\title{On the Beilinson--Bloch--Kato conjecture for polarized motives}
\author{Hao Peng}
\begin{document}

\maketitle

\begin{abstract}
We study the Beilinson--Bloch--Kato conjecture for polarized motives. In the conjugate self-dual case, we show that if the central $L$-value does not vanish, then the associated Bloch--Kato Selmer group with coefficients in a suitable local field vanishes. In the self-dual analytic rank-zero case, we reduce the conjecture to a conjecture in the endoscopic Rankin--Selberg case related to the orthogonal Gross--Prasad periods.
\end{abstract}

\tableofcontents

\section{Introduction}

The Beilinson--Bloch--Kato conjecture for motives vastly generalizes the rank part of the Birch--Swinnerton-Dyer conjecture for elliptic curves. In this article, we study the Beilinson--Bloch--Kato conjecture for motives associated with self-dual (resp. conjugate self-dual) automorphic representations of $\GL_{2r}(\Ade_F)$, where $F\subset\bb C$ is a totally real number field (resp. a CM field).

Let $\Mot^\rat(F, E)$ denote the pseudo-Abelian category of Chow motives over $F$ with coefficients in a number field $E$ (see, for example, \cite{And04}). For the complex conjugation $\cc\in\Gal(\bb C/\bb R)$, a polarization of $M$ is an isomorphism $M^\cc\xr\sim M^\vee(1)$ in the category $\Mot^\rat(F, E)$. For any Chow motive $M\in \Mot^\rat(F, E)$ and any finite place $\lbd$ of $E$, there is a $\lbd$-adic realization $M_\lbd$, which is a representation of $\Gal(\ovl F/F)$ with $E_\lbd$-coefficients. We consider the Bloch--Kato Selmer group
\begin{equation*}
\bx H^1_f(F, M_\lbd)=\ker\bigg(\bx H^1(F, M_\lbd)\to\prod_{w\nmid \ell}\bx H^1(I_w, M_\lbd)\times\prod_{w\mid\ell}\bx H^1(F_w, M_\lbd\otimes_{\bb Q_\ell}\bb B_{\crys, \ell})\bigg),
\end{equation*}
where $\ell$ is the underlying rational prime of $\lbd$ and $\bb B_{\crys, \ell}$ is the $\ell$-adic crystalline period ring.

For example, if $A$ is an Abelian variety over $F$ of dimension $g$ and $M=h^{2g-1}(A)(g)$ is the Albanese motive of $A$ with coefficient field $\bb Q$, then $\bx H^1_f(F, M_\ell)$ is canonically isomorphic to
\begin{equation*}
\bb Q_\ell\otimes_{\bb Z_\ell}\plim_n\Sel_{\ell^n}(A/F)
\end{equation*}
for every rational prime $\ell$. Here $\Sel_k(A/F)$ is the mod-$k$ Selmer group of $A$ over $F$ for every positive integer $k$.

Suppose $M$ is a polarized motive in $\Mot^\rat(F, E)$ and $\lbd$ is a finite place of $E$. Conjecturally, the $L$-function $L(s, M_\lbd)$ attached to $M_\lbd$ has a meromorphic continuation to the entire complex plane, satisfying a functional equation
\begin{equation*}
L(s, M_\lbd)=\eps(M)c(M)^{-s}L(-s, M_\lbd)
\end{equation*}
where $\eps(M)\in\{\pm1\}$ is the root number and $c(M)$ is the conductor; see \cite{Del79}. Assuming this conjectural functional equation, we recall the following Beilinson--Bloch--Kato conjecture \cite{B-K90}. 

\begin{conj*}[Beilinson--Bloch--Kato]
\begin{equation*}
\ord_{s=0}L(s, M_\lbd)=\dim_{E_\lbd}\bx H_f^1(F, M_\lbd)-\dim_{E_\lbd}\bx H^0(F, M_\lbd).
\end{equation*}
\end{conj*}

We focus on the analytic rank-zero case, that is, when $L(0, M_\lbd)$ is nonzero. 

\subsection{The conjugate self-dual case}

Let $F\subset \bb C$ be a totally imaginary quadratic extension of a totally real number field $F_+\subset \bb R$. We first state a less technical main result.

\begin{mainthm}\label{ismsiieppwso}
Let $r$ be a positive integer and let $A$ be a modular elliptic curve over $F_+$. Suppose that $F_+\ne\bb Q$ whenever $r>1$, and that $A$ has no complex multiplication over $\ovl F$. If the central critical value
\begin{equation*}
L\paren{r, \Sym^{2r-1}A_F}
\end{equation*}
is nonzero, then for all but finitely many rational primes $\ell$, the Bloch--Kato Selmer group
\begin{equation*}
\bx H^1_f\paren{F, \Sym^{2r-1}\etH^1(A_{\ovl F}, \bb Q_\ell)(r)}
\end{equation*}
vanishes.
\end{mainthm}
\begin{rem}
The finite set of rational primes $\ell$ that are excluded in Theorem~\ref{ismsiieppwso} can be effectively bounded in terms of $F$, $A$, and $r$. The condition $F_+\ne\bb Q$ is imposed because Hypothesis~\ref{iifififmieiemss} on the cohomology of unitary Shimura varieties is not yet known for $N\ge 4$ if $F_+=\bb Q$. This condition is not used elsewhere.
\end{rem}
\begin{rem}
When $r=1$, Theorem~\ref{ismsiieppwso} recovers part of the Birch--Swinnerton-Dyer conjecture for $A_F$. If, in addition, $F_+=\bb Q$, then this is covered by Kolyvagin's work \cite{Kol90}, which introduced the \emph{Heegner point Euler system}; it uses the Gross--Zagier formula \cite{G-Z86} to pass to the analytic rank-one case. For $F_+\ne \bb Q$, the corresponding result was later established in \cites{Zha01, Lon06, Lon07, Nek12}.

When $r=1$ and $F_+=\bb Q$, there are other approaches to Theorem~\ref{ismsiieppwso}. In \cite{Kat04}, Kato used $p$-adic families of Beilinson elements in the $K$-theory of modular curves to construct Selmer classes via $p$-adic Hodge theory, known as \emph{Kato's Euler system}. In \cite{B-D05}, Bertolini and Darmon developed a different approach that constructs Selmer classes via level-raising congruences on Shimura curves, known as the \emph{bipartite Euler system} (see \cite{How06} for a systematic formulation). Under mild additional assumptions, these yield alternative proofs of Kolyvagin's result that do not invoke the Gross--Zagier formula.
\end{rem}

Theorem~\ref{ismsiieppwso} is a special case of a more general result concerning the Bloch--Kato Selmer groups of Galois representations attached to conjugate self-dual automorphic representations. We first introduce the automorphic representations we study.

\begin{defi}\label{eirinenriedusnisimaos}
An isobaric automorphic representation $\Pi$ of $\GL_N(\Ade_F)$ with $N\ge 2$ is called a (conjugate self-dual) \tbf{relevant automorphic representation} if 
\begin{enumerate}
\item
$\Pi$ is conjugate self-dual in the sense that its contragredient $\Pi^\vee$ is isomorphic to $\Pi\circ\cc$;
\item
for every Archimedean place $w$ of $F$, $\Pi_w$ is isomorphic to the irreducible principal series representation induced from the characters $(\arg^{1-N}, \arg^{3-N}, \ldots, \arg^{N-1})$, where $\arg: \bb C^\times\to \bb C^\times$ is the argument character defined by the formula $\arg(z)=z/\sqrt{z\ovl z}$;
\item
either one of the following holds:
\begin{enumerate}
\item
$\Pi$ is cuspidal.
\item
$N$ is odd and $\Pi$ is an isobaric sum of a character of $\GL_1(\Ade_F)$ with a cuspidal automorphic representation of $\GL_{N-1}(\Ade_F)$. 
\end{enumerate}
\end{enumerate}

A relevant representation $\Pi$ is called \tbf{almost cuspidal} if it is not cuspidal, in which case we write $\Pi=\Pi^\flat\boxplus\chi$, where $\chi$ is a conjugate self-dual character of $F^\times\bsh\Ade_F^\times$.
\end{defi}
\begin{rem}\label{ismsiehiemfies}
Note that our definition of relevant automorphic representations is slightly more general than that of \cite[Definition~1.1.3]{LTXZZ}: A representation $\Pi$ of $\GL_N(\Ade_F)$ is relevant in their sense if and only if it is cuspidal and relevant in our sense.

If $\Pi$ is a relevant automorphic representation, then it is regular algebraic in the sense of \cite[Defintion~3.12]{Clo90}. Moreover, it is well known that the Asai $L$-function $L(s, \Pi', \Asai^{(-1)^N})$ is regular at $s=1$ for each isobaric factor $\Pi'$ of $\Pi$ (see, for example, \cite[Theorem~9.1]{F-P23}).
\end{rem}

We now state our main result in terms of automorphic representations analogous to Theorem~\ref{ismsiieppwso}.

\begin{mainthm}\label{ismsieifmeifmss}
Let $r$ be a positive integer and $\Pi$ be a relevant automorphic representation of $\GL_{2r}(\Ade_F)$. Let $E$ be a strong coefficient field of $\Pi$ (see \textup{Definition~\ref{sisieifnieeimfsi}}). If the central critical value
\begin{equation*}
L(\frac{1}{2}, \Pi)
\end{equation*}
is nonzero, then for every admissible place $\lbd$ of $E$ with respect to $\Pi$, the Bloch--Kato Selmer group
\begin{equation*}
\bx H^1_f(F, \rho_{\Pi, \lbd}(r))
\end{equation*}
vanishes. Here $\rho_{\Pi, \lbd}$ is the Galois representation of $F$ with coefficients in $E_\lbd$ attached to $\Pi$ as described in \textup{Proposition~\ref{ieieeinfeieiites}} and \textup{Definition~\ref{sisieifnieeimfsi}}.
\end{mainthm}
\begin{rem}
In the setting of the unitary Friedberg--Jacquet periods, M.~Zanarella studied automorphic representations $\Pi$ in a framework close to ours, under the additional assumption that $\Pi$ is self-dual \cite{Zan24}. His argument relies on the conjecture of Leslie--Xiao--Zhang \cite{LXZ25}; see also \cite{LXZ25a} for recent progress on this conjecture. 
\end{rem}

The notion of admissible places appearing in Theorem~\ref{ismsieifmeifmss} is defined in Definition~\ref{issisoeeiriens}, which consists of a long list of assumptions. It is expected that all but finitely many finite places are admissible (with respect to $\Pi$). Indeed, we have the following family of abstract examples where all but finitely many finite places are admissible.

\begin{mainthm}\label{lsslienefifnieiwsws}
Let $r$ and $\Pi$ be as in \textup{Theorem~\ref{ismsieifmeifmss}}. Suppose that
\begin{enumerate}
\item
$F_+\ne \bb Q$ if $r>1$;
\item
There exists a finite place $w$ of $F$ \sut $\Pi_w$ is supercuspidal;
\item
There exists a good inert place $\mfk p$ of $F_+$ (see \textup{Definition~\ref{issieniefeifmies}}) \sut $\Pi_{\mfk p}$ is a Steinberg representation.
\end{enumerate}
Let $E$ be a strong coefficient field of $\Pi$ (see \textup{Definition~\ref{sisieifnieeimfsi}}). If the central critical value
\begin{equation*}
L(\frac{1}{2}, \Pi)
\end{equation*}
is nonzero, then for all but finitely many finite places $\lbd$ of $E$, the Bloch--Kato Selmer group
\begin{equation*}
•\bx H^1_f(F, \rho_{\Pi, \lbd}(r))
\end{equation*}
vanishes.
\end{mainthm}
\begin{rem}
In condition (b) of Theorem~\ref{lsslienefifnieiwsws}, if $F$ is Galois over $\bb Q$ or contains an imaginary quadratic field, then a good inert place of $F_+$ is just a finite place of $F_+$ that is inert in $F$.
\end{rem}

Using theta correspondence and a Burger--Sarnak type principle for Fourier--Jacobi periods on a pair of global unitary group $(\mbf U_{2r}, \mbf U_{2r})$, we reduce Theorem~\ref{ismsieifmeifmss} to the following theorem concerning central critical values of Rankin--Selberg $L$-functions. Let $n\ge 2$ be a positive integer. Denote by $n_0$ and $n_1$ the unique even and odd numbers in $\{n, n+1\}$, respectively.

\begin{mainthm}\label{ismsieiemiwmws}
Let $\Pi_0, \Pi_1$ be relevant automorphic representations of $\GL_{n_0}(\Ade_F)$ and $\GL_{n_1}(\Ade_F)$, respectively, such that $\Pi_0$ is cuspidal and $\Pi_1$ is almost cuspidal of the form $\Pi_1=\Pi^\flat_1\boxplus\uno$ where $\uno$ is the trivial representation of $\GL_1(\Ade_F)$. Assume $F_+\ne \bb Q$ if $n>2$, and assume there is a finite place $w$ of $F$ over a place of $F_+$ inert in $F$ \sut $\Pi^\flat_{1, w}$ is square-integrable. Let $E\subset\bb C$ be a strong coefficient field of both $\Pi_0$ and $\Pi_1$ (see \textup{Definition~\ref{sisieifnieeimfsi}}). If the central critical value
\begin{equation*}
L(\frac{1}{2}, \Pi_0)\cdot L(\frac{1}{2}, \Pi_0\times\Pi_1^\flat)
\end{equation*}
is nonzero, then for every admissible place $\lbd$ of $E$ with respect to $(\Pi_0, \Pi_1)$, the Bloch--Kato Selmer group
\begin{equation*}
\bx H^1_f(F, \rho_{\Pi_0, \lbd}(n_0/2))
\end{equation*}
vanishes.
\end{mainthm}
\begin{rem}
Theorem~\ref{ismsieiemiwmws} is analogous to one of the main results of \cite{LTXZZ} that concerns the analytic rank-zero case, where they assumed that $\Pi_1$ is relevant and cuspidal. Via the Gan--Gross--Prasad conjecture \cite{GGP12}, which is established in our case in \cite{BCZ22}, the theorem can be regarded as relating nonvanishing unitary Gan--Gross--Prasad periods on a pair of unitary groups $(\bx U_{2r}, \bx U_{2r+1})$ to the vanishing of Bloch--Kato Selmer groups.
\end{rem}
\begin{rem}
The notion of admissible places appearing in Theorem~\ref{ismsieiemiwmws} is defined in Definition~\ref{oiamsisieiwps}, which consists of a long list of assumptions. The admissibility condition here is weaker than the analogous admissibility condition in \cite[Definition~8.1.1]{LTXZZ}. It is expected that if the two automorphic representations $\Pi_0$ and $\Pi_1$ are not correlated in terms of Langlands functoriality, then all but finitely many finite places of $E$ are admissible with respect to $(\Pi_0, \Pi_1)$. For example, if we assume that
\begin{enumerate}
\item
$F$ is Galois over $\bb Q$ or contains an imaginary quadratic field,
\item
for each $\alpha\in\{0, 1\}$, there exists a finite place $w_\alpha$ of $F$ \sut $\Pi_{\alpha, w_\alpha}$ is supercuspidal, and
\item
there exists a finite place $\mfk p_+$ of $F_+$ underlying a unique place $\mfk p$ of $F$, 
\sut $\Pi_{0,\mfk p}$ is a Steinberg representation and $\Pi^\flat_{1, \mfk p}$ is unramified with Satake parameter not containing $1$,
\end{enumerate}
then all but finitely many finite places of $E$ are admissible with respect to $(\Pi_0, \Pi_1)$; see Lemma~\ref{isimsieienfifmeiss}.
\end{rem}

\subsection{The self-dual case}

We now state analogous conjectures in the self-dual case. Let $F\subset \bb R$ be a totally real number field.

\begin{mainconj}\label{osoppowiur}
Let $r$ be a positive integer and let $A$ be a modular elliptic curve over $F$ with no complex multiplication over $\ovl F$. If the central critical value
\begin{equation*}
L\paren{r, \Sym^{2r-1}A}
\end{equation*}
is nonzero, then for all rational primes $\ell$ greater than an effective constant depending only on $A$ and $r$, the Bloch--Kato Selmer group
\begin{equation*}
\bx H^1_f\paren{F, \Sym^{2r-1}\etH^1(A_{\ovl F}, \bb Q_\ell)(r)}
\end{equation*}
vanishes.
\end{mainconj}
\begin{rem}\enskip
\begin{enumerate}
\item
Theorem~\ref{ismsiieppwso} is implied by Conjecture~\ref{osoppowiur}. In fact, we can even drop the assumption $F_+\ne\bb Q$ in Theorem~\ref{ismsiieppwso} if Conjecture~\ref{osoppowiur} is true.
\item
When $r=1$, Conjecture~\ref{osoppowiur} is established by Kolyvagin \cite{Kol90} when $F=\bb Q$ using the Gross--Zagier formula \cite{G-Z86}, and later generalized to the case when $F\ne\bb Q$ in \cites{Zha01, Lon06, Lon07, Nek12}.
\item
When $r=2$ and $F=\bb Q$, Conjecture~\ref{osoppowiur} is known by work of H. Wang \cite{Wan22a} and N. Sweeting \cite{Swe25} using the bipartite Euler system method.
\end{enumerate}
\end{rem}

When $r\cdot [F: \bb Q]$ is even, it appears that Conjecture~\ref{osoppowiur} would follow from Theorem~\ref{ismsiieppwso} provided the following analytic statement holds:

\begin{description}
\item[$(\bx{NV}_r)$]For any elliptic curve $A$ over $F$ with no complex multiplication over $\ovl F$ such that $L\paren{r, \Sym^{2r-1}A}$ is nonzero, there exists a totally negative element $D\in F^\times$ effectively bounded by $F, A$, and $r$ satisfying that the central critical value
\begin{equation*}
L\paren{r, \Sym^{2r-1}A^D}
\end{equation*}
is nonzero, where $A^D$ is the twist of $A$ by the quadratic extension $F(\sqrt D)/F$.
\end{description}
If $r=1$, $(\bx{NV}_1)$ holds by the nonvanishing theorem of Friedberg--Hoffstein for quadratic twists with prescribed local behavior; cf. \cite[Theorem~B]{F-H95}. For $r\ge 2$, $(\bx{NV}_r)$ appears to lie beyond current techniques; even the case $r=2$ seems difficult (see, for example, \cites{R-Y23, BFKMMS, HJL23}).



Alternatively, using theta correspondence and a Burger--Sarnak type principle for Fourier--Jacobi periods on the symplectic-metaplectic pair $(\Sp_{2r}, \tld\Sp_{2r})$, we show that Conjecture~\ref{osoppowiur} can be reduced to another conjecture of Gan--Gross--Prasad type, relating nonvanishing of orthogonal Gross--Prasad periods to vanishing of Bloch--Kato Selmer groups.\begin{mainconj}\label{ocneienipwowie}
Let $r$ be a positive integer and let $A$ be an elliptic curve over $F$ with no complex multiplication over $\ovl F$. Suppose that there exist
\begin{enumerate}
\item
a self-dual automorphic representation $\Pi$ of $\GL_{2r+1}(\mbf A_F)$ that is either cuspidal or an isobaric sum of a self-dual cuspidal automorphic representation of $\GL_{2r}(\Ade_F)$ with a nontrivial quadratic character of $F^\times\bsh \Ade_F^\times$;
\item
a pair $(\mbf V, \mbf V_\sharp)$ in which $\mbf V$ is a quadratic space of dimension $2r+1$ over $F$ that is positive definite at every Archimedean place of $F$ satisfying $-\disc(\mbf V)\notin F^{\times2}$, and $\mbf V_\sharp\defining \mbf V\oplus Fe$ where $e$ has norm $1$;
\item
cuspidal automorphic representations $\pi_0\subset\mcl A_0(\bx O(\mbf V))$ and $\pi_1\subset\mcl A_0(\bx O(\mbf V_\sharp))$ with trivial Archimedean components and with Arthur parameters $\Sym^{2r-1}A$ and $\Pi\boxplus\uno$, respectively (see \textup{Definition~\ref{formaml-parameineirmes}});\footnote{Here $\uno$ denotes the trivial automorphic character of $\GL_1(\Ade_F)$.} and
\item
cusp forms $f_0\in\pi_0$ and $f_1\in\pi_1$,
\end{enumerate}
\sut the orthogonal Gross--Prasad period
\begin{equation}\label{oeieiimfimes}
\mcl P_{\bx{GP}}(f_0, f_1)\defining \int_{\bx O(\mbf V)(F)\bsh \bx O(\mbf V)(\Ade_F)}f_0(h)f_1(\iota(h))\bx dh
\end{equation}
is nonzero. Here $\iota: \bx O(\mbf V)\inj\bx O(\mbf V_\sharp)$ is the embedding induced by the inclusion $\mbf V\subset\mbf V_\sharp$. Let $E\subset\bb C$ be a strong coefficient field of $\Pi$ (see Definition~\ref{psosieifmmiess}). Then there exists an effective constant $N(F, A, r)$ depending only on $F$, $A$, and $r$, \sut the Bloch--Kato Selmer group
\begin{equation*}
\bx H^1_f\paren{F, \Sym^{2r-1}\etH^1(A_{\ovl F}; \bb Q_\ell)(r)}
\end{equation*}
vanishes for all rational primes $\ell>N(F, A, r)$ underlying a preadmissible place $\lbd$ of $E$ with respect to $(A,\Pi)$.
\end{mainconj}
\begin{rem}
When $r=1$ and $F=\bb Q$, Conjecture~\ref{ocneienipwowie} is known by results of Y.~Liu \cite{Liu16} under suitable conditions, obtained using Hirzebruch--Zagier cycles and the bipartite Euler system method.
\end{rem}
\begin{rem}
The notion of preadmissible places appearing in Conjecture~\ref{ocneienipwowie} is a preliminary notion defined in Definition~\ref{aosppwoienuvneu}. It is expected that, if $\Pi$ is not correlated to $A$ in the sense of Langlands functoriality, then all but finitely many finite places of $E$ are admissible with respect to $(A, \Pi)$. For example, if there exist finite places $\mfk p, \mfk q$ of $F$ \sut
\begin{enumerate}
\item
$A$ has split multiplicative reduction at $\mfk p$, 
\item
$\Pi_{\mfk p}$ is unramified with Satake parameter of the form $\{-1, \alpha_1^{\pm1}, \ldots, \alpha_r^{\pm1}\}$ satisfying $\alpha_i\notin\{\pm1\}$ for every $1\le i\le r$, and
\item
$\Pi_{\mfk q}$ is either supercuspidal or an isobaric sum of a self-dual supercuspidal representation with a quadratic character,
\end{enumerate}
then there exists an effective constant $N(F, A,\Pi_{\mfk p}, \Pi_{\mfk q})$ depending on $F, A, \Pi_{\mfk p}$, and $\Pi_{\mfk q}$ \sut every finite place $\lbd$ of $E$ with underlying prime $\ell$ greater than $N(F, A, \Pi_{\mfk p}, \Pi_{\mfk q})$ is preadmissible \wrt $(A, \Pi)$; see Lemma~\ref{oeeoitiirunmes}.
\end{rem}

\begin{mainthm}\label{isioseoeimimfies}
If \textup{Conjecture~\ref{ocneienipwowie}} holds, then \textup{Conjecture~\ref{osoppowiur}} holds.
\end{mainthm}

\begin{rem}
In view of the Gross--Prasad conjecture for orthogonal groups \cite{G-P92, G-P94, I-I10}, Conjecture~\ref{ocneienipwowie} may be viewed as a natural analogue of Theorem~\ref{ismsieiemiwmws}. It will be studied in the author's forthcoming Ph.D. thesis \cite{Pen26} via orthogonal Shimura varieties and bipartite Euler system method, along the lines of the argument for Theorem~\ref{ismsieiemiwmws} (see \S\ref{oeieiitiidmiis}). In particular, for $F=\bb Q$ we expect to establish Conjecture~\ref{ocneienipwowie}, and hence also \textup{Conjecture~\ref{osoppowiur}}.
\end{rem}

\subsection{Strategy of proof}\label{oeieiitiidmiis}

The main innovation of this paper is an extensive use of local and global seesaw identities to deduce Theorem~\ref{ismsieifmeifmss} (resp. Conjecture~\ref{osoppowiur}) from Theorem~\ref{ismsieiemiwmws} (resp. Conjecture~\ref{ocneienipwowie}). The method of seesaw has proved to be a very useful tool in theta lifting of automorphic representations, yet our work seems to be the first to directly apply it to study arithmetic questions. 

For simplicity, we restrict to the self-dual case and assume $F=\bb Q$. Let $r$ be a positive integer and $A$ be an elliptic curve over $\bb Q$. By Newton--Thorne \cite{N-T21}, the odd symmetric power $\Sym^{2r-1}A$ is modular and associated with a self-dual cuspidal representation $\Pi_0$ of $\GL_{2r}(\Ade_{\bb Q})$. Rather than viewing $\Pi_0$ as the standard functorial transfer of a cuspidal automorphic representation on a special orthogonal group $\SO_{2r+1}$ as in previous work \cite{Liu16, LTXZZ, Zan24, Swe25}, we regard $\Pi_0$ as a generic elliptic $A$-parameter for the metaplectic group $\tld\Sp_{2r}$ in the Shimura--Waldspurger correspondence framework of \cite{G-I18}. In particular, by Arthur's multiplicity formula proved by Gan--Ichino \cite{G-I18}, there exists a genuine cuspidal automorphic representation $\tld\sigma_0$ of $\tld\Sp_{2r}(\Ade_{\bb Q})$ with $A$-parameter $\Pi_0$.\footnote{In fact, we twist $\Pi_0$ by a nontrivial quadratic character so that the quadratic space $\mbf V_{2r+1}$ (defined below) satisfies $-\disc(\mbf V_{2r+1})\notin\bb Q^{\times2}$.} Since the central critical value $L(\frac{1}{2}, \Pi_0)$ is nonzero (and $\Pi_0$ is tempered at every rational prime), the Rallis inner product formula \cite{Yam14}, together with local conservation relations, yields a positive definite quadratic space $\mbf V_{2r+1}$ of dimension $2r+1$ over $\bb Q$ \sut the global theta lift of $\tld\sigma_0$ to $\bx O(\mbf V_{2r+1})(\Ade_{\bb Q})$ is a nonzero cuspidal automorphic representation $\pi_0$ with trivial Archimedean components. 

We use the seesaw diagram
\begin{equation*}
\begin{tikzcd}[sep=large]
\Sp_{2r}\times\Sp_{2r}\ar[dr, dash] &\bx O(\mbf V_{2r+2})\\
\Sp_{2r}\ar[u, dash]\ar[ur, dash] &\bx O(\mbf V_{2r+1})\times\bx O(\mbf V_1),\ar[u, dash]
\end{tikzcd}
\end{equation*}
where $\mbf V_1=\bb Qe$ is a 1-dimensional quadratic space with $\norml{e}=1$, and $\mbf V_{2r+2}=\mbf V_{2r+1}\oplus \mbf V_1$. Fix a sufficiently large rational prime $\ell$ and a nontrivial additive character $\psi$ of $\bb Q\bsh\Ade_{\bb Q}$. Suppose we can find a cuspidal automorphic representation $\sigma_1$ of $\Sp_{2r}(\Ade_{\bb Q})$ such that the \tbf{Fourier--Jacobi period integral}
\begin{equation*}
\mcl{FJ}(\tilde\vp_0, \vp_1; \phi)\defining\int_{\Sp_{2r}(\bb Q)\bsh\Sp_{2r}(\Ade_{\bb Q})}\tilde\vp_0(g)\vp_1(g)\theta(g; \phi)\bx dg
\end{equation*}
is nonzero on the pair $(\sigma_1, \tld\sigma_0)$ for some Schwartz function $\phi$, where $\theta(g; \phi)$ is the theta function. Then it follows from the global seesaw identity that the theta lift of $\sigma_1$ to $\bx O(\mbf V_{2r+2})$ is a nonzero cuspidal automorphic representation $\pi_1$, and the orthogonal Gross--Prasad period integral \eqref{oeieiimfimes} is nonzero on the pair $(\pi_0, \pi_1)$. If we can further guarantee that
\begin{enumerate}
\item
$\pi_1$ has trivial Archimedean component,
\item
the Arthur parameter of $\pi_1$ is of the form $\Pi\boxplus\uno$ as in the statement of Conjecture~\ref{ocneienipwowie}; and
\item
$\ell$ underlies a preadmissible place $\lbd$ of $E$ with respect to $(A,\Pi)$,
\end{enumerate}
then Conjecture~\ref{osoppowiur} follows.

The shape of $\pi_1$ is determined by the shape of $\sigma_1$ via Prasad's conjecture \cite{A-G17}. Fix a large prime $p$. If the $L$-parameter of $\sigma_{1, p}$ contains a chosen $2r$-dimensional irreducible local Galois representation $\phi_p$ as a subrepresentation, we would know $\Pi$ is either cuspidal or an isobaric sum of a self-dual cuspidal automorphic representation and a quadratic Dirichlet character. Note that we cannot guarantee that $\Pi$ is cuspidal, because there exist no irreducible self-dual local Galois representations of odd dimension greater than one (when $p$ is odd); see \cite[Proposition~4]{Pra99}. Condition (1) and (3) would follow if we can control the Archimedean place of $\sigma_1$ and can choose $\phi_p$ with desired good properties.

To achieve these requirements, we prove a Burger--Sarnak type principle for Fourier--Jacobi periods on the pair $(\Sp_{2r}, \tld\Sp_{2r})$, in the spirit of \cites{B-S91, H-L98, Pra07, Zha14}. More precisely, suppose
\begin{enumerate}
\item
$\sigma_{1, p}$ is a supercuspidal representation of $\Sp_{2r}(\bb Q_p)$ that is induced from a compact open subgroup \sut the pair $(\tld\sigma_{0, p}, \sigma_{1, p})$ satisfies the Fourier--Jacobi case of the local Gan--Gross--Prasad restriction problem:
\begin{equation}\label{oieneifmeiiws}
\Hom_{\Sp_{2r}(\bb Q_p)}(\tld\sigma_{0, p}\otimes\omega_{\psi_p}\otimes\sigma_{1, p}, \bb C)\ne 0,
\end{equation}
where $\omega_{\psi_p}$ is the local Weil representation associated to $\psi_p$.
\item
The contragredient of $\sigma_{1, \infty}$ is a holomorphic discrete series of $\Sp_{2r}(\bb R)$ with scalar lowest $K$-type of weight $(r+1, \ldots, r+1)$.
\end{enumerate}
We show that there exists a cuspidal automorphic representation $\sigma_1$ of $\Sp_{2r}$ which globalizes $\sigma_{1, p}$ and $\sigma_{1, \infty}$ simultaneously, \sut the Fourier--Jacobi period integral on the pair $(\sigma_1, \sigma_0)$ is nonzero. The local restriction condition \eqref{oieneifmeiiws} then follows from the (now established) local Gan--Gross--Prasad conjecture, Prasad's conjecture, and a local seesaw identity; see \S\ref{ieiieiepeoies}.

The local Galois representation $\phi_p$ used to enforce the pre-admissibility condition at some place $\lbd$ above $\ell$ will be constructed in Appendix~\ref{oggieeotiieimfiiwis}. Let $\iota_\ell: \bb C\xr\sim\ovl{\bb Q_\ell}$ be a fixed isomorphism which induces a place $\lbd$ of $E$. We require $\phi_p$ to satisfy:
\begin{enumerate}
\item
$\phi_p$ is self-dual of orthogonal type;
\item
$\phi_p$ is residually absolutely irreducible;
\item
there exists an arithmetic Frobenius lift $\Frob_p\in\Gal(\ovl{\bb Q_p}/\bb Q_p)$ \sut the eigenvalues $\{\alpha_1, \ldots, \alpha_{2r}\}$ of $\iota_\ell\rho(\Frob_p)$ are $\ell$-adic units and their reductions in $\ovl{\bb F_\ell}$ avoid a prescribed finite subset of $\ovl{\bb F_\ell}$; moreover, $\alpha_i^2\ne p^2\alpha_j^2$ in $\ovl{\bb F_\ell}$ for any $1\le i\ne j\le 2r$.
\end{enumerate}
The explicit construction is more complicated than we expected. Indeed, in the conjugate self-dual variant, we need to split and distribute the analogous requirements between two distinct finite places.

We now turn to the conjugate self-dual setting and discuss the proof of Theorem~\ref{ismsieiemiwmws}, which is another main theorem. Following the bipartite Euler system arguments via level-raising congruences, pioneered by Bertolini and Darmon for Shimura curves \cite{B-D05}, we bound the Bloch--Kato Selmer group by constructing global Galois cohomology classes that are deeply ramified at prescribed primes. These classes originate from the cohomology of products of unitary Shimura varieties attached to (standard indefinite) unitary groups $\mbf U_{n_0}$ and $\mbf U_{n_1}$ via level-raising congruences, and are realized as the image of the diagonal cycle under the Hecke-localized Abel--Jacobi map. Their ramifications are detected by relating them to unitary Gan--Gross--Prasad periods on definite Shimura sets through the basic uniformization of the special fibers of the integral models---this is the so-called \emph{first explicit reciprocity law}.

Our argument follows \cite{LTXZZ} but requires modifications for the almost cuspidal setting. The results of \cite{LTXZZ} do not apply verbatim, since several of their standing hypotheses are tailored to the cuspidal case. For example, the computation of the Hecke--Galois module of the Shimura varieties is more delicate: when $\pi$ is a cuspidal representation of $\bx U_{n_1}$ with base change $\BC(\pi)\cong \Pi_1=\Pi_1^\flat\boxplus\uno$, then the $\pi^\infty$-isotypic part of the middle-degree (projective limit) cohomology of the Shimura variety
\begin{equation*}
\etH^{n_1-1}\paren{\bSh(\mbf U_{n_1})_{\ovl F}, \ovl{\bb Q_\ell}\paren{\frac{n_1-1}{2}}}[\iota_\ell\pi^\infty]
\end{equation*}
is a $\ovl{\bb Q_\ell}[\Gal(\ovl F/F)]$-module isomorphic to either the trivial character or the Galois representation $\rho^\cc_{\Pi_1^\flat, \lbd}(r)$, determined by Arthur's multiplicity formula. Here $\iota_\ell: \bb C\xr\sim\ovl{\bb Q_\ell}$ is a fixed isomorphism inducing a place $\lbd$ of $E$. More subtly, our construction of $\Pi_1$ via the Burger--Sarnak type principle fixes its local components of finitely many places, but does not a priori control ramifications at the remaining places. To compensate, we replace the notion of admissibility of \cite{LTXZZ} with a weaker variant adapted to the analytic rank-zero situation.

As in \cite{LTXZZ}, the geometric input has two parts: (i) the study of Tate cycles in the special fiber of the semistable integral model of Shimura varieties attached to $\mbf U_{n_1}$; and (ii) an arithmetic level-raising property for Shimura varieties attached to $\mbf U_{n_0}$. In our application, non-cuspidality is allowed only on the odd-unitary side, while the even-unitary representation remains cuspidal. Moreover, the Tate cycle argument works provided the Satake parameter at the given unramified place is ``generic'' enough.

Finally, under the hypotheses of Theorem~\ref{ismsieiemiwmws}, we are not able to prove the vanishing of the larger Bloch--Kato Selmer group
\begin{equation*}
\bx H^1_f(F, \rho_{\Pi_0, \lbd}\otimes\rho_{\Pi_1, \lbd}(n)),
\end{equation*}
although this is predicted by the Beilinson--Bloch--Kato conjecture. This limitation is intrinsic to our simplifying conditions when applying the bipartite Euler system method: Let $\bx R^\flat$ be a self-dual lattice in $\rho_{\Pi_0, \lbd}\otimes\rho_{\Pi_1^\flat, \lbd}(n)$. For any very good inert place $\mfk p$ of $F_+$ at which both arithmetic level-raising and the Tate cycle conditions apply, we cannot show deep ramification of the Hecke-localized Abel--Jacobi image of the diagonal cycle in the singular part of the local Galois cohomology
\begin{equation*}
\bx H^1_\sing(F_{\mfk p}, \bx R^\flat/\lbd^m),
\end{equation*}
for any $m\ge 1$. Indeed, under further conditions we impose, these cohomology spaces vanish; see~\S\ref{psleiieuiremfs}. We do not know how to circumvent this limitation.

Let us briefly summarize this article. In \S\ref{oigmeietemfueusmws}, we recall certain background materials related to automorphic representations and Galois representations. In \S\ref{oeiidimruuemfes}, we consider the conjugate self-dual Rankin--Selberg case. In \S\S\ref{oeoeieuturuesw}--\ref{ptiieirmeiehfieims}, we collect certain background results from \cite{LTXZZ} and extend them to the almost cuspidal situation. In \S\S\ref{Tiemsopeoireisss}--\ref{sosieemiifeiifemiws}, we 
compute the local part of the Abel--Jacobi image of the diagonal cycle. In \S\ref{osiieeuoeoutyrui}, we define the notion of admissible places in the almost cuspidal situation, and check in good situations that all but finitely many finite places are admissible. In \S\ref{psleiieuiremfs}, we prove Theorem~\ref{ismsieiemiwmws}. In \S\ref{ieiieiepeoies}, we collect the necessary background results related to theta correspondence that will be used in \S\ref{ososieifieiifmes}. Finally, in \S\ref{ososieifieiifmes}, we apply the Burger--Sarnak type principle and seesaw relation to prove the main theorems: Theorems~\ref{ismsiieppwso}, \ref{ismsieifmeifmss} and \ref{lsslienefifnieiwsws} are proved in \S\ref{psoseieiruefeimsis}, and Theorem~\ref{isioseoeimimfies} is proved in \S\ref{psosieuueiures}. In Appendix~\ref{oggieeotiieimfiiwis}, we construct certain (conjugate) self-dual local Galois representations with good properties, which will be used in the Burger--Sarnak type principle for Fourier--Jacobi periods.

\subsection{Notation and conventions}

In this subsection, we set up some common notations and conventions for the entire article, including the appendix.

\begin{note}[Generalities]\enskip
\begin{itemize}
\item
Let $\bb N =\{0, 1, 2, 3, . . . \}$ be the monoid of nonnegative integers and set $\bb Z_+=\bb N\setm\{0\}$. We write $\bb Z$, $\bb Q$, $\bb R$, and $\bb C$ for the integers, rational numbers, real numbers, and complex numbers, respectively.
\item
We take square roots only of positive real numbers and always choose the positive root.
\item
For any set $S$, we denote by $\uno_S$ the characteristic function of $S$, and by $\id_S: S\to S$ the identity map. We write $\id$ for $\id_S$ if $S$ is clear from context. Let $\#S$ be the cardinality of $S$.
\item
For any set $X$, let $\uno\in X$ denote the distinguished trivial element (this notation is only used when the notion of triviality is clear from context).
\item
The eigenvalues or generalized eigenvalues of a matrix over a field $k$ are counted with multiplicity, i.e., by the dimension of the corresponding eigenspace or generalized eigenspace.
\item
For each rational prime $p$, we fix an algebraic closure $\ovl{\bb Q_p}$ of $\bb Q_p$ with residue field $\ovl{\bb F_p}$. For every integer $r\in\bb Z_+$, we denote by $\bb Q_{p^r}$ the unique unramified extension of $\bb Q_p$ of degree $r$ inside $\ovl{\bb Q_p}$, and by $\bb F_{p^r}$ its residue field.
\item
We use standard notations from category theory. The category of sets is denoted by $\Set$. The category of schemes is denoted by $\Sch$.
\item
All rings are commutative and unital, and ring homomorphisms preserve units.
\item
If a base ring is not specified in the tensor operation $\otimes$, then it is $\bb Z$.
\item
For a ring $L$ and a set $S$, denote by $L[S]$ the $L$-module of $L$-valued functions on $S$ of finite support.
\item
For each square matrix $M$ over a ring, we write $M^\top$ for its transpose.
\item
Suppose $\tld\Gamma, G$ are groups, $\Gamma\subset \tld\Gamma$ is a subgroup, and $L$ is a ring.
\begin{itemize}
\item
We denote by $\Gamma^\ab$ the maximal abelian quotient of $\Gamma$;
\item
For a homomorphism $\rho: \Gamma\to \GL_r(L)$ for some $r\in\bb Z_+$, we denote by $\rho^\vee: \Gamma\to \GL_r(L)$ the contragredient homomorphism, which is defined by the formula $\rho^\vee(x)=(\rho(x)^\top)^{-1}$.
\item
For a group homomorphism $\rho: \Gamma\to G$ and an element $\gamma\in\tld\Gamma$ that normalizes $\Gamma$, let $\rho^\gamma: \Gamma\to G$ denote the homomorphism defined by $\rho^\gamma(x)=\rho(\gamma x\gamma^{-1})$.
\item
We say that two homomorphisms $\rho_1, \rho_2: \Gamma\to G$ are conjugate if there exists an element $g\in G$ \sut $\rho_1=g\circ\rho_2\circ g^{-1}$.
\end{itemize}
\item
For any positive integer $n\in\bb Z_+$, let $\mu_n$ denote the finite diagonalizable group scheme over $\bb Z$ of $n$-th roots of unity.
\item
Denote by $\cc\in\Gal(\bb C/\bb R)$ the complex conjugation.
\item
For each field $k$, we denote by $\chr k$ the characteristic of $k$.
\item
If $G$ is a real Lie group or a totally disconnected locally compact group and $\pi$ is an irreducible admissible representation of $G$, we denote by $\pi^\vee$ the contragredient of $\pi$. We do not use $\tilde\pi$ for the contragredient of $\pi$.
\end{itemize}
\end{note}

\begin{note}[Number fields]
A subfield of $\bb C$ is called a number field if it is a finite extension of $\bb Q$. Suppose $F$ is a number field.
\begin{itemize}
\item
We denote by $\mcl O_F$ the ring of integers of $F$. We will not distinguish between prime ideals of $\mcl O_F$ and the corresponding finite places of $F$; we denote by $\fPla_F$ the set of finite places of $F$, by $\Pla^\infty_F=\Hom(F, \bb C)$ the set of infinite places (also called Archimedean places) of $F$, and by $\Pla_F=\fPla_F\cup \Pla^\infty_F$ the set of all places of $F$.
\item
For each finite set $\Pla$ of finite places of $F$, we write
\begin{equation*}
\Ade_{F, \Pla}\defining\prod\nolimits_{v\in\Pla}F_v, \quad \Ade_F^\Pla\defining\prod_{v\in \Pla_F\setm\Pla} \nolimits' F_v, \quad \Ade_F\defining \Ade_F^\infty\times(F\otimes_{\bb Q}\bb R)\end{equation*}
If $F=\bb Q$, we omit $\bb Q$ from the notation.
\item
Let $\ovl F$ denote the Galois closure of $F$ in $\bb C$, and set $\Gal_F=\Gal(\ovl F/F)$.
\item
For each rational prime $\ell$, let $\ve_\ell: \Gal_F\to\bb Z_\ell^\times$ denote the $\ell$-adic cyclotomic character. If $v$ is a finite place of $F$, we continue to write $\ve_\ell$ for its restriction to $\Gal_{F_v}$.
\item
We fix the following conventions. For each finite place $v\in\fPla_F$:
\begin{itemize}
\item
write $\mcl O_v$ and $F_v$ for the completion of $\mcl O_F$ (resp. $F$) at $v$;
\item
let $\kappa_v$ denote the residue field, $\norml{v}\defining\#\kappa_v$, and write $\chr \kappa_v$ for the residue characteristic.
\item
we fix an algebraic closure $\ovl{F_v}$ of $F_v$ and an embedding $\iota_v: \ovl F\inj \ovl{F_v}$ extending $F\inj F_v$; via $\iota_v$ we regard $\Gal_{F_v}\defining \Gal(\ovl{F_v}/F_v)$ as a decomposition subgroup of $\Gal_F$;
\item
for any map $r: \Gal_F\to X$, we write $r_v\defining r|_{\Gal_{F_v}}$;
\item
let $I_v\subset \Gal_{F_v}$ denote the inertia subgroup;
\item
fix an algebraic closure $\ovl{\kappa_v}$ of $\kappa_v$, and identify $\Gal_{\kappa_v}\defining \Gal(\ovl{\kappa_v}/\kappa_v)$ with $\Gal_{F_v}/I_v$,
\item
fix $\phi_v\in \Gal_{F_v}$ lifting the arithmetic Frobenius in $\Gal_{\kappa_v}$, and
\item
let $W_{F_v}$ denote the Weil group, and denote by $\Art_v: F_v^\times\to W_{F_v}^\ab$ the local reciprocity map (also called the Artin map), normalized so that uniformizers are sent to geometric Frobenius classes.
\item
for every automorphism $\tau\in\Aut(F)$, denote by $v^\tau$ the place defined by $v^\tau(x)\defining v(\tau^{-1}x)$ for every $x\in F$.
\end{itemize}
\item
For each finite set $S$ of rational primes, set $\Pla_F(S)\defining \{v\in\fPla_F: \chr\kappa_v\in S\}$. If $S=\{p\}$ is a singleton, we write simply $\Pla_F(p)\defining \Pla_F(\{p\})=\{v\in\fPla_F: v|p\}$.
\item
Two subsets $\Pla_1, \Pla_2$ of finite places of $F$ are called \tbf{strongly disjoint} if $\{\chr\kappa_v: v\in\Pla_1\}$ is disjoint from $\{\chr\kappa_v: v\in\Pla_2\}$.
\end{itemize}
\end{note}

\begin{note}[Automorphic representations]
Suppose $F$ is a number field. Let $G$ be either the metaplectic double cover $\tld\Sp_{2n}$ of a symplectic group $\Sp_{2n}$ over $F$ or an algebraic group over $F$ whose central connected component is a connected reductive group.
\begin{itemize}
\item
If $G$ is a metaplectic group $\tld\Sp_{2n}$, then an automorphic form $f$ on $G(\Ade_F)$ is called \tbf{genuine} if the nontrivial element in $\ker\big(\tld\Sp_{2n}(\Ade_F)\to\Sp_{2n}(\Ade_F)\big)$ acts by $-1$ on $f$. For simplicity, if $G$ is not metaplectic, then every automorphic form on $G(\Ade_F)$ is called genuine. We denote by $\mcl A_0(G(\Ade_F))$ the space of genuine cusp forms on $G(\Ade_F)$.
\item
Suppose $\pi$ is an automorphic representation of $G(\Ade_F)$.
\begin{itemize}
\item
We write $\pi_v$ for its local component at $v$, for every place $v$ of $F$.
\item
We denote by $\pi^\vee$ the contragredient of $\pi$.
\item
For any automorphism $\tau\in\Aut(F)$, we denote by $\pi^\tau$ the automorphic representation of $G(\Ade_F)$ satisfying $\pi^\tau_v\cong\pi_{v^\tau}$ for every place $v$ of $F$.
\item
For any cuspidal genuine automorphic representation $\pi\subset \mcl A_0(G(\Ade_F))$, we write $\ovl\pi$ for its conjugation.
\end{itemize}
\end{itemize}
\end{note}

\subsection{Acknowledgments}

I wish to thank Rui Chen and Jialiang Zou for many valuable discussions on the theta correspondence. I am grateful to Yifeng Liu for sharing an earlier draft of \cite{LTXZZb}. I also thank Weixiao Lu, Hang Xue, and Murilo C. Zanarella for helpful conversations, and Daniel Disegni, Zhiyu Zhang for comments on an earlier draft. Finally, I am deeply indebted to my Ph.D. advisor, Wei Zhang, for his invaluable guidance and encouragement.

\section{Automorphic representations and Galois representations}\label{oigmeietemfueusmws}

In this section, we introduce the automorphic representations relevant to us and their associated Galois representations.

\subsection{The conjugate self-dual case}

In this subsection, we fix a positive integer $N\in\bb Z_+$, an imaginary quadratic extension $F$ of a totally real number field $F_+$, and a relevant representation $\Pi$ of $\GL_N(\Ade_F)$ (see~Definition~\ref{eirinenriedusnisimaos}).

If $\mbf V$ is a Hermitian space of dimension $N$ over $F$ and $\pi$ is a discrete automorphic representation of $\bx U(\mbf V)(\Ade_{F_+})$, let $\BC(\pi)$ denote the automorphic base change of $\pi$ as defined in~\cite[Definition~3.2.3]{LTXZZ} (see also Definition~\ref{formaml-parameineirmes}), which always exists by \cite[Theorem~2.1]{C-Z24}.

\begin{prop}\label{ieieeinfeieiites}\enskip
\begin{enumerate}
\item
For every finite place $w$ of $F$, $\Pi_w$ is tempered.
\item
Suppose $\Pi$ is cuspidal. For every rational prime $\ell$ and every isomorphism $\iota_\ell: \bb C\xr\sim\ovl{\bb Q_\ell}$, there exists a semisimple continuous homomorphism
\begin{equation*}
\rho_{\Pi, \iota_\ell}: \Gal_F\to \GL_N(\ovl{\bb Q_\ell}),
\end{equation*}
unique up to conjugation, satisfying that
\begin{equation*}
\WD_\ell\paren{\rho_{\Pi, \iota_\ell}|_{\Gal_{F_w}}}^{\bx F\dash\sems}\cong\iota_\ell\rec_N\paren{\Pi_w\otimes\largel{\det}^{\frac{1-N}{2}}},
\end{equation*}
for every finite place $w$ of $F$, where $\rec_N$ is the local Langlands correspondence for $\GL_N(F_w)$. Moreover, $\rho_{\Pi, \iota_\ell}^\cc$ and $\rho_{\Pi, \iota_\ell}^\vee(1-N)$ are conjugate.
\item
Suppose $N$ is odd and $\Pi=\Pi^\flat\boxplus\chi$ is almost cuspidal. For every rational prime $\ell$ and every isomorphism $\iota_\ell: \bb C\xr\sim\ovl{\bb Q_\ell}$, there exist semisimple continuous homomorphisms
\begin{equation*}
\rho_{\Pi^\flat, \iota_\ell}: \Gal_F\to \GL_{N-1}(\ovl{\bb Q_\ell}), \quad \rho_{\chi, \iota_\ell}: \Gal_F\to \GL_1(\ovl{\bb Q_\ell}),
\end{equation*}
unique up to conjugation, satisfying that
\begin{equation*}
\WD_\ell\paren{\rho_{\Pi^\flat, \iota_\ell}|_{\Gal_{F_w}}}^{\bx F\dash\sems}\cong\iota_\ell\rec_N\paren{\Pi^\flat_w\otimes\largel{\det}^{\frac{1-N}{2}}}
\end{equation*}
and
\begin{equation*}
\WD_\ell\paren{\rho_{\chi, \iota_\ell}|_{\Gal_{F_w}}}^{\bx F\dash\sems}=\iota_\ell\paren{\chi_w\otimes\largel{\det}^{\frac{1-N}{2}}}\circ\Art_w^{-1},
\end{equation*}
for every finite place $w$ of $F$, where $\rec_{N-1}$ is the local Langlands correspondence for $\GL_{N-1}(F_w)$. Moreover, $\rho_{\Pi^\flat, \iota_\ell}$ and $\rho_{\Pi^\flat, \iota_\ell}^\vee(1-N)$ are conjugate. Let $\rho_\Pi$ denote the direct sum Galois representation $\rho_{\Pi^\flat, \iota_\ell}\boxplus \rho_{\chi, \iota_\ell}$.
\end{enumerate}
\end{prop}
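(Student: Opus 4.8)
The plan is to derive all three parts from the established theory of Galois representations attached to regular algebraic, conjugate self-dual, cuspidal (RACSDC) automorphic representations of $\GL_n$ over a CM field, together with an elementary reduction to the cuspidal constituents in the almost cuspidal case. First I would record the structural facts I need. By Remark~\ref{ismsiehiemfies}, a relevant $\Pi$ is regular algebraic; and each isobaric summand of $\Pi$ is conjugate self-dual, because $\Pi^\vee\circ\cc\cong\Pi$ forces the multiset of isobaric summands of $\Pi$ to be stable under $\pi\mapsto\pi^\vee\circ\cc$, and since $\Pi^\flat$ and $\chi$ are cuspidal representations of $\GL_{N-1}$ and $\GL_1$ of different ranks, each must be individually fixed, so $(\Pi^\flat)^\vee\circ\cc\cong\Pi^\flat$ and $\chi^\vee\circ\cc\cong\chi$. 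For part (1): a cuspidal relevant $\Pi$ is RACSDC, so $\Pi_w$ is tempered at every finite place $w$ of $F$ by the Ramanujan-type theorem for such representations (Shin; and Caraiani for the remaining finite places), which equivalently follows from the purity of $\rho_{\Pi,\iota_\ell}$ combined with the local-global compatibility of part (2). In the almost cuspidal case, $\Pi_w=\Pi^\flat_w\times\chi_w$ is the parabolic induction of the tempered representations $\Pi^\flat_w$ (cuspidal case) and $\chi_w$ (tempered since $\chi$ is unitary), hence tempered.

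For part (2), fix $\iota_\ell$. The existence of a semisimple continuous $\rho_{\Pi,\iota_\ell}\colon\Gal_F\to\GL_N(\ovl{\bb Q_\ell})$ realizing the displayed Frobenius-semisimplified local-global compatibility with $\rec_N$ at every finite place of $F$, including the places above $\ell$, is the content of the construction of Galois representations for RACSDC automorphic representations together with their full local-global compatibility (Harris--Lan--Taylor--Thorne and Caraiani, building on Harris--Taylor, Shin, Chenevier--Harris, and Barnet-Lamb--Gee--Geraghty--Taylor); I would quote it in the form needed. Uniqueness up to conjugation is the standard consequence of the Brauer--Nesbitt and Chebotarev density theorems: a semisimple $\rho$ is pinned down by its characteristic polynomials of Frobenius at all but finitely many finite places, and these are determined by the Satake parameters, i.e. by $\rec_N(\Pi_w\otimes\largel{\det}^{\frac{1-N}{2}})$. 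For the self-duality statement, the relation $\Pi^\vee\circ\cc\cong\Pi$ together with local-global compatibility shows that $\rho_{\Pi,\iota_\ell}^\cc$ and $\rho_{\Pi,\iota_\ell}^\vee(1-N)$ have the same characteristic polynomials of Frobenius at all but finitely many finite places---here one uses the compatibility of $\rec_N$ with conjugation of places, with passage to the contragredient, and with the normalizing and cyclotomic twists---so the two are conjugate by the uniqueness just established.

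For part (3), $\Pi^\flat$ is a cuspidal, regular algebraic, conjugate self-dual representation of $\GL_{N-1}(\Ade_F)$ (with $N-1$ even), so the RACSDC Galois representation theory quoted for part (2), applied to $\Pi^\flat$ (with the normalizing twist adjusted so that $\rho_{\Pi^\flat,\iota_\ell}$ matches $\rec_{N-1}(\Pi^\flat_w\otimes\largel{\det}^{\frac{1-N}{2}})$ rather than the symmetric normalization), yields $\rho_{\Pi^\flat,\iota_\ell}$ together with its conjugate self-duality; and $\rho_{\chi,\iota_\ell}$ is produced directly from $\chi$ (an algebraic Hecke character, since $\Pi$ is regular algebraic) by global class field theory through $\iota_\ell$, and is conjugate self-dual because $\chi$ is. The displayed compatibilities at the finite places $w$ are then immediate, and one sets $\rho_\Pi=\rho_{\Pi^\flat,\iota_\ell}\boxplus\rho_{\chi,\iota_\ell}$. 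The main difficulty is not located in this deduction, which is formal, but in the quoted input---full local-global compatibility at the ramified finite places, and at the places above $\ell$, for RACSDC representations---which is a deep theorem. The one point requiring genuine care is the normalization bookkeeping: the unitary twist by $\largel{\det}^{\frac{1-N}{2}}$, the interaction of $\rec_N$ with duality, with conjugation of places, and with the cyclotomic character, and the correct placement of $\cc$ in the self-duality assertions (so that part (3) is read off consistently with part (2)).
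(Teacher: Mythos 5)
Your proposal is correct and takes essentially the same route as the paper, which simply cites Chenevier--Harris and Caraiani and treats the almost cuspidal case by reduction to the cuspidal constituents; you have filled in precisely that reduction (each isobaric summand is conjugate self-dual by rank separation, temperedness of the isobaric sum follows from temperedness of the summands via irreducibility of parabolic induction of unitary representations for $\GL_N$, and class field theory handles the character). The only caveat is your parenthetical that part (1) ``equivalently follows from purity of $\rho_{\Pi,\iota_\ell}$ combined with local-global compatibility'': this is true as stated in Caraiani's theorem, but one should be careful not to present it as a free deduction, since in the underlying proofs purity and temperedness are established jointly rather than one being a formal consequence of the other.
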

\begin{proof}
These follow from standard results, see for example \cite[Theorem~3.2.3]{C-H13}, \cite[Theorem~1.1]{Car12} and \cite[Theorem~1.1]{Car14}
\end{proof}
\begin{rem}
If $\chi$ is trivial, then $\rho_{\chi, \iota_\ell}$ equals $\ve_\ell^{(1-N)/2}$.
\end{rem}

\begin{lm}\label{iseinifiemesss}
Let $\ell$ be a rational prime with a fixed isomorphism $\iota_\ell: \bb C\xr\sim\ovl{\bb Q_\ell}$. If $N$ is odd, then $\rho_{\Pi, \iota_\ell}(\frac{N-1}{2})$ is pure of weight 0 at every finite place $w$ of $F$. If $N$ is even, then $\rho_{\Pi, \iota_\ell}(\frac{N}{2})$ is pure of weight $-1$ at every finite place $w$ of $F$.
\end{lm}
\begin{proof}
It suffices to show that $\rho_{\Pi, \iota_\ell}(\frac{N-1}{2})$ (resp. $\rho_{\Pi, \iota_\ell}(\frac{N}{2})$) is pure of some weight when $N$ is odd (resp. even). By \cite[Lemma~1.4(3)]{T-Y07} and Proposition~\ref{ieieeinfeieiites}, this follows from the fact that $\Pi_w$ is tempered for any finite place $w$ of $F$.
\end{proof}

\subsection{The self-dual case}

In this subsection, we fix a positive integer $r$ and a totally real number field $F$. Let $\Pla^\bad$ denote the (finite) set of finite places of $F$ whose underlying rational prime ramifies in $F$.

\begin{defi}\label{oreieigieifiewp}
An isobaric automorphic representation $\Pi$ of $\GL_{2r+1}(\Ade_F)$ is called a (self-dual) \tbf{relevant automorphic representation} if 
\begin{enumerate}
\item
$\Pi$ is self-dual in the sense that its contragredient $\Pi^\vee$ is isomorphic to $\Pi$;
\item
$\Pi$ has nontrivial central character $\chi_{(-1)^{r+1}\mfk d}$, where $\chi_{(-1)^{r+1}\mfk d}$ is the quadratic character of $\Ade_F$ attached to a quadratic extension $F(\sqrt{(-1)^{r+1}\mfk d})$ of $F$, where $\mfk d$ is a totally positive element in $F^\times$;
\item
$\Pi_\infty$ has infinitesimal character $(r-1, r-2, \ldots, 1-r)$; and
\item
$\Pi$ is either cuspidal or an isobaric sum of a cuspidal automorphic representation of $\GL_{2r}(\Ade_F)$ and a nontrivial quadratic character of $F^\times\bsh \Ade_F^\times$.
\end{enumerate}
\end{defi}

We fix a relevant representation $\Pi$ of $\GL_{2r+1}(\Ade_F)$, and denote by $\Pla^\Pi$ the smallest (finite) set of finite places of $F$ containing $\Pla^\bad$ \sut $\Pi_v$ is unramified for every finite place $v$ of $F$ not in $\Pla^\Pi$.

\begin{prop}\label{ddddddeieopws}\enskip
\begin{enumerate}
\item
For every finite place $v$ of $F$, $\Pi_v$ is tempered.
\item
For every rational prime $\ell$ and every isomorphism $\iota_\ell: \bb C\xr\sim\ovl{\bb Q_\ell}$, there exists a semisimple continuous homomorphism
\begin{equation*}
\rho_{\Pi, \iota_\ell}: \Gal_F\to \GL_{2r+1}(\ovl{\bb Q_\ell}),
\end{equation*}
unique up to conjugation, satisfying that
\begin{equation*}
\WD_\ell\paren{\rho_{\Pi, \iota_\ell}|_{\Gal_{F_w}}}^{\bx F\dash\sems}\cong\iota_\ell\rec_{2r+1}\paren{\Pi_w\otimes\largel{\det}^{-r}},
\end{equation*}
for every finite place $v$ of $F$, where $\rec_{2r+1}$ is the local Langlands correspondence for $\GL_{2r+1}(F_v)$. Moreover, $\rho_{\Pi, \iota_\ell}$ and $\rho_{\Pi, \iota_\ell}^\vee(-2r)$ are conjugate.
\end{enumerate}
\end{prop}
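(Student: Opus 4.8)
The plan is to reduce to the case in which $\Pi$ is cuspidal and then invoke the now-standard construction of Galois representations attached to self-dual regular algebraic cuspidal automorphic representations, together with the known local--global compatibility; this parallels the proof of Proposition~\ref{ieieeinfeieiites}, except that over the totally real field $F$ one has to descend the construction from a CM quadratic extension, and the isobaric-sum case requires a small amount of extra bookkeeping.

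\emph{Part (1).} If $\Pi$ is cuspidal it is a self-dual regular algebraic cuspidal representation of $\GL_{2r+1}(\Ade_F)$, necessarily of orthogonal type since $2r+1$ is odd and with the nontrivial quadratic central character of Definition~\ref{oreieigieifiewp}; temperedness of $\Pi_v$ at each finite place is then the Ramanujan bound, which follows from the purity of the Weil--Deligne representations attached in part (2) (purity theorems of Caraiani, building on Shin), or may be quoted directly from the literature. If $\Pi=\Pi^\flat\boxplus\chi$ is almost cuspidal, then $\Pi^\flat$ is a self-dual regular algebraic cuspidal representation of $\GL_{2r}(\Ade_F)$ (of orthogonal type, with the prescribed nontrivial quadratic central character), so $\Pi^\flat_v$ is tempered by the same argument; since $\chi_v$ is a unitary character, hence tempered, and $\Pi_v$ is a constituent of the parabolic induction of $\Pi^\flat_v\otimes\chi_v$ from the Levi $\GL_{2r}\times\GL_1$, temperedness of $\Pi_v$ follows because parabolic induction of tempered representations is tempered.

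\emph{Part (2).} Suppose first that $\Pi$ is cuspidal. One may either argue directly from \cite{C-H13, Car12, Car14}, whose results are available over totally real base fields, or descend from the CM case: choosing a CM quadratic extension $F'/F$ (with the quadratic character of $F'/F$ distinct from $\chi$ in the almost-cuspidal situation), the base change $\Pi'\defining\BC_{F'/F}(\Pi)$ is conjugate self-dual, regular algebraic, and isobaric with conjugate self-dual cuspidal constituents, so Proposition~\ref{ieieeinfeieiites} (applied to each constituent) yields a semisimple continuous $\rho_{\Pi',\iota_\ell}\colon\Gal_{F'}\to\GL_{2r+1}(\ovl{\bb Q_\ell})$, unique up to conjugation, with the expected local--global compatibility and polarization $(\rho_{\Pi',\iota_\ell})^\cc\cong(\rho_{\Pi',\iota_\ell})^\vee(-2r)$. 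As $\Pi'$ is $\Gal(F'/F)$-invariant, $\rho_{\Pi',\iota_\ell}$ is conjugate to its $\Gal(F'/F)$-twist (by uniqueness, via Chebotarev and Brauer--Nesbitt); a standard descent --- taking an irreducible constituent of the induced Galois representation $\operatorname{Ind}_{F'}^{F}\rho_{\Pi',\iota_\ell}$ when $\rho_{\Pi',\iota_\ell}$ is irreducible, and reducing to that case by automorphic induction otherwise --- produces an extension to $\Gal_F$, among whose twists by the quadratic character of $F'/F$ the desired $\rho_{\Pi,\iota_\ell}$ is characterized by local--global compatibility at the finite places of $F$ (using $\Pi'_w=\Pi_v$ for $v$ split in $F'$). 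If $\Pi=\Pi^\flat\boxplus\chi$ is almost cuspidal, apply the cuspidal case to the self-dual representation $\Pi^\flat$ of $\GL_{2r}(\Ade_F)$ and set $\rho_{\Pi,\iota_\ell}\defining\rho_{\Pi^\flat,\iota_\ell}\oplus\big(\iota_\ell(\chi\circ\Art_F^{-1})\cdot\ve_\ell^{-r}\big)$; local--global compatibility holds because $\rec_{2r+1}(\Pi_w\otimes\largel{\det}^{-r})$ decomposes as the direct sum of $\rec_{2r}(\Pi^\flat_w\otimes\largel{\det}^{-r})$ and $\rec_1(\chi_w\otimes\largel{\det}^{-r})$. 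In all cases uniqueness up to conjugation is Chebotarev plus Brauer--Nesbitt at the unramified places, and the polarization $\rho_{\Pi,\iota_\ell}\cong\rho_{\Pi,\iota_\ell}^\vee(-2r)$ then follows from $\Pi^\vee\cong\Pi$ together with uniqueness (for the isobaric sum, directly from the corresponding relation for $\rho_{\Pi^\flat,\iota_\ell}$ and from $\chi^2=1$).

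The substantive input --- construction of $\rho_{\Pi^\flat,\iota_\ell}$ and full local--global compatibility, including at the places above $\ell$, for self-dual regular algebraic cuspidal representations of $\GL_m$ over a totally real field --- is already available in the literature, so the remaining work is the descent from the CM case (and pinning down the correct quadratic twist) together with the elementary bookkeeping for the isobaric-sum case. I expect the descent step to be the one requiring the most care to write out, though it is routine; alternatively, the entire statement can be obtained directly from \cite{C-H13, Car12, Car14} without passing through an intermediate CM field.
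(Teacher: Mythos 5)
The paper's own proof is a bare citation to \cite[Theorem~3.2.3]{C-H13}, \cite[Theorem~1.1]{Car12}, and \cite[Theorem~1.1]{Car14}, exactly as you suggest at the end; your elaboration via quadratic base change to a CM field $F'$ and descent is the argument implicitly encoded in those citations (which are stated over CM fields), so the route is essentially the same. Part (1) is handled correctly, including the reduction of temperedness of $\Pi_v$ to that of $\Pi^\flat_v$ and $\chi_v$ via normalized parabolic induction.

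The one place where your sketch should be made more careful is the almost-cuspidal case. You write $\rho_{\Pi,\iota_\ell}\defining\rho_{\Pi^\flat,\iota_\ell}\oplus\big(\iota_\ell(\chi\circ\Art_F^{-1})\cdot\ve_\ell^{-r}\big)$ with $\rho_{\Pi^\flat,\iota_\ell}$ obtained by ``applying the cuspidal case to $\Pi^\flat$.'' But $\Pi^\flat$ is a representation of $\GL_{2r}(\Ade_F)$ whose archimedean infinitesimal character is $(r,r-1,\dots,1,-1,\dots,-r)$, a set of \emph{integers}; for even $n=2r$ over a totally real field this is $L$-algebraic but not $C$-algebraic, i.e.\ not regular algebraic in Clozel's sense, so the standard RAESDC attachment theorem cannot be applied to $\Pi^\flat$ verbatim. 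Moreover, the $(1-n)/2$-convention for $\GL_{2r}$ would yield the twist $|\det|^{(1-2r)/2}$, which differs from the $|\det|^{-r}$ twist you need by a half-integral power of the cyclotomic character that does not exist as a Galois character. The standard repair is precisely the CM descent you outline: base change $\Pi^\flat$ to $F'$, twist by an algebraic Hecke character $\mu$ of $F'$ of CM type with $\mu\mu^c=|\cdot|_{\Ade_{F'}}$ to make the inf.\ character half-integral and the resulting representation $C$-algebraic conjugate self-dual, apply \cite{C-H13,Car12,Car14}, twist back, and descend. (Alternatively, one can quote the construction via cohomology of Shimura varieties of the relevant classical group, which directly gives integral Hodge--Tate weights and the $|\det|^{-r}$ normalization.) This is exactly the ``small amount of extra bookkeeping'' you allude to, but it is worth making explicit because the direct-sum formula with the wrong normalization on the $\GL_{2r}$ factor would fail local--global compatibility by a $|\cdot|^{1/2}$.
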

\begin{proof}
These follow from standard results, see for example \cite[Theorem~3.2.3]{C-H13}, \cite[Theorem~1.1]{Car12} and \cite[Theorem~1.1]{Car14}
\end{proof}

\begin{defi}\label{ocoeieifihiemfeos}
For each finite place $w$ of $F$ not lying above $\Pla^\Pi_+$, let $\bm\alpha(\Pi_w)$ denote the Satake parameter of $\Pi_w$, and let $\bb Q(\Pi_w)$ denote the subfield of $\bb C$ generated by the coefficients of the polynomial
\begin{equation*}
\prod_{\alpha\in\bm\alpha(\Pi_w)}\paren{T-\alpha}\in\bb C[T].
\end{equation*}

We define the \tbf{coefficient field} (also called the \tbf{Hecke field}) of $\Pi$ to be the compositum of the fields $\bb Q(\Pi_w)$ for all finite places $w$ of $F$ not lying above $\Pla^\Pi_+$, denoted by $\bb Q(\Pi)$. 
\end{defi}

\begin{defi}\label{psosieifmmiess}
We say a number field $E\subset \bb C$ is a \tbf{strong coefficient field} of $\Pi$ if $E$ contains $\bb Q(\Pi)$, and for every finite place $\lbd$ of $E$ with underlying prime $\ell$, there exists a continuous homomorphism
\begin{equation*}
\rho_{\Pi, \lbd}: \Gal_F\to \GL_{2r+1}(E_\lbd)
\end{equation*}
necessarily unique up to conjugation, such that for every isomorphism $\iota_\ell: \bb C\xr\sim \ovl{\bb Q_\ell}$ inducing the place $\lbd$,  $\rho_{\Pi, \lbd}\otimes_{E_\lbd}\ovl{\bb Q_\ell}$ and $\rho_{\Pi, \iota_\ell}$ (see Proposition~\ref{ddddddeieopws}) are conjugate.
\end{defi}
\begin{rem}\label{oeietureps}
By the argument of \cite[Proposition~3.2.5]{C-H13}, a strong coefficient field of $\Pi$ exists.
\end{rem}

\subsection{Galois theoretic arguments}

Let $F_+$ be a subfield of $\bb R$ and $F$ be a quadratic extension of $F_+$ contained in $\bb C$ that is not contained in $\bb R$. We fix an odd rational prime $\ell$ that is unramified in $F$, and consider a finite extension $E_\lbd/\bb Q_\ell$, with ring of integers $\mcl O_\lbd$ and the maximal ideal $\lbd$ of $\mcl O_\lbd$. We freely use the notation of \cite[\S2]{LTXZZ}. For example,

\begin{itemize}
\item
If $\Gamma$ is a topological group and $L$ is a $\bb Z_\ell$-ring that is finite over either $\bb Z_\ell$ or $\bb Q_\ell$, then an $L[\Gamma]$-module $M$ is called \tbf{weakly semisimple} if $M$ is an object of $\Mod(\Gamma, L)$, and the natural map $M^\Gamma\to M_\Gamma$ is an isomorphism.
\item
For each positive integer $N\in\bb Z_+$, we define the group scheme $\mrs G_N\defining (\GL_N\times\GL_1)\rtimes\{1, \cc\}$ with $\cc^2=1$ and $\cc (g, \mu)\cc=(\mu g^{-\top}, \mu)$ for $(g, \mu)\in \GL_N\times\GL_1$. Denote by $\nu: \mrs G_N\to \GL_1$ the homomorphism \sut $\nu|_{\GL_N\times\GL_1}$ is the projection to the $\GL_1$ factor and $\nu(\cc)=-1$.
\item
For an $\mcl O_\lbd$-module $M$ and an element $x\in M$, the \tbf{exponent} of $x$ is defined to be 
\begin{equation*}
\exp_\lbd(x, M)\defining\min\{d\in\bb N\cup\{\infty\}|\lbd^dx=0\}.
\end{equation*}
\item
For a finite place $w$ of $F$ over $\ell$ and an object $\bx R$ in $\Mod(F_w, \mcl O_\lbd)$ that is crystalline with Hodge--Tate weights in $[a, b]$ where $b$ and $-a$ are nonnegative integers and $b-a\le \ell-2$, let $\bx H^1_\ns(F_w, \bx R)$ denote the $\mcl O_\lbd$-submodule of $\bx H^1(F_w, \bx R)$ consisting of elements $s$ represented by an extension
\begin{equation*}
0\to \bx R_0\to \bx R_s\to \bb Z_\ell\to 0
\end{equation*}
in the category $\Mod(F_w, \bb Z_\ell)$ \sut $\bx R_s$ is crystalline. Here $\bx R_0$ is the underlying $\bb Z_\ell[\Gal_{F_w}]$-module of $\bx R$.
\item
For a finite place $w$ of $F$ not over $\ell$ and an object $\bx R$ in $\Mod(F_w, \mcl O_\lbd)$, we set $\bx H^1_\sing(F_w, \bx R)\defining \bx H^1(I_{F_w}, \bx R)^{\Gal_{\kappa_w}}$, and denote by $\bx H^1_\ns(F_w, \bx R)$ the kernel of the canonical map
\begin{equation*}
\partial_w: \bx H^1(F_w, \bx R)\to \bx H^1_\sing(F_w, \bx R).
\end{equation*}
$\bx H^1_\ns(F_w,\bx R)$ is canonically isomorphic to $\bx H^1(\kappa_w, \bx R^{I_{F_w}})$.
\end{itemize}

We recall the following definition of Bloch--Kato Selmer groups from \cite{B-K90}.

\begin{defi}
For an object $\bx R\in \Mod(F, E_\lbd)$, the \tbf{Bloch--Kato Selmer group} $\bx H_f^1(F, \bx R)$ attached to $\bx R$ is defined to be
\begin{equation*}
\bx H^1_f(F, \bx R)\defining \ker\bigg(\bx H^1(F, \bx R)\to\prod_{w\in\fPla_F\setm\Pla_F(p)}\bx H^1_\sing(F_w, \bx R)\times\prod_{w\in\Pla_F(p)}\bx H^1(F_w, \bx R\otimes_{\bb Q_\ell}\bb B_\crys)\bigg)
\end{equation*}
\end{defi}

\begin{defi}
For an object $\bx R\in \Mod(F, \mcl O_L)_{\bx{fr}}$, the (integral) \tbf{Bloch--Kato Selmer group} $\bx H^1_f(F, \bx R)$ attached to $\bx R$ is defined to be the inverse image of $\bx H^1_f(F, \bx R\otimes\bb Q)$ under the natural map
\begin{equation*}
\bx H^1(F, \bx R)\to\bx H^1(F, \bx R\otimes\bb Q).
\end{equation*}
Moreover, for each $m\in \bb Z_+\cup\{\infty\}$, the (mod-$\lbd^m$) \tbf{Bloch--Kato Selmer group} $\bx H^1_{f, \bx R}(F, \ovl{\bx R}^{(m)})$ is defined to be the image of $\bx H^1_f(F, \bx R)$ under the natural map $\bx H^1(F, \bx R)\to \bx H^1(F, \ovl{\bx R}^{(m)})$.
\end{defi}

To end this subsection, we study two ``general image'' conditions for integral Galois modules.

\begin{lm}\label{osiseiheifmiesw}
Let $F'/F_+$ be a totally real finite Galois extension contained in $\bb R$ and a polynomial $\mrs P(T)\in \bb Z[T]$. For each $\alpha\in\{0, 1\}$, we take an object $\bx R_\alpha\in \Mod(F, \mcl O_\lbd)_{\bx{fr}}$ with the associated homomorphism $\rho_\alpha: \Gal_F\to \GL(\bx R_\alpha)$, together with a $(1-\alpha)$-polarization $\Xi: \bx R_\alpha^\cc\xr\sim \bx R_\alpha^\vee(1-\alpha)$. We assume that $\rk\bx R_0=n_0=2r_0$ is even and $\rk\bx R_1=n_1=2r_1+1$ is odd. Set $\bx R=\bx R_0\otimes\bx R_1$ and $\Xi: \bx R\xr\sim\bx R^\vee(1)$. For every positive integer $m\in\bb Z_+$, consider the following statement
\begin{description}
\item[$(\bx{GI}^m_{\bx R_0, \bx R_1, F', \mrs P})$] The image of the restriction of the homomorphism
\begin{equation*}
\paren{\ovl\rho^{(m)}_{\bx R_0,+}, \ovl\rho^{(m)}_{\bx R_1,+}, \ovl\ve_\ell^{(m)}}: \Gal_F\to \mrs G_{n_0}(\mcl O_\lbd/\lbd^m)\times\mrs G_{n_1}(\mcl O_\lbd/\lbd^m)\times(\mcl O_\lbd/\lbd^m)^\times
\end{equation*}
(see \cite[Notation~2.6.1]{LTXZZ}) to $\Gal_{F'}$ contains the element $(\gamma_0, \gamma_1, \xi)$ satisfying
\begin{enumerate}[(a)]
\item
$\mrs P(\xi)$ is invertible in $\mcl O_\lbd/\lbd^m$;
\item
for each $\alpha\in\{0, 1\}$, $\gamma_\alpha$ belongs to $\GL_{n_\alpha}(\mcl O_\lbd/\lbd^m)\times(\mcl O_\lbd/\lbd^m)^\times\times\{\cc\}$ with order coprime to $\ell$;
\item
the kernels of $(h_{\gamma_0}-1)^{n_0}, (h_{\gamma_1}-1)^{n_1}$ and $(h_{\gamma_0}\otimes h_{\gamma_1}-1)$ (see \cite[Notation~2.6.2]{LTXZZ}) are all free over $\mcl O_\lbd/\lbd^m$ of rank $1$;
\item
for each $\alpha\in\{0, 1\}$, $h_{\gamma_\alpha}$ does not have an eigenvalue that is equal to $-\xi$ in $\kappa_\lbd$.
\end{enumerate}
\end{description}
Then $(\bx{GI}^1_{\bx R_0, \bx R_1, F', \mrs P})$ implies $(\bx{GI}^m_{\bx R_0, \bx R_1, F', \mrs P})$ for every $m\in\bb Z_+$.
\end{lm}
\begin{proof}
This is \cite[Lemma~2.7.1]{LTXZZ}.
\end{proof}

\begin{lm}\label{ososoieieiehfimiws}
Let $F'/F_+$ be a totally real finite Galois extension contained in $\bb R$ and a polynomial $\mrs P(T)\in \bb Z[T]$. We take object $\bx R\in \Mod(F, \mcl O_\lbd)_{\bx{fr}}$ with the associated homomorphism $\rho: \Gal_F\to \GL(\bx R)$, together with a $1$-polarization $\Xi: \bx R^\cc\xr\sim \bx R^\vee(1)$. We assume that $\rk\bx R=2r$. For every positive integer $m\in\bb Z_+$, consider the following statement

\begin{description}
\item[$(\bx{GI}^m_{\bx R, F', \mrs P})$] The image of the restriction of the homomorphism
\begin{equation*}
\paren{\ovl\rho^{(m)}_{\bx R,+}, \ovl\ve_\ell^{(m)}}: \Gal_F\to \mrs G_{2r}(\mcl O_\lbd/\lbd^m)\times(\mcl O_\lbd/\lbd^m)^\times
\end{equation*}
(see \cite[Notation~2.6.1]{LTXZZ}) to $\Gal_{F'}$ contains the element $(\gamma, \xi)$ satisfying
\begin{enumerate}[(a)]
\item
$\mrs P(\xi)$ is invertible in $\mcl O_\lbd/\lbd^m$;
\item
$\gamma$ belongs to $\GL_{2r}(\mcl O_\lbd/\lbd^m)\times(\mcl O_\lbd/\lbd^m)^\times\times\{\cc\}$ with order coprime to $\ell$;
\item
the kernels of $(h_\gamma-1)^{2r}$ (see \cite[Notation~2.6.2]{LTXZZ}) is all free over $\mcl O_\lbd/\lbd^m$ of rank $1$;
\item
$h_\gamma$ does not have an eigenvalue that is equal to $-\xi$ in $\kappa_\lbd$.
\end{enumerate}
\end{description}
Then $(\bx{GI}^1_{\bx R, F', \mrs P})$ implies $(\bx{GI}^m_{\bx R, F', \mrs P})$ for every $m\in\bb Z_+$.
\end{lm}
\begin{proof}
The argument of \cite[Lemma~2.7.1]{LTXZZ} goes through.
\end{proof}

\section{The conjugate self-dual Rankin--Selberg case}\label{oeiidimruuemfes}

In this section, we adapt the argument of \cite{LTXZZ} to prove Theorem~\ref{ismsieiemiwmws}. While our setup is not identical to that of \cite{LTXZZ}, we align our notation with theirs whenever possible and record any deviations as they arise. Fix a positive integer $N\ge2$ and set $r=\floor{\frac{N}{2}}$. We work in the following setting.

\begin{setup}\enskip
\begin{itemize}
\item
Let $F_+\subset\bb R$ be a totally real number field and let $F\subset\bb C$ be a quadratic CM extension of $F_+$.
\item
Denote by $\Pla^\infty$ (resp. $\Pla^\infty_+$) the set of Archimedean places of $F$ (resp. $F_+$), with $\tau_\infty$ (resp. $\udl\tau_\infty$) the default one induced by the inclusion $F\subset \bb C$ (resp. $F_+\subset\bb R$).
\item
Let $\Pla^\bad_+$ denote the (finite) set of finite places of $F_+$ whose underlying rational prime ramifies in $F$.
\item
For any place $v$ of $F_+$, we set $\mcl O_{F_v}\defining \mcl O_v\otimes_{\mcl O_{F_+}}\mcl O_F$ and $F_v\defining F\otimes_{F_+}F_v$.
\item
For every place $w$ of $F$ with underlying place $v$ of $F_+$, we identify $\Gal_{F_w}$ with $\Gal_{F_{+, v}}\cap\Gal_F$ (resp. $\cc(\Gal_{F_{+, v}}\cap\Gal_F)\cc$), if the embedding $\iota_v: \ovl F\to \ovl{F_{+, v}}$ induces (resp. does not induce) the place $w$.
\end{itemize}
\end{setup}

\subsection{Unitary Satake parameters and unitary Hecke algebras}\label{oeoeieuturuesw}

We recall the notation of the coefficient field for an automorphic representation of $\GL_N(\Ade_F)$. Let $\Pi$ be an irreducible relevant automorphic representation of $\GL_N(\Ade_F)$ that is cuspidal (resp. almost cuspidal) when $N$ is even (resp. $N$ is odd).

\begin{defi}\label{ifitneramieiiefolacies}
We denote by $\Pla^\Pi_+$ the smallest finite set of (finite) places of $F_+$ containing $\Pla^\bad_+$ so that $\Pi_w$ is unramified for every finite place $w$ of $F$ not lying above $\Pla^\Pi_+$.
\end{defi}

\begin{defi}\label{issienefienisws}
For each ring $L$, we define an \tbf{abstract Satake parameter} in $L$ of rank $N$ to be a multi-set $\bm\alpha$ consisting of $N$ elements in $L$. For two Satake parameters $\bm\alpha, \bm\alpha'$ in $L$ of dimension $n$ and $n'$, respectively, we can form their tensor product $\bm\alpha\otimes\bm\alpha'$ in the natural way, which is an abstract Satake parameter of dimension $nn'$.
\end{defi}

\begin{defi}\label{isioswooiemis}\enskip
\begin{itemize}
\item
For each finite place $w$ of $F$ not lying above $\Pla^\Pi_+$, let $\bm\alpha(\Pi_w)$ denote the Satake parameter of $\Pi_w$, which is an abstract Satake parameter in $\bb C$ of dimension $N$ (see Definition~\ref{issienefienisws}), and let $\bb Q(\Pi_w)$ denote the subfield of $\bb C$ generated by the coefficients of the polynomial
\begin{equation*}
\prod_{\alpha\in\bm\alpha(\Pi_w)}\bigg(T-\alpha\sqrt{\norml{w}}^{N-1}\bigg)\in\bb C[T].
\end{equation*}
\item
We define the \tbf{coefficient field} of $\Pi$ to be the compositum of the fields $\bb Q(\Pi_w)$ for all finite places $w$ of $F$ not lying above $\Pla^\Pi_+$, denoted by $\bb Q(\Pi)$, 
\item
For each finite place $v$ of $F_+$ not in $\Pla^\Pi_+$ and inert in $F$, the abstract Satake parameter $\bm\alpha(\Pi_v)$ at $v$ of rank $N$ is defined in \cite[Notation~3.14]{LTXZZ}, which is an abstract Satake parameter in $\bb C$ of dimension $N$.
\end{itemize}
\end{defi}

\begin{defi}\label{llieienieiheires}
Let $v$ be a finite place of $F_+$ inert in $F$, $L$ be a ring, and $P\in L[T]$ be a monic polynomial.
\begin{itemize}
\item
When $N$ is odd, we say $P$ is \tbf{Tate generic at $v$} if $P'(1)$ is invertible in $L$.
\item
When $N$ is odd, we say $P$ is \tbf{intertwining generic at $v$} if $P(-\norml{v})$ is invertible in $L$.
\item
When $N$ is even, we say $P$ is \tbf{level-raising special at $v$} if $P(\norml{v})=0$ and $P'(\norml{v})$ is invertible in $L$.
\item
When $N$ is even, we say $P$ is \tbf{intertwining generic at $v$} if $P(-1)$ is invertible in $L$.
\end{itemize}
\end{defi}

\begin{lm}
The coefficient field $\bb Q(\Pi)$ is a number field.
\end{lm}
\begin{proof}
We take a standard pair $(\mbf V, \pi)$ (in the sense of Definition~\ref{ssowosieiniemfes}) \sut $\BC(\pi)$ is isomorphic to $\Pi$ and $\pi_v$ is unramified for each finite place $v$ of $F_+$ not in $\Pla^\Pi_+$. Such a standard pair always exists by Arthur's multiplicity formula \cite[Theorem~1.7.1]{KMSW}. For each finite place $v$ of $F_+$, we denote by $\bb Q(\pi_v)$ the fixed field of the group
\begin{equation*}
\{\tau\in\Aut(\bb C): \pi_v\otimes_{\bb C, \tau}\bb C\cong \pi_v\}.
\end{equation*}
If $v$ is not contained in $\Pla^\Pi_+$, then $\bb Q(\BC(\pi_v))$ equals $\bb Q(\pi_v)$ by \cite[Lemma 2.25 and Lemma~4.5]{S-T14}. 
Moreover, the composite field of $\bb Q(\pi_v)$ for all finite places $v$ not in $\Pla^\Pi_+$ is a number field by \cite[Proposition~2.15]{S-T14}. Thus the assertion follows. 
\end{proof}

\begin{defi}\label{sisieifnieeimfsi}
We say a number field $E\subset \bb C$ is a \tbf{strong coefficient field} of $\Pi$ if $E$ contains $\bb Q(\Pi)$, and for every finite place $\lbd$ of $E$ with underlying prime $\ell$, there exists a continuous homomorphism
\begin{equation*}
\rho_{\Pi, \lbd}: \Gal_F\to \GL_N(E_\lbd)
\end{equation*}
necessarily unique up to conjugation, such that for every isomorphism $\iota_\ell: \bb C\xr\sim \ovl{\bb Q_\ell}$ inducing the place $\lbd$,  $\rho_{\Pi, \lbd}\otimes_{E_\lbd}\ovl{\bb Q_\ell}$ and $\rho_{\Pi, \iota_\ell}$ (see Proposition~\ref{ieieeinfeieiites}) are conjugate.
\end{defi}
\begin{rem}
By the argument of \cite[Proposition~3.2.5]{C-H13}, a strong coefficient field of $\Pi$ exists. Moreover, if $N$ is odd and $\Pi$ is almost cuspidal of the form $\Pi=\Pi^\flat\boxplus\uno$ where $\uno$ is the trivial character of $\GL_1(\Ade_F)$, then for every strong coefficient $E$ with a finite place $\lbd$, the homomorphism $\rho_{\Pi, \lbd}$ is of the form
\begin{equation*}
\rho_{\Pi, \lbd}=\rho_{\Pi^\flat, \lbd}\oplus\ve_\ell^{(1-N)/2}.
\end{equation*}
\end{rem}

%

\begin{defi}
For any $\mcl O_{F_+}$-ring $R$, a \tbf{Hermitian space} over $\mcl O_F\otimes_{\mcl O_{F_+}}R$ of dimension $N$ is a projective $\mcl O_F\otimes_{\mcl O_{F_+}}R$-module $V$ of rank $N$ together with a  perfect pairing
\begin{equation*}
(-, -)_V: V\times V\to\mcl O_F\otimes_{\mcl O_{F_+}}R
\end{equation*}
that is $\mcl O_F\otimes_{\mcl O_{F_+}}R$-linear in the first variable and $(\mcl O_F\otimes_{\mcl O_{F_+}}R, \cc\otimes\id)$-linear in the second variable, and satisfies $(x, y)_V=(y, x)_V^\cc$ for any $x, y\in V$. We write $\bx U(V)$ for the group of $R$-linear isometries of $V$, which is a reductive group scheme over $R$.

We denote by $V_\sharp\defining V\oplus Re$ the orthogonal direct sum Hermitian space where we set $\norml{e}=1$. If $f: V\to V'$ is an isometry of Hermitian spaces over $R$, we write $f_\sharp: V_\sharp\to V'_\sharp$ for the induced isometry of Hermitian spaces over $\mcl O_F\otimes_{\mcl O_{F_+}}R$.
\end{defi}

\begin{defi}\label{INIENIEMisimw}\enskip
\begin{enumerate}
\item
For a finite place $v$ of $F_+$ not in $\Pla_+$, let $\Lbd_{N, v}$ denote the unique up to isomorphism Hermitian space over $\mcl O_{F_v}$ of dimension $N$, and $\bx U_{N, v}$ its unitary group over $\mcl O_{F_{+,v}}$. We define spherical Hecke algebra
\begin{equation*}
\bb T_{N, v}\defining \bb Z[\bx U_{N, v}(\mcl O_v)\bsh \bx U_{N, v}(F_{+, v})/\bx U_{N, v}(\mcl O_v)].
\end{equation*}
\item
For a finite set $\Pla_+$ of finite places of $F_+$ containing $\Pla^\bad_+$, we define the abstract unitary Hecke algebra away from $\Pla_+$ to be the restricted tensor product ring
\begin{equation*}
\bb T_N^{\Pla_+}\defining \bigotimes_v{}'\bb T_{N, v}
\end{equation*}
over all finite places of $F_+$ not in $\Pla_+$, \wrt the unit elements.
\item
The Hecke character $\phi_\Pi: \bb T_N^{\Pla^\Pi_+}\to\bb C$ attached to $\Pi$ is defined in \cite[Construction~3.1.10]{LTXZZ}. By \cite[Lemma 2.2.3]{B-G14}, $\phi_\Pi$ takes value in $\bb Q(\Pi)$. Furthermore, $\phi_\Pi$ takes values in $\mcl O_{\bb Q(\Pi)}$. In fact, if we take a standard pair $(\mbf V, \pi)$ (in the sense of Definition~\ref{ssowosieiniemfes}) \sut $\BC(\pi)$ is isomorphic to $\Pi$ and $\pi_v$ is unramified for each finite place $v$ of $F_+$ not in $\Pla^\Pi_+$. Such a standard pair always exists by Arthur's multiplicity formula \cite[Theorem~1.7.1]{KMSW}. Then $\phi_\Pi$ is identical to the spherical Hecke character of $\pi$, which is easily seen to be valued in algebraic integers. 
\end{enumerate}
\end{defi}

\subsection{Unitary Shimura varieties}

Let $\mbf V$ be a Hermitian space over $F$ of dimension $N$.

\begin{defi}\label{ssowosieiniemfes}\enskip
\begin{enumerate}
\item
Recall from \cite[Definition~3.1.11]{LTXZZ} that, for any finite set $\Pla_+$ of finite places of $F_+$ we have
\begin{enumerate}
\item
a category $\mfk K(\mbf V)^{\Pla_+}$ whose objects are neat compact open subgroups of $\bx U(\mbf V)(\Ade_{F_+}^{\infty, \Pla})$ and whose morphisms are double cosets. There is also a subcategory $\mfk K(\mbf V)^{\Pla_+}$ of $\mfk K(\mbf V)^{\Pla_+}$ consisting of the same objects but allowing only identity double cosets; and
\item
a category $\mfk K(\mbf V)_\sp^{\Pla_+}$ consisting of pairs $(\mdc K_\flat, \mdc K_\sharp)$, where $\mdc K_\flat$ (resp. $\mdc K_\sharp$) is an object of $\mfk K(\mbf V)^{\Pla_+}$ (resp. $\mfk K(\mbf V_\sharp)^{\Pla_+}$) \sut $\mdc K_\flat$ is contained in $\mdc K_\sharp$. There are the obvious functors 
\begin{equation*}
(-)_\flat: \mfk K(\mbf V)_\sp^{\Pla_+}\to \mfk K(\mbf V)^{\Pla_+}, \quad (-)_\sharp: \mfk K(\mbf V)_\sp^{\Pla_+}\to\mfk K(\mbf V_\sharp)^{\Pla_+}.
\end{equation*}
When $\Pla_+$ is the empty set, we suppress it from all the notations above.
\end{enumerate}
\item
We say $\mbf V$ is standard definite if it has signature $(N, 0)$ at each real place of $F_+$. We say $\mbf V$ is standard indefinite if it has signature $(N-1, 1)$ at $\udl\tau_\infty$ and $(N, 0)$ at other real places of $F_+$.
\item
For a discrete automorphic representation $\pi$ of $\bx U(\mbf V)(\Ade_{F_+})$, we say $(\mbf V, \pi)$ is a standard pair if one of the following holds:
\begin{enumerate}
\item
$\mbf V$ is standard definite, and $\pi^\infty$ appears in
\begin{equation*}
\lim_{\mdc K\in\mfk K'(\mbf V)}\bb C[\bSh(\mbf V, \mdc K)].
\end{equation*}
\item
$\mbf V$ is standard indefinite, and $\pi^\infty$ appears in
\begin{equation*}
\lim_{\mdc K\in\mfk K'(\mbf V)}\iota_\ell^{-1}\etH^i(\bSh(\mbf V, \mdc K)_{\ovl F}, \ovl{\bb Q_\ell}),
\end{equation*}
for some rational prime $\ell$ with isomorphism $\iota_\ell:\bb C\xr\sim \ovl{\bb Q_\ell}$ and some $i\in\bb N$.
\end{enumerate}
\end{enumerate}
\end{defi}

\begin{prop}\label{issiieifimeifes}
Let $\pi$ be a discrete automorphic representation of $\bx U(\mbf V)(\Ade_{F_+})$ \sut $(\mbf V, \pi)$ is a standard pair. For every rational prime $\ell$ and every isomorphism $\iota_\ell: \bb C\xr\sim \ovl{\bb Q_\ell}$, there exists a semisimple continuous homomorphism
\begin{equation*}
\rho_{\BC(\pi), \iota_\ell}: \Gal_F\to \GL_N(\ovl{\bb Q_\ell}),
\end{equation*}
unique up to conjugation, satisfying that
\begin{equation}
\WD_\ell\paren{\rho_{\BC(\pi), \iota_\ell}|_{\Gal_{F_w}}}^{\bx F\dash\sems}\cong\iota_\ell\rec_N\paren{\BC(\pi)_w\otimes\largel{\det}^{\frac{1-N}{2}}},
\end{equation}
for every finite place $w$ of $F$, where $\rec_N$ is the local Langlands correspondence for $\GL_N(F_w)$. Moreover, $\rho_{\Pi, \iota_\ell}^\cc$ and $\rho_{\Pi, \iota_\ell}^\vee(1-N)$ are conjugate.
\end{prop}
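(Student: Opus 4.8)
The plan is to construct $\rho_{\BC(\pi),\iota_\ell}$ as a direct sum of the Galois representations attached to the cuspidal constituents of $\BC(\pi)$, thereby reducing the statement to the known cuspidal (regular algebraic, essentially conjugate self-dual) case. First I would record that $\BC(\pi)$ exists as an isobaric automorphic representation of $\GL_N(\Ade_F)$ by \cite[Theorem~2.1]{C-Z24}, and that, being the standard base change of an automorphic representation of a unitary group, it is conjugate self-dual. Next I would exploit the standard-pair hypothesis to control the Archimedean components: whether $\mbf V$ is standard definite or standard indefinite, the requirement that $\pi^\infty$ occur in the space of algebraic modular forms (resp.\ in the \'etale cohomology $\etH^i(\bSh(\mbf V,\mdc K)_{\ovl F},\ovl{\bb Q_\ell})$) for the trivial coefficient system forces $\pi_\infty$ to have trivial infinitesimal character, so that $\BC(\pi)_\infty$ has the regular infinitesimal character $\paren{\tfrac{N-1}{2},\tfrac{N-3}{2},\ldots,\tfrac{1-N}{2}}$ at every Archimedean place. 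Writing $\BC(\pi)=\Pi_1\boxplus\cdots\boxplus\Pi_k$ as an isobaric sum of cuspidal representations $\Pi_i$ of $\GL_{n_i}(\Ade_F)$, the endoscopic classification of the discrete spectrum of $\bx U(\mbf V)$ (see \cite{KMSW}) together with this regularity forces the $\Pi_i$ to be pairwise non-isomorphic, rules out any Speh-type (non-tempered) constituents, and guarantees that each $\Pi_i$ is conjugate self-dual with $\Pi_i\otimes\largel{\det}^{(1-N)/2}$ regular $L$-algebraic and essentially conjugate self-dual.

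The main construction step is then to attach, to each $\Pi_i$, a semisimple continuous $\rho_{\Pi_i,\iota_\ell}: \Gal_F\to\GL_{n_i}(\ovl{\bb Q_\ell})$, unique up to conjugation, satisfying $\WD_\ell\paren{\rho_{\Pi_i,\iota_\ell}|_{\Gal_{F_w}}}^{\bx F\dash\sems}\cong\iota_\ell\rec_{n_i}\paren{\Pi_{i,w}\otimes\largel{\det}^{(1-N)/2}}$ for every finite place $w$ of $F$, with $\rho_{\Pi_i,\iota_\ell}^\cc$ conjugate to $\rho_{\Pi_i,\iota_\ell}^\vee(1-N)$. This is precisely the content of the results already invoked for Proposition~\ref{ieieeinfeieiites}, applied to the regular algebraic essentially conjugate self-dual cuspidal representation $\Pi_i\otimes\largel{\det}^{(1-N)/2}$: see \cite[Theorem~3.2.3]{C-H13}, \cite[Theorem~1.1]{Car12}, and \cite[Theorem~1.1]{Car14}; in the definite case, where $\pi$ is not realized directly in the cohomology of a Shimura variety, this is the point at which one genuinely uses base change to $\GL_N$ together with the interpolation argument of Chenevier--Harris. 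I would then set $\rho_{\BC(\pi),\iota_\ell}\defining\bigoplus_{i=1}^k\rho_{\Pi_i,\iota_\ell}$. Local--global compatibility at every finite place $w$ is immediate from that of each $\rho_{\Pi_i,\iota_\ell}$ and the compatibility of the local Langlands correspondence with parabolic induction (isobaric sums go to direct sums of Weil--Deligne parameters), since $\rec_N\paren{\BC(\pi)_w\otimes\largel{\det}^{(1-N)/2}}=\bigoplus_i\rec_{n_i}\paren{\Pi_{i,w}\otimes\largel{\det}^{(1-N)/2}}$; the polarization $\rho_{\BC(\pi),\iota_\ell}^\cc\cong\rho_{\BC(\pi),\iota_\ell}^\vee(1-N)$ follows by summing the polarizations of the $\rho_{\Pi_i,\iota_\ell}$. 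For uniqueness, any semisimple $\rho$ with the stated local--global compatibility has prescribed characteristic polynomial of Frobenius at every finite place of $F$ at which $\BC(\pi)$ is unramified, hence is determined up to conjugation by the Chebotarev density theorem and Brauer--Nesbitt.

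I expect the only genuinely substantive input to be the existence and full local--global compatibility (at all finite places, including the ramified ones and those above $\ell$) of the Galois representations attached to the cuspidal pieces --- this is the deep work cited above. Everything else is bookkeeping; the one point requiring care is to use the \emph{uniform} normalization $\largel{\det}^{(1-N)/2}$ for each cuspidal constituent of $\BC(\pi)$ rather than the constituent-dependent $\largel{\det}^{(1-n_i)/2}$, which is also why one works throughout with the essentially conjugate self-dual twists $\Pi_i\otimes\largel{\det}^{(1-N)/2}$: the $\Pi_i$ themselves need not be $L$-algebraic when $n_i\not\equiv N\pmod 2$.
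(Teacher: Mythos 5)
Your proposal correctly identifies the paper's strategy: base change $\pi$, decompose $\BC(\pi)$ isobarically, attach Galois representations to the constituents via Chenevier--Harris and Caraiani, take a direct sum, and deduce uniqueness from Chebotarev plus Brauer--Nesbitt. The paper's own proof is exactly this, compressed into one sentence citing Arthur's multiplicity formula (in the form of \cite[Theorem~2.6]{C-Z24}) together with \cite[Theorem~3.2.3]{C-H13}, \cite[Theorem~1.1]{Car12}, \cite[Theorem~1.1]{Car14}; you are unpacking the same argument.

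There is, however, one incorrect claim: you assert that regularity of the infinitesimal character rules out Speh-type (non-tempered) constituents of the Arthur parameter. This is false. The trivial representation of $\bx U(\mbf V)(\Ade_{F_+})$, for $\mbf V$ standard definite, occurs as constant functions in $\bb C[\bSh(\mbf V,\mdc K)]$ and so gives a standard pair; its Arthur parameter is $\uno\boxtimes[N]$, a single Speh block, and its base change $\boxplus_{j=0}^{N-1}|\det|^{(N-1)/2-j}$ has regular infinitesimal character $\paren{\tfrac{N-1}{2},\ldots,\tfrac{1-N}{2}}$. More generally, any $\Pi'\boxtimes[d]$ with $\Pi'$ cohomological produces a regular infinitesimal character after base change, because the integer shifts $\tfrac{d-1}{2},\tfrac{d-3}{2},\ldots,\tfrac{1-d}{2}$ interleave with the (integrally spaced) infinitesimal character of $\Pi'$. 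Thus Proposition~\ref{issiieifimeifes} must cover non-tempered $\pi$, and your decomposition $\BC(\pi)=\boxplus_i\Pi_i$ then contains constituents of the form $\Pi'\otimes|\det|^s$ with $s\ne 0$; these are not conjugate self-dual, so \cite[Theorem~3.2.3]{C-H13} does not apply to them directly. The fix is routine: for a Speh block $\Pi'\boxtimes[d]$ with $\Pi'$ conjugate self-dual, attach $\rho_{\Pi'}$ to $\Pi'$ and set the contribution of that block to be $\bigoplus_{j=0}^{d-1}\rho_{\Pi'}\otimes\ve_\ell^{m_j}$ for appropriate integers $m_j$ dictated by the desired local--global compatibility, noting that the parity constraints built into the sign of $\Pi'\boxtimes[d]$ (conjugate-orthogonal vs.\ conjugate-symplectic, tied to the parity of $N$) guarantee these exponents are integral. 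With this correction, the construction, the polarization, and the uniqueness argument all go through as you wrote them.
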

\begin{proof}
This follows from Arthur's multiplicity formula \cite[Theorem~2.6]{C-Z24} and standard results, see for example \cite[Theorem~3.2.3]{C-H13}, \cite[Theorem~1.1]{Car12} and \cite[Theorem~1.1]{Car12}. 
\end{proof}

When $\mbf V$ is standard definite (resp. standard indefinite), there are functors $\bSh(\mbf V, -): \mfk K(\mbf V)\to \Set$ (resp. $\bSh(\mbf V, -): \mfk K(\mbf V)\to \Sch/F$) of Shimura sets (Shimura varieties) attached to $\Res_{F_+/\bb Q}\bx U(\mbf V)$, as defined in~\cite[\S3.2]{LTXZZ}.

\begin{hypothesis}\label{iifififmieiemss}
Suppose $\mbf V$ is a standard indefinite Hermitian space over $F$ of dimension $N$, and $\pi$ is a discrete automorphic representation of $\bx U(\mbf V)(\Ade_{F_+})$ \sut the functorial lift $\BC(\pi)$ is a relevant automorphic representation of $\GL_N(\Ade_F)$ (see Definition~\ref{formaml-parameineirmes}). For every isomorphism $\iota_\ell: \bb C\xr\sim \ovl{\bb Q_\ell}$, we consider the $\ovl{\bb Q_\ell}[\Gal_F]$-module
\begin{equation*}
W^{N-1}(\pi^\infty)\defining \Hom_{\ovl{\bb Q_\ell}[\bx U(\mbf V)(\Ade_{F_+}^\infty)]}\bigg(\iota_\ell\pi^\infty, \ilim_{\mfk K'(\mbf V)}\etH^{N-1}\paren{\bSh(\mbf V, \mdc K)_{\ovl F}, \ovl{\bb Q_\ell}}\bigg).
\end{equation*}
\begin{enumerate}
\item
If $\rho_{\BC(\pi), \iota_\ell}$ is irreducible, then $W^{N-1}(\pi^\infty)$ is isomorphic to $\rho_{\BC(\pi), \iota_\ell}^\cc$.
\item
If $N$ is odd, $\BC(\pi)=\Pi^\flat\boxplus\chi$ is almost cuspidal, and $\rho_{\Pi^\flat, \iota_\ell}$ is irreducible, then $W^{N-1}(\pi^\infty)$ is isomorphic to either $\rho_{\Pi^\flat, \iota_\ell}^\cc$ or $\rho_{\chi, \iota_\ell}^\cc$. Moreover, if there is a finite place $w$ of $F$ over a place of $F_+$ inert in $F$ \sut $\Pi^\flat_w$ is square-integrable, then there exists a unique irreducible admissible representation $\pi_1^\infty$ of $\bx U(\mbf V)(\Ade_F^\infty)$ \sut $\pi_1^\infty$ is isomorphic to $\pi^\infty$ away from $w$, and $W^{N-1}(\pi^\infty)\oplus W^{N-1}(\pi_1^\infty)$ is conjugate to $\rho_{\Pi, \iota_\ell}^\cc$ as $\Gal_F$-representations.
\end{enumerate}
\end{hypothesis}

\begin{prop}\label{oososiemiehifws}
\textup{Hypothesis~\ref{iifififmieiemss}} holds if $N\le 3$ or $F_+\ne \bb Q$.
\end{prop}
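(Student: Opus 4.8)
The plan is to deduce Hypothesis~\ref{iifififmieiemss} from the known description of the $\Gal_F$-action on the cohomology of unitary Shimura varieties. Part~(1) already forces $\BC(\pi)$ to be cuspidal (since $\rho_{\BC(\pi),\iota_\ell}$ is assumed irreducible), and is the analogue of the hypothesis verified in \cite{LTXZZ}; the genuinely new content is the almost cuspidal case~(2), on which I focus. The first reduction is to the middle degree. If $F_+\ne\bb Q$, there is a real place of $F_+$ different from $\udl\tau_\infty$, at which $\mbf V$ has signature $(N,0)$, hence is anisotropic; by the Hasse principle for isotropy of Hermitian forms over $F/F_+$, $\mbf V$ is then anisotropic over $F_+$, so each $\bSh(\mbf V,\mdc K)$ is projective. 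If $N\le 3$, one works instead with a smooth compactification, using that on the $\iota_\ell\pi^\infty$-isotypic part its intersection cohomology agrees with interior cohomology (valid because $\pi$ is tempered at $\infty$). In either case the Arthur parameter $\Psi$ of $\BC(\pi)$ is generic --- it is $\Pi$ if cuspidal and $\Pi^\flat\boxplus\chi$ otherwise --- so every member of the Archimedean $A$-packet is a tempered cohomological representation of $\bx U(N-1,1)$, whose $(\mathfrak g,K)$-cohomology is concentrated in degree $N-1$. By Matsushima's formula, $W^i(\pi^\infty)=0$ for $i\ne N-1$, and $W^{N-1}(\pi^\infty)$ is assembled from the $\pi^\infty$-isotypic part of the discrete spectrum.

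Next I would enumerate that part via Arthur's multiplicity formula for unitary groups, available here by \cite{KMSW} and \cite{C-Z24}. Since $\Psi$ is generic, the local $A$-packets are $L$-packets; when $\Psi=\Pi$ is irreducible the relevant packet is essentially a point and the multiplicity is one, while when $\Psi=\Pi^\flat\boxplus\chi$ the global component group is $\bb Z/2$, and the multiplicity formula retains precisely the global members $\otimes_v\pi_v$ whose local signs pair to the canonical sign character $\eps_\Psi$ of $\Psi$; at a place $w$ where $\Pi^\flat_w$ is square-integrable the local $L$-packet has exactly two members, so changing the component there flips the product of local signs, and hence switches between $\pi^\infty$ and its twin $\pi_1^\infty$ of part~(2). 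I would then invoke the description of the Galois action: in the proper case, the Kottwitz--Morel--Shin identity refined to the $\iota_\ell\pi^\infty$-isotypic part (with local--global compatibility at ramified places after Caraiani) shows that, for a generic parameter, $W^{N-1}(\pi^\infty)$ is $\rho_{\BC(\pi),\iota_\ell}^\cc$ when $\Psi$ is irreducible, and a single Galois block --- either $\rho_{\Pi^\flat,\iota_\ell}^\cc$ or $\rho_{\chi,\iota_\ell}^\cc$, with multiplicity one --- when $\Psi=\Pi^\flat\boxplus\chi$, the block being determined by the character of $\bb Z/2$ attached to $\pi$; since $\pi^\infty$ and $\pi_1^\infty$ carry the two characters, $W^{N-1}(\pi^\infty)\oplus W^{N-1}(\pi_1^\infty)$ is conjugate to $\rho_{\Pi^\flat,\iota_\ell}^\cc\oplus\rho_{\chi,\iota_\ell}^\cc=\rho_{\Pi,\iota_\ell}^\cc$. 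For $N=2$ this is the Eichler--Shimura--Carayol theorem for Shimura curves, and for $N=3$ it is the description of the cohomology of Picard modular surfaces coming from the stabilized trace formula for $\bx U(2,1)$ (Rogawski, Blasius--Rogawski), where the $\rho_\chi$-block is exactly the endoscopic contribution.

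The main difficulty is the last step in the almost cuspidal case: reconciling the Arthur bookkeeping at the place $w$ --- the component-group character, the sign $\eps_\Psi$, and the normalizations of the transfer factors and $\eps$-factors entering the Kottwitz--Shin cohomology formula --- with the decomposition of $W^{N-1}(\pi^\infty)$ into Galois blocks, so as to guarantee that $\pi^\infty$ and $\pi_1^\infty$ pick out complementary blocks (rather than the same block, or blocks with extra multiplicity). For $N\le 3$ this can be done by hand using the explicit stabilization; for $F_+\ne\bb Q$ one runs the general argument while tracking the local data at $w$ carefully, and one also checks that the possible failure of $\rho_{\BC(\pi),\iota_\ell}$ to be irreducible in case~(2) introduces no summands of $W^{N-1}(\pi^\infty)$ beyond the two expected blocks.
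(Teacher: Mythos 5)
The paper's ``proof'' of this proposition is a pure citation: Liu's Theorem~D.6 in [Liu21] for $N=2$; Rogawski [Rog92] for $N=3$ and $F_+=\bb Q$; and, for $N\ge 3$ with $F_+\ne\bb Q$, a \emph{forthcoming} sequel to [KSZ21] that will rely on the full endoscopic classification for unitary groups established by Chen--Zou. Your sketch reconstructs the right ingredients underlying those sources: reduction to middle degree via properness or $L^2$-/intersection cohomology plus Matsushima and Borel--Casselman, Arthur's multiplicity formula for generic parameters, a Kottwitz--Morel--Shin--type description of the $\Gal_F$-action on the $\iota_\ell\pi^\infty$-isotypic part, and the observation that at a place $w$ where $\Pi^\flat_w$ is square-integrable the local $L$-packet has exactly two members, so flipping there exchanges the two candidates $\pi^\infty$ and $\pi_1^\infty$. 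This is the same strategy the cited works carry out.

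The gap is precisely the one you flag yourself: you do not actually verify that the component-group character singled out by Arthur's multiplicity formula matches, under the Kottwitz--Shin normalization of transfer and $\varepsilon$-factors, the Galois block ($\rho_{\Pi^\flat,\iota_\ell}^\cc$ versus $\rho_{\chi,\iota_\ell}^\cc$) that appears in the cohomology, nor that the multiplicities come out to exactly one on each side, nor (for general $N$) that no further endoscopic contributions enter. That bookkeeping is the entire technical content of Hypothesis~\ref{iifififmieiemss}~(2), and it is not a routine deduction: for $N\ge4$, $F_+\ne\bb Q$ it is the subject of an unpublished sequel to [KSZ21], so even the paper does not give a self-contained argument here but rather points to the literature. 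For $N=2$ and $N=3$ you could make your outline rigorous by importing the explicit stabilization from Carayol/Liu (resp.\ Rogawski, Blasius--Rogawski), but as written the proposal is a roadmap with its hardest step left open, not a proof; it should instead simply invoke [Liu21], [Rog92], and the [KSZ21] sequel (granting Chen--Zou's endoscopic classification), which is what the paper does.
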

\begin{proof}
The case for $N=2$ is established by Liu \cite[Theorem~D.6]{Liu21}. The case for $N=3$ and $F_+=\bb Q$ follows from the main result of \cite{Rog92}. The case for $N\ge 3$ when $F_+\ne \bb Q$ will be established in a sequel to \cite{KSZ21}, assuming the full endoscopic classification for unitary groups. Note that the full endoscopic classification for such unitary groups is established by Chen-Zou \cite[Corollary~7.6]{C-Z24}.
\end{proof}

We recall the following definition of cohomological Hecke characters from \cite{LTXZZ}.

\begin{defi}\label{cohomomoegienifmos}
Let $N\in\bb Z_+$ be a positive integer, and $\Pla_+$ a finite set of finite places of $F_+$ containing $\Pla^\bad_+$. Consider a homomorphism $\phi: \bb T_N^{\Pla_+}\to \kappa$ with $\kappa$ a field. We say $\phi$ is \tbf{cohomologically generic} if
\begin{equation*}
\etH^i(\bSh(\mbf V, \mdc K)_{\ovl F}, \kappa)_{\bb T_N^{\Pla_+'}\cap\ker\phi}=0
\end{equation*}
holds for any tuple $(\Pla_+', i, \mbf V, \mdc K)$ in which
\begin{itemize}
\item
$\Pla_+'$ is a finite set of finite places of $F_+$ containing $\Pla_+$,
\item
$i$ is a nonnegative integer distinct from $N-1$,
\item
$\mbf V$ is a standard indefinite Hermitian space over $F$ of dimension $N$, and
\item
$\mdc K$ is an object of $\mfk K(\mbf V)$ of the form $\mdc K=\mdc K_{\Pla_+'}\times\prod_{v\in\fPla_{F_+}\setm\Pla_+'}\bx U(\Lbd)(\mcl O_v)$ for some self-dual $\prod_{v\in\fPla_{F_+}\setm\Pla_+'}\mcl O_{F_v}$-lattice $\Lbd$ in $\mbf V\otimes_{F_+}\Ade_{F_+}^{\fPla_{F_+}\setm\Pla_+'}$.
\end{itemize}
\end{defi}

\subsection{Generalized CM type and reflexive closure}

We denote by $\bb N[\Pla^\infty_F]$ the commutative monoid freely generated by the set $\Pla^\infty_F$, which admits an action of $\Aut(\bb C)$ via the set $\Pla^\infty_F$.

\begin{defi}
A \tbf{generalized CM type} of rank $N$ is an element
\begin{equation*}
\Psi=\sum_{\tau\in\Pla_\infty}r_\tau\tau\in\bb N[\Pla^\infty_F]
\end{equation*}
satisfying $r_\tau+r_{\tau^\cc}=N$ for every $\tau\in\Pla^\infty_F$. For such $\Psi$, we define its reflex field $F_\Psi\subset\bb C$ to be the fixed subfield of the stabilizer of $\Psi$ in $\Aut(\bb C)$. A \tbf{CM type} is simply a generalized CM type of rank 1.
\end{defi}

\begin{defi}
We define the \tbf{reflexive closure} of $F$, denoted by $F_{\bx{rflx}}$, to be the subfield of $\bb C$ generated by $F$ and the intersections of $F_\Phi$ for all CM types $\Phi$ of $F$. Set $F_{\bx{rflx}, +}\defining (F_{\bx{rflx}})^{\cc=1}$.
\end{defi}

\begin{defi}\label{issieniefeifmies}
We say a finite place $\mfk p$ of $F_+$ is \tbf{good inert} if it is inert in $F$ and splits completely in $F_{\bx{rflx}, +}$. By abuse of notation, we also denote by $\mfk p$ the induced finite place of $F$. We say a good inert place $\mfk p$ is \tbf{very good inert} if the following are satisfied:
\begin{enumerate}
\item
the underlying rational prime $p$ of $\mfk p$ is odd and unramified in $F$;
\item
$\mfk p$ is of degree one over $\bb Q$, that is, $F_{+, \mfk p}=\bb Q_p$.
\end{enumerate}
\end{defi}
\begin{rem}
A finite place $\mfk p$ of $F_+$ is very good inert in our sense if it is very special inert in the sense of \cite[Definition~3.3.4]{LTXZZ}.
\end{rem}

\subsection{Preparation for Tate classes and arithmetic level-raising}\label{ptiieirmeiehfieims}

We now work in the following setting.

\begin{setup}\label{issientieitiemfes}\enskip
\begin{itemize}
\item
Let $\Pi$ be a relevant representation of $\GL_N(\Ade_F)$ that is cuspidal (resp. almost cuspidal of the form $\Pi=\Pi^\flat\boxplus\uno$) if $N$ is even (resp. odd). Here $\uno$ is the trivial character of $\GL_1(\Ade_F)$.
\item
Let $E\subset \bb C$ be a strong coefficient field of $\Pi$ (see Definition~\ref{sisieifnieeimfsi}).
\item
Let $\Pla^{\bx{min}}_+$ be a finite set of finite places of $F_+$ that contains $\Pla^\Pi_+$ (see Definition~\ref{ifitneramieiiefolacies}).
\item
Let $\lbd$ be a finite place of $E$ whose underlying prime $\ell$ satisfies $\Pla^{\bx{min}}_+\cap\Pla_{F_+}(\ell)=\vn$. We fix an isomorphism $\iota_\ell: \bb C\xr\sim\ovl{\bb Q_\ell}$ that induces the place $\lbd$.
\item
Let $\Pla^\lr_+$ be a finite set of finite places of $F_+$ that are inert in $F$, which is strongly disjoint from $\Pla^{\bx{min}}_+$ and satisfies $\ell\nmid\norml{v}(\norml{v}^2-1)$ for any $v\in\Pla^\lr_+$. 
\item
Let $\Pla_+$ be a finite set of finite places of $F_+$ containing $\Pla^{\bx{min}}_+$ and $\Pla^\lr_+$.
\item
Let $\phi_\Pi: \bb T_N^{\Pla_+}\to \mcl O_E$ be the Hecke character attached to $\Pi$ (see Definition~\ref{INIENIEMisimw}).

\item
Let $\rho_{\Pi, \lbd}: \Gal_F\to \GL_N(E_\lbd)$ be the continuous homomorphism attached to $\Pi$ (see Definition~\ref{sisieifnieeimfsi}). In particular, $\rho_{\Pi, \lbd}^\cc$ and $\rho_{\Pi, \lbd}^\vee(1-N)$ are conjugate.
\item
Let $\mrs V_N^\circ=(\mbf V_N^\circ, \Lbd_N^\circ, \mdc K_N^\circ)$ be a triple, where\footnote{Compared with \cite[\S6.1]{LTXZZ}, we omit the assumption that $(\mdc K_N^\circ)_v$ is transferable when $N$ is even, which is possible by \cite[Remark~8.2]{LTXZZb}}
\begin{enumerate}
\item
$\mbf V_N^\circ$ is a standard definite Hermitian space over $F$ of dimension $N$ (see Definition~\ref{ssowosieiniemfes}) such that $(\mbf V_N^\circ)_v$ is not split for $v\in\Pla^\lr_+$ when $N$ is even;
\item
$\Lbd_N^\circ$ is a self-dual $\prod_{v\in\fPla_{F_+}\setm\Pla^{\bx{min}}_+}\mcl O_{F_v}$-lattice in $\mbf V^\circ_N\otimes_{F_+}\Ade_{F_+}^{\infty, \Pla^{\bx{\min}}}$;
\item
$\mdc K_N^\circ$ is an object in $\mfk K(\mbf V_N^\circ)$ of the form
\begin{equation*}
\mdc K_N^\circ=\prod_{v\in\Pla_+}(\mdc K_N^\circ)_v\times\prod_{v\in\fPla_{F_+}\setm\Pla_+}\bx U(\Lbd_N^\circ)(\mcl O_v),
\end{equation*}
satisfying that when $N$ is even, $(\mdc K_N^\circ)_v$ is a hyperspecial maximal subgroup of $\bx U(\mbf V_N^\circ)(F_v)$ for $v\in\Pla_+\setm(\Pla^\lr_+\cup\Pla^{\bx{min}}_+)$, and is a special maximal subgroup of $\bx U(\mbf V_N^\circ)(F_v)$ for $v\in\Pla^\lr_+$
\end{enumerate}
such that
\begin{equation*}
\frac{\mcl O_\lbd[\bSh(\mbf V_N^\circ, \mdc K_N^\circ)]}{\bb T_N^{\Pla_+}\cap\ker\phi_{\Pi_N}}
\end{equation*}
is nontrivial when $N$ is even.
\item
Let $m\in\bb Z_+$ be a positive integer.
\item
Let $\mfk p$ be a very good inert place of $F_+$ with the underlying rational prime $p$ (see Definition~\ref{issieniefeifmies}), satisfying\footnote{Compared with \cite[\S6.1]{LTXZZ}, we omit the assumption (PI6), because it will be redundant for applications in view of \cite[Lemma 4.2.4(2)]{LTX24}.}
\begin{enumerate}
\item[(P1)]
$\mfk p$ is strongly disjoint from $\Pla_+$;
\item[(P2)]
$\ell$ does not divide $p(p^2-1)$;
\item[(P3)]
There exists a CM type $\Phi$ containing $\tau_\infty$ with $\bb Q_{p^2}^\Phi=\bb Q_{p^2}$ (we refer to \cite[\S3.3]{LTXZZ} for the definitions).
\item[(P4)]
If $N$ is even, then $P_{\bm\alpha(\Pi_{\mfk p})}\modu{\lbd^m}$ is level-raising special at $\mfk p$; if $N$ is odd, then $P_{\bm\alpha(\Pi_{\mfk p})}\modu\lbd$ is Tate generic at $\mfk p$ (see Definition~\ref{llieienieiheires});
\item[(P5)]
$P_{\bm\alpha(\Pi_{\mfk p})}\modu\lbd$ is intertwining generic at $\mfk p$.
\end{enumerate}
In particular, we can and will apply the construction and notations in \cite[\S5.1]{LTXZZ} to the datum $(\mbf V_N^\circ, \{\Lbd_{N, \mfk q}^\circ\}|_{\mfk q|p})$; cf. the beginning of \cite[\S5.2]{LTXZZ}. Denote by
\begin{equation*}
\mfk m\defining \bb T_N^{\Pla_+\cup\Pla_{F_+}(p)}\cap\ker\paren{\bb T_N^{\Pla_+}\xr{\phi_\Pi}\mcl O_E\to\mcl O_E/\lbd}
\end{equation*}
and
\begin{equation*}
\mfk n\defining \bb T_N^{\Pla_+\cup\Pla_{F_+}(p)}\cap\ker\paren{\bb T_N^{\Pla_+}\xr{\phi_\Pi}\mcl O_E\to\mcl O_E/\lbd^m}
\end{equation*}
the two ideals of $\bb T_N^{\Pla_+\cup\Pla_{F_+}(p)}$.
\item
Let $\mrs T=(\Phi, \mbf W_0, \mdc K_0^p, \iota_p, \varpi)$ be a quintuple of data as in \cite[\S5.1]{LTXZZ} with $\bb Q_p^\Phi=\bb Q_{p^2}$, which is possible because $\mfk p$ is very good inert.
\item
Let $\Lbd_{N, \mfk p}^\bullet$ be a lattice in $\mbf V_N^\circ\otimes_FF_{\mfk p}$ satisfying
\begin{itemize}
\item
$\Lbd_{N, \mfk p}^\circ\subset\Lbd_{N, \mfk p}^\bullet\subset p^{-1}\Lbd^\circ_{N, \mfk p}$, and
\item
$p\Lbd_{N, \mfk p}^\bullet\subset\Lbd_{N, \mfk p}^{\bullet, \vee}$ and $\Lbd_{N,\mfk p}^{\bullet, \vee}/p\Lbd_{N,\mfk p}^\bullet$ has length $\frac{1-(-1)^N}{2}$.
\end{itemize}
\item
Let $\mdc K_{N, \mfk p}^\bullet$ denote the stabilizer of $\Lbd_{N, \mfk p}^\bullet$ in $\bx U(\mbf V_N'\otimes_FF_{\mfk p})$, and set $\mdc K_{N, p}^\bullet\defining \mdc K_{N, \mfk p}^\bullet\times\prod_{\mfk q\in\Pla_{F_+}(p)\setm\{\mfk p\}}\mdc K_{\mfk q}^\circ$.
\item
Let $\mrs U_N=\paren{\mbf V_N', \{\Lbd_{N, \mfk q}'\}_{\mfk q|p}, \mdc K_{N,p}', \mtt j_N}$ be an indefinite uniformization datum for $\mbf V_N^\circ$, which means
\begin{itemize}
\item
$\mbf V'_N$ is a standard indefinite Hermitian space over $F$ of dimension $N$;
\item
for every place $\mfk q$ of $F_+$ lying above $p$ other than $\mfk p$, $\Lbd_{N, \mfk q}'$ is a self-dual $\mcl O_{F_{\mfk q}}$-lattice in $\mbf V'\otimes_FF_{\mfk q}$;
\item
$\Lbd_{N, \mfk p}'$ is an $\mcl O_{F_{\mfk p}}$-lattice in $V'\otimes_FF_{\mfk p}$ satisfying $\Lbd'_{N, \mfk p}\subset(\Lbd'_{N, \mfk p})^\vee$ and $(\Lbd'_{N, \mfk p})^\vee/\Lbd'_{N, \mfk p}$ has length 1;
\item
$\mdc K_{N, p}'=\prod_{\mfk q\in \Pla_{F_+}(p)}\mdc K_{N, \mfk q}'$, where $\mdc K_{N, \mfk q}'$ is the stabilizer of $\Lbd_{N, \mfk q}'$ in $\bx U(\mbf V_N'\otimes_FF_{\mfk q})$ for each $\mfk q\in \Pla_{F_+}(p)$; and
\item
$\mtt j_N: \mbf V_N^\circ\otimes_{\bb Q}\Ade^{\infty, p}\to \mbf V_N'\otimes_{\bb Q}\Ade^{\infty, p}$ is an isometry.
\end{itemize}
\item
Set $\mdc K_N^{p, \circ}\defining (\mdc K_N^\circ)^p$, and $\mdc K_N^\bullet\defining \mdc K_N^{p, \circ}\times\mdc K_{N, p}^\bullet$.
\item
Set $\bx X^?_N\defining \bx X_{\mfk p}^?(\mbf V_N^\circ, \mdc K_N^{p, \circ})$ for meaningful pairs $(\bx X, ?)\in \{\mbf M, \bx M, \bx B, \bx S\}\times \{\ ,\eta,\circ,\bullet, \dagger\}$, and let $(\bx E_s^{p, q}, \bx d_s^{p, q})$ denote the weight spectral sequence abutting to the cohomology $\bx H^\bullet_{\mfk T}(\ovl{\bx M}_N, \bx R\Psi\mcl O_\lbd(r))$ from \cite[\S5.9]{LTXZZ}.
\end{itemize}
\end{setup}

\begin{assumption}\label{iisieifnieimeifs}
The composite homomorphism $\bb T_N^{\Pla_+}\xr{\phi_\Pi}\mcl O_E\to \kappa_\lbd$ is cohomologically generic (see Definition~\ref{cohomomoegienifmos}).
\end{assumption}

\begin{assumption}\label{isismiwisiws}
The Galois representation $\rho_{\Pi, \lbd}$ (resp. $\rho_{\Pi^\flat, \lbd}$) is residually absolutely irreducible when $N$ is even (resp. $N$ is odd).
\end{assumption}

Under Assumption~\ref{isismiwisiws}, we get a residual representation $\ovl\rho_{\Pi, \lbd}$, which is unique up to conjugation and $(1-N)$-polarizable in the sense of \cite[Definition~2.5.3]{LTXZZ}. Then we obtain a continuous homomorphism
\begin{equation}\label{neeineiemirheos}
\ovl\rho_{\Pi, \lbd, +}: \Gal_{F_+}\to \mrs G_N(\kappa_\lbd)
\end{equation}
from \cite[Construction~2.5.4]{LTXZZ}.

\begin{defi}
We say a standard pair $(\mbf V, \pi)$ (see Definition~\ref{ssowosieiniemfes}) with $\dim_F\mbf V=N$ is $\Pi$-congruent (outside $\Pla_+\cup\Pla_{F_+}(p)$) if for any finite place $v$ of $F_+$ not in $\Pla_+\cup\Pla_{F_+}(\{p, \ell\})$, $\pi_v$ is unramified, and the two homomorphisms $\iota_\ell\phi_{\alpha(\BC(\pi)_v)}$ and $\iota_\ell\phi_{\alpha(\Pi_v)}$ from $\bb T_{N, v}$ to $\ovl{\bb Q_\ell}$, taking values in $\ovl{\bb Z_\ell}$, coincide in $\ovl{\bb F_\ell}$.
\end{defi}

\begin{lm}\label{isnsieoeimeifes}
Assume \textup{Assumption~\ref{isismiwisiws}}. Then the natural maps
\begin{equation*}
\cetH^i\paren{\bSh\paren{\mbf V_N', \mtt j_N\mdc K_N^{p\circ}\mdc K_{p, N}'}_{\ovl F}, \mcl O_\lbd}_{\mfk m}\to \etH^i\paren{\bSh\paren{\mbf V_N', \mtt j_N\mdc K_N^{p\circ}\mdc K_{p, N}'}_{\ovl F}, \mcl O_\lbd}_{\mfk m}
\end{equation*}
\begin{equation*}
\bx H^i_{\mfk T, c}\paren{\ovl{\bx M}_N^\bullet, \mcl O_\lbd}_{\mfk m}\to \bx H^i_{\mfk T}\paren{\ovl{\bx M}_N^\bullet, \mcl O_\lbd}_{\mfk m}
\end{equation*}
are both isomorphisms for every $i\in\bb N$.
\end{lm}
\begin{proof}
We follow the proof of \cite[Lemma~6.1.11]{LTXZZ}. We abbreviate $\bSh\defining \bSh\paren{\mbf V_N', \mtt j_N\mdc K_N^{p\circ}\mdc K_{p, N}'}$. By \cite[Lemma~5.2.7]{LTXZZ} and the description of the weight spectral sequences $(\bx E_s^{p, q}, \bx d_s^{p, q})$ in \cite[Lemma~5.9.2]{LTXZZ} (for $N$ odd) and \cite[Lemma~5.9.3]{LTXZZ} for $N$ even, it suffices to show that the first map is an isomorphism for every $i\in\bb N$. This is trivial if $F_+\ne \bb Q$, because in that case $\bSh$ is proper.

If $F_+=\bb Q$, then the Witt index of $\mbf V_N'$ is $1$. In that case, the Shimura variety $\bSh$ has a unique toroidal compactification \cite{AMRT}, which we denote by $\tld\bSh$. Since the choice of the relevant combinatorial data is unique, $\tld\bSh$ is smooth over $F$. As $\mtt j_N\mdc K_N^{p\circ}\mdc K_{p, N}'$ is neat, the boundary $Z\defining \tld\bSh\setm\bSh$ is geometrically isomorphic to a disjoint union of abelian varieties of dimension $N-2$. In particular, $\etH^i(Z_{\ovl F}, \mcl O_F)$ is a finite free $\mcl O_\lbd$-module. Let $\pi^{\prime\infty}$ be an irreducible admissible representation of $\bx U(\mbf V_N')(\Ade_{F_+}^\infty)$ that appears in $\etH^i(Z_{\ovl F}, \mcl O_\lbd)\otimes_{\mcl O_\lbd, \iota_\ell^{-1}}\bb C$. Then $\pi^{\prime\infty}$ extends to an automorphic representation $\pi'$ of $\bx U(\mbf V_N')(\Ade_{F_+})$ that is a subquotient of the parabolic induction of a cuspidal automorphic representation $\pi_{\mbf L}$ of $\mbf L(\Ade_{F_+})$ where $\mbf L$ is the unique proper Levi subgroup of $\bx U(\mbf V_N')$ up to conjugation.

We write $\mbf L=\bx U(\mbf V_{N-2})\times\Res_{F/F_+}\GL_1$, where $\mbf V_{N-2}$ is a standard definite Hermitian spaces of dimension $N-2$ contained in $\mbf V_N$ (if $N=2$, $\bx U(\mbf V_{N-2})$ denotes the trivial group). Then we can write $\pi_{\mbf L}=\pi_{N-2}\boxtimes\chi$ where $\pi_{N-2}$ is a cuspidal automorphic representation of $\bx U(\mbf V_{N-2})$ and $\chi$ is an automorphic character of $\GL_1(\Ade_F)$. In particular, $\BC(\pi')$ is of the form $\BC(\pi')=\BC(\pi_{N-2})\boxplus\chi\boxplus\chi^{-1}$. Then it is impossible that the (semi-simplified) residual representation of $\rho_{\BC(\pi')}$ is conjugate to that of $\rho_\Pi$, as the latter has an irreducible component of at least dimension $\max(2, N-1)$ (Note that $2$ is even). Thus $\etH^i(Z_{\ovl F}, \mcl O_\lbd)_{\mfk m}$ vanishes, because for any automorphic representation $\pi$ \sut $\pi^\infty$ appearing in $\etH^i(Z_{\ovl F}, \mcl O_\lbd)_{\mfk m}\otimes_{\mcl O_\lbd, \iota_\ell^{-1}}\bb C$, $\rho_{\BC(\pi)}$ should have (semi-simplified) residual representation conjugate to that of $\rho_\Pi$. This implies that
\begin{equation*}
\cetH^i(\bSh, \mcl O_\lbd)_{\mfk m}\xr\sim\etH^i(\bSh, \mcl O_\lbd)_{\mfk m}
\end{equation*}
is an isomorphism for every $i\in\bb N$.
\end{proof}

\begin{lm}\label{issiienneifheims}
Let $(\mbf V, \pi)$ be a $\Pi$-congruent standard pair. If \textup{Assumption~\ref{isismiwisiws}} holds, then $\BC(\pi)$ is a relevant automorphic representation of $\GL_N(\Ade_F)$ (see \textup{Definition~\ref{eirinenriedusnisimaos}}).
\end{lm}
\begin{proof}
Let $\rho_{\BC(\pi), \iota_\ell}: \Gal_F\to \GL_N(\ovl{\bb Q_\ell})$ denote the Galois representation attached to $\pi$ (see Proposition~\ref{issiieifimeifes}). Since $(\mbf V, \pi)$ is $\Pi$-congruent, by the Chebotarev density theorem, $\rho_{\BC(\pi), \iota_\ell}$ admits a lattice whose semisimplified residual representation $\ovl\rho_{\BC(\pi), \iota_\ell}$ is isomorphic to $\ovl\rho_{\Pi, \lbd}\otimes_{\kappa_\lbd}\ovl{\bb F_\ell}$, which is irreducible (resp. the sum of an irreducible Galois representation with a character) if $N$ is even (resp. odd). If $N$ is even, then $\rho_{\BC(\pi), \iota_\ell}$ is irreducible, so $\BC(\pi)$ must be cuspidal. If $N$ is odd, then $\rho_{\BC(\pi), \iota_\ell}$ is either irreducible or a sum of a character and an irreducible Galois representation. In the former case, $\BC(\pi)$ is cuspidal and conjugate self-dual. Assume now that $N$ is odd and $\BC(\pi)$ is not cuspidal. Then $\BC(\pi)$ must be the isobaric sum of a conjugate self-dual cuspidal automorphic representation of $\GL_{N-1}(\Ade_F)$ and a conjugate self-dual character $\chi$ of $\GL_1(\Ade_F)$.

We now show that $\BC(\pi)$ is relevant. By the above argument, it suffices to show that $\BC(\pi)_w$ is isomorphic to $\Pi_w$ for every infinite place $w$ of $F$. Let
\begin{equation*}
\bx H^i_{(2)}(\bSh)\defining \plim_{\mdc K\in\mfk K'(\mbf V)}\bx H^i_{(2)}(\bSh(\mbf V, \mdc K), \bb C)
\end{equation*}
be the $L^2$-cohomology as defined in \cite[\S6]{Fal83}, It follows from (an analogue of) Lemma~\ref{isnsieoeimeifes} that there are isomorphisms
\begin{equation*}
\lim_{\mdc K\in\mfk K'(\mbf V)}\iota_\ell^{-1}\cetH^i(\bSh(\mbf V, \mdc K)_{\ovl F}, \ovl{\bb Q_\ell})_{\mfk m}\cong \iota_\ell\bx H^i_{(2)}(\bSh)_{\mfk m}\cong \lim_{\mdc K\in\mfk K'(\mbf V)}\iota_\ell^{-1}\etH^i(\bSh(\mbf V, \mdc K)_{\ovl F}, \ovl{\bb Q_\ell})_{\mfk m}.
\end{equation*}
In particular, $\pi^\infty$ appears in $\iota_\ell\bx H^i_{(2)}(\bSh)$. By Borel--Casselman's decomposition of $\bx H_{(2)}^i(\bSh)$ \cite{B-C83}, we see that $\pi_\infty$ is cohomological for the trivial representation of $\Res_{F_+/\bb Q}\bx U(\mbf V)$. In particular, $\BC(\pi)_w$ is isomorphic to $\Pi_w$ for every infinite place $w$ of $F$.
\end{proof}

\begin{lm}\label{changeoeinfifenfiems}
Let $N$ be odd and assume \textup{Assumption~\ref{iisieifnieimeifs}} and \textup{Hypothesis~\ref{iifififmieiemss}} for $N$. Then for any object $\mdc K_N^{\prime\mfk p}\in \mfk K(\mbf V_N')^{\mfk p}$ and hyperspecial maximal subgroup $\mdc K_{\mfk p}^{\prime\hs}$ of $\bx U(\mbf V_N')(F_{+, \mfk p})$, there are isomorphisms
\begin{equation*}
\etH^i(\bSh(\mbf V_N', \mdc K_N^{\prime\mfk p}\mdc K_{N, \mfk p}')_{\ovl F}, \mcl O_\lbd)_{\mfk m}\cong \etH^i(\bSh(\mbf V_N', \mdc K_N^{\prime\mfk p}\mdc K_{N, \mfk p}^{\prime\hs})_{\ovl F}, \mcl O_\lbd)_{\mfk m}
\end{equation*}
\end{lm}
\begin{proof}
As both $\mdc K_{N, \mfk p}'$ and $\mdc K_{N, \mfk p}^{\prime\hs}$ are special maximal subgroups of $\bx U(\mbf V_N')(F_{+, \mfk p})$, the proof of \cite[Lemma~8.1.7]{LTXZZ} goes through, noticing that for every cuspidal automorphic representation $\pi'$ of $\bx U(\mbf V')(\Ade_{F_+})$ appearing in either
\begin{equation*}
\etH^i(\bSh(\mbf V_N', \mdc K_N^{\prime\mfk p}\mdc K_{N, \mfk p}')_{\ovl F}, \mcl O_\lbd)_{\mfk m}\otimes_{\mcl O_\lbd}\ovl{\bb Q_\ell}
\end{equation*}
or 
\begin{equation*}
\etH^i(\bSh(\mbf V_N', \mdc K_N^{\prime\mfk p}\mdc K_{N, \mfk p}^{\prime\hs})_{\ovl F}, \mcl O_\lbd)_{\mfk m}\otimes_{\mcl O_\lbd}\ovl{\bb Q_\ell},
\end{equation*}
the semisimplified residual representations of $\rho_{\BC(\pi'), \iota_\ell}$ and $\rho_{\Pi, \iota_\ell}$ are conjugate as $\ovl{\bb F_\ell}[\Gal_F]$-modules by the Chebotarev density theorem.
\end{proof}

\subsection{Tate classes in the odd rank case}\label{Tiemsopeoireisss}

In this subsection, we assume that $N$ is odd, and work in the setting of Setup~\ref{issientieitiemfes}.

\begin{lm}\label{issieiemfieehireriems}
$\bx H^i_{\mfk T}(\ovl{\bx M}_N^\dagger, \mcl O_\lbd)_{\mfk m}$ vanishes for every odd integer $i$.
\end{lm}
\begin{proof}
We follow the proof of \cite[Lemma~6.2.1]{LTXZZ}. If $i\ne 2r-1$, this follows from \cite[Lemma~5.6.2(1)]{LTXZZ}. We now assume $i=2r-1$.

Suppose that $\pi^{\infty, p}$ is an irreducible admissible representation of $\bx U(\mbf V_N^\circ)(\Ade_{F_+}^{\infty, p})$ that appears in the cohomology $\bx H_{\mfk T}^{2r-1}(\ovl{\bx M}^\dagger_N, \mcl O_\lbd)_{\mfk m}\otimes_{\mcl O_\lbd, \iota_\ell^{-1}}\bb C$. By \cite[Proposition~5.6.4]{LTXZZ}, we may complete $\pi^{\infty, p}$ to an automorphic representation $\pi$ as in that proposition, such that $(\mbf V_N^\circ, \pi)$ is a $\Pi$-congruent standard pair, and that $\BC(\pi_{\mfk p})$ is a constituent of an unramified principal series of $\GL_N(F_{\mfk p})$, whose Satake parameter contains $-p$ and $-p^{-1}$ (which is then different from $\bm\alpha(\Pi_{\mfk p})$ in $\ovl{\bb F_\ell}$ by (P5)). On the other hand, the semisimplified residual representations of $\rho_{\BC(\pi), \iota_\ell}$ and $\rho_{\Pi, \iota_\ell}$ are isomorphic. In particular, they have the same generalized Frobenius eigenvalues in $\ovl{\bb F_\ell}$ at the unique place of $F$ over $\mfk p$. However, this is not possible by Arthur's multiplicity formula (see \cite[Theorem~1.7.1]{KMSW}), Proposition~\ref{ieieeinfeieiites}(3) and Proposition~\ref{issiieifimeifes}. Therefore, we must have $\bx H^{2r-1}_{\mfk T}(\ovl{\bx M}_N^\dagger, \mcl O_\lbd)_{\mfk m}=0$.
\end{proof}

\begin{prop}\label{isnsnieiiheinfies}
Assume \textup{Assumption~\ref{iisieifnieimeifs}} and \textup{Hypothesis~\ref{iifififmieiemss}} for $N$.
\begin{enumerate}
\item
$\bx E_{2, \mfk m}^{p, q}$ vanishes unless $(p, q)=(0, 2r)$, and $\bx E_{2, \mfk m}^{0, 2r}$ is canonically isomorphic to $\bx H_{\mfk T}^{2r}(\ovl{\bx M}_N, \bx R\Psi\mcl O_\lbd(r))_{\mfk m}$, which is a free $\mcl O_\lbd$-module.
\item
The set of generalized Frobenius eigenvalues of the $\kappa_\lbd[\Gal_{\bb F_{p^2}}]$-module $\bx E_{2, \mfk m}^{0, 2r}\otimes_{\mcl O_\lbd}\kappa_\lbd$ is contained in the set of roots of $P_{\bm\alpha(\Pi_{\mfk p})}\modu\lbd$ in $\ovl{\bb F_\ell}$.
\item
The $\mcl O_\lbd[\Gal_{\bb F_{p^2}}]$-module $\bx E_{2, \mfk m}^{0, 2r}$ is weakly semisimple.
\item
The localization of the map $\nabla^1$ at $\mfk m$ induces an isomorphism
\begin{equation*}
\nabla^1_{\mfk m}: \paren{\bx E_{2, \mfk m}^{0, 2r}}_{\Gal_{\bb F_{p^2}}}\xr\sim \mcl O_\lbd[\bSh(\mbf V_N^\circ, \mdc K_N^\circ)]_{\mfk m}.
\end{equation*}
\end{enumerate}
\end{prop}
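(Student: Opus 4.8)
The plan is to run the argument of \cite[\S6.2]{LTXZZ} in the present odd-rank, almost-cuspidal setting, feeding in the weight spectral sequence of \cite[\S5.9]{LTXZZ}, the middle-degree concentration of \cite[Lemma~5.6.2]{LTXZZ}, Lemma~\ref{issieiemfieehireriems}, Assumption~\ref{iisieifnieimeifs}, Hypothesis~\ref{iifififmieiemss}(2), and the genericity conditions (P4), (P5). The only genuinely new feature is that $\Pi=\Pi^\flat\boxplus\uno$ is almost cuspidal, so that the $\mfk m$-localized middle cohomology may contain, besides the constituent cut out by $\rho^\cc_{\Pi^\flat,\lbd}(r)$, the trivial constituent of Hypothesis~\ref{iifififmieiemss}(2); this trivial constituent is precisely the source of the Tate class that (P4) isolates. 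For part (1): by \cite[Lemma~5.9.2]{LTXZZ}, the $\bx E_1$-page is assembled, up to shifts and Tate twists, from the $\mfk T$-cohomology of the strata of the special fibre of $\bx M_N$ (in particular of $\ovl{\bx B}_N$ and $\ovl{\bx M}_N^\dagger$). After localizing at $\mfk m$, I would combine Assumption~\ref{iisieifnieimeifs} and \cite[Lemma~5.6.2]{LTXZZ} with Lemma~\ref{issieiemfieehireriems} to see that each such cohomology group is concentrated in its middle degree and finite free over $\mcl O_\lbd$; this forces $\bx E_{1,\mfk m}^{p,q}$ to be concentrated on a single antidiagonal, so that after taking $\bx d_1$-cohomology $\bx E_{2,\mfk m}^{p,q}=0$ unless $(p,q)=(0,2r)$. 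There is then no room for higher differentials, whence $\bx E_{2,\mfk m}^{0,2r}\cong\bx H^{2r}_{\mfk T}(\ovl{\bx M}_N,\bx R\Psi\mcl O_\lbd(r))_{\mfk m}$, and $\mcl O_\lbd$-freeness follows since, by Assumption~\ref{iisieifnieimeifs} and \cite[Lemma~5.6.2]{LTXZZ}, the $\mfk m$-localized cohomology with $\kappa_\lbd$-coefficients in the neighbouring odd degree $2r-1$ vanishes, exactly as in \cite[\S6.2]{LTXZZ}.

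For part (2), I would read off the generalized $\Gal_{\bb F_{p^2}}$-eigenvalues stratum by stratum from the $\bx E_{1,\mfk m}$-page. As in the proof of Lemma~\ref{issieiemfieehireriems}, each stratum's $\mfk m$-localized cohomology, transported to $\bb C$ via $\iota_\ell^{-1}$, decomposes into $\pi^{\prime\infty}$-isotypic pieces for $\Pi$-congruent standard pairs $(\mbf V,\pi')$ (using \cite[Proposition~5.6.4]{LTXZZ}); by local-global compatibility (Propositions~\ref{issiieifimeifes} and \ref{ieieeinfeieiites}) together with Arthur's multiplicity formula, the Frobenius eigenvalues at the place of $F$ over $\mfk p$ are, up to the fixed Tate twist built into the spectral sequence, those of $\rho_{\BC(\pi'),\iota_\ell}$, whose reduction modulo $\lbd$ coincides, by the congruence defining $\mfk m$ and the Chebotarev density theorem, with that of $\rho_{\Pi,\iota_\ell}$. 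The latter eigenvalues are exactly the roots of $P_{\bm\alpha(\Pi_{\mfk p})}\bmod\lbd$ (the root $1$ contributed by the $\uno$-factor included), which gives the assertion for $\bx E_{2,\mfk m}^{0,2r}\otimes_{\mcl O_\lbd}\kappa_\lbd$.

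For parts (3) and (4): since $P_{\bm\alpha(\Pi_{\mfk p})}=P_{\bm\alpha(\Pi^\flat_{\mfk p})}\cdot(T-1)$, the Tate-generic condition (P4) forces $P_{\bm\alpha(\Pi^\flat_{\mfk p})}(1)$ to be a unit in $\kappa_\lbd$; combined with (2) and Hypothesis~\ref{iifififmieiemss}(2), the generalized $1$-eigenspace of $\Frob$ on $\bx E_{2,\mfk m}^{0,2r}\otimes_{\mcl O_\lbd}\kappa_\lbd$ comes entirely from the trivial constituent and $\Frob$ acts on it by the scalar $1$, while $\Frob-1$ is invertible on the complement. Lifting the associated idempotent over $\mcl O_\lbd$ yields $\bx E_{2,\mfk m}^{0,2r}=\bx E'\oplus\bx E''$ with $\Frob-1$ invertible on $\bx E''$ and $\Frob$ acting on $\bx E'$ by a unit congruent to $1$; that this unit is exactly $1$, i.e.\ that $\bx E'$ is genuinely Tate, follows by identifying $\bx E'$, via the map $\nabla^1$ of \cite[\S5.9]{LTXZZ}, with a localization of the cohomology of the Shimura set $\bSh(\mbf V_N^\circ,\mdc K_N^\circ)$---on which $\Gal_{F_{+,\mfk p}}$ acts trivially because $\mfk p$ splits completely in $F_{\bx{rflx},+}$. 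This is weak semisimplicity (3), and then $(\bx E_{2,\mfk m}^{0,2r})_{\Gal_{\bb F_{p^2}}}=\bx E'$; finally $\nabla^1_{\mfk m}$ is surjective (as in \cite[\S6.2]{LTXZZ}, since the Shimura-set term occurs on the $\bx E_1$-page and $\nabla^1$ is the induced map), and it induces the claimed isomorphism on coinvariants by (3) together with a rank count matching both sides---both finite free $\mcl O_\lbd$-modules of the same rank, namely the number of $\Pi$-congruent points of $\bSh(\mbf V_N^\circ,\mdc K_N^\circ)$.

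I expect the main obstacle to be parts (3)--(4): one must pin down $\nabla^1$ as the explicit geometric incidence/specialization map coming from the basic uniformization of the special fibre of $\bx M_N$ (\cite[\S5.1, \S5.9]{LTXZZ}), verify that its $\mfk m$-localized coinvariants land exactly in $\mcl O_\lbd[\bSh(\mbf V_N^\circ,\mdc K_N^\circ)]_{\mfk m}$, and control the integral ranks. This is where Hypothesis~\ref{iifififmieiemss}(2) and the almost-cuspidality are essential: one needs to know that the only constituent of the $\mfk m$-localized middle cohomology beyond $\rho^\cc_{\Pi^\flat,\lbd}(r)$ is the trivial one, contributing precisely the Tate piece seen by $\nabla^1$, and to rule out---by Arthur's multiplicity formula, as in Lemma~\ref{issieiemfieehireriems}---any further unwanted constituent. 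The integral weak-semisimplicity statement (3), which goes beyond the rational picture, is the other delicate point.
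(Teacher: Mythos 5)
Your proofs of parts (1) and (2) track the paper's reasoning closely: part (1) runs the argument of \cite[Lemma~6.2.2(3)]{LTXZZ} via Lemma~\ref{issieiemfieehireriems}, and part (2) reads the Frobenius eigenvalues off a decomposition into $W^{N-1}(\pi'^\infty)$-isotypic pieces together with Hypothesis~\ref{iifififmieiemss} and Arthur's multiplicity formula (the paper routes through the generic fibre of the indefinite Shimura variety using \cite[Lemma~5.2.7]{LTXZZ} and part~(1), rather than stratum by stratum, but the content is the same).

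For parts (3) and (4), however, your argument has a genuine circularity. You take the Fitting decomposition $\bx E^{0,2r}_{2,\mfk m}=\bx E'\oplus\bx E''$ with $\Frob-1$ topologically nilpotent on $\bx E'$ and invertible on $\bx E''$, and then claim that $\Frob$ acts on $\bx E'$ \emph{exactly} by $1$ (no unipotent part) ``by identifying $\bx E'$, via $\nabla^1$, with a localization of the cohomology of the Shimura set.'' But that identification \emph{is} the content of part~(4), which in turn needs part~(3) to replace coinvariants by the Fitting summand: a priori $\nabla^1_{\mfk m}$ is a map out of $(\bx E^{0,2r}_{2,\mfk m})_{\Gal_{\bb F_{p^2}}}$, not out of $\bx E'$, and on the Fitting summand $\Frob$ could act by a nontrivial unipotent automorphism; killing that unipotent part is precisely the assertion of weak semisimplicity. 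The paper instead proves (3) piece by piece: it chooses a $\Gal_F$-stable lattice $\bx R^{N-1}(\pi'^\infty)$ inside each constituent $W^{N-1}(\pi'^\infty)$, notes (by Hypothesis~\ref{iifififmieiemss}(2)) that its residual reduction is conjugate to $\ovl\rho_{\Pi^\flat,\lbd}(r)$ or to $\ovl\rho_{\chi,\lbd}(r)$, and invokes the integral weak-semisimplicity criterion \cite[Lemma~2.1.5]{LTXZZ}, using (P4) to see that $\ovl\rho_{\Pi,\lbd}^\cc(r)$ is weakly semisimple and to control the sizes of the $\Gal_{\bb F_{p^2}}$-invariant subspaces. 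This avoids any appeal to part~(4).

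Your sketch of part~(4) also leaves unidentified the actual inputs for the ``rank count matching both sides.'' After surjectivity of $\nabla^1_{\mfk m}$ (which the paper gets from \cite[Theorem~6.2.3]{LTXZZ} and \cite[Lemma~4.2.4]{LTX24}), the missing inequality is
\begin{equation*}
\sum_{\pi'^\infty}d(\pi'^\infty)\le\dim_{E_\lbd}\mcl O_\lbd[\bSh(\mbf V_N^\circ,\mdc K_N^\circ)]_{\mfk m}\otimes_{\mcl O_\lbd}E_\lbd,
\end{equation*}
summed over those $\pi'^\infty$ with $\dim_{E_\lbd}W^{N-1}(\pi'^\infty)$ odd, and this is exactly what Lemma~\ref{changeoeinfifenfiems} (replacing the paramodular level $\mdc K'_{N,\mfk p}$ by a hyperspecial one) and Lemma~\ref{lsomiiaueimipws} (automorphic multiplicity one from Arthur's multiplicity formula, matching each $\pi'^\infty$ across the two inner forms) provide. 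Without naming these inputs the count does not close; ``the number of $\Pi$-congruent points'' of the Shimura set is not obviously the right quantity unless one has exactly this multiplicity-one statement.
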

\begin{proof}
For (1), by Lemma~\ref{issieiemfieehireriems}, the same proof of \cite[Lemma~6.2.2(3)]{LTXZZ} goes through.

Next we prove parts (2)-(4). Firstly it follows from the proof of \cite[Theorem~6.2.3]{LTXZZ} and \cite[Lemma~4.2.4]{LTX24} that $\nabla_{\mfk m}^1$ is surjective. By \cite[Lemma~5.2.7]{LTXZZ} and part (1), there is an isomorphism
\begin{equation}\label{issiemifmiefiws}
\bx E_{2, \mfk m}^{0, 2r}\cong \etH^{2r}\paren{\bSh\paren{\mbf V', \mtt j_N\mdc K_N^{\circ, p}\mdc K_{N, p}'}_{\ovl F}, \mcl O_\lbd(r)}_{\mfk m}
\end{equation}
of $\mcl O_\lbd[\Gal_{\bb Q_{p^2}}]$-modules. By Lemma~\ref{issiienneifheims}, \ref{isnsieoeimeifes}, Hypothesis~\ref{iifififmieiemss} and Arthur's multiplicity formula \cite[Theorem~1.7.1]{KMSW}, there is an isomorphism
\begin{equation}\label{ismieeifhes}
\etH^{2r}\paren{\bSh\paren{\mbf V', \mtt j_N\mdc K_N^{\circ, p}\mdc K'_{N, p}}_{\ovl F}, \mcl O_\lbd(r)}_{\mfk m}\otimes_{\mcl O_\lbd}\ovl{\bb Q_\ell}\cong \bigoplus_{\pi^{\prime\infty}}\paren{W^{N-1}(\pi^{\prime\infty})}^{\oplus d(\pi^{\prime\infty})}
\end{equation}
of $\ovl{\bb Q_\ell}[\Gal_F]$-modules, where $d(\pi^{\prime\infty})=\dim\paren{\pi^{\prime\infty}}^{\mtt j_N\mdc K_N^{\circ, p}\mdc K_{N, p}'}$; and the sum is taken over all admissible irreducible representations $\pi^{\prime\infty}$ of $\bx U(\mbf V')(\Ade_{F_+}^\infty)$ that is the finite part of some automorphic representation $\pi'$ of $\bx U(\mbf V')(\Ade_{F_+})$ satisfying $(\mbf V', \pi')$ is a standard pair. Here we choose such a $\pi'$ for each $\pi^{\prime\infty}$ appearing in the direct sum. For the proof of parts (2-4), we may replace $E_\lbd$ by a finite extension inside $\ovl{\bb Q_\ell}$ \sut $W^{N-1}(\pi^{\prime\infty})$ is defined over $E_\lbd$ for each $\pi^{\prime\infty}$ appearing in the previous direct sum. For each such $\pi^{\prime\infty}$, $W^{N-1}(\pi^{\prime\infty})$ is conjugate to an irreducible subrepresentation of $\rho_{\Pi, \iota_\ell}^\cc$ by Hypothesis~\ref{iifififmieiemss}. Thus part (2) follows from Equations~\eqref{issiemifmiefiws},~\eqref{ismieeifhes} and part (1).

For (3), we choose a $\Gal_F$-stable $\mcl O_\lbd$-lattice $\bx R^{N-1}(\pi^{\prime\infty})$ of $W^{N-1}(\pi^{\prime\infty})$ for each $\pi^{\prime\infty}$ appearing in the previous direct sum. We claim that $\bx R^{N-1}(\pi^{\prime\infty})$ is weakly semisimple, which implies part (3) by \cite[Lemma~2.1.4(1)]{LTXZZ}. By (P4), we know $\ovl\rho_{\Pi, \lbd}^\cc(r)$ is weakly semisimple, and 
\begin{equation*}
\dim_{\kappa_\lbd}\ovl\rho_{\Pi, \lbd}(r)^{\Gal_{\bb F_{p^2}}}=0, \quad\dim_{\kappa_\lbd}\ovl\rho_{\Pi^\flat, \lbd}(r)^{\Gal_{\bb F_{p^2}}}=1.
\end{equation*}
If $\dim_{E_\lbd}W^{N-1}(\pi^{\prime\infty})$ is odd, then
\begin{equation*}
\dim_{E_\lbd}W^{N-1}(\pi^{\prime\infty})^{\Gal_{\bb F_{p^2}}}\ge 1.
\end{equation*}
Thus $\bx R^{N-1}(\pi^{\prime\infty})$ is weakly semisimple by \cite[Lemma~2.1.5]{LTXZZ}. On the other hand, if $\dim_{E_\lbd}W^{N-1}(\pi^{\prime\infty})$ is even, then $\BC(\pi')$ is almost cuspidal, and $\bx R^{N-1}(\pi^{\prime\infty})\otimes_{\mcl O_\lbd}\kappa_\lbd$ is conjugate to $\rho_{\Pi^\flat, \lbd}$ as $\kappa_\lbd[\Gal_F]$-modules. Thus $\bx R^{N-1}(\pi^{\prime\infty})$ is also weakly semisimple by \cite[Lemma~2.1.5]{LTXZZ}.

For (4): By the above discussion, it suffices to show
\begin{equation*}
\sum_{\pi^{\prime\infty}}d(\pi^{\prime\infty})\le\dim_{E_\lbd}\mcl O_\lbd[\bSh(\mbf V_N^\circ, \mdc K_N^\circ)]_{\mfk m}\otimes_{\mcl O_\lbd}E_\lbd,
\end{equation*}
where $\pi^{\prime\infty}$ is taken over all those appearing in the previous direct sum satisfying $\dim_{E_\lbd}W^{N-1}(\pi^{\prime\infty})$ is odd. This assertion follows from Lemma~\ref{changeoeinfifenfiems} and Lemma~\ref{lsomiiaueimipws} below.
\end{proof}
\begin{lm}\label{lsomiiaueimipws}
Let $\pi'$ be an automorphic representation of $\bx U(\mbf V')(\Ade_{F_+})$ \sut $\BC(\pi')$ is relevant. If $N$ is odd and $\BC(\pi')=\Pi^\flat\boxplus\chi$ is almost cuspidal, we further assume the following conditions.
\begin{itemize}
\item
The local component $\pi'_{\mfk p}$ is unramified with Satake parameter containing 1 exactly once.
\item
Set $\mfk I=\{N-1, N-3, \ldots, 3-N, 1-N\}$, in particular $\chi_{\udl\tau_\infty}(z)=\arg(z)^{a_\chi}$ for some $a_\chi\in \mfk I$. Let $\kappa_\chi: \mu_2^{\mfk I}\to \bb C^\times$ denote the character that takes value $-1$ on the generator corresponding to $a_\chi\in \mfk I$ and takes value $1$ on all other generators. Then $\pi'_{\udl\tau_\infty}$ is isomorphic to the discrete series $\pi^{\kappa_\chi}$ (with Harish-Chandra parameter $(r, r-1, \ldots, 1-r, -r)$) as defined in \cite[Notation~3.14]{L-L21}.
\end{itemize}
Consider the admissible irreducible representation $\pi\defining \pi_{\udl\tau_\infty}\otimes\pi_{\mfk p}\otimes(\pi')^{\udl\tau_\infty, \mfk p}$ of $\bx U(\mbf V^\circ)(\Ade_{F_+})$ where
\begin{itemize}
\item
$\pi_{\udl\tau_\infty}$ is the trivial representation of $\bx U(\mbf V^\circ\otimes_FF_{\udl\tau_\infty})$;
\item
$\pi_{\mfk p}$ is an unramified representation of $\bx U(\mbf V^\circ\otimes_FF_{\mfk p})$ satisfying $\BC(\pi_{\mfk p})=\BC(\pi'_{\mfk p})$.
\end{itemize}
Then the automorphic multiplicity of $\pi$ is $1$.
\end{lm}
\begin{proof}
This follows from Arthur's multiplicity formula for tempered global L-packets; cf.\cite[Theoerm~1.7.1]{KMSW}.
\end{proof}

\subsection{Arithmetic level-raising in the even rank case}\label{ososiemeoireuuems}

In this subsection, we assume that $N$ is even and work in the setting of Setup~\ref{issientieitiemfes}.

We recall the following definition of rigid residual Galois representations from \cite[\S 3.6]{LTXZZa}.

\begin{defi}\label{rigidindiremiesLGoos}
Let $\ovl r: \Gal_{F_+}\to \mrs G_N(\kappa_\lbd)$ be a continuous homomorphism satisfying
\begin{equation*}
\ovl r^{-1}(\GL_N(\kappa_\lbd)\times\GL_1(\kappa_\lbd))=\Gal_F, \quad \nu\circ\ovl r=\eta_{F/F_+}^N\ovl\ve_\ell^{1-N}.
\end{equation*}
We say $\ovl r$ is rigid for $(\Pla^{\bx{min}}_+, \Pla^\lr_+)$ if the following are satisfied:
\begin{enumerate}
\item
For $v\in\Pla^{\bx{min}}_+$, any lifting $r: \Gal_{F_+}\to \mrs G_N(\kappa_\lbd)$ with $\nu\circ r=\eta_{F/F_+}^N\ve_\ell^{1-N}$ is minimally ramified as defined in \cite[Definition~3.4.8]{LTXZZa}.
\item
For $v\in\Pla^\lr_+$, the set of generalized eigenvalues of $\ovl r_v^\natural(\phi_w)$ contains the pair $\{\norml{v}^{-N}, \norml{v}^{-N+2}\}$ exactly once, where $w$ is the unique place of $F$ over $v$.
\item
For $v\in \Pla_{F_+}(\ell)$, $\ovl r_v^\natural$ is regular Fontaine--Laffaille crystalline as defined in \cite[Definition~3.2.4]{LTXZZa}.
\item
For a finite place of $F_+$ not in $\Pla^{\bx{min}}_+\cup\Pla^\lr_+\cup\Pla_{F_+}(\ell)$, $\ovl r_v$ is unramified.
\end{enumerate}
\end{defi}

We state the following variant of the R=T theorem in \cite{LTXZZa} suitable for our case. We apply the discussion of \cite[\S3]{LTXZZa} to the pair $(\ovl r, \chi)=(\ovl\rho_{\Pi, \lbd, +}, \ve_\ell^{1-N})$. Suppose $\ovl r$ is rigid for $(\Pla^{\bx{min}}_+, \Pla^\lr_+)$. For each $?\in\{\bx{mix, unr, ram}\}$, we consider the global deformation problem (see \cite[Definition~3.6]{LTXZZa})
\begin{equation*}
\mrs S^?=\paren{\ovl r, \ve_\ell^{1-N}, \Pla^{\bx{min}}_+\cup\Pla^\lr_+\cup\{\mfk p\}\cup\Pla_{F_+}(\ell), \{\mrs D_v\}_{v\in \Pla^{\bx{min}}_+\cup\Pla^\lr_+\cup\{\mfk p\}\cup\Pla_{F_+}(\ell)}}
\end{equation*}
where
\begin{itemize}
\item
for $v\in \Pla^{\bx{\min}}_+$, $\mrs D_v$ is the local deformation problem classifying all liftings of $\ovl r_v$;
\item
for $v\in \Pla^\lr_+$, $\mrs D_v$ is the local deformation problem $\mrs D^{\bx{ram}}$ of $\ovl r_v$ from \cite[Definition~3.34]{LTXZZa};
\item
for $v=\mfk p$, $\mrs D_v$ is the local deformation problem $\mrs D^?$ of $\ovl r_v$ from \cite[Definition~3.34]{LTXZZa};
\item
for $v\in \Pla_{F_+}(\ell)$, $\mrs D_v$ is the local deformation problem $\mrs D^\FL$ of $\ovl r_v$ from \cite[Definition~3.12]{LTXZZa}.
\end{itemize}
Then the global universal deformation ring $\msf R^\univ_{\mrs S^?}$ is defined in \cite[Proposition~3.7]{LTXZZa}. Set $\msf R^?\defining \msf R^\univ_{\mrs S^?}$ for short. Then there are canonical surjective homomorphisms $\msf R^{\bx{mix}}\to \msf R^\unr$ and $\msf R^{\bx{mix}}\to \msf R^{\bx{ram}}$ of $\mcl O_\lbd$-rings. We have the following corollary of the $\msf R=\msf T$ theorem from \cite{LTXZZa}.

\begin{thm}\label{isnsieniehfbeienis}
Assume \textup{Assumptions~\ref{iisieifnieimeifs}} and \textup{Hypothesis~\ref{iifififmieiemss}} for $N$. We further assume that $\ell\ge 2(N+1)$, $\ovl\rho_{\Pi, \lbd, +}$ (\textup{Equation~\eqref{neeineiemirheos}}) is rigid for $(\Pla^{\min}_+, \Pla_+^\lr)$, and $\ovl\rho_{\Pi, \lbd}|_{\Gal_{F(\mu_\ell)}}$ is absolutely irreducible.
\begin{itemize}
\item
Let $\msf T^\unr$ denote the image of $\bb T_N^{\Pla_+\cup\{\mfk p\}}$ in $\End_{\mcl O_\lbd}\paren{\mcl O_\lbd[\bSh(\mbf V_N^\circ, \mdc K_N^\circ)]}$. Then there is a canonical isomorphism $\msf R^\unr\cong\msf T^\unr$ of nonzero local rings \sut $\mcl O_\lbd[\bSh(\mbf V_N^\circ, \mdc K_N^\circ)]$ is a nonzero finite free module over $\msf R^\unr$.
\item
Let $\msf T^{\bx{ram}}$ denote the image of $\bb T_N^{\Pla_+\cup\{\mfk p\}}$ in $\End_{\mcl O_\lbd}\paren{\bx H^{2r-1}_{\mfk T}(\ovl{\bx M}_N, \bx R\Psi\mcl O_\lbd)}$. Then there is a canonical isomorphism $\msf R^{\bx{ram}}\cong\msf T^{\bx{ram}}$ of nonzero local rings \sut $\mcl O_\lbd[\bSh(\mbf V_N^\circ, \mdc K_N^\circ)]$ is a nonzero finite free module over $\msf R^{\bx{ram}}$.
\end{itemize}
\end{thm}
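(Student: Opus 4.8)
The plan is to deduce both assertions from the main $\msf R=\msf T$ theorem of \cite{LTXZZa}, by checking that, with the two deletions noted below, our situation lies within its scope. Since $N$ is even, $\Pi$ is cuspidal, so the automorphic input coincides with that of \cite{LTXZZ, LTXZZa}. Concretely, I would apply the discussion preceding the statement to the pair $(\ovl r, \chi)=(\ovl\rho_{\Pi, \lbd, +}, \ve_\ell^{1-N})$ and run the Taylor--Wiles--Kisin patching method for the two global deformation problems $\mrs S^\unr$ and $\mrs S^{\bx{ram}}$. The running hypotheses enter in the usual way: the bound $\ell\ge 2(N+1)$ places the primes above $\ell$ in the Fontaine--Laffaille range, so that the local condition $\mrs D^\FL$ is formally smooth of the expected relative dimension (\cite[\S3.2]{LTXZZa}), and, together with absolute irreducibility of $\ovl\rho_{\Pi, \lbd}|_{\Gal_{F(\mu_\ell)}}$, supplies the big-image (adequacy) property of the residual image needed for the patching argument; rigidity of $\ovl\rho_{\Pi, \lbd, +}$ for $(\Pla^{\min}_+, \Pla^\lr_+)$ pins down the local deformation problems at $\Pla^{\min}_+$ (minimally ramified) and at $\Pla^\lr_+$ (of type $\mrs D^{\bx{ram}}$), whose structure is recorded in \cite[\S3]{LTXZZa}.

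Next I would attach the appropriate cohomology to each deformation problem. On the unramified side, $\mcl O_\lbd[\bSh(\mbf V_N^\circ, \mdc K_N^\circ)]_{\mfk m}$ is a faithful $\msf T^\unr$-module; by Proposition~\ref{issiieifimeifes}, local--global compatibility at unramified places, and the Chebotarev density theorem it carries a compatible $\msf R^\unr$-action interpolating the Hecke eigenvalues, and it is nonzero by the nontriviality hypothesis in Setup~\ref{issientieitiemfes}. On the ramified side, the analysis of the semistable reduction at $\mfk p$—the weight spectral sequence $(\bx E_s^{p, q}, \bx d_s^{p, q})$ of \cite[\S5.9]{LTXZZ} together with the basic uniformization of the supersingular locus—identifies $\bx H^{2r-1}_{\mfk T}(\ovl{\bx M}_N, \bx R\Psi\mcl O_\lbd)_{\mfk m}$ with a module built from the definite Shimura set at the Iwahori-type level $\mdc K_N^\bullet$ at $\mfk p$; this is a faithful $\msf T^{\bx{ram}}$-module carrying a compatible $\msf R^{\bx{ram}}$-action through the monodromy operator and the Galois action, and it is nonzero because (P4) makes the level-raising locus at $\mfk p$ nonempty, which, combined with the surjections $\msf R^{\bx{mix}}\to\msf R^\unr$ and $\msf R^{\bx{mix}}\to\msf R^{\bx{ram}}$ and the nonvanishing on the unramified side, forces $\msf R^{\bx{ram}}\ne 0$.

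Feeding these modules into the patching argument of \cite[\S3]{LTXZZa} produces a patched module over a patched ring that is a power series ring over the completed tensor product of the local framed deformation rings; by the local computations above the patched module is finite free over the patched ring. Specializing back yields the canonical isomorphisms $\msf R^\unr\xr\sim\msf T^\unr$ and $\msf R^{\bx{ram}}\xr\sim\msf T^{\bx{ram}}$ of nonzero local rings, together with the stated finite freeness assertions; canonicity holds because both sides act faithfully and agree on the image of $\bb T_N^{\Pla_+\cup\{\mfk p\}}$ by local--global compatibility. In the ramified case, freeness of $\mcl O_\lbd[\bSh(\mbf V_N^\circ, \mdc K_N^\circ)]$ itself is then read off from that of $\bx H^{2r-1}_{\mfk T}(\ovl{\bx M}_N, \bx R\Psi\mcl O_\lbd)$ via the arithmetic level-raising map and $\msf R^{\bx{mix}}$.

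Most of the work is done in \cite{LTXZZa, LTXZZ}, so the genuine task—and the step I expect to be the main obstacle—is to verify that our two deviations from \cite[\S6.1]{LTXZZ} do not affect any of the above. First, we have dropped the requirement that $(\mdc K_N^\circ)_v$ be transferable for $v\in\Pla^\lr_+$; one must check that neither the local Hecke-module computation nor the analysis of the special fiber at the level-raising places uses it, which holds by \cite[Remark~8.2]{LTXZZb}. Second, we have omitted the assumption (PI6) of \cite[\S6.1]{LTXZZ}, which is redundant for the nearby-cycles input it was introduced to guarantee, by \cite[Lemma~4.2.4(2)]{LTX24}. Once these are confirmed to leave the local deformation ring computations and the patching unchanged, the argument of \cite{LTXZZa} goes through and the theorem follows.
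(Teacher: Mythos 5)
Your overall approach is the same as the paper's: both proofs are reductions to the $\msf R=\msf T$ theorem \cite[Theorem~3.38]{LTXZZa}, with the running hypotheses (the Fontaine--Laffaille bound $\ell\ge 2(N+1)$, adequacy via absolute irreducibility of $\ovl\rho_{\Pi,\lbd}|_{\Gal_{F(\mu_\ell)}}$, rigidity fixing the local deformation conditions) entering exactly as you describe. For part (2) you correctly identify that one must invoke LTXZZ's weight spectral sequence and basic uniformization to convert $\bx H^{2r-1}_{\mfk T}(\ovl{\bx M}_N,\bx R\Psi\mcl O_\lbd)$ into a module over a definite Shimura set at Iwahori level (this is \cite[Lemma~5.2.7]{LTXZZ} in the paper), and the paper does apply \cite[Theorem~3.38]{LTXZZa} with $\Pla^\lr_+$ replaced by $\Pla^\lr_+\cup\{\mfk p\}$, which your discussion of $\mfk p$ as a level-raising place captures.

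However, you misidentify the genuine technical concern. The transferability omission and the deletion of (PI6) are handled entirely in footnotes to Setup~\ref{issientieitiemfes} by the cited references \cite[Remark~8.2]{LTXZZb} and \cite[Lemma~4.2.4(2)]{LTX24}; they are not what the paper flags as the deviation from \cite[Theorem~3.38]{LTXZZa}. What actually requires comment, per the paper's proof, is that for $v\in\fPla_{F_+}\setminus\Pla^{\min}_+$ the level group $(\mdc K_N^\circ)_v$ is hyperspecial but need not be the stabilizer of a self-dual lattice in $\bx U(\mbf V_N^\circ)(F_{+,v})$, a condition the statement of \cite[Theorem~3.38]{LTXZZa} assumes. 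The paper asserts (without further detail) that the proof of that theorem goes through regardless. Your proposal never addresses this point, and conversely spends its worry-budget on two points the paper dismisses in footnotes.

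A second, more substantive gap: your argument for $\msf R^{\bx{ram}}\ne 0$ does not work as stated. The surjections $\msf R^{\bx{mix}}\to\msf R^\unr$ and $\msf R^{\bx{mix}}\to\msf R^{\bx{ram}}$, together with $\msf R^\unr\ne 0$, only give $\msf R^{\bx{mix}}\ne 0$; they say nothing about the target of the other surjection. One needs an actual level-raising input producing a lift with the ramified local shape at $\mfk p$, and the paper supplies this by citing \cite[Proposition~3.6.1]{LTXZZ} to conclude $\msf T^{\bx{ram}}\ne 0$ (whence $\msf R^{\bx{ram}}\ne 0$ after the isomorphism). Replace your surjection argument with that citation.
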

\begin{proof}
For (1): This follows from \cite[Theorem~3.38]{LTXZZa}, except that when $v\in \fPla_{F_+}\setm\Pla_+^{\bx{min}}$, the level group $(\mdc K_N^\circ)_v$ is a hyperspecial but may not be the stabilizer of a self-dual lattice in $\bx U(\mbf V_N^\circ)(F_{+, v})$. However, the proof of \cite[Theorem~3.38]{LTXZZa} goes through.

For (2): By \cite[Proposition~3.6.1]{LTXZZ}, we know $\msf T^{\bx{ram}}$ is nonzero. Thus by \cite[Lemma~5.2.7]{LTXZZ} and the same reason as in (1), the assertion follows from \cite[Theorem~3.38]{LTXZZa} (with $(\Pla^{\bx{\min}}_+, \Pla^\lr_+)$ replaced by $(\Pla^{\bx{\min}}_+, \Pla^\lr_+\cup\{\mfk p\})$).
\end{proof}

\begin{prop}\label{isnsieiINEifehiw}
Assume \textup{Assumptions~\ref{iisieifnieimeifs}} and \textup{Hypothesis~\ref{iifififmieiemss}} for $N$. Assume further that $\ell\ge 2(N+1)$, $\ovl\rho_{\Pi, \lbd, +}$ (\textup{Equation~\eqref{neeineiemirheos}}) is rigid for $(\Pla^{\min}_+, \Pla_+^\lr)$, and $\ovl\rho_{\Pi, \lbd}|_{\Gal_{F(\mu_\ell)}}$ is absolutely irreducible.
\begin{enumerate}
\item
$\bx H^i_{\mfk T}\paren{\ovl{\bx M}_N^\bullet, \mcl O_\lbd}_{\mfk m}$ is a free $\mcl O_\lbd$-module for every $i\in\bb Z_+$.
\item
$\bx E_{2, \mfk m}^{p, q}$ is a free $\mcl O_\lbd$-module, and vanishes unless $p+q=2r-1$ and $|p|\le 1$.
\item
The set of generalized Frobenius eigenvalues of the $\kappa_\lbd[\Gal_{\bb F_{p^2}}]$-module $\bx H^{2r-1}_{\mfk T}(\ovl{\bx M}_N^\bullet, \mcl O_\lbd(r))_{\mfk m}\otimes_{\mcl O_\lbd}\kappa_\lbd$ is contained in the set of roots of $P_{\bm\alpha(\Pi_{0, \mfk p})}(p^{-1}T)\modu\lbd$ in $\ovl{\bb F_\ell}$, and does not contain $1$ or $p^2$.
\item
The quotient modulo $\mfk n$ of the map $\nabla_{\mfk m}^0$ induces an isomorphism
\begin{equation*}
\nabla_{/\mfk n}^0: \bx F_{-1}\bx H^1\paren{I_{\bb Q_{p^2}}, \bx H^{2r-1}_{\mfk T}\paren{\ovl M_N, \bx R\Psi\mcl O_\lbd(r)}/\mfk n}\xr\sim \mcl O_\lbd[\bSh(\mbf V^\circ, \mdc K_N^\circ)]/\mfk n.
\end{equation*}
Here $\bx F_{-1}$ is the degree $-1$ term of the monodromy filtration.
\item
There is a natural isomorphism
\begin{equation*}
\bx F_{-1}\bx H^1\paren{I_{\bb Q_{p^2}}, \etH^{2r-1}\paren{\ovl{\bx M}_N, \bx R\Psi\mcl O_\lbd(r)}/\mfk n}\cong \bx H^1_\sing\paren{\bb Q_{p^2}, \etH^{2r-1}\paren{\ovl{\bx M}_N, \bx R\Psi\mcl O_\lbd(r)}/\mfk n}.
\end{equation*}
\item
There exists a positive integer $\mu\in\bb Z_+$ and an isomorphism
\begin{equation*}
\etH^{2r-1}\paren{\bSh\paren{\mbf V_N', \mtt j_N\mdc K_N^{\infty, p}\mdc K_{N, p}'}_{\ovl F}, \mcl O_\lbd(r)}/\mfk n\cong \paren{\ovl{\bx R}^{(m)\cc}}^{\oplus\mu}
\end{equation*}
of $\mcl O_\lbd[\Gal_F]$-modules, where $\bx R$ is a $\Gal_F$-stable $\mcl O_\lbd$-lattice in $\rho_{\Pi, \lbd}(r)$, unique up to homothety.
\end{enumerate}
\end{prop}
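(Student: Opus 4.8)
The plan is to transport the arithmetic level-raising package of \cite[\S6.3]{LTXZZ} to the present setting, with Theorem~\ref{isnsieniehfbeienis} playing the role of their $\msf R=\msf T$ theorem. This is legitimate because on the even-rank side $\Pi$ is cuspidal, so the standing hypotheses of \cite{LTXZZ} are in force; and the deviations introduced in Setup~\ref{issientieitiemfes} — the level groups $(\mdc K_N^\circ)_v$ being only hyperspecial rather than stabilizers of self-dual lattices, and the omission of (PI6) and of transferability — have already been absorbed, via \cite[Remark~8.2]{LTXZZb} and \cite[Lemma~4.2.4]{LTX24}, into the hypotheses of Theorem~\ref{isnsieniehfbeienis}. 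I indicate the main steps.

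For (1), (2) and (3): by Theorem~\ref{isnsieniehfbeienis}(2), $\mcl O_\lbd[\bSh(\mbf V_N^\circ, \mdc K_N^\circ)]_{\mfk m}$ is a nonzero finite free module over $\msf R^{\bx{ram}}\cong\msf T^{\bx{ram}}$, so $\bx H^{2r-1}_{\mfk T}(\ovl{\bx M}_N, \bx R\Psi\mcl O_\lbd(r))_{\mfk m}$ is a free $\mcl O_\lbd$-module. One also establishes the even-rank counterpart of Lemma~\ref{issieiemfieehireriems}, namely the vanishing of $\bx H^i_{\mfk T}(\ovl{\bx M}_N^\dagger, \mcl O_\lbd)_{\mfk m}$ in the relevant parity, by the same argument using (P4), (P5), Proposition~\ref{issiieifimeifes}, and Arthur's multiplicity formula \cite[Theorem~1.7.1]{KMSW}. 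Feeding these into the weight spectral sequence $(\bx E^{p,q}_s,\bx d^{p,q}_s)$ of \cite[\S5.9]{LTXZZ}, and using Lemma~\ref{isnsieoeimeifes} together with Poincaré duality on the special fiber, one concludes that the spectral sequence degenerates at $\bx E_2$ after localizing at $\mfk m$, with $\bx E^{p,q}_{2,\mfk m}$ free over $\mcl O_\lbd$ and vanishing outside $p+q=2r-1$, $|p|\le 1$ (this is (2)), and that the same freeness propagates to all $\bx H^i_{\mfk T}(\ovl{\bx M}_N^\bullet, \mcl O_\lbd)_{\mfk m}$ (this is (1)). For (3), \cite[Lemma~5.2.7]{LTXZZ} expresses the graded pieces of the monodromy filtration on $\bx H^{2r-1}_{\mfk T}(\ovl{\bx M}_N^\bullet,\mcl O_\lbd(r))_{\mfk m}$ through the middle-degree cohomology of an indefinite Shimura variety with good reduction at $\mfk p$; since $\rho_{\Pi,\lbd}$ is irreducible (Assumption~\ref{isismiwisiws}), Hypothesis~\ref{iifififmieiemss}(1) shows that cohomology is, after $\otimes_{\mcl O_\lbd}\ovl{\bb Q_\ell}$, a direct sum of copies of $\rho_{\Pi,\iota_\ell}^\cc(r)$, while local-global compatibility (Proposition~\ref{issiieifimeifes}) together with Definition~\ref{isioswooiemis} identifies the generalized Frobenius eigenvalues at $\mfk p$ with the roots of $P_{\bm\alpha(\Pi_{\mfk p})}(p^{-1}T)$ modulo $\lbd$; these avoid $1$ and $p^2$ by (P4) and (P2).

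For (4) and (5): write $M=\etH^{2r-1}(\ovl{\bx M}_N, \bx R\Psi\mcl O_\lbd(r))/\mfk n$ with its unipotent monodromy operator $N$. Statement (5) is the even-rank analogue of the weight-monodromy analysis behind Proposition~\ref{isnsnieiiheinfies}: one has $\bx H^1(I_{\bb Q_{p^2}}, M)\cong (M/NM)(-1)$ as $\Gal_{\bb F_{p^2}}$-modules, and the weight-monodromy formalism identifies the Frobenius-invariants of this module with the image of the term $\bx F_{-1}$ of the monodromy filtration, the exclusion of the eigenvalues $1$ and $p^2$ in (3) being exactly what makes the identification clean. Statement (4) — the arithmetic level-raising isomorphism — is obtained by first showing $\nabla^0_{/\mfk n}$ is surjective via the geometric computation of \cite[\S5]{LTXZZ} (using (P4) to match the relevant Hecke operator with the monodromy), and then that it is an isomorphism by a commutative-algebra length count based on the relations between $\msf R^{\bx{mix}}$, $\msf R^\unr$ and $\msf R^{\bx{ram}}$ furnished by Theorem~\ref{isnsieniehfbeienis} and \cite{LTXZZa}. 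For (6): the $\mfk m$-localized cohomology $\etH^{2r-1}(\bSh(\mbf V_N', \mtt j_N\mdc K_N^{\infty,p}\mdc K_{N,p}')_{\ovl F},\mcl O_\lbd(r))_{\mfk m}$ is $\mcl O_\lbd$-free by the argument of (1); every irreducible $\pi^{\prime\infty}$ contributing to it is $\Pi$-congruent, hence by Lemma~\ref{issiienneifheims} has residual Galois representation conjugate to the irreducible $\ovl\rho_{\Pi,\lbd}^\cc$, so by Hypothesis~\ref{iifififmieiemss}(1) the cohomology is a direct sum of copies of $\bx R^\cc$ for a $\Gal_F$-stable $\mcl O_\lbd$-lattice $\bx R$ in $\rho_{\Pi,\lbd}(r)$ (unique up to homothety since $\ovl\rho_{\Pi,\lbd}$ is absolutely irreducible); reducing modulo $\mfk n$ — which corresponds under $\phi_\Pi$ to reduction modulo $\lbd^m$ — then gives $(\ovl{\bx R}^{(m)\cc})^{\oplus\mu}$ for some $\mu\in\bb Z_+$.

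The main obstacle is part (4). One has to match the map $\nabla^0$, a priori a purely geometric object built from the basic uniformization of the special fiber of $\bx M_N$ at $\mfk p$ and the monodromy on nearby cycles, with the Hecke-theoretic picture so that the full strength of Theorem~\ref{isnsieniehfbeienis} — including the modified level structure — becomes applicable, and then run the length count relating $\msf R^\unr$ and $\msf R^{\bx{ram}}$ as quotients of $\msf R^{\bx{mix}}$ cut out by level-raising. This is also the point at which one must be most careful that nothing from the almost-cuspidal odd-rank side, treated separately, leaks into the even-rank argument.
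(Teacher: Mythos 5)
Your proposal follows the same route as the paper's proof, which simply cites \cite[Theorem~6.3.4]{LTXZZ} for parts (1)--(3), and \cite[Theorem~6.3.4(4)]{LTXZZ}, \cite[Proposition~6.4.1]{LTXZZ}, \cite[Lemma~4.2.4(2)]{LTX24} for parts (4)--(6), with Theorem~\ref{isnsieniehfbeienis} supplying the modified $\msf R=\msf T$ input. You have simply unwound those references into an explicit sketch; the strategy, the role of the $\msf R=\msf T$ theorem, the weight spectral sequence, and the length count relating $\msf R^{\unr}$ and $\msf R^{\bx{ram}}$ are all the same.
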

\begin{proof}
For (1)-(3): Using Theorem~\ref{isnsieniehfbeienis}, the proof of \cite[Theorem~6.3.4(1)-(3)]{LTXZZ} goes through.

For (6)-(7): By the proof of \cite[Theorem~6.3.4(4)]{LTXZZ}, these follow from \cite[Proposition~6.4.1]{LTXZZ}, \cite[Lemma~4.2.4(2)]{LTX24} and Theorem~\ref{isnsieniehfbeienis}.
\end{proof}

\subsection{First explicit reciprocity law}\label{sosieemiifeiifemiws}

We now work in the following setup.

\begin{setup}\label{isnsiemfmiemiw}\enskip
\begin{itemize}
\item
Let $n\ge 2$ be an integer. Among $\{n, n+1\}$, $n_0=2r_0$ (resp. $n_1=2r_1+1$) be the unique even (resp. odd) number in the set $\{n, n+1\}$. In particular, $r_0+r_1=n$.
\item
Let $\Pi_0$ be a cuspidal relevant representation of $\GL_{n_0}(\Ade_F)$, and let $\Pi_1$ be an almost cuspidal relevant representation of $\GL_{n_1}(\Ade_F)$ of the form $\Pi_1=\Pi_1^\flat\boxplus\uno$, where $\uno$ is the trivial character of $\GL_1(\Ade_F)$ (see Definition~\ref{eirinenriedusnisimaos}).
\item
Let $E\subset \bb C$ be a strong coefficient field of $\Pi$ (see Definition~\ref{sisieifnieeimfsi}).
\item
For each $\alpha\in\{0,1\}$ and each finite place $\lbd$ of $E$, let $\rho_{\Pi_\alpha, \lbd}:\Gal_F\to \GL_{n_\alpha}(E_\lbd)$ be the continuous homomorphism attached to $\Pi_\alpha$ (see Definition~\ref{sisieifnieeimfsi}). In particular, $\rho_{\Pi_\alpha, \lbd}^\cc$ and $\rho_{\Pi_\alpha, \lbd}^\vee(1-n_\alpha)$ are conjugate.
\item
For each $\alpha\in\{0,1\}$, let $\phi_{\Pi_\alpha}: \bb T_{n_\alpha}^{\Pla^{\bx{\Pi_0}}_+\cup\Pla^{\bx{\Pi_1}}_+}\to\mcl O_E$ be the restriction of the Hecke character defined in Definition~\ref{INIENIEMisimw}.
\end{itemize}
\end{setup}

We further assume that we are in the following setting.

\begin{setup}\label{ismsienifmeitjes}
Let $(\lbd, \Pla_+^{\lr, \bx{\Rmnum1}}, \Pla_+^{\bx{\Rmnum1}}, \mrs V^\circ, m, \mfk p, \mrs T, \mrs V^\bullet, \mrs U)$ be a nonuple, where
\begin{itemize}
\item
$\lbd$ is a finite place of $E$ whose underlying prime $\ell$ satisfies $\Pla^{\Pi_0}_+\cap\Pla_{F_+}(\ell)=\vn$ and $\ell\ge 2(n_0+1)$.
\item
$\Pla^{\lr, \bx{\Rmnum1}}_+$ is a finite set of finite inert places of $F_+$ strongly disjoint from $\Pla^{\Pi_0}_+\cup\Pla^{\Pi_1}_+$ (see Definition~\ref{ifitneramieiiefolacies}) satisfying $\ell\nmid\norml{v}(\norml{v}^2-1)$ for any $v\in\Pla^{\lr, \bx{\Rmnum1}}_+$.
\item
$\Pla^{\bx{\Rmnum1}}_+$ is a finite set of finite places of $F_+$ containing $\Pla_+^{\lr, \bx{\Rmnum1}}$ and $\Pla^{\Pi_0}_+\cup\Pla^{\Pi_1}_+$.

\item
$\mrs V^\circ=(\mbf V_n^\circ, \mbf V_{n+1}^\circ; \Lbd_n^\circ, \Lbd_{n+1}^\circ; \mdc K_n^\circ, \mdc K_\sp^\circ, \mdc K_{n+1}^\circ)$ is a septuple, where\footnote{Compared with \cite[\S7.2]{LTXZZ}, we omit the assumption that $(\mdc K_N^\circ)_v$ is transferable when $N$ is even, which is possible by \cite[Remark~8.2]{LTXZZb}}
\begin{enumerate}
\item
$\mbf V_n^\circ$ is a standard definite Hermitian space over $F$ of dimension $N$ (see Definition~\ref{ssowosieiniemfes}), and $\mbf V_{n+1}^\circ=(\mbf V_n^\circ)_\sharp$, such that $(\mbf V_{n_0}^\circ)_v$ is not split for $v\in \Pla^{\lr, \bx{\Rmnum1}}_+$.
\item
$\Lbd_n^\circ$ is a self-dual $\prod_{v\in\Pla_F^\infty\setm\Pla_+^{\bx{\Rmnum1}}}\mcl O_{F_v}$-lattice in $\mbf V^\circ_n\otimes_{F_+}\Ade_{F_+}^{\infty, \Pla_+^{\bx{\Rmnum1}}}$;
\item
$\mdc K_n^\circ$ is an object in $\mfk K(\mbf V_n^\circ)$ and $(\mdc K_\sp^\circ, \mdc K_{n+1}^\circ)$ is an object in $\mfk K(\mbf V_n^\circ)_\sp$ of the forms
\begin{equation*}
\mdc K_n^\circ=\prod_{v\in\Pla^{\bx{\Rmnum1}}_+}(\mdc K_n^\circ)_v\times\prod_{v\in\fPla_+\setm\Pla^{\bx{\Rmnum1}}_+}\bx U(\Lbd_n^\circ)(\mcl O_v),
\end{equation*}
\begin{equation*}
\mdc K_\sp^\circ=\prod_{v\in\Pla^{\bx{\Rmnum1}}_+}(\mdc K_\sp^\circ)_v\times\prod_{v\in\fPla_+\setm\Pla^{\bx{\Rmnum1}}_+}\bx U(\Lbd_n^\circ)(\mcl O_v),
\end{equation*}
\begin{equation*}
\mdc K_{n+1}^\circ=\prod_{v\in\Pla^{\bx{\Rmnum1}}_+}(\mdc K_{n+1}^\circ)_v\times\prod_{v\in\fPla_+\setm\Pla^{\bx{\Rmnum1}}_+}\bx U(\Lbd_{n+1}^\circ)(\mcl O_v),
\end{equation*}
satisfying
\begin{itemize}
\item
$(\mdc K_\sp^\circ)_v\subset (\mdc K_n^\circ)_v$ for $v\in\Pla^{\bx{\Rmnum1}}_+$, and
\item
$(\mdc K_{n_0}^\circ)_v$ is a hyperspecial maximal subgroup of $\bx U(\mbf V_{n_0}^\circ)(F_v)$ for $v\in\Pla^{\bx{\Rmnum1}}_+\setm(\Pla^{\lr, \bx{\Rmnum1}}_+\cup\Pla^{\Pi_0}_+)$, and is a special maximal subgroup of $\bx U(\mbf V_{n_0}^\circ)(F_v)$ for $v\in\Pla^{\lr, \bx{\Rmnum1}}_+$
\end{itemize}
\end{enumerate}
\sut
\begin{equation*}
\frac{\mcl O_\lbd[\bSh(\mbf V_{n_0}^\circ, \mdc K_{n_0}^\circ)]}{\bb T_{n_0}^{\Pla_+^{\bx{\Rmnum1}}}\cap\ker\phi_{\Pi_{n_0}}}
\end{equation*}
is nontrivial.
\item
$m\in\bb Z_+$ is a positive integer,
\item
$\mfk p$ is a very good inert place of $F_+$ with the underlying rational prime $p$ (see Definition~\ref{issieniefeifmies}), satisfying\footnote{Compared with \cite[\S7.2]{LTXZZ}, we incorporate (PI7) into (PI4), and omit assumption (PI6) as it will be redundant for applications in view of \cite[Lemma 4.2.4(2)]{LTX24}.}
\begin{enumerate}
\item[(PI1)]
$\mfk p$ is strongly disjoint from $\Pla^{\bx{\Rmnum1}}_+$;
\item[(PI2)]
$\ell$ does not divide $p(p^2-1)$;
\item[(PI3)]
There exists a CM type $\Phi$ containing $\tau_\infty$ as in \cite[\S5.1]{LTXZZ} with $\bb Q_{p^2}^\Phi=\bb Q_{p^2}$ (we refer to \cite[\S3.3]{LTXZZ} for the definitions).
\item[(PI4)]
$P_{\bm\alpha(\Pi_{0, \mfk p})}\modu{\lbd^m}$ is level-raising special at $\mfk p$, $P_{\bm\alpha(\Pi_{1, \mfk p})}\modu\lbd$ is Tate generic at $\mfk p$, and $P_{\bm\alpha(\Pi_{0, \mfk p})\otimes\bm\alpha(\Pi_{1, \mfk p})}\modu{\lbd^m}$ is level-raising special at $\mfk p$ (see Definition~\ref{llieienieiheires});
\item[(PI5)]
$P_{\bm\alpha(\Pi_{\alpha, \mfk p})}\modu\lbd$ is intertwining generic at $\mfk p$ for each $\alpha\in\{0, 1\}$.
\end{enumerate}
In particular, we can and will apply the construction and notations in \cite[\S5.10]{LTXZZ} to the datum $(\mbf V_n^\circ, \{\Lbd_{n, \mfk q}^\circ\}|_{\mfk q|p})$. For each $\alpha\in\{0, 1\}$, denote by
\begin{equation*}
\mfk m_\alpha\defining \bb T_{n_\alpha}^{\Pla^{\bx{\Rmnum1}}_+\cup\Pla_{F_+}(p)}\cap\ker\paren{\bb T_{n_\alpha}^{\Pla^{\bx{\Pi_0}}_+\cup\Pla^{\bx{\Pi_1}}_+}\xr{\phi_\Pi}\mcl O_E\to\mcl O_E/\lbd}
\end{equation*}
and
\begin{equation*}
\mfk n_\alpha\defining \bb T_{n_\alpha}^{\Pla^{\bx{\Rmnum1}}_+\cup\Pla_{F_+}(p)}\cap\ker\paren{\bb T_{n_\alpha}^{\Pla^{\bx{\Pi_0}}_+\cup\Pla^{\bx{\Pi_1}}_+}\xr{\phi_\Pi}\mcl O_E\to\mcl O_E/\lbd^m}
\end{equation*}
the two ideals of $\bb T_{n_\alpha}^{\Pla^{\bx{\Rmnum1}}_+\cup\Pla_{F_+}(p)}$.
\item
$\mrs T=(\Phi, \mbf W_0, \mdc K_0^p, \iota_p, \varpi)$ is a quintuple of data as in \cite[\S5.1]{LTXZZ} with $\bb Q_p^\Phi=\bb Q_{p^2}$.
\item
$\mrs V^\bullet=(\Lbd_{n, \mfk p}^\bullet, \Lbd_{n+1, \mfk p}^\bullet; \mdc K_{n, p}^\bullet, \mdc K_{n+1, p}^\bullet, \mdc K_{\sp, p}^\bullet; \mdc K_{n, p}^\dagger, \mdc K_{\sp, p}^\dagger, \mdc K_{n+1, p}^\dagger)$ is an octuple of data as in \cite[Notation~5.10.13]{LTXZZ}. For each $\alpha\in\{0, 1\}$, we set $\mdc K_{n_\alpha}^{\circ, p}\defining (\mdc K_{n_\alpha}^\circ)^p$, and $\mdc K_{n_\alpha}^\bullet\defining \mdc K_{n_\alpha}^{\circ, p}\times \mdc K_{n_\alpha, p}^\bullet$.
\item
$\mrs U=\paren{\mbf V_n', \mtt j_n, \{\Lbd_{n, \mfk q}'\}_{\mfk q|p}; \mbf V_{n+1}', \mtt j_{n+1}, \{\Lbd_{n+1, \mfk q}'\}_{\mfk q|p}}$ is a sextuple in which $\paren{\mbf V_n', \mtt j_n, \{\Lbd_{n, \mfk q}'\}_{\mfk q|p}}$ is an indefinite uniformization datum for $\mbf V_n^\circ$ as in Setup~\ref{issientieitiemfes}, $\mbf V_{n+1}'\defining (\mbf V_n')_\sharp$, $\mtt j_{n+1}\defining (\mtt j_n)_\sharp$, and $\Lbd_{n+1, \mfk q}=(\Lbd_{n, \mfk q})_\sharp$ for each $\mfk q|p$. Then $\paren{\mbf V_{n+1}', \mtt j_{n+1}, \{\Lbd_{n+1, \mfk q}'\}_{\mfk q|p}}$ is an indefinite uniformization datum for $\mbf V_{n+1}^\circ$. For each $\alpha\in\{0, 1\}$, let $\mdc K_{n_\alpha, \mfk q}'$ denote the stabilizer of $\Lbd_{n_\alpha, \mfk q}'$, and set $\mdc K_{n_\alpha, p}'\defining\prod_{\mfk q|p}\mdc K_{n_\alpha, \mfk q}'$.
\end{itemize}
\end{setup}

For each $\alpha\in\{0, 1\}$, we set $\bx X^?_{n_\alpha}\defining \bx X_{\mfk p}^?(\mbf V_{n_\alpha}^\circ, \mdc K_{n_\alpha}^{p, \circ})$ for meaningful pairs $(\bx X, ?)\in \{\mbf M, \bx M, \bx B, \bx S\}\times \{\ ,\eta,\circ,\bullet, \dagger\}$, and let $({}^\alpha\bx E_s^{p, q}, {}^\alpha\bx d_s^{p, q})$ denote the weight spectral sequence abutting to the cohomology $\bx H_{\mfk T}^\bullet(\ovl{\bx M}_{n_\alpha}, \bx R\Psi\mcl O_\lbd(r_\alpha))$ from \cite[\S5.9]{LTXZZ}.

\begin{assumption}\label{ieieniemeifs}
$\rho_{\Pi_0, \lbd}$ and $\rho_{\Pi^\flat_1, \lbd}$ are residually absolutely irreducible.
\end{assumption}

Under Assumption~\ref{ieieniemeifs}, for each $\alpha\in\{0, 1\}$, we get a residual representation $\ovl\rho_{\Pi_\alpha, \lbd}$, which is unique up to conjugation and $(1-n_\alpha)$-polarizable in the sense of \cite[Definition~2.5.3]{LTXZZ}. Then we obtain a continuous homomorphism
\begin{equation}\label{ititttemows}
\ovl\rho_{\Pi_\alpha, \lbd, +}: \Gal_{F_+}\to \mrs G_{n_\alpha}(\kappa_\lbd)
\end{equation}
from \cite[Construction~2.5.4]{LTXZZ}.

\begin{assumption}\label{issmienfeiifems}
Assumption~\ref{ieieniemeifs} holds, $\ovl\rho_{\Pi_0, \lbd, +}$ is rigid for $(\Pla^{\Pi_0}_+, \Pla^{\lr, \bx{\Rmnum1}}_+)$ (see Definition~\ref{rigidindiremiesLGoos}), and $\ovl\rho_{\Pi_0, \lbd}|_{\Gal_{F(\mu_\ell)}}$ is absolutely irreducible.
\end{assumption}

\begin{assumption}\label{isnsiiehifeireiheiss}
For each $\alpha\in\{0, 1\}$, the composite homomorphisms $\bb T_{n_\alpha}^{\Pla^{\bx{min}_+}}\xr{\phi_{\Pi_\alpha}}\mcl O_E\to \kappa_\lbd$ is cohomologically generic (see Definition~\ref{cohomomoegienifmos}).
\end{assumption}

In the following we will freely use the notation from \cite[\S7.2]{LTXZZ}.

We apply the construction and notation of \cite[\S5.11]{LTXZZ}, evaluating on the object $(\mdc K_n^{\circ, p}, \mdc K_{n+1}^{\circ, p})\in \mfk K(\mbf V_n^\circ)^p\times \mfk K(\mbf V_{n+1}^\circ)^p$. In particular, we obtain the blow-up morphism $\sigma: \mbf Q\to \mbf P$ from \cite[Notation~5.11.1]{LTXZZ}, and the localized weight spectral sequence $\paren{\bb E^{p, q}_{s, (\mfk m_0, \mfk m_1)}, \bx d^{p, q}_{s, (\mfk m_0, \mfk m_1)}}$ abutting to the cohomology $\bx H_{\mfk T}^\bullet(\ovl{\bx Q}, \bx R\Psi\mcl O_\lbd(n))_{(\mfk m_0,\mfk m_1)}$ from \cite[(5.27)]{LTXZZ}.

\begin{lm}\label{Kemmeiefeifemis}
Assume \textup{Assumptions~\ref{issmienfeiifems}, \ref{isnsiiehifeireiheiss}} and \textup{Hypothesis~\ref{iifififmieiemss}} for each $N\in\{n, n+1\}$. Then
\begin{enumerate}
\item
For any $(?_0, ?_1)\in\{\circ, \bullet, \dagger\}^2$ and any $i\in\bb Z$, there is a canonical isomorphism
\begin{equation*}
\bx H^{i}_{\mfk T}\paren{\ovl{\bx P}^{?_0, ?_1}, \mcl O_\lbd(i)}_{(\mfk m_0, \mfk m_1)}\cong \bplus_{i_0+i_1=i}\bx H^{i_0}_{\mfk T}\paren{\ovl{\bx M}_{n_0}^{?_0}, \mcl O_\lbd}_{\mfk m_0}\otimes_{\mcl O_\lbd}\bx H^{i_0}_{\mfk T}\paren{\ovl{\bx M}_{n_1}^{?_1}, \mcl O_\lbd}_{\mfk m_1}
\end{equation*}
in $\Mod(\Gal_{\bb F_{p^2}}, \mcl O_\lbd)_{\bx{fr}}$.
\item
$\bb E^{p, q}_{2, (\mfk m_0, \mfk m_1)}$ vanishes unless $(p, q)\in\{(-1, 2n), (0, 2n-1), (1, 2n-2)\}$, and canonical isomorphisms
\begin{equation*}
\begin{cases}
&\bb E_{2, (\mfk m_0, \mfk m_1)}^{-1, 2n}\cong {}^0\bx E_{2, \mfk m_0}^{-1, 2r_0}\otimes_{\mcl O_\lbd}{}^1\bx E^{0, 2r_1}_{2, \mfk m_1},\\
&\bb E_{2, (\mfk m_0, \mfk m_1)}^{0, 2n-1}\cong {}^0\bx E_{2, \mfk m_0}^{0, 2r_0-1}\otimes_{\mcl O_\lbd}{}^1\bx E^{0, 2r_1}_{2, \mfk m_1},\\
&\bb E_{2, (\mfk m_0, \mfk m_1)}^{1, 2n-2}\cong {}^0\bx E_{2, \mfk m_0}^{1, 2r_0-2}\otimes_{\mcl O_\lbd}{}^1\bx E^{0, 2r_1}_{2, \mfk m_1},
\end{cases}
\end{equation*}
in $\Mod(\Gal_{\bb F_{p^2}}, \mcl O_\lbd)_\lr$. In particular, $\bx H_{\mfk T}^i\paren{\ovl{\bx Q}, \bx R\Psi\mcl O_\lbd(n)}_{(\mfk m_0,\mfk m_1)}$ vanishes unless $i=2n-1$.
\item
If $\bb E^{i, 2n-1-i}_{2, (\mfk m_0, \mfk m_1)}(-1)$ has a nontrivial subquotient on which $\Gal_{\bb F_{p^2}}$ acts trivially, then $i=1$.
\item
For any $(?_0, ?_1)\in\{\circ, \bullet, \dagger\}^2$ and any $i\in\bb Z$, $\bx H^{2i}_{\mfk T}\big(\ovl{\bx Q}^{?_0, ?_1}, \mcl O_\lbd(i)\big)_{(\mfk m_0, \mfk m_1)}$ is weakly semisimple.
\item
The canonical map $\bx H^i_{\mfk T, c}(\ovl{\bx Q}^{(c)}, \mcl O_\lbd)_{(\mfk m_0, \mfk m_1)}\to \bx H^i_{\mfk T}(\ovl{\bx Q}^{(c)}, \mcl O_\lbd)_{(\mfk m_0, \mfk m_1)}$ is an isomorphism for any integers $c$ and $i$.
\end{enumerate}
\end{lm}
\begin{proof}
For (1), By \cite[Lemma~5.6.2]{LTXZZ}, Lemma~\ref{isnsnieiiheinfies}(1) and Lemma~\ref{isnsieiINEifehiw}(1), we know that $\bx H^{i_\alpha}\paren{\ovl{\bx M}_{n_\alpha}^{?_\alpha}, \mcl O_\lbd}_{\mfk m_\alpha}$ is a free $\mcl O_\lbd$-module for every $(\alpha, i_\alpha,  ?_\alpha)\in \{0, 1\}\times\bb N\times\{\circ, \bullet, \dagger\}$. Thus (1) follows from Lemma~\ref{isnsieoeimeifes} and the \Kunneth formula.

For (2), Using Lemma~\ref{issieiemfieehireriems}, Propositions~\ref{isnsnieiiheinfies}, \ref{isnsieiINEifehiw}(2) and Lemma~\ref{isnsieoeimeifes}, the proof of \cite[Lemma~7.2.5(2)]{LTXZZ} goes through.

For (3), by inspecting the proof of \cite[Lemma~7.2.5(3)]{LTXZZ}, the assertion follows from Proposition~\ref{isnsnieiiheinfies}(2) and Proposition~\ref{isnsieiINEifehiw}(3).

For (4): Using Proposition~\ref{isnsnieiiheinfies}, the proof of \cite[Lemma~7.2.5(4)]{LTXZZ} goes through.

Part (5) follows from part (1), Lemma~\ref{isnsieoeimeifes} and \cite[Lemma~5.11.3(3-5)]{LTXZZ}.
\end{proof}

By Lemma~\ref{Kemmeiefeifemis}(2), we obtain a coboundary map
\begin{equation*}
\AJ_{\mbf Q}: Z_{\mfk T}^n(\mbf Q^\eta)\to \bx H^1\paren{\bb Q_{p^2}, \bx H^{2n-1}_{\mfk T}\paren{\ovl{\bx Q}, \bx R\Psi\mcl O_\lbd(n)}_{(\mfk m_0, \mfk m_1)}}.
\end{equation*}

By our choice of $\mdc K_n^\circ$ and $(\mdc K_\sp^\circ, \mdc K_{n+1}^\circ)$, we obtain a finite morphism
\begin{equation*}
\mbf M_{\mfk p}(\mbf V_n^\circ, \mdc K_\sp^\circ)\to \mbf P.
\end{equation*}
Denote by $\mbf P_\sp$ the corresponding cycle, and by $\mbf Q_\sp$ the strict
transform of $\mbf P_\sp$ under $\sigma$, and $\bx Q_\sp$ the special fiber of $\mbf Q_\sp$.

We recall the construction of potential map from \cite[\S5.11]{LTXZZ}. For each $r\in\bb Z$, set
\begin{equation*}
B^r(\bx Q, \mcl O_\lbd)\defining \ker\paren{\delta_0^*: \bx H^{2r}_{\mfk T}\paren{\ovl{\bx Q}^{(0)}, \mcl O_\lbd(r)}\to \bx H^{2r}_{\mfk T}\paren{\ovl{\bx Q}^{(1)}, \mcl O_\lbd(r)}},
\end{equation*}
and
\begin{align*}
B_{n-r}(\bx Q, \mcl O_\lbd)\defining \coker\bigg(\delta_{1, !}: &\bx H^{2(n+r-2)}_{\mfk T}\paren{\ovl{\bx Q}^{(1)}, \mcl O_\lbd(n+r-2)}\\
&\to \bx H^{2(n+r-1)}_{\mfk T}\paren{\ovl{\bx Q}^{(0)}, \mcl O_\lbd(n+r-1)}\bigg),
\end{align*}
where $\delta_0^*$ is a linear combination of pullback maps and $\delta_{1, !}$ is a linear combination of pushforward maps; see \cite[p.~262]{LTXZZ}. Denote by $B^n(\bx Q, \mcl O_\lbd)^0$ and $B_n(\bx Q, \mcl O_\lbd)_0$ the kernel and cokernel of the tautological map
\begin{equation*}
B^n(\bx Q, \mcl O_\lbd)\to B_{n-1}(\bx Q, \mcl O_\lbd),
\end{equation*}
respectively. By \cite[Lemma~2.4]{Liu19}, the composite map
\begin{equation*}
\bx H_{\mfk T}^{2(n-1)}\paren{\ovl{\bx Q}^{(0)}, \mcl O_\lbd(n-1)}\xr{\delta_0^*}\bx H_{\mfk T}^{2(n-1)}\paren{\ovl{\bx Q}^{(1)}, \mcl O_\lbd(n-1)}\xr{\delta_{1, !}}\bx H_{\mfk T}^{2n}\paren{\ovl{\bx Q}^{(0)}, \mcl O_\lbd(n)}
\end{equation*}
factors through a unique map $B_n(\bx Q, \mcl O_\lbd)_0\to B_n(\bx Q, \mcl O_\lbd)^0$. Set
\begin{equation*}
C_n(\bx Q, \mcl O_\lbd)\defining B_n(\bx Q, \mcl O_\lbd)_0^{\Gal_{\bx F_{p^2}}}, \quad C^n(\bx Q, \mcl O_\lbd)\defining B^n(\bx Q, \mcl O_\lbd)^0_{\Gal_{\bx F_{p^2}}}.
\end{equation*}
Then we obtain a \tbf{potential map}
\begin{equation*}
\Delta^n: C_n(\bx Q, \mcl O_\lbd)\to C^n(\bx Q, \mcl O_\lbd).
\end{equation*}
In particular, the cycle $\bx Q_\sp$ gives rise to a class $\cl(\bx Q_\sp)\in C^n(\bx Q, \mcl O_\lbd)$.

\begin{prop}\label{issienifeiehriefmew}
Assume \textup{Assumptions~\ref{issmienfeiifems}, \ref{isnsiiehifeireiheiss}} and \textup{Hypothesis~\ref{iifififmieiemss}} for each $N\in\{n, n+1\}$. There is a canonical isomorphism
\begin{equation*}
\bx H^1_\sing\paren{\bb Q_{p^2}, \bx H^{2n-1}_{\mfk T}\paren{\ovl{\bx Q}, \bx R\Psi\mcl O_\lbd(n)}_{(\mfk m_0, \mfk m_1)}}\cong\coker\Delta^n_{(\mfk m_0, \mfk m_1)},
\end{equation*}
under which $\partial \AJ_{\mbf Q}(\mbf Q_\sp^\eta)$ is identified with the image of $\cl(\bx Q_\sp)$ in $\coker\Delta^n_{(\mfk m_0, \mfk m_1)}$.
\end{prop}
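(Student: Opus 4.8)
The plan is to carry over the analysis of \cite[\S\S5.11--5.12]{LTXZZ}---which ultimately rests on \cite[Lemma~2.4]{Liu19}---essentially word for word, with the concentration, freeness, and weak semisimplicity inputs that it requires now furnished by Lemma~\ref{Kemmeiefeifemis}. Write $H\defining\bx H^{2n-1}_{\mfk T}\paren{\ovl{\bx Q}, \bx R\Psi\mcl O_\lbd(n)}_{(\mfk m_0,\mfk m_1)}$, which is a finite free $\mcl O_\lbd$-module by Lemma~\ref{Kemmeiefeifemis}(2). The starting point is that $\bx H^1_\sing(\bb Q_{p^2}, H)=\bx H^1\paren{I_{\bb Q_{p^2}}, H}^{\Gal_{\bb F_{p^2}}}$ by definition, and that, $\mbf Q$ being strictly semistable over $\mcl O_{\bb Q_{p^2}}$, inertia acts on the nearby-cycle cohomology $H$ through its tame quotient by a unipotent automorphism $\exp(N)$, where $N\colon H\to H(-1)$ is the monodromy operator. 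Hence $\bx H^1\paren{I_{\bb Q_{p^2}}, H}$ is canonically $\coker\paren{N\colon H\to H(-1)}$, so $\bx H^1_\sing(\bb Q_{p^2}, H)\cong\paren{\coker N}^{\Gal_{\bb F_{p^2}}}$.

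Next I would identify $\paren{\coker N}^{\Gal_{\bb F_{p^2}}}$ with $\coker\Delta^n_{(\mfk m_0,\mfk m_1)}$. By Lemma~\ref{Kemmeiefeifemis}(2) the localized weight spectral sequence $\bb E^{p,q}_{2,(\mfk m_0,\mfk m_1)}$ is supported on $(p,q)\in\{(-1,2n),(0,2n-1),(1,2n-2)\}$, hence degenerates at the second page, and the induced three-step monodromy filtration on $H$, together with weight-monodromy for the even-rank factor (automatic once the filtration has length three, and compatible with the \Kunneth decompositions of Lemma~\ref{Kemmeiefeifemis}(2)), places $N$ in the standard position relative to it. This is exactly the configuration analyzed in \cite[\S\S5.11--5.12]{LTXZZ}: there the $B$-groups $B^n(\bx Q,\mcl O_\lbd)^0$ and $B_n(\bx Q,\mcl O_\lbd)_0$ are shown---using \cite[Lemma~2.4]{Liu19} for the factorization of $\delta_{1,!}\circ\delta_0^*$ through the potential map $\Delta^n$---to compute $\coker N$ after passage to $\Gal_{\bb F_{p^2}}$-invariants and coinvariants, which yields the canonical isomorphism $\paren{\coker N}^{\Gal_{\bb F_{p^2}}}\cong\coker\Delta^n_{(\mfk m_0,\mfk m_1)}$. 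The blow-up $\sigma\colon\mbf Q\to\mbf P$ and the localization at $(\mfk m_0,\mfk m_1)$ cause no trouble, because Lemma~\ref{Kemmeiefeifemis}(1),(4),(5) supplies precisely the freeness, weak semisimplicity, and compact-support comparison statements that those sections invoke (in place of the cuspidal-case computations of \cite[\S7.2]{LTXZZ}). Composing with the first paragraph gives the asserted canonical isomorphism.

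Finally I would match $\partial\AJ_{\mbf Q}(\mbf Q_\sp^\eta)$ with the image of $\cl(\bx Q_\sp)$ in $\coker\Delta^n_{(\mfk m_0,\mfk m_1)}$. Since $\mbf Q_\sp$ is a model of $\mbf Q_\sp^\eta$ over $\mcl O_{\bb Q_{p^2}}$ whose special fiber is the cycle $\bx Q_\sp$, the class $\AJ_{\mbf Q}(\mbf Q_\sp^\eta)$ lies in the image of cohomology with support along the special fiber, so its image under the boundary map $\partial$ into $\bx H^1_\sing(\bb Q_{p^2}, H)$ is computed by specializing $\mbf Q_\sp^\eta$; under the identification of the previous paragraph this specialization is exactly $\cl(\bx Q_\sp)$ read modulo the image of $\Delta^n$. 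This is the compatibility recorded in \cite[\S5.12]{LTXZZ}, and it transfers unchanged. I expect the main obstacle to be not any single assertion but the cumulative bookkeeping: confirming that every hypothesis invoked in \cite[\S\S5.11--5.12]{LTXZZ} during this argument is among those provided by Lemma~\ref{Kemmeiefeifemis}, and, in particular, that the points at which we have weakened the hypotheses of \cite{LTXZZ}---dropping transferability of $(\mdc K_N^\circ)_v$ for $N$ even, and omitting condition (PI6)---are irrelevant here, as guaranteed by \cite[Remark~8.2]{LTXZZb} and \cite[Lemma~4.2.4(2)]{LTX24} respectively.
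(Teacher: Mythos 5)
Your argument is essentially the same as the paper's: the paper's proof is the one-line citation ``Using Lemma~\ref{Kemmeiefeifemis}, the proof of \cite[Proposition~7.2.7]{LTXZZ} goes through,'' and what you have written unpacks precisely that reference (the monodromy computation of $\bx H^1_\sing$, the degeneration and support of the localized weight spectral sequence, the factorization through the potential map $\Delta^n$ via \cite[Lemma~2.4]{Liu19}, and the specialization compatibility for $\AJ_{\mbf Q}(\mbf Q_\sp^\eta)$), while correctly identifying that the role of the corresponding lemmas in \cite[\S7.2]{LTXZZ} is now played by Lemma~\ref{Kemmeiefeifemis}. The only stylistic difference is that the paper cites the downstream proposition \cite[Proposition~7.2.7]{LTXZZ} directly rather than the underlying machinery in \cite[\S5.11]{LTXZZ}, but the content is the same.
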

\begin{proof}
Using Lemma~\ref{Kemmeiefeifemis}, the proof of \cite[Proposition~7.2.7]{LTXZZ} goes through.
\end{proof}

For each $\alpha\in\{0, 1\}$, we set $\bSh_{n_\alpha}'\defining \bSh(\mbf V'_{n_\alpha}, \mtt j_{n_\alpha}\mdc K_{n_\alpha}^{\circ, p}\mdc K_{n_\alpha, p}')$. By \cite[Construction~5.11.7 and Remark~5.11.8]{LTXZZ}, we obtain a map
\begin{equation*}
\nabla: C^n(\bx Q, \mcl O_\lbd)\to \mcl O_\lbd[\bSh(\mbf V_{n_0}^\circ, \mdc K_{n_0}^\circ)]\otimes_{\mcl O_\lbd}\mcl O_\lbd[\bSh(\mbf V_{n_1}^\circ, \mdc K_{n_1}^\circ)].
\end{equation*}

Under Assumption~\ref{ieieniemeifs} and Assumption~\ref{isnsiiehifeireiheiss},
\begin{equation*}
\etH^{2n}\paren{(\bSh_{n_0}'\times_{\Spec F}\bSh_{n_1}')_{\ovl F}, \mcl O_\lbd}_{(\mfk m_0, \mfk m_1)}
\end{equation*}
vanishes. This follows from \cite[Lemma~5.2.7]{LTXZZ}, Lemma~\ref{isnsieoeimeifes}, and the \Kunneth formula. In particular, we obtain an Abel--Jacobi map
\begin{equation*}
\AJ: \bx Z^n\paren{\bSh_{n_0}'\times_{\Spec F}\bSh_{n_1}'}\to \bx H^1\paren{F, \etH^{2n-1}\paren{\paren{\bSh_{n_0}'\times_{\Spec F}\bSh_{n_1}'}_{\ovl F}, \mcl O_\lbd(n)}_{(\mfk m_0, \mfk m_1)}}
\end{equation*}
and its natural projection
\begin{equation*}
\ovl\AJ: \bx Z^n\paren{\bSh_{n_0}'\times_{\Spec F}\bSh_{n_1}'}\to \bx H^1\paren{F, \etH^{2n-1}\paren{\paren{\bSh_{n_0}'\times_{\Spec F}\bSh_{n_1}'}_{\ovl F}, \mcl O_\lbd(n)}/(\mfk n_0, \mfk n_1)}.
\end{equation*}
Let $\bSh'_\sp$ denote the cycle given by the finite morphism $\bSh(\mbf V'_n, \mtt j_n\mdc K_\sp^{\circ, p}\mdc K_{n, p}')\to \bSh_{n_0}'\times_{\Spec F}\bSh_{n_1}'$.

\begin{prop}\label{lsisineioonfiemis}
Assume \textup{Assumptions~\ref{issmienfeiifems}, \ref{isnsiiehifeireiheiss}} and \textup{Hypothesis~\ref{iifififmieiemss}} for each $N\in\{n, n+1\}$.
\begin{enumerate}
\item
The map $\nabla$ descends modulo $(\mfk n_0, \mfk n_1)$ to an isomorphism
\begin{equation*}
\nabla_{/(\mfk n_0, \mfk n_1)}: \coker\Delta^n/(\mfk n_0,\mfk n_1)\xr\sim\mcl O_\lbd[\bSh(\mbf V_{n_0}^\circ, \mdc K_{n_0}^\circ)]\otimes_{\mcl O_\lbd}\mcl O_\lbd[\bSh(\mbf V_{n_1}^\circ, \mdc K_{n_1}^\circ)]/(\mfk n_0, \mfk n_1).
\end{equation*}
\item
The Hecke operator $(p+1)\mtt I^\circ_{n_0, \mfk p}\otimes\mtt T^\circ_{n_1, \mfk p}$ acts invertible on
\begin{equation*}
\mcl O_\lbd[\bSh(\mbf V_{n_0}^\circ, \mdc K_{n_0}^\circ)]\otimes_{\mcl O_\lbd}\mcl O_\lbd[\bSh(\mbf V_{n_1}^\circ, \mdc K_{n_1}^\circ)]/(\mfk n_0, \mfk n_1);
\end{equation*}
denote its inverse by $\mtt T^\circ$. Moreover,
\begin{equation*}
\nabla_{/(\mfk n_0, \mfk n_1)}(\partial_{\mfk p}\AJ_{\mbf Q})(\mbf Q_\sp^\eta)=\mtt T^\circ\uno_{\bSh(\mbf V^\circ_n, \mdc K_\sp^\circ)},
\end{equation*}
where $\uno_{\bSh(\mbf V^\circ_n, \mdc K_\sp^\circ)}$ is the pushforward of the characteristic function along the map $\bSh(\mbf V^\circ_n, \mdc K_\sp^\circ)\to \bSh(\mbf V^\circ_n, \mdc K_n^\circ)\times\bSh(\mbf V^\circ_{n+1}, \mdc K_{n+1}^\circ)$.
\item
\begin{align*}
&\exp_\lbd\paren{\partial_{\mfk p}\loc_{\mfk p}\ovl\AJ(\bSh'_\sp), \bx H^1_\sing\paren{F_{\mfk p}, \etH^{2n-1}\paren{\paren{\bSh_{n_0}'\times_{\Spec F}\bSh_{n_1}'}_{\ovl F}, \mcl O_\lbd(n)}/(\mfk n_0, \mfk n_1)}}\\
&=\exp_\lbd\paren{\uno_{\bSh(\mbf V^\circ_n, \mdc K_\sp^\circ)}, \mcl O_\lbd\Bkt{\bSh(\mbf V^\circ_{n_0}, \mdc K_{n_0}^\circ)\times \bSh(\mbf V^\circ_{n_1}, \mdc K_{n_1}^\circ)}/(\mfk n_0, \mfk n_1)}.
\end{align*}
\end{enumerate}
\end{prop}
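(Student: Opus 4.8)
The plan is to follow the strategy of \cite[\S7.2]{LTXZZ}, inserting the modifications forced by the almost cuspidal shape of $\Pi_1$.

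For part~(1), I would begin from Proposition~\ref{issienifeiehriefmew}, which identifies $\coker\Delta^n_{(\mfk m_0, \mfk m_1)}$ with $\bx H^1_\sing\bigl(\bb Q_{p^2}, \bx H^{2n-1}_{\mfk T}(\ovl{\bx Q}, \bx R\Psi\mcl O_\lbd(n))_{(\mfk m_0, \mfk m_1)}\bigr)$, compatibly with the class $\cl(\bx Q_\sp)$. Reducing modulo $(\mfk n_0, \mfk n_1)$ and feeding in the weight spectral sequence decomposition of Lemma~\ref{Kemmeiefeifemis}(2) together with the \Kunneth formula (Lemma~\ref{Kemmeiefeifemis}(1) and Lemma~\ref{isnsieoeimeifes}), the singular cohomology of $\bx H^{2n-1}_{\mfk T}(\ovl{\bx Q}, \bx R\Psi\mcl O_\lbd(n))/(\mfk n_0, \mfk n_1)$ is expressed as the tensor product of the even-side term $\bx F_{-1}\bx H^1\bigl(I_{\bb Q_{p^2}}, \etH^{2r_0-1}(\ovl{\bx M}_{n_0}, \bx R\Psi\mcl O_\lbd(r_0))/\mfk n_0\bigr)$ with the odd-side Tate coinvariants $(\bx E^{0, 2r_1}_{2, \mfk m_1})_{\Gal_{\bb F_{p^2}}}/\mfk n_1$. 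By its construction in \cite[Construction~5.11.7, Remark~5.11.8]{LTXZZ}, the map $\nabla$ is, factor by factor, the composite of the even-side level-raising map (descending to the isomorphism $\nabla^0_{/\mfk n}$ of Proposition~\ref{isnsieiINEifehiw}(4), rewritten via Proposition~\ref{isnsieiINEifehiw}(5)) with the odd-side Tate map (descending to the isomorphism $\nabla^1_{\mfk m}$ of Proposition~\ref{isnsnieiiheinfies}(4)); since both factors descend modulo $(\mfk n_0, \mfk n_1)$ to isomorphisms, so does $\nabla$, which yields the asserted $\nabla_{/(\mfk n_0, \mfk n_1)}$.

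For part~(2), the cycle $\bx Q_\sp$ is the strict transform under $\sigma\colon\mbf Q\to\mbf P$ of the cycle $\mbf P_\sp$ cut out by the finite morphism $\mbf M_{\mfk p}(\mbf V_n^\circ, \mdc K_\sp^\circ)\to\mbf P=\mbf M_{n_0}\times\mbf M_{n_1}$, i.e.\ a ``diagonal'' cycle on the product of the two integral models. Its image $\cl(\bx Q_\sp)\in C^n(\bx Q, \mcl O_\lbd)$ under $\nabla$ is computed by the intersection-theoretic description of $\nabla$ in \cite[Remark~5.11.8]{LTXZZ}: the combinatorics of the basic (supersingular) locus on the even-unitary factor contributes the operator $(p+1)\mtt I^\circ_{n_0, \mfk p}$, while the Tate-class Hecke action on the odd-unitary factor contributes $\mtt T^\circ_{n_1, \mfk p}$, giving $\nabla(\cl(\bx Q_\sp))=(p+1)\mtt I^\circ_{n_0, \mfk p}\otimes\mtt T^\circ_{n_1, \mfk p}\cdot\uno_{\bSh(\mbf V^\circ_n, \mdc K_\sp^\circ)}$. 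For invertibility of this operator on $\mcl O_\lbd[\bSh(\mbf V_{n_0}^\circ, \mdc K_{n_0}^\circ)]\otimes_{\mcl O_\lbd}\mcl O_\lbd[\bSh(\mbf V_{n_1}^\circ, \mdc K_{n_1}^\circ)]/(\mfk n_0, \mfk n_1)$: $(p+1)$ is a unit by (PI2); $\mtt I^\circ_{n_0, \mfk p}$ is a unit by construction; and $\mtt T^\circ_{n_1, \mfk p}$ acts invertibly modulo $\mfk n_1$ because the value of $\phi_{\Pi_1}$ on it is an $\lbd$-adic unit, a consequence of $\Pi_{1, \mfk p}$ being unramified and tempered together with the Tate- and intertwining-generic hypotheses (PI4)--(PI5). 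Setting $\mtt T^\circ$ to be the inverse and using the compatibility of Proposition~\ref{issienifeiehriefmew} (which sends $\partial_{\mfk p}\AJ_{\mbf Q}(\mbf Q_\sp^\eta)$ to the image of $\cl(\bx Q_\sp)$) gives the identity of part~(2).

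For part~(3), I would pass from the integral model $\mbf Q$ back to $\bSh_{n_0}'\times_{\Spec F}\bSh_{n_1}'$: since $\bSh_\sp'$ is the generic fibre of $\mbf Q_\sp$, the class $\loc_{\mfk p}\ovl\AJ(\bSh_\sp')$ is carried, under $\partial_{\mfk p}$ and the canonical comparison isomorphism $\bx H^1_\sing\bigl(F_{\mfk p}, \etH^{2n-1}(\cdot)/(\mfk n_0, \mfk n_1)\bigr)\cong\bx H^1_\sing\bigl(\bb Q_{p^2}, \bx H^{2n-1}_{\mfk T}(\ovl{\bx Q}, \bx R\Psi\mcl O_\lbd(n))/(\mfk n_0, \mfk n_1)\bigr)$ supplied by Lemma~\ref{isnsieoeimeifes}, Proposition~\ref{isnsieiINEifehiw}(6)--(7) and the \Kunneth formula, to $\partial_{\mfk p}\AJ_{\mbf Q}(\mbf Q_\sp^\eta)$. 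Combining this with parts~(1) and~(2) and the fact that $\nabla_{/(\mfk n_0, \mfk n_1)}$ is an isomorphism, the exponent of $\partial_{\mfk p}\loc_{\mfk p}\ovl\AJ(\bSh_\sp')$ equals that of $\mtt T^\circ\uno_{\bSh(\mbf V^\circ_n, \mdc K_\sp^\circ)}$, which equals that of $\uno_{\bSh(\mbf V^\circ_n, \mdc K_\sp^\circ)}$ because $\mtt T^\circ$ is a unit; this is the claim. The main obstacle is part~(2): pinning down the precise Hecke operator $(p+1)\mtt I^\circ_{n_0, \mfk p}\otimes\mtt T^\circ_{n_1, \mfk p}$ produced by $\nabla(\cl(\bx Q_\sp))$ and proving its invertibility in the almost cuspidal regime. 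Unlike in \cite{LTXZZ}, where $\Pi_1$ is cuspidal and the odd-side Shimura-set module is essentially isotypic after localization, here Hypothesis~\ref{iifififmieiemss}(2) forces $\mcl O_\lbd[\bSh(\mbf V_{n_1}^\circ, \mdc K_{n_1}^\circ)]_{\mfk m_1}$ to mix a trivial-character constituent with the $\rho_{\Pi_1^\flat, \lbd}$-constituent, so one must check both that the Tate isomorphism $\nabla^1_{\mfk m}$ of Proposition~\ref{isnsnieiiheinfies} isolates the intended constituent and that $\mtt T^\circ_{n_1, \mfk p}$ is invertible on the \emph{entire} localized module; this is precisely where the genericity conditions (PI4)--(PI5), the automorphic multiplicity statement of Lemma~\ref{lsomiiaueimipws}, and the Tate-generic hypothesis do essential work.
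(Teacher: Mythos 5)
For parts (1) and (3), your outline is broadly consistent with the paper's, which simply invokes the proofs of [LTXZZ, Theorem~7.2.8(2), Corollary~7.2.9]. One methodological divergence in part~(1) is worth flagging: the paper does not claim that $\nabla$ literally factors as $\nabla^0\otimes\nabla^1$ on $\coker\Delta^n$. Instead it argues indirectly---first proving $\nabla_{/(\mfk n_0,\mfk n_1)}$ surjective using Lemma~\ref{Kemmeiefeifemis}(1), Proposition~\ref{isnsnieiiheinfies}(4) and Proposition~\ref{isnsieiINEifehiw}(3), then matching the $\mcl O_\lbd$-module lengths of domain and target via Proposition~\ref{issienifeiehriefmew}, Lemma~\ref{Kemmeiefeifemis}(2,3), Proposition~\ref{isnsnieiiheinfies}(4) and Proposition~\ref{isnsieiINEifehiw}(4,5). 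Your tensor-product factorization is cleaner looking, but it requires first verifying that the potential map $\Delta^n$ respects the \Kunneth decomposition in a way compatible with the claimed factorization, which is not automatic; the paper's length-counting argument sidesteps this.

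The genuine gap is in part~(2). You compute $\nabla(\cl(\bx Q_\sp))=(p+1)\mtt I^\circ_{n_0,\mfk p}\otimes\mtt T^\circ_{n_1,\mfk p}\cdot\uno_{\bSh(\mbf V^\circ_n,\mdc K_\sp^\circ)}$, and then invoke Proposition~\ref{issienifeiehriefmew} to identify $\partial_{\mfk p}\AJ_{\mbf Q}(\mbf Q_\sp^\eta)$ with the image of $\cl(\bx Q_\sp)$ in $\coker\Delta^n$. Since $\nabla_{/(\mfk n_0,\mfk n_1)}$ is, by part~(1), simply the descent of $\nabla$, your two assertions together would force $\nabla_{/(\mfk n_0,\mfk n_1)}(\partial_{\mfk p}\AJ_{\mbf Q}(\mbf Q_\sp^\eta))=(p+1)\mtt I^\circ_{n_0,\mfk p}\otimes\mtt T^\circ_{n_1,\mfk p}\cdot\uno$, i.e.\ the Hecke operator itself applied to $\uno$. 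But the proposition states the result equals $\mtt T^\circ\uno$ where $\mtt T^\circ$ is the \emph{inverse} of that operator; the two differ by the square of $(p+1)\mtt I^\circ_{n_0,\mfk p}\otimes\mtt T^\circ_{n_1,\mfk p}$ and cannot be reconciled by normalization. The computation in [LTXZZ, Propositions~B.3.5, B.4.3] together with [LTX24, Lemma~4.2.4(1)] yields precisely the inverse: the level-raising map $\nabla^0$ and the Tate-class map $\nabla^1$ each carry a Hecke factor, and chasing the strict transform of the diagonal cycle through $\sigma\colon\mbf Q\to\mbf P$ produces $\uno$ only after inverting $(p+1)\mtt I^\circ_{n_0,\mfk p}\otimes\mtt T^\circ_{n_1,\mfk p}$. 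Your intersection-theoretic derivation has the direction reversed, and your justification of the invertibility of $\mtt T^\circ_{n_1,\mfk p}$ (``the value of $\phi_{\Pi_1}$ on it is an $\lbd$-adic unit'') is also not quite what is used: what is actually invoked is Tate-genericity (PI4) and intertwining-genericity (PI5) through [LTXZZ, Prop.~B.4.3(2)]. Until the direction in part~(2) is fixed, the exponent identity in part~(3) does not follow as you describe.
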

\begin{proof}
For (1): We follow the proof of \cite[Theorem~7.2.8(2)]{LTXZZ}. Firstly, by Proposition~\ref{Kemmeiefeifemis}(1), Proposition~\ref{isnsnieiiheinfies}(4) and Proposition~\ref{isnsieiINEifehiw}(3), the map $\nabla_{/(\mfk n_0, \mfk n_1)}$ is surjective. Thus it remains to show that the domain and the target of $\nabla_{/(\mfk n_0, \mfk n_1)}$ are isomorphic as $\mcl O_\lbd$-modules. By the proof of \cite[Theorem~7.2.8(2)]{LTXZZ}, this follows from Proposition~\ref{issienifeiehriefmew}, Lemma~\ref{Kemmeiefeifemis}(2, 3), Proposition~\ref{isnsnieiiheinfies}(4), and Proposition~\ref{isnsieiINEifehiw}(4, 5).

For (2): $p+1$ is invertible in $\mcl O_\lbd$ by (PI2); $\mtt I_{n_0,\mfk p}^\circ\otimes\mtt T_{n_1, \mfk p}^\circ$ is invertible by (PI4, PI5), \cite[Propositions~B.3.5(1), B.4.3(2)]{LTXZZ} and \cite[Lemma~4.2.4(1)]{LTX24}; 

For (3): This follows from part (2) by the proof of \cite[Corollary~7.2.9]{LTXZZ}.
\end{proof}

\subsection{Admissible places}\label{osiieeuoeoutyrui}

We now work in the setting of Setup~\ref{isnsiemfmiemiw}.

\begin{defi}\label{oiamsisieiwps}
We say that a finite place $\lbd\in \fPla_E$, with underlying prime $\ell$, is an admissible place (with respect to $(\Pi_0, \Pi_1))$ if the following hold:\footnote{Compared to \cite[Definition~8.1.1]{LTXZZ}, we omitted assumption (L3) because we will not consider the Bloch--Kato Selmer group of the Galois representation $\rho_{\Pi_0, \lbd}\otimes\rho_{\Pi_1, \lbd}$.}
\begin{enumerate}
\item[(L1)]
$\ell\ge2(n_0+1)$;
\item[(L2)]
$\Pla^{\Pi_0}_+$ does not contain places lying above $\ell$;
\item[(L3)]
The residual representations $\ovl\rho_{\Pi_0, \lbd}$ and $\ovl\rho_{\Pi_1^\flat, \lbd}$ are both absolutely irreducible. Fix $\Gal_F$-stable $\mcl O_\lbd$-lattices $\bx R_0\subset\rho_{\Pi_0, \lbd}(r_0)$ and $\bx R_1^\flat\subset\rho_{\Pi_1^\flat, \lbd}(r_1)$ (which are unique up to homothety), together with isomorphisms $\Xi_0: \bx R_0\xr\sim \bx R_0^\vee(1)$ and $\Xi^\flat_1: \bx R_1^\flat\xr\sim (\bx R_1^\flat)^\vee$. Set $\bx R_1\defining \bx R_1^\flat\oplus\mcl O_\lbd$ and $\Xi_1\defining\Xi_1^\flat\oplus\id: \bx R_1\xr\sim\bx R_1^\vee$.
\item[(L4-1)]
One of the following holds:
\begin{enumerate}
\item
The image of $\Gal_F$ in $\GL(\ovl{\bx R_0})$ contains a nontrivial scalar element;
\item
$\ovl{\bx R_0}$ is a semisimple $\kappa_\lbd[\Gal_F]$-module and $\Hom_{\kappa_\lbd[\Gal_F]}(\End(\ovl{\bx R_0}), \ovl{\bx R_0})=0$;
\end{enumerate} 
\item[(L4-2)]
$(\bx{GI}^1_{F', \mrs P, \bx R_0, \bx R_1})$ from Lemma~\ref{osiseiheifmiesw} holds for $F'=F_{\bx{rflx}, +}$ and $\mrs P(T)=T^2-1$;
\item[(L5)]
The homomorphism $\ovl\rho_{\Pi_0, \lbd, +}$ is rigid for $(\Pla^{\Pi_0}_+, \vn)$ (see Definition~\ref{rigidindiremiesLGoos}), and $\ovl\rho_{\Pi_0, \lbd}|_{\Gal_{F(\mu_\ell)}}$ is absolutely irreducible; and
\item[(L6)]
The composite homomorphism $\bb T_{n_\alpha}^{\Pla^{\Pi_0}_+\cup\Pla^{\Pi_1}_+}\xr{\phi_{\Pi_\alpha}}\mcl O_E\to \kappa_\lbd$ is cohomologically generic (Definition~\ref{cohomomoegienifmos}) for every $\alpha\in\{0, 1\}$.
\end{enumerate}
\end{defi}

To end this subsection, we give several examples where it is known that all but finitely many finite places $\lbd$ of $E$ are admissible.

\begin{lm}\label{ssoeienfefmfeiskw}
Suppose that
\begin{enumerate}
\item
there exists an elliptic curve $A_0$ over $F_+$ \sut for every finite place $\lbd$ of $E$,
\begin{equation*}
\rho_{\Pi_0, \lbd}\cong\Sym^{n_0-1}\etH^1(A_{\ovl F}, E_\lbd)|_{\Gal_F};
\end{equation*}
\item
there exists a good inert place $\mfk p$ of $F_+$ (see \textup{Definition~\ref{issieniefeifmies}}) \sut $A_0$ has split multiplicative reduction at $\mfk p$, and $\Pi^\flat_{1, \mfk p}$ is a supercuspidal $B$-avoiding good representation (see \textup{Definition~\ref{osoeiiinfiemisw}}) for
\begin{equation*}
B=\{-\norml{\mfk p}, \norml{\mfk p}^{1\pm1}, \norml{\mfk p}^{1\pm3}, \ldots, \norml{\mfk p}^{1\pm(2r-1)}\}
\end{equation*}
\end{enumerate}
\wrt any isomorphism $\iota_\ell: \bb C\xr\sim\ovl{\bb Q_\ell}$ where $\ell$ is not a rational prime underlying $\mfk p$. Then all but finitely many finite places $\lbd$ of $E$ are admissible (with respect to $(\Pi_0, \Pi_1)$).
\end{lm}
\begin{proof}
We show that every condition in Definition~\ref{oiamsisieiwps} excludes only finitely many finite places of $E$. By \cite[Th\'eor\`eme~6]{Ser72}, for sufficiently large prime $\ell$, the homomorphism
\begin{equation*}
\ovl\rho_{A, \ell}|_{\Gal_F}: \Gal_F\to\GL\paren{\etH^1(A_{\ovl F},\bb F_\ell)}
\end{equation*}
is surjective. So we may assume that $\ell$ is large such that this is the case.

For (L1) and (L2), this is trivial.

For (L3), $\ovl\rho_{\Pi_0, \lbd}$ is clearly absolutely irreducible, and the condition that $\ovl\rho_{\Pi_1^\flat, \lbd}$ is absolutely irreducible only excludes finitely many finite places $\lbd$ of $E$ by~\cite[Theorem~4.5.(1)]{LTXZZa} and condition (2).

For (L4-1), condition (a) always holds.

For (L4-2), because $A_0$ has split multiplicative reduction at $\mfk p$, $\Pi_{0, \mfk p}$ is the Steinberg representation by \cite[\S15]{Roh94}. Thus (L4-2) excludes only finitely many finite places $\lbd$ of $E$, by the same reasoning as in the proof of \cite[Lemma 8.1.4]{LTXZZ}.

For (L5), by~\cite[Corollary~4.2]{LTXZZa}, the condition that $\ovl\rho_{\Pi_0, \lbd, +}$ is rigid for $(\Pla^{\min}_+, \vn)$ excludes only finitely many finite places $\lbd$ of $E$. The second condition is clearly satisfied.

For (L6), for each $\alpha\in\{0, 1\}$, we choose a finite place $w_\alpha$ of $F$ \sut $\Pi_{\alpha, w_\alpha}$ is unramified with Satake parameter $\{a_{\alpha, 1}, \ldots, a_{\alpha, n_\alpha}\}$. By Proposition~\ref{ieieeinfeieiites}, $|a_{\alpha, i}|=1$ for every $1\le i\le n_\alpha$. Thus, for every sufficiently large rational prime $\ell$, $a_{\alpha, i}/a_{\alpha_j}\ne \norml{w}$ for $1\le i\ne j\le n_\alpha$ even in $\ovl{\bb F_\ell}$. Suppose $\lbd$ is a finite place of $E$ lying above $\ell$. We fix an isomorphism $\iota_\ell: \bb C\xr\sim\ovl{\bb Q_\ell}$ which induces $\lbd$. Applying the Chebotarev density theorem to the representation $\ovl\rho_{\Pi, \lbd}\oplus\ovl\ve_\ell$ of $\Gal_F$, we see that there are infinitely many finite places $w'_\alpha$ of $F$ that are of degree $1$ over $\bb Q$ satisfying that 
\begin{itemize}
\item
$\Pi_{\alpha, w'_\alpha}$ is unramified with Satake parameter $\{a'_{\alpha, 1}, \ldots, a'_{\alpha, n_\alpha}\}$ in which $\iota_\ell(a'_{\alpha, i})$ is an $\ell$-adic unit for every $1\le i\le n_\alpha$, and
\item
$\iota_\ell(a'_{\alpha, i}/a'_{\alpha, j})\ne\norml{w'_\alpha}\in \ovl{\bb F_\ell}$ for $1\le i\ne j\le n_\alpha$. 
\end{itemize}
Then it follows from \cite[Theorem~1.5]{Y-Z25} that (L6) holds for $\lbd$.
\end{proof}

\begin{lm}\label{isimsieienfifmeiss}
Suppose that
\begin{enumerate}
\item
there exists a very good inert place $\mfk p$ of $F_+$ (see \textup{Definition~\ref{issieniefeifmies}}) \sut $\Pi_{0, \mfk p}$ is Steinberg, and $\Pi^\flat_{1, \mfk p}$ is unramified with Satake parameter not containing $1$; and
\item
for each $\alpha\in\{0, 1\}$, there exist a finite place $w_\alpha$ of $F$ \sut $\Pi_{\alpha, w_\alpha}$ is supercuspidal;
\end{enumerate}
Then all but finitely many finite places $\lbd$ of $E$ are admissible (with respect to $(\Pi_0, \Pi_1)$).
\end{lm}
\begin{proof}
We show that every condition in Definition~\ref{oiamsisieiwps} excludes only finitely many finite places of $E$.

For (L1) and (L2), this is trivial.

For (L3), this follows from \cite[Theorem 4.5.(1)]{LTXZZa} by (2).

For (L5), this follows from \cite[Theorem 4.8]{LTXZZa} by (2).

For (L6), this follows from the same reasoning as in the proof of Lemma~\ref{ssoeienfefmfeiskw}.

For (L4-1), this follows by the same reasoning as in the proof of \cite[Lemma 8.1.4]{LTXZZ}.

For (L4-2), this follows by the same reasoning as in the proof of \cite[Lemma 8.1.4]{LTXZZ}.
\end{proof}

\subsection{Proof of Theorem D}\label{psleiieuiremfs}

The following lemma is crucial for the proof of Theorem~\ref{ismsieiemiwmws}, which is essentially the solution of the Gan--Gross--Prasad conjecture for unitary groups \cites{J-R11, Zha14, BPLZZ, BCZ22}.

\begin{lm}\label{Gan-Gorsinsieinfeinids}
We work in the setting of \textup{Setup~\ref{isnsiemfmiemiw}}. If $L(\frac{1}{2}, \Pi_0\times\Pi_1)\ne 0$, then there exists
\begin{itemize}
\item a standard definite Hermitian space $\mbf V^\circ_n$ of dimension $n$ over $F$, together with a self-dual $\prod_{v\in\fPla_+\setm(\Pla^{\Pi_0}_+\cup\Pla^{\Pi_1}_+)}\mcl O_{F_v}$-lattice $\Lbd_n^\circ$ in $\mbf V_n^\circ\otimes_{F_+}\Ade_{F_+}^{\Pla_{+, \infty}\cup\Pla^{\Pi_0}_+\cup\Pla^{\Pi_1}_+}$, and we set $\mbf V_{n+1}^\circ=(\mbf V_n^\circ)_\sharp$ and $\Lbd_{n+1}^\circ=(\Lbd_n^\circ)_\sharp$.
\item objects $\mdc K_n\in \mfk K(\mbf V_n^\circ)$ and $(\mdc K_\sp^\circ, \mdc K_{n+1}^\circ)\in \mfk K(\mbf V_n^\circ)_\sp$ of the forms
\begin{equation*}
\mdc K_n^\circ=\prod_{v\in\Pla^{\Pi_0}_+\cup\Pla^{\Pi_1}_+}(\mdc K_n^\circ)_v\times\prod_{v\in\fPla_+\setm(\Pla^{\Pi_0}_+\cup\Pla^{\Pi_1}_+)}\bx U(\Lbd_n^\circ)(\mcl O_v),
\end{equation*}
\begin{equation*}
\mdc K_\sp^\circ=\prod_{v\in\Pla^{\Pi_0}_+\cup\Pla^{\Pi_1}_+}(\mdc K_\sp^\circ)_v\times\prod_{v\in\fPla_+\setm(\Pla^{\Pi_0}_+\cup\Pla^{\Pi_1}_+)}\bx U(\Lbd_n^\circ)(\mcl O_v),
\end{equation*}
\begin{equation*}
\mdc K_{n+1}^\circ=\prod_{v\in\Pla^{\Pi_0}_+\cup\Pla^{\Pi_1}_+}(\mdc K_{n+1}^\circ)_v\times\prod_{v\in\fPla_+\setm(\Pla^{\Pi_0}_+\cup\Pla^{\Pi_1}_+)}\bx U(\Lbd_{n+1}^\circ)(\mcl O_v),
\end{equation*}
satisfying
\begin{itemize}
\item
$\mdc K_{\sp, v}^\circ\subset \mdc K_{n, v}^\circ$ for $v\in\Pla^{\Pi_0}_+\cup\Pla^{\Pi_1}_+$, and
\item
$\mdc K_{n_0, v}^\circ$ is hyperspecial maximal subgroup of $\bx U(\mbf V_{n_\alpha}^\circ)(F_v)$ for $v\in\Pla^{\Pi_0}_+\setm\Pla^{\Pi_1}_+$,
\end{itemize}
\end{itemize}
\sut
\begin{equation*}
\sum_{s\in\bSh(\mbf V_n^\circ, \mdc K_\sp^\circ)}f(s)\ne 0
\end{equation*}
for some $f\in \mcl O_E\Bkt{\bSh(\mbf V_{n_0}^\circ, \mdc K_{n_0}^\circ)}\Bkt{\ker\phi_{\Pi_0}}\otimes_{\mcl O_E}\mcl O_E\Bkt{\bSh(\mbf V_{n_1}^\circ, \mdc K_{n_1}^\circ)}\Bkt{\ker\phi_{\Pi_1}}$. Here we regard $f$ as a function on $\bSh(\mbf V_n^\circ, \mdc K_\sp^\circ)$ via the map $\bSh(\mbf V^\circ_n, \mdc K_\sp^\circ)\to \bSh(\mbf V^\circ_n, \mdc K_n^\circ)\times\bSh(\mbf V^\circ_{n+1}, \mdc K_{n+1}^\circ)$.
\end{lm}
\begin{proof}
In view of Remark~\ref{ismsiehiemfies}, this follows from the direction $(1)\implies (2)$ of \cite[Theorem~1.1.5.1]{BCZ22}. Note that since our $\Pi_n$ and $\Pi_{n+1}$ are relevant representations of $\GL_n(\Ade_F)$ and $\GL_{n+1}(\Ade_F)$, respectively, the Hermitian space in (2) of \cite[Theorem~1.1.5.1]{BCZ22} is standard definite.
\end{proof}

\begin{thm}\label{iieiieierhfieiswp}
We work in the setting of \textup{Setup~\ref{isnsiemfmiemiw}}. Assume there is a finite place $w$ of $F$ lying above a place of $F_+$ inert in $F$ \sut $(\Pi^\flat_1)_w$ is square-integrable, and assume \textup{Hypothesis~\ref{iifififmieiemss}} for each $N\in\{n, n+1\}$. If the central critical value
\begin{equation*}
L(\frac{1}{2}, \Pi_0)\cdot L(\frac{1}{2}, \Pi_0\times\Pi_1^\flat)
\end{equation*}
does not vanish, then for all admissible finite places $\lbd$ of $E$ (with respect to $(\Pi_0, \Pi_1)$), the Bloch--Kato Selmer group $\bx H_f^1\paren{F, \rho_{\Pi_0, \lbd}(r_0)}$ vanishes.
\end{thm}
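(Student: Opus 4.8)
The plan is to run the bipartite Euler system argument via level-raising congruences in the style of \cite[\S8]{LTXZZ}, feeding in the geometric inputs of \S\S\ref{Tiemsopeoireisss}--\ref{sosieemiifeiifemiws} in place of their cuspidal analogues. \emph{First, from $L$-values to periods.} Since $\Pi_1=\Pi_1^\flat\boxplus\uno$ one has $L(\frac{1}{2}, \Pi_0\times\Pi_1)=L(\frac{1}{2}, \Pi_0\times\Pi_1^\flat)\cdot L(\frac{1}{2}, \Pi_0)$, which is nonzero by hypothesis; hence Lemma~\ref{Gan-Gorsinsieinfeinids} applies and produces a standard definite Hermitian space $\mbf V_n^\circ$ over $F$ together with level data $(\mdc K_n^\circ, \mdc K_\sp^\circ, \mdc K_{n+1}^\circ)$ and an element $f$ in the $\phi_{\Pi_0}$-isotypic part of $\mcl O_E[\bSh(\mbf V_{n_0}^\circ, \mdc K_{n_0}^\circ)]$ tensored with the $\phi_{\Pi_1}$-isotypic part of $\mcl O_E[\bSh(\mbf V_{n_1}^\circ, \mdc K_{n_1}^\circ)]$ with $\sum_{s\in\bSh(\mbf V_n^\circ, \mdc K_\sp^\circ)}f(s)\ne 0$. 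Fix an admissible place $\lbd$, and let $c_0$ be the $\lbd$-adic valuation of the nonzero scalar $\sum_s f(s)$; it depends only on $\lbd$ and the fixed data, not on the auxiliary integer $m$ introduced below.

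\emph{Next, the setup and the choice of the level-raising prime.} Write $\bx R_0\subset\rho_{\Pi_0, \lbd}(r_0)$, $\bx R_1^\flat\subset\rho_{\Pi_1^\flat, \lbd}(r_1)$, $\bx R_1=\bx R_1^\flat\oplus\mcl O_\lbd$ for the lattices of Definition~\ref{oiamsisieiwps}, and suppose toward a contradiction that $\bx H^1_f(F, \rho_{\Pi_0, \lbd}(r_0))\ne 0$. By (L3) the residual representation $\ovl{\bx R_0}$ is absolutely irreducible and nontrivial, so $\bx H^0(F, \rho_{\Pi_0, \lbd}(r_0)/\mcl O_\lbd)=0$ and $\bx H^1_f(F, \bx R_0)$ is finite free of positive rank over $\mcl O_\lbd$; being saturated, its mod-$\lbd^m$ Selmer group $\bx H^1_{f, \bx R_0}(F, \ovl{\bx R_0}^{(m)})$ contains for each $m\in\bb Z_+$ an element $s$ of exponent $m$. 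Fix $m>c_0$ and such an $s$. Using conditions (L4-1), (L4-2) and Lemma~\ref{osiseiheifmiesw} to pass from $(\bx{GI}^1_{\bx R_0, \bx R_1, F_{\bx{rflx}, +}, \mrs P})$ to $(\bx{GI}^m_{\bx R_0, \bx R_1, F_{\bx{rflx}, +}, \mrs P})$ with $\mrs P(T)=T^2-1$, together with the Chebotarev density theorem for the $\lbd^m$-reduction of the tuple $\big(\ovl\rho^{(m)}_{\bx R_0,+}, \ovl\rho^{(m)}_{\bx R_1,+}, \ovl\ve_\ell^{(m)}\big)$, I would choose a very good inert place $\mfk p$ of $F_+$, with underlying rational prime $p$, such that: (i) the genericity encoded by the $(\bx{GI}^m)$-element translates into assumptions (PI1)--(PI5) of Setup~\ref{ismsienifmeitjes} with parameter $m$ (notably $P_{\bm\alpha(\Pi_{0, \mfk p})}$ and $P_{\bm\alpha(\Pi_{0, \mfk p})\otimes\bm\alpha(\Pi_{1, \mfk p})}$ are level-raising special modulo $\lbd^m$, $P_{\bm\alpha(\Pi_{1, \mfk p})}$ is Tate generic modulo $\lbd$, and both Satake polynomials are intertwining generic); and (ii) $\loc_{\mfk p}(s)$ has exponent $m$ in $\bx H^1_\ns(F_{\mfk p}, \ovl{\bx R_0}^{(m)})$.

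\emph{Then, construction of the annihilating class and conclusion.} With the data of Setups~\ref{isnsiemfmiemiw} and \ref{ismsienifmeitjes} assembled from the above, form the diagonal cycle $\bSh'_\sp$ on the product $\bSh'_{n_0}\times_{\Spec F}\bSh'_{n_1}$ of the associated indefinite unitary Shimura varieties and apply the Hecke-localized Abel--Jacobi map $\ovl\AJ$. By the \Kunneth formula together with Lemma~\ref{Kemmeiefeifemis}(1), Proposition~\ref{isnsieiINEifehiw}(6)--(7) on the even-rank factor, and Proposition~\ref{isnsnieiiheinfies} with Hypothesis~\ref{iifififmieiemss}(2) on the odd-rank factor (this is where the finite place $w$ with $(\Pi_1^\flat)_w$ square-integrable is used, to split off the trivial-character constituent of the middle cohomology of $\bSh'_{n_1}$), the Hecke-localized summand of $\etH^{2n-1}\big((\bSh'_{n_0}\times\bSh'_{n_1})_{\ovl F}, \mcl O_\lbd(n)\big)/(\mfk n_0, \mfk n_1)$ on which the $\bSh'_{n_1}$-factor contributes its trivial-character constituent is isomorphic to $(\ovl{\bx R_0}^{(m)\cc})^{\oplus\mu}$ for some $\mu\in\bb Z_+$; projecting $\ovl\AJ(\bSh'_\sp)$ to a component and using the polarization $\Xi_0\colon\bx R_0\xr\sim\bx R_0^\vee(1)$ produces a class $\Theta\in\bx H^1(F, \ovl{\bx R_0}^{(m)})$. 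As in \cite[\S8]{LTXZZ}, $\Theta$ is unramified outside $\{\mfk p\}\cup\Pla^{\Pi_0}_+\cup\Pla_{F_+}(\ell)$, crystalline above $\ell$ (geometric classes, and $\ell\ge 2(n_0+1)$ by (L1)), and minimally ramified at $\Pla^{\Pi_0}_+$ (by (L5)), so $\loc_v(\Theta)$ annihilates $\loc_v(s)$ under local Tate duality for every $v\ne\mfk p$. Now the first explicit reciprocity law, Proposition~\ref{lsisineioonfiemis}(3), combined with the period nonvanishing from the first step, shows $\partial_{\mfk p}\loc_{\mfk p}(\Theta)$ has exponent $\ge m-c_0$ in $\bx H^1_\sing(F_{\mfk p}, \ovl{\bx R_0}^{(m)})$; on the other hand, the global reciprocity law applied to $(s, \Theta)$ via $\Xi_0$, using that all local invariants away from $\mfk p$ vanish and that $\bx H^1_\ns(F_\mfk p, \ovl{\bx R_0}^{(m)})$ is isotropic (the module being unramified at $\mfk p$), gives $\langle\loc_{\mfk p}(s), \partial_{\mfk p}\loc_{\mfk p}(\Theta)\rangle_{\mfk p}=0$. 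Since at the level-raising prime $\mfk p$ both $\bx H^1_\ns(F_{\mfk p}, \ovl{\bx R_0}^{(m)})$ and $\bx H^1_\sing(F_{\mfk p}, \ovl{\bx R_0}^{(m)})$ are free of rank one over $\mcl O_\lbd/\lbd^m$ with perfect Tate pairing and $\loc_{\mfk p}(s)$ is of maximal exponent $m$ (hence a generator), this forces $\partial_{\mfk p}\loc_{\mfk p}(\Theta)=0$, contradicting $m-c_0\ge 1$. Therefore $\bx H^1_f(F, \rho_{\Pi_0, \lbd}(r_0))=0$.

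\emph{Main obstacle.} The delicate point is the odd-rank side of the construction: in the almost cuspidal situation the $\mfk m_1$-localized middle cohomology of $\bSh'_{n_1}$ carries both a copy of $\rho_{\Pi_1^\flat, \lbd}^\cc$ and a trivial-character copy (Hypothesis~\ref{iifififmieiemss}(2)), and one must run the Tate-cycle analysis of \S\ref{Tiemsopeoireisss} — in particular establish the isomorphism $\nabla^1_{\mfk m}$ of Proposition~\ref{isnsnieiiheinfies}(4) together with the weak-semisimplicity statement, whose proof now splits according to the parity of $\dim_{E_\lbd}W^{n_1-1}(\pi^{\prime\infty})$ — in such a way that the \Kunneth decomposition of the product delivers precisely $\rho_{\Pi_0, \lbd}(r_0)^\cc$ up to multiplicity, so that $\Theta$ lives in $\bx H^1(F, \ovl{\bx R_0}^{(m)})$ rather than in a cohomology group of a larger Galois module. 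This is exactly where the square-integrability of $(\Pi_1^\flat)_w$ and the genericity conditions (PI4)--(PI5) at $\mfk p$ become indispensable; the remaining inputs, namely the arithmetic level-raising on the cuspidal even-rank side and the $\msf R=\msf T$ bookkeeping, transpose from \cite{LTXZZ} essentially verbatim.
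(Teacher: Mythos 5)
Your proposal follows essentially the same route as the paper's proof — the Gan--Gross--Prasad input via Lemma~\ref{Gan-Gorsinsieinfeinids}, contradiction with a high-exponent Selmer class, Chebotarev selection of a level-raising prime $\mfk p$, and the first explicit reciprocity law of Proposition~\ref{lsisineioonfiemis}(3) — but there are two real gaps in the details.

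First, at finite places $w$ lying above $\Pla^{\Pi_0}_+$, you assert that $\Theta$ is ``minimally ramified (by (L5))'' and that this forces the local Tate pairing $\langle \loc_w(s), \loc_w(\Theta)\rangle$ to vanish. Condition (L5) is a rigidity statement about the universal deformation ring of $\ovl\rho_{\Pi_0,\lbd,+}$; it does not by itself tell you that the Abel--Jacobi class $\Theta$ lands in a prescribed isotropic submodule of $\bx H^1(F_w, \ovl{\bx R_0}^{(m)\cc})$, nor that it annihilates an arbitrary Selmer class $\loc_w(s)$ under local duality. The paper deals with these places by a different and sharper mechanism: it applies \cite[Proposition~2.4.6]{LTXZZ} (whose hypothesis is supplied by purity, Lemma~\ref{iseinifiemesss}) with $\Pla$ the set of places of $F$ above $\Pla^{\Pi_0}_+$, producing a free rank-one submodule $S\subset \bx H^1_{f,\bx R_0}(F,\ovl{\bx R_0}^{(m)})$ on which $\loc_w$ vanishes identically for all such $w$. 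Then the local pairing at these places is killed by $\loc_w(s_1)=0$, with no property of $\Theta$ required. You need this step; your substitute does not close the gap. (Note that the price is a shift $m\mapsto m-m_\Pla$ in the exponent, and a further loss $\mfk r_{\bx R_0}$ from the abundance machinery of \cite[Prop.~2.6.6--2.6.7]{LTXZZ} rather than the clean exponent $m$ that you claim for $\loc_{\mfk p}(s)$.)

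Second, your identification of the relevant Hecke-localized summand of $\etH^{2n-1}((\bSh'_{n_0}\times\bSh'_{n_1})_{\ovl F}, \mcl O_\lbd(n))/(\mfk n_0,\mfk n_1)$ with $(\ovl{\bx R_0}^{(m)\cc})^{\oplus\mu}$ is stated as an isomorphism, but on the odd-rank factor the middle cohomology is only \emph{comparable} to the chosen lattice up to a bounded defect: the paper fixes a map $\Upsilon_1$ whose kernel and cokernel are annihilated by $\lbd^{m_{\mathrm{lat}}}$, and imposes the additional condition (PI6) on $\mfk p$ to ensure that passing from the mass ideal to the Hecke kernel does not degrade this bound. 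Your argument does not record $m_{\mathrm{lat}}$ or (PI6), so the conclusion ``$\partial_{\mfk p}\loc_{\mfk p}(\Theta)$ has exponent $\ge m-c_0$'' is overclaimed: the correct bound carries the extra loss $m_{\mathrm{lat}}$, and a further $m_{\mathrm{dif}}$ at places above $\ell$ (where the crystalline condition gives annihilation only up to the different ideal, via \cite[Lemma~2.2.7]{LTXZZ}). These are bounded error terms independent of $m$, so the strategy survives once they are tracked and $m$ is taken larger than their sum; but as written the quantitative chain does not close.

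The remaining ingredients — the geometric Tate-cycle analysis on the odd-rank side via Proposition~\ref{isnsnieiiheinfies}, the arithmetic level-raising via Proposition~\ref{isnsieiINEifehiw}, and the \Kunneth step via Lemma~\ref{Kemmeiefeifemis} together with Hypothesis~\ref{iifififmieiemss}(2), where the square-integrability of $(\Pi_1^\flat)_w$ is used — you have located correctly, and these transpose from the cuspidal case as you describe.
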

\begin{proof}
The proof is a variant of that of \cite[Theorem~8.2.2]{LTXZZ}. By Lemma~\ref{Gan-Gorsinsieinfeinids}, we may fix the choices of $\mbf V_n^\circ, \mbf V_{n+1}^\circ, \Lbd_n^\circ, \Lbd_{n+1}^\circ; \mdc K_n^\circ, \mdc K_\sp^\circ, \mdc K_{n+1}^\circ$ in that lemma \sut
\begin{equation*}
\sum_{s\in\bSh(\mbf V_n^\circ, \mdc K_\sp^\circ)}f(s)\ne 0
\end{equation*}
for some $f\in \mcl O_E\Bkt{\bSh(\mbf V_{n_0}^\circ, \mdc K_{n_0}^\circ)}\Bkt{\ker\phi_{\Pi_0}}\otimes_{\mcl O_E}\mcl O_E\Bkt{\bSh(\mbf V_{n_1}^\circ, \mdc K_{n_1}^\circ)}\Bkt{\ker\phi_{\Pi_1}}$.

Let $\lbd$ be an admissible finite place of $E$ with the underlying rational prime $\ell$. We choose a $\Gal_F$-stable $\mcl O_\lbd$-lattice $\bx R_0$ in $\rho_{\Pi_0, \lbd}(r_\alpha)$, unique up to homothety, with a fixed isomorphism $\Xi_0: \bx R_0\xr\sim \bx R_0^\vee(1)$; and a $\Gal_F$-stable $\mcl O_\lbd$-lattice $\bx R_1^\flat$ in $\rho_{\Pi_1^\flat, \lbd}(r_1)$, unique up to homothety, with a fixed isomorphism $\Xi^\flat_1: \bx R_1^\flat\xr\sim (\bx R_1^\flat)^\vee$. Set $\bx R_1\defining \bx R_1^\flat\oplus\mcl O_\lbd$, with a fixed isomorphism $\Xi: \bx R_1\xr\sim\bx R_1^\vee$. We write $\bx R\defining \bx R_0\otimes \bx R_1$ and $\Xi\defining \Xi_0\otimes \Xi_1: \bx R\xr\sim \bx R^\vee(1)$. Define two nonnegative integers $m_{\bx{per}}$ and $m_{\bx{lat}}$ as follows.
\begin{enumerate}
\item
Let $m_{\bx{per}}$ denote the largest nonnegative integer such that
\begin{equation*}
\sum_{s\in\bSh(\mbf V_n^\circ, \mdc K_\sp^\circ)}f(s)\in\lbd^{m_{\bx{per}}}\mcl O_E
\end{equation*}
for every $f\in \mcl O_E\Bkt{\bSh(\mbf V_{n_0}^\circ, \mdc K_{n_0}^\circ)}\Bkt{\ker\phi_{\Pi_0}}\otimes_{\mcl O_E}\mcl O_E\Bkt{\bSh(\mbf V_{n_1}^\circ, \mdc K_{n_1}^\circ)}\Bkt{\ker\phi_{\Pi_1}}$.
\item
We choose a standard indefinite Hermitian space $\mbf V_{n_1}$ over $F$ of rank $n_1$, together with a fixed isomorphism $\bx U((\mbf V_{n_1}^\circ)^\infty)\cong \bx U(\mbf V_{n_1}^\infty)$ of reductive groups over $\Ade_{F_+}^\infty$. In particular, we obtain the Shimura variety $\bSh(\mbf V_{n_1}, \mdc K_{n_1}^\circ)$. By Hypothesis~\ref{iifififmieiemss}, there is an isomorphism
\begin{equation*}
\etH^{2r_1}\paren{\bSh(\mbf V_{n_1}, \mdc K_{n_1}^\circ)_{\ovl F}, E_\lbd(r_1)}/\ker\phi_{\Pi_1}\cong (\bx R_1^\cc\otimes_{\mcl O_\lbd}E_\lbd)^{\oplus\mu_1}
\end{equation*}
of $E_\lbd[\Gal_F]$-modules for some positive integer $\mu_1\in\bb Z_+$. We fix a map
\begin{equation*}
\etH^{2r_1}\paren{\bSh(\mbf V_{n_1}, \mdc K_{n_1}^\circ)_{\ovl F}, \mcl O_\lbd(r_1)}/\ker\phi_{\Pi_1}\to (\bx R_1^\cc)^{\oplus\mu_1}
\end{equation*}
of $\mcl O_\lbd[\Gal_F]$-modules whose kernel and cokernel are both $\mcl O_\lbd$-torsion. Then we denote by $m_{\bx{lat}}$ the smallest nonnegative integer \sut both the kernel and the cokernel are annihilated by $\lbd^{m_{\bx{lat}}}$.
\end{enumerate}
We start to prove the theorem by contradiction, hence assume
\begin{equation*}
\dim_{E_\lbd}\bx H_f^1\paren{F, \rho_{\Pi_0, \lbd}(r_0)}\ge 1.
\end{equation*}
Tate a sufficiently large positive integer $m$ which will be determined later. By Lemma~\ref{iseinifiemesss}, we may apply \cite[Proposition~2.4.6]{LTXZZ} by taking $\Pla$ to be the set of places of $F$ lying above $\Pla^{\Pi_0}_+$. Then we obtain a submodule $S$ of $\bx H^1_{f, \bx R}(F, \ovl{\bx R_0}^{(m)})$ that is free of rank $1$ over $\mcl O_\lbd/\lbd^{m-m_\Pla}$ \sut $\loc_w|_S=0$ for every finite place $w$ of $F$ lying above $\Pla^{\Pi_0}_+$. We now apply the discussion of \cite[\S2.3]{LTXZZ} to the submodule $S\subset \bx H^1(F, \ovl{\bx R_0}^{(m)})$. By (L4-1) and \cite[Lemma~2.3.4]{LTXZZ}, we obtain an injective map
\begin{equation*}
\theta_S: \Gal(F_S/F_{\ovl\rho^{(m)}})\to \Hom_{\mcl O_\lbd}(S, \ovl{\bx R_0}^{(m)})
\end{equation*}
whose image generates an $\mcl O_\lbd$-submodule containing $\lbd^{\mfk r_{\ovl{\bx R_0}^{(m)}}}\Hom_{\mcl O_\lbd}(S, \ovl{\bx R_0}^{(m)})$, which further contains $\lbd^{\mfk r_{\bx R_0}}\Hom_{\mcl O_\lbd}(S, \ovl{\bx R_0}^{(m)})$ by \cite[Lemma~2.3.3]{LTXZZ} and (L3) (Here $\mfk r_{\ovl{\bx R_0}^{(m)}}$ and $\mfk r_{\bx R_0}$ are reducibility depths defined in \cite[Definition~2.3.2, Proposition~2.3.3]{LTXZZ}). By (L4-2) and Lemma~\ref{osiseiheifmiesw}, we may choose an element $(\gamma_0, \gamma_1, \xi)$ in the image of $(\ovl\rho_{\Pi_0, \lbd}^{(m)}, \ovl\rho_{\Pi_1, \lbd}^{(m)}, \ovl\ve_\ell^{(m)})|_{\Gal_{F_{\bx{rflx}, +}}}$ satisfying conditions (a-d) in Lemma~\ref{osiseiheifmiesw}. In particular, the natural inclusion
\begin{equation}\label{osoepofmiesss}
\big(\ovl{\bx R_0}^{(m)}\big)^{h_{\gamma_0}}\to \big(\ovl{\bx R}^{(m)}\big)^{h_{\gamma_0}\otimes h_{\gamma_1}}
\end{equation}
is an isomorphism of free $\mcl O_\lbd/\lbd^m$-modules of rank 1. By \cite[Proposition~2.6.6]{LTXZZ} (with $m_0=m_\Pla$ and $r_S=1$), we may fix an $(S, \gamma)$-abundant element $\Psi\in G_{S, \gamma}$ (see \cite[Definition~2.6.5]{LTXZZ}).

By the Chebotarev density theorem, we can choose a $\gamma$-associated place (see \cite[Definition~2.6.3]{LTXZZ}) $w_+^{(m)}$ of $F_+^{(m)}$ satisfying $\Psi_{w^{(m)}}=\Psi$ and whose underlying prime $\mfk p$ of $F_+$ (with its underlying rational prime $p$ and an isomorphism $\iota_p: \bb C\xr\sim \ovl{\bb Q_p}$ under which $\udl\tau_\infty$ and $\mfk p$ correspond) is a very good inert place satisfying (PI1)-(PI5) and
\begin{itemize}
\item[(PI6)]
the natural map
\begin{equation*}
\frac{\etH^{2r_1}\paren{\bSh(\mbf V_{n_1}, \mdc K_{n_1}^\circ)_{\ovl F}, \mcl O_\lbd(r_1)}}{\bb T^{\Pla^{\Pi_0}_+\cup\Pla^{\Pi_1}_+\cup\Pla_{F_+}(p)}_{n_1}\cap\ker\phi_{\Pi_1}}\to\frac{\etH^{2r_1}\paren{\bSh(\mbf V_{n_1}, \mdc K_{n_1}^\circ)_{\ovl F}, \mcl O_\lbd(r_1)}}{\ker\phi_{\Pi_1}}
\end{equation*}
is an isomorphism.
\end{itemize}

We can choose a quintuple $\mrs T=(\Phi, \mbf W_0, \mdc K_0^p, \iota_p, \varpi)$ as in \cite[\S5.1]{LTXZZ} with $\bb Q_p^\Phi=\bb Q_{p^2}$, an octuple
\begin{equation*}
\mrs V^\bullet=(\Lbd_{n, \mfk p}^\bullet, \Lbd_{n+1, \mfk p}^\bullet; \mdc K_{n, p}^\bullet, \mdc K_{n+1, p}^\bullet, \mdc K_{\sp, p}^\bullet; \mdc K_{n, p}^\dagger, \mdc K_{\sp, p}^\dagger, \mdc K_{n+1, p}^\dagger)
\end{equation*}
as in \cite[Notation~5.10.13]{LTXZZ}, and a sextuple $\mrs U$ as in Setup~\ref{ismsienifmeitjes}. We are now working in the setting of Setup~\ref{ismsienifmeitjes} with
\begin{equation*}
\lbd, \quad\Pla^{\lr, \bx{\Rmnum1}}_+=\vn, \quad\Pla^{\bx{\Rmnum1}}_+=\Pla^{\Pi_0}_+\cup\Pla^{\Pi_1}_+, \quad\mrs V^\circ=(\mbf V_n^\circ, \mbf V_{n+1}^\circ; \Lbd_n^\circ, \Lbd_{n+1}^\circ; \mdc K_n^\circ, \mdc K_\sp^\circ, \mdc K_{n+1}^\circ),\quad m, \quad \mfk p,\quad\mrs T, \quad\mrs V^\bullet, \quad\mrs U
\end{equation*}
specified.

By the definition of $m_{\bx{per}}$,
\begin{equation}\label{ospwiemieficcws}
\exp_\lbd\paren{\uno_{\bSh(\mbf V^\circ_n, \mdc K_\sp^\circ)}, \mcl O_\lbd\Bkt{\bSh(\mbf V^\circ_{n_0}, \mdc K_{n_0}^\circ)\times \bSh(\mbf V^\circ_{n_1}, \mdc K_{n_1}^\circ)}/(\mfk n_0, \mfk n_1)}\ge m-m_{\bx{per}},
\end{equation}
where $\uno_{\bSh(\mbf V^\circ_n, \mdc K_\sp^\circ)}$ is the pushforward of the characteristic function along the map $\bSh(\mbf V^\circ_n, \mdc K_\sp^\circ)\to \bSh(\mbf V^\circ_n, \mdc K_n^\circ)\times\bSh(\mbf V^\circ_{n+1}, \mdc K_{n+1}^\circ)$.

We claim that there exists an element $c_1\in \bx H^1(F, \ovl{\bx R_0}^{(m), \cc})$ \sut
\begin{equation}\label{opwpwoppp}
\exp_\lbd\paren{\partial_{\mfk p}\loc_{\mfk p}(c_1), \bx H^1_\sing(F_{\mfk p}, \ovl{\bx R_0}^{(m), \cc})}\ge m-m_{\bx{per}}-m_{\bx{lat}},
\end{equation}
and for every finite place $w$ of $F$ not lying above $\Pla^{\Pi_0}_+\cup\{\mfk p\}$, 
\begin{equation}\label{ismieieniikuwqoap}
\loc_w(c_1)\in\bx H^1_\ns(F_w, \ovl{\bx R_0}^{(m), \cc}).
\end{equation}

We first prove the theorem assuming the existence of such $c_1$. Fix a generator $s_1$ of the submodule $S\subset \bx H^1_{f, \bx R_0}(F, \ovl{\bx R}^{(m)})$. We also identify $\ovl{\bx R_0}^{(m), \cc}$ with $(\ovl{\bx R_0}^{(m)})^*(1)$ via the polarization $\Xi_0$. We now compute the local Tate pairing $\bra{s_1, c_1}_w$ (see \cite[Equation~(2.2)]{LTXZZ}) for every finite place $w$ of $F$.
\begin{itemize}
\item
Suppose $w$ is lying above $\Pla^{\Pi_0}_+$. Then $\loc_w(s_1)$ vanishes by our choice of $S$. Thus $\bra{s_1, c_1}_w=0$.
\item
Suppose $w$ is lying above $\Pla_{F_+}(\ell)$. Then by (L2), $(\bx R_0)_{\bb Q}$ is crystalline with Hodge--Tate weights in $[-r_0, r_0+1]$. Thus $\loc_w(c_1)$ is in $\bx H^1_\ns(F_w, \ovl{\bx R}^{(m)})$ by \cite[Lemma~2.4.3(2)]{LTXZZ} and (L1). By \eqref{ismieieniikuwqoap}, \cite[Lemma~2.2.7]{LTXZZ} and (L1), $\lbd^{m_\bx{dif}}\bra{s_1, c_1}_w$ vaniehes, where $\mfk d_\lbd=\lbd^{m_{\bx{dif}}}\subset\mcl O_\lbd$ is the different ideal of $E_\lbd$ over $\bb Q_\ell$.
\item
Suppose $w$ is not lying above $\Pla^{\Pi_0}_+\cup\Pla_{F_+}(\ell)\cup\{\mfk p\}$. Then by (L2), $\bx R_0$ is unramified. Thus $\loc_w(c_1)$ is in $\bx H^1_\ns(F_w, \ovl{\bx R}^{(m)})$ by \cite[Lemma~2.4.3(1)]{LTXZZ}. By \eqref{ismieieniikuwqoap} and \cite[Lemma~2.2.3]{LTXZZ}, $\bra{s_1, c_1}_w$ vanishes.
\item
Suppose $w$ is the unique place lying above $\mfk p$. Then
\begin{equation*}
\exp_\lbd\paren{\loc_w(s_1), \bx H^1_\ns(F_w, \ovl{\bx R_0}^{(m)})}\ge m-m_\Pla-\mfk r_{\bx R_0}
\end{equation*}
by \cite[Proposition~2.6.7]{LTXZZ}; and
\begin{equation*}
\exp_\lbd\paren{\bra{s_1, c_1}_w, \mcl O_\lbd/\lbd^m}\ge m-m_{\bx{per}}-m_{\bx{lat}}-m_\Pla-\mfk r_{\bx R_0}
\end{equation*}
by \eqref{opwpwoppp} and \cite{Rub00}*{Proposition \Rmnum1.4.3.(\rmnum2)}.
\end{itemize}
Therefore, as long as we take $m$ such that $m>m_{\bx{per}}+m_{\bx{lat}}+m_\Pla+\mfk r_{\bx R_0}+m_{\bx{dif}}$, we will have a contradiction to the relation
\begin{equation*}
\sum_{w\in\Pla_F^\infty}\bra{s_1, c_1}_w=0.
\end{equation*}

The theorem is proved assuming the claim.

We now consider the claim on the existence of $c_1$. 
It follows from (L5), (L6) and Proposition~\ref{isnsieiINEifehiw}(6) that there exists an isomorphism
\begin{equation*}
\ovl\Upsilon_0: \etH^{2r_0-1}\paren{\bSh\paren{\mbf V_{n_0}', \mtt j_{n_0}\mdc K_{n_0}^{\infty, p}\mdc K_{n_0, p}'}_{\ovl F}, \mcl O_\lbd(r_0)}/\mfk n_0\xr\sim \paren{\ovl{\bx R_0}^{(m), \cc}}^{\oplus\mu_0}
\end{equation*}
of $\mcl O_\lbd[\Gal_F]$-modules, for some positive integer $\mu_0\in\bb Z_+$. It follows from Lemma~\ref{changeoeinfifenfiems} that there exists an isomorphism
\begin{equation*}
\etH^{2r_1}(\bSh(\mbf V_{n_1}, \mdc K_{n_1}^\circ)_{\ovl F}, \mcl O_\lbd)_{\mfk m_1}\cong \etH^{2r_1}(\bSh(\mbf V_{n_1}', \mtt j_{n_1}(\mdc K_{n_1}^{p\circ})\mdc K_{n_1, p}')_{\ovl F}, \mcl O_\lbd)_{\mfk m_1}
\end{equation*}
of $\mcl O_\lbd[\Gal_F]$-modules. Thus, by (PI8) and the definition of $m_{\bx{lat}}$, we may fix a map
\begin{equation*}
\Upsilon_1: \frac{\etH^{2r_1}(\bSh(\mbf V_{n_1}', \mtt j_{n_1}(\mdc K_{n_1}^{p\circ})\mdc K_{n_1, p}')_{\ovl F}, \mcl O_\lbd(r_1))}{\bb T^{\Pla^{\Pi_0}_+\cup\Pla^{\Pi_1}_+\cup\Pla_{F_+}(p)}_{n_1}\cap\ker\phi_{\Pi_1}}\to\paren{\bx R_1^\cc}^{\oplus\mu_1}
\end{equation*}
of $\mcl O_\lbd[\Gal_F]$-modules whose kernel and cokernel are both annihilated by $\lbd^{m_{\bx{lat}}}$.

To continue, we adopt the notational abbreviation prior to Proposition~\ref{lsisineioonfiemis}. By Lemma~\ref{isnsieoeimeifes} and the \Kunneth formula, we obtain a map
\begin{equation*}
\ovl\Upsilon\defining \ovl\Upsilon_0\otimes\Upsilon_1: \etH^{2n-1}\paren{\paren{\bSh_{n_0}'\times_{\Spec F}\bSh_{n_1}'}_{\ovl F}, \mcl O_\lbd(n)}/(\mfk n_0, \mfk n_1)\to\paren{\ovl{\bx R}^{(m), \cc}}^{\oplus\mu_0\mu_1}
\end{equation*}
of $\mcl O_\lbd[\Gal_F]$-modules whose kernel and cokernel are both annihilated by $\lbd^{m_{\bx{lat}}}$. Consider the class
\begin{equation*}
\ovl\AJ(\bSh_\sp')\in\bx H^1\paren{F, \etH^{2n-1}\paren{\paren{\bSh_{n_0}'\times_{\Spec F}\bSh_{n_1}'}_{\ovl F}, \mcl O_\lbd(n)}/(\mfk n_0, \mfk n_1)}.
\end{equation*}
Here $\bSh'_\sp$ denotes the cycle associated to the finite morphism $\bSh(\mbf V'_n, \mtt j_n\mdc K_\sp^{\circ, p}\mdc K_{n, p}')\to \bSh_{n_0}'\times_{\Spec F}\bSh_{n_1}'$. It follows frm Proposition~\ref{lsisineioonfiemis}(3) and \eqref{ospwiemieficcws} that
\begin{equation}\label{osooeimsseiws}
\exp_\lbd\paren{\partial_{\mfk p}\loc_{\mfk p}\ovl\AJ(\bSh'_\sp), \bx H^1_\sing\paren{F_{\mfk p}, \etH^{2n-1}\paren{\paren{\bSh_{n_0}'\times_{\Spec F}\bSh_{n_1}'}_{\ovl F}, \mcl O_\lbd(n)}/(\mfk n_0, \mfk n_1)}}
\ge m-m_{\bx{per}}.
\end{equation}
For each $1\le i\le \mu_0$ and each $1\le j\le \mu_1$, let
\begin{align*}
\ovl\Upsilon_{i, j}: \etH^{2n-1}\paren{\paren{\bSh_{n_0}'\times_{\Spec F}\bSh_{n_1}'}_{\ovl F}, \mcl O_\lbd(n)}/(\mfk n_0, \mfk n_1)&\xr{\ovl\Upsilon}\paren{\ovl{\bx R}^{(m), \cc}}^{\oplus\mu_0\mu_1}\\
&=\paren{\ovl{\bx R_0}^{(m), \cc}}^{\oplus\mu_0\mu_1}\oplus \paren{\ovl{\bx R_0\otimes\bx R_1^\flat}^{(m), \cc}}^{\oplus\mu_0\mu_1}\\
&\xr{\pr_{i, j}}\ovl{\bx R_0}^{(m), \cc}
\end{align*}
denote the composition of $\ovl\Upsilon$ with the projection to the $(i, j)$-th $\ovl{\bx R_0}^{(m), \cc}$-factor, and set
\begin{equation*}
c_{i, j}\defining \bx H^1(F, \ovl\Upsilon_{i, j})(\AJ(\bSh_\sp'))\in \bx H^1(F, \ovl{\bx R_0}^{(m), \cc}).
\end{equation*}
Then it follows from \eqref{osooeimsseiws} and \eqref{osoepofmiesss} that
\begin{equation*}
\max_{1\le i\le\mu_0}\max_{1\le j\le \mu_1}\exp_\lbd\paren{\partial_{\mfk p}\loc_{\mfk p}(c_{i, j}), \bx H^1_\sing\paren{F_{\mfk p}, \ovl{\bx R_0}^{(m), \cc}}}\ge m-m_{\bx{per}}-m_{\bx{lat}}.
\end{equation*}
Thus we obtain \eqref{opwpwoppp} by taking $c_1=c_{i, j}$ for some $i, j$. On the other hand, by (L6),
\begin{equation*}
\mrs H_\alpha\defining \etH^{n_\alpha-1}\paren{\paren{\bSh_{n_\alpha}'}_{\ovl F}, \mcl O_\lbd(n)}_{\mfk m_\alpha}
\end{equation*}
is a finite free $\mcl O_\lbd$-module for each $\alpha\in\{0, 1\}$. By Lemma~\ref{isnsieoeimeifes} and the \Kunneth formula, the following composition map
\begin{equation*}
\mrs H_0\otimes_{\mcl O_\lbd}\mrs H_1\xr{1\otimes\Upsilon_1}\mrs H_0\otimes_{\mcl O_\lbd}(\bx R_1^{\flat, \cc}\oplus\mcl O_\lbd)^{\oplus\mu_1}\xr{1\otimes\pr_j}\mrs H_0\xr{\ovl\Upsilon_0}\paren{\ovl{\bx R_0}^{(m), \cc}}^{\oplus\mu_0}\xr{\pr_i}\ovl{\bx R_0}^{(m), \cc}
\end{equation*}
is equal to $\ovl\Upsilon_{i, j}$, where $\pr_i, \pr_j$ are obvious projection maps for every $1\le i\le \mu_0$ and every $1\le j\le \mu_1$. Thus
\begin{equation*}
c_{i, j}=\bx H^1_\sing(F, \ovl\Upsilon_{i, j})(\AJ(\bSh_\sp')).
\end{equation*}
Let $w$ be a finite place of $F$. By Lemma~\ref{issiienneifheims}, \ref{iseinifiemesss} and Hypothesis~\ref{iifififmieiemss}, $\mrs H_0\otimes_{\mcl O_\lbd}\mrs H_1$ is pure of weight $-1$ at $w$. Thus
\begin{equation*}
\bx H^1(F_w, \mrs H_0\otimes_{\mcl O_\lbd}\mrs H_1)
\end{equation*}
vanishes if $w$ is not lying above $\ell$, and 
\begin{equation*}
\bx H^1_f(F_w, \mrs H_0\otimes_{\mcl O_\lbd}\mrs H_1)=\bx H^1_{\bx{st}}(F_w, \mrs H_0\otimes_{\mcl O_\lbd}\mrs H_1)
\end{equation*}
if $w$ is lying above $\ell$. Then it follows from \cite[Theorem~5.9]{N-N16} and the proof of \cite[Theorem~3.1(ii)]{Nek00} that $\AJ(\bSh_\sp')$ is contained in $\bx H^1_f(F, \mrs H_0\otimes_{\mcl O_\lbd}\mrs H_1)$. Hence
\begin{equation*}
\bx H^1(F, (1\otimes\pr_j)\circ(1\otimes\Upsilon_1))(\AJ(\bSh_\sp'))\in \bx H^1_f(F, \mrs H_0),
\end{equation*}
by definition of Bloch--Kato Selmer groups. Therefore, for every finite place $w$ of $F$ not lying above $\Pla_+^{\Pi_0}\cup\{\mfk p\}$, 
\begin{equation*}
\loc_w(c_{i, j})=\bx H^1(F_w, \pr_i\circ\ovl\Upsilon_0)\paren{\loc_w\paren{\bx H^1(F, (1\otimes\pr_j)\circ(1\otimes\Upsilon_1))(\AJ(\bSh_\sp'))}}
\end{equation*}
is contained in $\bx H^1_\ns(F_w, \ovl{\bx R_0}^{(m), \cc})$ by \cite[Lemma~2.4.3]{LTXZZ} and the fact that $\bSh_{n_0}'$ has good reduction at $w$.

The claim is proved.
\end{proof}

\section{Theta correspondence}\label{ieiieiepeoies}

In this appendix, we review some results on automorphic representations and theta correspondence that will be useful to us.

Let $K$ be a local or global field of characteristic zero, and let $K_1$ be an extension field of $K$ with degree at most two. Let $\cc$ denote the element in $\Gal(K_1/K)$ ith fixed field $K$. We fix a nontrivial additive character $\psi$ of $K$ (resp. of $K\bsh \Ade_K$) if $K$ is local (resp. global). For an element $d\in K^\times$, let $\chi_d$ denote the quadratic character of $K$ (resp. of $K^\times\bsh \Ade_K^\times$) corresponding to the quadratic extension $K(\sqrt{d})/K$ via local (resp. global) class field theory when $K$ is local (resp. global).
\subsection{The groups}

Suppose $\eps\in\{\pm1\}$ is a sign and $\msf W$ is a finite dimensional vector space over $K_1$ of dimension $n$ equipped with a nondegenerate $\eps$-Hermitian $\cc$-sesquilinear form
\begin{equation*}
\bra{-, -}_{\msf W}:\msf W\times\msf W\to K_1.
\end{equation*}
We denote by $G(\msf W)$ the group of elements of $\GL(\msf W)$ preserving the form $\bra{-, -}_{\msf W}$:
\begin{equation*}
G(\msf W)(R)=\{g\in\GL(R): \bra{gv, gw}_{\msf W}=\bra{v, w}_{\msf W}\}.
\end{equation*}
If $K_1\ne K$ or $\eps=1$, let the discriminant $\disc(\msf W)$ and Hasse--Witt invariant $\eps(\msf W)$ of $\msf W$ be normalized as in \cite[\S2.1]{Pen25a}. In particular, if $K_1=K$ and $\eps=1$, and $\msf W$ has an orthogonal basis $\{v_1, \ldots, v_n\}$ with $\bra{v_i, v_i}=a_i\in K^\times$ for $1\le i\le n$, then
\begin{equation*}
\disc(\msf W)=(-1)^{n(n-1)/2}\prod_{i=1}^na_i.
\end{equation*}
For notational simplicity, we define $\disc(\msf W)=1$ and $\eps(\msf W)=1$ if $K_1=K$ and $\eps=-1$. Then the neutral component of $G(\msf W)$ is a reductive group over $K$. There are several cases to consider:
\begin{enumerate}
\item
If $K_1=K$ and $\eps=1$, then $G(\msf W)=\bx O(\msf W)$ is an orthogonal group. If $\dim\msf W$ is odd, then $G(\msf W)$ is split (resp. non-quasi-split) if $\eps(\msf W)=1$ (resp. $\eps(\msf W)=-1$). If $\dim\msf W$ is even, then $G$ is split if $\disc(\msf W)=1, \eps(\msf W)=1$, $G$ is non-quasi-split if $\disc(\msf W)=1, \eps(\msf W)=-1$, and $G$ is quasi-split but non-split if $\disc(\msf W)\ne 1$;
\item
If $K_1=K$ and $\eps=-1$, then $G(\msf W)=\Sp(\msf W)$ is a symplectic group;
\item
If $K_1\ne K$, then $G(\msf W)=\bx U(\msf W)$ is a unitary group. $G(\msf W)$ is quasi-split except when $\dim\msf W$ is even and $\eps(\msf W)=-1$, in which case it is non-quasi-split.
\end{enumerate}

If $K_1=K$ and $\eps=1$, the determinant map on $\GL(\msf W)$ restricts to a nontrivial quadratic character $\det$ of $G(\msf W)=\bx O(\msf W)$.

If $K_1=K$ and $\eps=-1$, we will consider metaplectic group $\Mp(\msf W)$, which is the unique nonsplit $\bb C^1$-covering of $G(\msf W)=\Sp(\msf W)$:
\begin{equation*}
1\to \bb C^1\to \Mp(\msf W)\to \Sp(\msf W)\to 1.
\end{equation*}
Here $\bb C^1$ is the group of norm-$1$ elements in $\bb C^\times$. We can write $\Mp(\msf W)=\Sp(\msf W)\rtimes\bb C^1$, with multiplication law given by 
\begin{equation*}
(g_1, z_1)\cdot (g_2, z_2)=(g_1g_2, z_1z_2\cdot c(g_1, g_2))
\end{equation*}
for $g_1, g_2\in \Sp(\msf W)$ and $z_1, z_2\in\bb C^1$, where $c$ is the 2-cocycle of $\Sp(\msf W)$ in $\{\pm1\}$ given in \cite{Rao93}. $\Mp(\msf W)$ has a natural subgroup
\begin{equation*}
\tld\Sp(\msf W)\defining \Sp(\msf W)\rtimes\{\pm1\}\subset\Mp(\msf W),
\end{equation*}
which is a nonsplit double cover of $\Sp(\msf W)$. Let $\omega_{\msf W, \psi}$ denote the Weil representation of $\Mp(\msf W)$ \wrt $\psi$, defined via the Heisenberg group attached to the symplectic space $(\msf W, 2\bra{-, -}_{\msf W})$. We continue to write $\omega_{\msf W, \psi}$ for its restriction to $\tld\Sp(\msf W)$. When $K$ is global, we simply write $\omega_{\msf W}$ for the Weil representation.

These classical groups arise naturally in Howe's theory of reductive dual pairs in the symplectic group. We recall some basic facts about these reductive dual pairs and the splitting of the metaplectic cover over them.

Let $W$ be a vector space over $K_1$ equipped with nondegenerate $\eps$-Hermitian $\cc$-sesquilinear form
\begin{equation*}
\bra{-, -}_W: W\times W\to K_1,
\end{equation*}
and let $V$ be a vector space over $K_1$ equipped with nondegenerate $(-\eps)$-Hermitian $\cc$-sesquilinear form
\begin{equation*}
\bra{-, -}_V: V\times V\to K_1.
\end{equation*}
We distinguish the following cases:
\begin{itemize}
\item
(Case U) $K_1\ne K$, $W$ is Hermitian and $V$ is skew-Hermitian, or $W$ is skew-Hermitian and $V$ is Hermitian;
\item
(Case SO1) $K_1=K$, $W$ is symplectic and $V$ is orthogonal with $\dim V$ odd;
\item
(Case O1S) $K_1=K$, $W$ is orthogonal with $\dim V$ odd and $V$ is symplectic;
\item
(Case O2S) $K_1=K$, $W$ is orthogonal with $\dim V$ even and $V$ is symplectic;
\item
(Case SO2) $K_1=K$, $W$ is symplectic and $V$ is orthogonal with $\dim V$ even.
\end{itemize}
We collectively refer to Cases O1S and O2S as Case OS, and refer to Cases SO1 and SO2 as Case SO.

Let $G$ and $H$ be algebraic groups over $K$ defined by
\begin{equation*}
G=
\begin{cases}
G(W) & \text{in Cases U, OS, SO2}\\
\tld\Sp(W) &\text{in Case SO1},
\end{cases}
\quad H=
\begin{cases}
G(V) & \text{in Cases U, O2S, SO}\\
\tld\Sp(V) &\text{in Case O1S}
\end{cases}.
\end{equation*}

Let $\bb W=W\otimes_{K_1}V$, regarded as a vector space over $K$ and equipped with a symplectic form
\begin{equation*}
\tr_{K_1/K}\paren{\bra{-, -}_W\otimes_{K_1}\bra{-, -}_V}.
\end{equation*}
Then $(G(W), H(V))$ is a reductive dual pair in the symplectic group $\Sp(\bb W)$, and there is a natural map \begin{equation*}
\iota: G\times H\to G(W)\times H(V)\to \Sp(\bb W).
\end{equation*}

\subsection{Local Gan--Gross--Prasad conjecture}

In this subsection, we assume that $K$ is a \nA local field. We will focus on the group $G$. Fix a nontrivial additive character $\psi$ of $K$, and set $m\defining \dim(W)$.

If $G$ is isomorphic to a metaplectic $\tld\Sp_{2n}(K)$, then we say an irreducible admissible representation $\pi$ of $G(K)$ is \tbf{genuine} if the nontrivial element in $\ker\paren{\tld\Sp_{2n}(K)\to \Sp_{2n}}(K)$ acts by $-1$. For simplicity, if $G$ is not metaplectic, then every irreducible admissible representation of $G(K)$ is called genuine.

Let $\Pi(G)$ denote the set of all irreducible admissible genuine representations of $G(K)$. Denote by $\Phi(G)$ the set of equivalence classes of representations $\phi$ of $W_{K_1}\times \SL_2$ of dimension
\begin{equation*}
\begin{cases}
m-1 &\text{in Case }\bx U\\
m &\text{in Case }\bx{SO}1\\
m+1 &\text{in Case }\bx{SO}2\\
m-1 &\text{in Case }\bx{O1S}\\
m &\text{in Case }\bx{O2S}\\
\end{cases}
\end{equation*}
which are
\begin{equation*}
\begin{cases}
\text{conjugate self-dual of sign }(-1)^{m-1} &\text{in Case }\bx U\\
\text{self-dual of sign 1 such that }\det(\phi)=\chi_W&\text{in Case }\bx{O2S}\\
\text{self-dual of sign 1 such that }\det(\phi)=\uno&\text{in Case }\bx{SO1}\\
\text{self-dual of sign }-\eps\text{ such that }\det(\phi)=\uno&\text{otherwise}\\
\end{cases}
\end{equation*}
Elements of $\Phi(G)$ are called \tbf{$L$-parameters} for $G$. We denote by $\Phi_\temp(G)$ the subset of equivalence classes of tempered $L$-parameters, that is, the set of $\phi\in \Phi(G)$ \sut $\phi(W_E)$ is precompact.


Recall that there is a canonical local Langlands reciprocity map (depending on $\psi$ in the metaplectic case)
\begin{equation*}
\rec_W: \Pi(G)\to \Phi(G);
\end{equation*}
see \cite{G-S12, Art13, KMSW, A-G17, C-Z21, Ish24}. For any $\pi\in\Pi(G)$, $\pi$ is tempered if and only if $\rec(\pi)$ is tempered. For $\phi\in\Phi(G)$, we denote by $\Pi_\phi$ the inverse image of $\phi$, called the \tbf{$L$-packet} of $\phi$ on $G$.





We now state the tempered Bessel case of the local Gan--Gross--Prasad conjecture.

\begin{thm}\label{ssoeieuriwos}\enskip
\begin{enumerate}
\item
Suppose we are in Case $\bx U$. Set $V_\sharp\defining V\oplus L_{(-1)^{\dim V}}$ where $L_{(-1)^{\dim V}}$ is the Hermitian space of dimension $1$ and discriminant $(-1)^{\dim V}$. 
For any $\phi\in \Phi_\temp(\bx U(V))$ and $\phi_\sharp\in\Phi_\temp(\bx U(V_\sharp))$, there exists 
\begin{itemize}
\item
a unique pair $(V^\bullet, V^\bullet_\sharp)$ in which $V^\bullet$ is a Hermitian space over $K_1$ with $\dim V^\bullet=\dim V$ and $V^\bullet_\sharp\defining V^\bullet\oplus L_{(-1)^{\dim V}}$; and
\item
a pair of irreducible admissible representations $(\pi, \pi_\sharp)\in \Pi_\phi(\bx U(V^\bullet))\times \Pi_{\phi_\sharp}(\bx U(V^\bullet_\sharp))$,
\end{itemize}
satisfying
\begin{equation*}
\Hom_{\bx U(V^\bullet)}(\pi\otimes\pi_\sharp, \bb C)\ne 0.
\end{equation*}
\item
Suppose we are in Case $\SO$. Set $V_\sharp\defining V\oplus L_{(-1)^{\dim V+1}}$ where $L_{(-1)^{\dim V+1}}$ is the quadratic space of dimension $1$ and discriminant $(-1)^{\dim V+1}$.
Let $V_0$ (resp. $V_1$) denote the unique even (resp. odd) dimensional element in the set $\{V, V_\sharp\}$. For $\phi_0\in \Phi_\temp(\bx O(V_0))$ and $\phi_1\in\Phi_\temp(\bx O(V_1))$, there exist
\begin{itemize}
\item
a unique pair $(V_0^\bullet, V_1^\bullet)$ in which $V_0^\bullet$ is a quadratic space with $\dim(V_0^\bullet)=\dim(V_0)$ and $\disc(V_0^\bullet)=\disc(V_0)$ and $V_1^\bullet=V_0^\bullet\oplus L_{(-1)^{\dim V+1}}$; and
\item
a pair of irreducible admissible representations $(\pi_0, \pi_1)\in \Pi_{\phi_0}(\bx O(V_0^\bullet))\times \Pi_{\phi_1}(\bx O(V^\bullet_1))$,
\end{itemize}
satisfying
\begin{equation*}
\Hom_{\bx O(V_0^\bullet)}(\pi_0\otimes\pi_1, \bb C)\ne 0.
\end{equation*}
\end{enumerate}
\end{thm}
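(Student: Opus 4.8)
The plan is to deduce both parts from the local Gan--Gross--Prasad conjecture \cite{GGP12} in the tempered Bessel case, which is now a theorem, together with a bookkeeping of the relevant pure inner forms. Recall the shape of the prediction: given a pair of tempered parameters, the \emph{relevant Vogan packet}---the disjoint union, over the isomorphism classes of pure inner forms of the ambient pair of groups, of the products of the two $L$-packets---contains exactly one member supporting the Bessel functional, and that member does so with multiplicity one (the distinguished member being moreover characterized by a local root-number condition, which we will not need). Thus in each case the argument has three ingredients: (i) the multiplicity-one theorem, forcing every $\Hom$ space to be at most one-dimensional; (ii) the existence of a member with nonzero $\Hom$ space, which by (i) and the ``sum equals one'' form of the conjecture is then unique; and (iii) the identification of ``pure inner form of the pair'' with the combinatorial datum $(V^\bullet, V^\bullet_\sharp)$ (resp.\ $(V_0^\bullet, V_1^\bullet)$) of the statement, the auxiliary line $L_{(-1)^{\dim V}}$ (resp.\ $L_{(-1)^{\dim V+1}}$) being fixed precisely so that the codimension-one Bessel setup and the determinant/discriminant constraints built into the definition of $\Phi(G)$ are met.

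For Case $\bx U$ I would invoke the multiplicity-one theorem of Aizenbud--Gourevitch--Rallis--Schiffmann and the local Gross--Prasad conjecture for tempered representations of unitary groups, due to Beuzart-Plessis (following the method introduced by Waldspurger), together with the local Langlands correspondence $\rec_W$ for unitary groups normalized as in the references cited after its definition. The pure inner forms of $\bx U(V)\times \bx U(V_\sharp)$ relevant to the Bessel problem are indexed by the two isomorphism classes of Hermitian spaces of dimension $\dim V$ (equivalently by $\disc$, equivalently by $H^1(K,\bx U(V))$), with $V^\bullet_\sharp$ forced to be $V^\bullet\oplus L_{(-1)^{\dim V}}$; the ``sum equals one'' statement then produces the unique pair $(V^\bullet,V^\bullet_\sharp)$ and the unique $(\pi,\pi_\sharp)\in\Pi_\phi(\bx U(V^\bullet))\times\Pi_{\phi_\sharp}(\bx U(V^\bullet_\sharp))$.

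For Case $\SO$ I would argue the same way, citing multiplicity one (Aizenbud--Gourevitch--Rallis--Schiffmann) and the local Gross--Prasad conjecture for special orthogonal groups of Waldspurger and M{\oe}glin--Waldspurger, together with Arthur's local Langlands correspondence for $\SO$ and its extension to the full orthogonal groups. The one extra step is the passage between $\bx O$ and $\SO$: when $\dim V$ is odd, $\bx O(V)\cong\SO(V)\times\{\pm1\}$ and the $\bx O$-restriction problem reduces immediately to the $\SO$-one, while when $\dim V$ is even the $\bx O$-restriction problem attached to a representation of $\bx O(V)$ recombines the $\SO$-problems for it and for its twist by the nontrivial character of $\bx O(V)/\SO(V)$; this reduction is the one explained in \cite{GGP12}. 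The relevant pure inner forms are the two quadratic spaces $V_0^\bullet$ with $\dim V_0^\bullet=\dim V_0$ and $\disc(V_0^\bullet)=\disc(V_0)$, with $V_1^\bullet=V_0^\bullet\oplus L_{(-1)^{\dim V+1}}$ determined, so the conjecture again gives the unique $(V_0^\bullet, V_1^\bullet)$ and $(\pi_0,\pi_1)$.

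The main difficulty is not a mathematical one but a matter of matching conventions: one must check that the packaging here---in terms of the auxiliary lines, the parity of $\dim V$, and the sign/determinant conditions in the definition of $\Phi(G)$ (e.g.\ $\det\phi=\chi_W$ in Case $\bx{O2S}$, $\det\phi=\uno$ in Case $\bx{SO}1$)---reproduces exactly the hypotheses under which the cited local Gross--Prasad theorems are stated, and, in the even orthogonal case, that the normalization of $\rec_W$ used here agrees with the one implicit in the work of Waldspurger and M{\oe}glin--Waldspurger, so that membership in the $L$-packet $\Pi_\phi$ has the intended meaning on both sides of the comparison.
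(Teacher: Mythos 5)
Your proposal is correct and takes essentially the same approach as the paper, which simply cites the established local Gan--Gross--Prasad theorems: Beuzart-Plessis for unitary groups and Atobe--Gan (Theorem 5.6 of their paper, building on Waldspurger and M\oe glin--Waldspurger) for full orthogonal groups, together with the needed local Langlands inputs from Arthur, Ishimoto and Chen--Zou. The only cosmetic difference is that the paper cites Atobe--Gan directly, who have already carried out the $\bx O$ vs.\ $\SO$ recombination and stated the result for full orthogonal groups, whereas you sketch that reduction yourself starting from the $\SO$ case.
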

\begin{proof}
Case U is established by Beuzart-Plessis \cites{Beu14, Beu15, Beu16}. Case SO is established in \cite[Theorem~5.6]{A-G17}, extending the Gross--Prasad conjecture in the special orthogonal case established by Waldspurger \cite{Wal10a, Wal12, Wal12a, Wal12b}. Note that the assumptions on local Langlands correspondence for orthogonal groups in \cite[Theorem~5.6]{A-G17} are established in \cite{Art13, Ish24} for odd special orthogonal groups and in \cite[Theorem~4.4]{C-Z21} for even orthogonal groups.
\end{proof}

\subsection{Local theta lifts and Prasad's conjectures}

In this subsection, we assume that $K$ is a local field. We fix a nontrivial additive character $\psi$ of $K$ and a pair of characters $\chi=(\chi_W, \chi_V)$ of $K_1^\times$ \sut 
\begin{enumerate}
\item
In Case U, $\chi_W|_{K^\times}=\chi_{K_1}^{\dim W}$ and $\chi_V|_{K^\times}=\chi_{K_1}^{\dim V}$;
\item
In Case SO, $\chi_W$ is trivial and $\chi_V=\chi_{\disc(V)}$.
\item
In Case OS, $\chi_W=\chi_{\disc(W)}$ and $\chi_V$ is trivial.
\end{enumerate}
Note that $\chi_W^\cc=\chi_W^{-1}$ and $\chi_V^\cc=\chi_V^{-1}$.

Using $\psi$ and $\chi$, the natural map
\begin{equation*}
\iota_{W, V}: G\times H\to \Sp(\bb W)
\end{equation*}
can be lifted to a homomorphism 
\begin{equation*}
\tilde\iota_{W, V, \chi, \psi}\to \Mp(\bb W);
\end{equation*}
see \cite{Kud94} and \cite{HKS96}*{\S 1}.

Let $\omega_{\bb W, \psi}$ denote the Weil representation of $\Mp(\bb W)$ \wrt $\psi$. Using this splitting $\tilde\iota_{W, V, \chi, \psi}$, we obtain a representation
\begin{equation*}
\omega_{W, V, \psi, \chi}\defining \omega_{\bb W, \psi}\circ \tilde\iota_{W, V, \psi, \chi}
\end{equation*}
of $G\times H$, called the Weil representation of $G\times H$ (\wrt the auxiliary data above).
 
For any irreducible admissible genuine representation $\pi$ of $G(K)$, the maximal $\pi$-isotypic quotient of $\omega_{W, V, \psi, \chi}$ is of the form
\begin{equation*}
\pi\boxtimes\Theta_{W, V, \psi, \chi}(\pi),
\end{equation*}
where $\Theta_{W, V, \psi, \chi}(\pi)$ is either zero or a finite length smooth representation of $H(K)$ \cite{Kud86}. Let $\theta_{W, V, \psi, \chi}(\pi)$ denote the maximal semisimple quotient of $\Theta_{W, V, \psi, \chi}(\pi)$. We have the following standard properties.

\begin{prop}\label{oeiieieikifimeis}\enskip
\begin{enumerate}
\item
$\theta_{W, V, \psi, \chi}(\pi)$ is either zero or irreducible.
\item
If $K$ is \nA and $\pi$ is supercuspidal, then $\Theta_{W, V, \psi, \chi}(\pi)$ is either zero or irreducible.
\item
If $K=\bb R$ and at least one of $G$ and $H$ is compact, then $\Theta_{W, V, \psi, \chi}(\pi)$ is either zero or irreducible.
\end{enumerate}
\end{prop}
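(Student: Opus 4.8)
The plan is to reduce each assertion to a known structural result about the theta correspondence and the big theta lift. Part (1) is the most fundamental and has, by now, a uniform proof: the assignment $\pi \mapsto \theta_{W, V, \psi, \chi}(\pi)$ is the maximal semisimple quotient of $\Theta_{W,V,\psi,\chi}(\pi)$, and the statement that this is zero or irreducible is the Howe duality conjecture. First I would invoke the local Howe duality theorem --- due to Waldspurger for $p$ odd \cite{Wal90}, and in full generality to Gan--Takeda \cite{GT16} for symplectic-orthogonal and unitary dual pairs over $p$-adic fields, and to Howe and others in the Archimedean case --- which asserts precisely that $\Theta_{W,V,\psi,\chi}(\pi)$ has a unique irreducible quotient when it is nonzero. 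In fact once one knows the uniqueness of the irreducible quotient, the maximal semisimple quotient $\theta_{W,V,\psi,\chi}(\pi)$ coincides with that unique quotient, so it is zero or irreducible. One should be slightly careful in the metaplectic and orthogonal settings about the choice of splitting data $(\psi, \chi)$ and about the fact that we work with $\bx O(\msf V)$ rather than $\SO(\msf V)$; but the Howe duality statement is known in exactly this generality (see \cite{GT16} and, for even orthogonal groups, \cite{AG17}), so this is a citation rather than an argument.

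Next I would treat part (2). When $K$ is non-Archimedean and $\pi$ is supercuspidal, the point is that $\Theta_{W,V,\psi,\chi}(\pi)$ is not merely a finite-length representation with a unique irreducible quotient, but is already irreducible (i.e.\ the semisimplification step is vacuous). The classical reference here is Kudla's persistence/filtration argument \cite{Kud86}: for $\pi$ supercuspidal, the Jacquet modules of $\omega_{W,V,\psi,\chi}$ along parabolic subgroups of $H$, twisted by $\pi$, vanish, which forces $\Theta_{W,V,\psi,\chi}(\pi)$ to be a quotient that cannot have any proper subrepresentation of the wrong cuspidal support; combined with the Howe duality of part (1) one concludes it is irreducible. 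This is standard and I would simply cite \cite{Kud86} together with \cite{MVW87} for the precise Jacquet-module computation, noting that the argument is insensitive to whether we are in Case U, SO, or OS.

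For part (3), with $K = \bb R$ and one of $G, H$ compact, the mechanism is different but cleaner. If $H$ is compact, then every smooth representation of $H(\bb R)$ is a direct sum of finite-dimensional irreducibles, so $\Theta_{W,V,\psi,\chi}(\pi)$, being of finite length, is automatically semisimple, hence equal to $\theta_{W,V,\psi,\chi}(\pi)$, which is zero or irreducible by part (1). If instead $G$ is compact, then $\pi$ is finite-dimensional and one uses the explicit description of the theta lift from a compact group --- going back to Howe's original work and worked out in detail by Kashiwara--Vergne and Li \cite{Li89} --- under which $\Theta_{W,V,\psi,\chi}(\pi)$ is realized inside a space of harmonic polynomials and is irreducible (a lowest-weight, in fact unitary, module). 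I would cite \cite{KV78, Li89} here. The only mild subtlety is matching normalizations of the splitting characters $\chi$ with those references, which I would dispatch in a sentence.

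The main obstacle, such as it is, is not mathematical depth but bookkeeping: the excerpt's framework bundles together Cases U, SO1, SO2, O1S, O2S and allows metaplectic groups and full orthogonal groups, whereas the Howe-duality and compact-lift references are often stated for one family at a time (special orthogonal vs.\ full orthogonal, symplectic vs.\ metaplectic). So the real work is to check that each cited theorem does cover the precise group and splitting data in play --- in particular that Gan--Takeda's Howe duality \cite{GT16} and its even-orthogonal extension via \cite{AG17} apply verbatim, and that Kudla's supercuspidal irreducibility argument \cite{Kud86} is stated generally enough --- rather than to reprove anything. I expect this to be routine, so I would present parts (1)--(3) as short paragraphs each ending in the appropriate citation.
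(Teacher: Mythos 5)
Your proposal is correct and follows essentially the same route as the paper: parts (1) and (2) are deduced from the Howe duality theorem together with Kudla's supercuspidal argument (the paper cites \cite{Kud86, MVW87, How89, Wal90, G-T16} in a single breath), and part (3) is the Archimedean compact case, for which the paper simply cites Howe \cite{How89a}. The one place you depart slightly is part (3), where the paper cites Howe once and for all, whereas you split into the two sub-cases (``$H$ compact,'' reducing to part (1) via complete reducibility, versus ``$G$ compact,'' invoking the Kashiwara--Vergne/Li lowest-weight description). This split is a clean observation and is arguably more self-contained than the paper's bare citation, though both ultimately rest on the same body of results; the ``$H$ compact'' reduction in particular is a nice shortcut since it avoids any appeal to the explicit structure of the lift.
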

\begin{proof}
The first two follow from the Howe duality conjecture \cites{Kud86,MVW87, How89, Wal90,G-T16}. The third one is due to Howe \cite{How89a}. 
\end{proof}

The following theorem is known as the local conservation relation (also called the local theta dichotomy); see \cites{HKS96, K-R05, Min12, G-S12, S-Z15}.

\begin{thm}\label{oeoeieifnies}
Suppose $K$ is \nA and we are in Case $\bx U$ or $\SO$. If $V'$ is another nondegenerate $(-\eps)$-Hermitian vector space over $K_1$ with 
\begin{equation*}
\dim V+\dim V'=2(\dim W+2-[K_1: K]), \quad \eps(V')\ne \eps(V)
\end{equation*}
and moreover $\disc(V')=\disc(V)$ in Case $\SO$, then for any irreducible admissible genuine representation $\pi$ of $G(K)$, exactly one of the two theta lifts $\theta_{W,V, \psi, \chi}(\pi)$ and $\theta_{W, V', \psi, \chi}(\pi)$ is nonzero.
\end{thm}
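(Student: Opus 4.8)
The plan is to reduce Theorem~\ref{oeoeieifnies} to two standard facts about the behaviour of theta lifts along Witt towers: Kudla's persistence principle, and the conservation relation of Kudla--Rallis and Sun--Zhu. Fix $\pi\in\Pi(G)$. Among the nondegenerate $(-\eps)$-Hermitian $\cc$-sesquilinear spaces over $K_1$ that can occur as the space $V$ in the dual pair built from $W$, carrying the prescribed splitting character $\chi_V$ --- equivalently, in Case $\SO$, the quadratic spaces of discriminant $\disc(V)$ --- the isometry classes organize into exactly two Witt towers, distinguished by the invariant $\eps(\cdot)$; the spaces $V$ and $V'$ of the statement are the members of dimensions $\dim V$ and $\dim V'$ of these two towers, and $\dim V\equiv\dim V'\pmod 2$ since their sum is even. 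For $?\in\{+,-\}$ I would write $n^?(\pi)$ for the first occurrence index of $\pi$ in the tower with $\eps=?$, i.e.\ the least dimension of a space $V_\bullet$ in that tower with $\Theta_{W,V_\bullet,\psi,\chi}(\pi)\ne 0$.

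First I would invoke persistence: by Kudla's induction principle for the Weil representation \cite{Kud86} (comparing $\omega_{W,V,\psi,\chi}$ with the analogous representation for $V$ enlarged by a hyperbolic plane, via the Jacquet module along the stabilizer of an isotropic line), the nonvanishing of $\Theta_{W,V_\bullet,\psi,\chi}(\pi)$ propagates to every larger member of the same Witt tower. Since $\theta_{W,V,\psi,\chi}(\pi)$ is by definition the maximal semisimple quotient of $\Theta_{W,V,\psi,\chi}(\pi)$, the two vanish simultaneously, so
\[
\theta_{W,V,\psi,\chi}(\pi)\ne 0\iff\dim V\ge n^{\eps(V)}(\pi),\qquad \theta_{W,V',\psi,\chi}(\pi)\ne 0\iff\dim V'\ge n^{\eps(V')}(\pi).
\]

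Next I would bring in the conservation relation: by \cite{K-R05} in the symplectic--orthogonal cases, by \cite{HKS96, G-S12, Min12} in the unitary and low-rank cases, and by \cite{S-Z15} in general, the two first occurrence indices of $\pi$ are both finite and satisfy
\[
n^+(\pi)+n^-(\pi)=2(\dim W+2-[K_1:K])+2,
\]
the ``$+2$'' being the standard shift for this correspondence. Combined with the hypothesis $\dim V+\dim V'=2(\dim W+2-[K_1:K])$ this gives $\dim V+\dim V'=n^+(\pi)+n^-(\pi)-2$. Taking $\eps(V)=+$, $\eps(V')=-$ without loss of generality, the condition ``$\dim V'\ge n^-(\pi)$'' is then equivalent to ``$\dim V\le n^+(\pi)-2$''; and since $\dim V$ and $n^+(\pi)$ are dimensions of members of the same Witt tower, $\dim V-n^+(\pi)$ is even, so exactly one of ``$\dim V\ge n^+(\pi)$'' and ``$\dim V\le n^+(\pi)-2$'' holds. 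By the equivalences above this says precisely that exactly one of $\theta_{W,V,\psi,\chi}(\pi)$ and $\theta_{W,V',\psi,\chi}(\pi)$ is nonzero. The three cases (U, SO1, SO2) enter only through the normalization of the metaplectic splitting $\tilde\iota_{W,V,\psi,\chi}$ and, in Case SO1, the metaplectic cover of $G=\tld\Sp(W)$; both the persistence argument and the shape of the conservation relation are uniform in them.

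The main obstacle is not this reduction, which is short, but the input on which it rests. The conservation relation in the generality required here --- for an arbitrary irreducible genuine representation of a symplectic, metaplectic, orthogonal, or unitary group over a $p$-adic field --- is the deep theorem of Sun--Zhu, and the delicate points in the write-up are: (i) extracting it in exactly the normalized form displayed above, so that the dimension count $2(\dim W+2-[K_1:K])$ in the hypothesis is matched with the ``$+2$'' shift; and (ii) checking that the two Witt towers relevant to the statement are precisely those singled out by ``$\eps(V')\ne\eps(V)$'' together with ``$\disc(V')=\disc(V)$'' in Case $\SO$, and that under the hypothesis on $\dim V+\dim V'$ they carry the common parity required by the parity argument.
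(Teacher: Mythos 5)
The paper records Theorem~\ref{oeoeieifnies} as a known fact and just cites the literature on the conservation relation and theta dichotomy (Harris--Kudla--Sweet, Kudla--Rallis, Sun--Zhu, Gan--Savin, Minguez); it supplies no argument of its own, so there is no proof to compare against. Your derivation from Kudla's persistence plus the Sun--Zhu conservation relation is the standard unpacking those citations are meant to stand in for, and the bookkeeping checks out: the constant $n^+(\pi)+n^-(\pi)=2\dim W+6-2[K_1:K]$ specializes to $4n+4$ in the symplectic/metaplectic--orthogonal cases and to $2\dim W+2$ in the unitary case, so the hypothesis $\dim V+\dim V'=2(\dim W+2-[K_1:K])$ is exactly two less than it, and the parity-of-dimension observation within a fixed Witt tower closes the argument in both directions.
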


To conclude this subsection, we recall Prasad's conjectures relating local theta correspondence and local Langlands correspondence.

\begin{thm}\label{sieieifnimidms}
Suppose $K$ is \nA and $\pi$ is an irreducible admissible representation of $G$.
\begin{enumerate}
\item
Suppose we are in Case $\bx U$ and $\dim V=\dim W$. Then there is a unique $(-\eps)$-Hermitian space $V^\bullet$ over $K_1$ with $\dim(V^\bullet)=\dim(V)$ \sut $\theta_{W, V^\bullet, \psi, \chi}(\pi)$ is nonzero. Moreover,
\begin{equation*}
\rec_{V^\bullet}(\theta_{W, V^\bullet, \psi, \chi}(\pi))=\rec_W(\pi)\otimes\chi_V^{-1}\chi_W.
\end{equation*}
\item
Suppose we are in Case $\bx U$ and $\dim V=\dim W+1$. If $\theta_{W, V^\bullet, \psi, \chi}(\pi)$ is nonzero, then
\begin{equation*}
\rec_{V^\bullet}(\theta_{W, V^\bullet, \psi, \chi}(\pi))=(\rec_W(\pi)\otimes\chi_V^{-1}\chi_W)\oplus\chi_W.
\end{equation*}
\item
Suppose we are in Case $\bx U$ and $\dim V=\dim W-1$. If $\rec_W(\pi)$ contains $\chi_V$ as a subrepresentation, then there is a unique $(-\eps)$-Hermitian space $V^\bullet$ over $K_1$ with $\dim(V^\bullet)=\dim(V)$ \sut $\theta_{W, V^\bullet, \psi, \chi}(\pi)$ is nonzero. Moreover,
\begin{equation*}
\rec_W(\pi)=(\rec_{V^\bullet}(\theta_{W, V, \psi, \chi}(\pi))\otimes\chi_W^{-1}\chi_V)\oplus\chi_V
\end{equation*}
\item
Suppose we are in Case $\bx{SO}1$ and $\dim V=\dim W+1$.
Then there exists a unique quadratic space $V^\bullet$ over $K$ with $\dim(V^\bullet)=\dim(V)$ and $\disc(V^\bullet)=\disc(V)$ \sut $\theta_{W, V^\bullet, \psi, \chi}(\pi)$ is nonzero. Moreover,
\begin{equation*}
\rec_{V^\bullet}(\theta_{W, V^\bullet, \psi, \chi}(\pi))=\rec_W(\pi)\otimes\chi_V.
\end{equation*}
\item
Suppose we are in Case $\bx O1\bx S$ and $\dim V=\dim W-1$.
Then there exists a unique element $\eps\in\{\pm1\}$ \sut $\theta_{W, V, \psi, \chi}(\pi\otimes\det\nolimits^{(1-\eps)/2})$ is nonzero. Moreover,
\begin{equation*}
\rec_V(\theta_{W, V, \psi, \chi}(\pi\otimes\det\nolimits^{(1-\eps)/2}))=\rec_W(\pi)\otimes\chi_W.
\end{equation*}
\item
Suppose we are in Case $\bx{SO}2$ and $\dim V=\dim W+2$. 
If $\theta_{W, V, \psi, \chi}(\pi)$ is nonzero, then
\begin{equation*}
\rec_V(\theta_{W, V, \psi, \chi}(\pi))=(\rec_W(\pi)\otimes\chi_V)\oplus\uno.
\end{equation*}
Here $\uno$ is the trivial representation of $W_K$.
\item
Suppose we are in Case $\bx O2\bx S$ and $\dim V=\dim W-2$. 
If $\rec_W(\pi)$ contains the trivial representation $\uno$ as a subrepresentation, then there exists a unique element $\eps\in\{\pm1\}$ \sut $\theta_{W, V, \psi, \chi}(\pi\otimes\det\nolimits^{(1-\eps)/2})$ is nonzero. Moreover,
\begin{equation*}
\rec_W(\pi)=(\rec_V(\theta_{W, V, \psi, \chi}(\pi\otimes\det\nolimits^{(1-\eps)/2}))\otimes\chi_W)\oplus\uno.
\end{equation*}
\end{enumerate}
\end{thm}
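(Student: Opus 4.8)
The plan is to reduce each assertion to the tempered (indeed discrete-series) situation and then to combine the explicit local Langlands correspondences with the known compatibility of theta lifting with Langlands parameters in the equal-rank and almost-equal-rank ranges.

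First I would reduce to tempered representations. By Kudla's computation of theta lifts of parabolically induced representations (see, e.g., \cite{Kud86, MVW87}), for a standard module $\pi$ the big theta lift $\Theta_{W,V,\psi,\chi}(\pi)$ is, after applying the Langlands quotient functor, obtained by parabolic induction from the theta lift of the inducing discrete data in a smaller stable range, twisted by the splitting characters $\chi_W,\chi_V$; since the behaviour of $\rec_W$ under parabolic induction is known, the desired formula for an arbitrary $\pi$ follows from the formula for tempered $\pi$. A further standard reduction brings us to discrete series, whose parameters are multiplicity-free sums of irreducible (conjugate) self-dual pieces.

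Next, for the equal-rank case (part~(1)) and the going-up-by-one cases (parts~(2), (4), and dually (3), (5)), I would invoke the compatibility of local theta with local Langlands in the (almost) equal-rank range. There are two ingredients: (i) the conservation relation of Theorem~\ref{oeoeieifnies} together with the first-occurrence indices, which pins down the unique space $V^\bullet$ carrying a nonzero lift; and (ii) the identification of the Langlands parameter of the lift, obtained from the doubling (Piatetski-Shapiro--Rallis) zeta integrals and the Rallis inner product formula \cite{Yam14}, which relate the twisted standard $\gamma$- and $\varepsilon$-factors of $\pi$ (twisted by $\chi_V^{-1}\chi_W$, resp.\ by $\chi_V$) to those of the putative lift. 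Since the tempered local Langlands correspondence for the relevant groups is characterized by these twisted factors together with endoscopic character identities --- by Arthur and Kaletha--Minguez--Shin--White \cite{Art13, KMSW} for quasi-split symplectic, orthogonal, and unitary groups, by Gan--Ichino \cite{G-I18} for the metaplectic group, and by Atobe--Gan and Chen--Zou \cite{A-G17, C-Z21} for the even orthogonal refinements --- the stated equalities of parameters follow. The extra summand $\chi_W$ in part~(2) (resp.\ the summand $\uno$ in part~(6)) is forced by the dimension bookkeeping and is identified explicitly via the see-saw attached to a one-dimensional (resp.\ split rank-two) orthogonal space.

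Finally, the up-by-two and down-by-two orthogonal cases (parts~(6), (7)) I would handle by factoring the theta correspondence through an intermediate almost-equal-rank step in the Rallis tower, using the tower property and stable-range non-vanishing: the lift to $\bx O(V)$ with $\dim V=\dim W+2$ is the composition of the almost-equal-rank lift with a one-step lift contributing the summand $\uno$, so part~(6) reduces to part~(4) up to the sign and determinant bookkeeping distinguishing Case~SO2 from Case~SO1, and part~(7) is the adjoint statement extracted from Howe duality (Proposition~\ref{oeiieieikifimeis}) together with a see-saw. I expect the main obstacle to be the careful tracking of the splitting characters $\chi_W,\chi_V$ and the $\psi$-normalization across the see-saw and tower arguments, especially in the even orthogonal and metaplectic cases, where the parametrizing set $\Phi(G)$ and the correspondence $\rec_W$ themselves depend on auxiliary choices; a secondary point is that parts~(4)--(7) genuinely rely on the recent refinements of the endoscopic classification, so the conclusion is only as unconditional as those inputs.
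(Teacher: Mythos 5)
The paper's ``proof'' of this theorem consists entirely of citations: parts~(1)--(3) (the paper has an apparent typo writing ``(4)--(6)'' for the first group) are attributed to Gan--Ichino~\cite{G-I16}, parts~(4)--(5) to Gan--Savin~\cite{G-S12} (cf.~\cite[Theorem~B.8]{A-G17}), and parts~(6)--(7) to Atobe--Gan~\cite[Theorem~4.4]{A-G17}. No argument is carried out in the paper itself; the theorem is simply recorded as known. Your sketch, by contrast, attempts to outline the actual mechanism of proof, and it is a fair high-level description of the strategy those references use: Kudla's induction principle to reduce to tempered/discrete-series representations, the conservation relation plus first-occurrence indices to pin down the unique $V^\bullet$, and then an identification of L-parameters. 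One caveat: you lean on the doubling zeta integrals and the Rallis inner product formula as the mechanism for matching parameters; this is essentially the Gan--Savin approach to the metaplectic case (where $\varepsilon$-dichotomy is the organizing principle), but Gan--Ichino's treatment of the unitary case and Atobe--Gan's of the even orthogonal case rely more on preservation of Plancherel measures, endoscopic character identities, and the solution of the Gross--Prasad conjecture rather than on a local converse theorem via doubling $\gamma$-factors. So your single mechanism glosses over a real difference in technique across the three cases, and the tracking of splitting characters and determinant twists in the even orthogonal case is genuinely delicate in a way the sketch does not resolve. Since the paper defers entirely to the literature, your proposal goes further than the paper does, but a reader should still consult the cited references for the actual proofs.
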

\begin{proof}
(4)-(6) are established by Gan--Ichino \cite{G-I16}. (4-5) are established by Gan-Savin \cite{G-S12} (cf. \cite[Theorem~B.8]{A-G17}). (6)-(7) are established by Atobe--Gan \cite[Theorem~4.4]{A-G17}.
\end{proof}

\subsection{Global theta lifts}\label{issoeoifmmieimws}

In this subsection, we assume that $K$ is a global field, and set $F\defining K, F_1\defining K_1$. We fix a conjugate self-dual automorphic character $\mu$ of $\Ade_{F_1}$ that satisfying $\mu_u(z)=z/\sqrt{z\ovl z}$ for every infinite place $u$ of $F_1$ and $z\in \bb C^\times$.

If $G$ (resp. $H$) is isomorphic to a metaplectic group $\tld\Sp_{2n}$, then the covering $\tld\Sp_{2n}(F_v)\to \Sp_{2n}(F_v)$ splits over the hyperspecial maximal compact subgroup $\mdc K_v$ for all but finitely many finite places $v$ of $F$. So we may regard $\mdc K_v$ as a compact open subgroup of $\tld\Sp_{2n}(F_v)$. In this case, the restricted tensor product 
\begin{equation*}
\prod_v{}'G(F_v)
\end{equation*}
\wrt the family $\{\mdc K_v\}_v$ contains $\oplus_v\mu_2$ as a central subgroup. Denote by $G(\Ade_F)$ (resp. $H(\Ade_F)$) the quotient of the above restricted tensor product by the central subgroup
\begin{equation*}
\{(z_v)\in\bplus\nolimits_v\mu_2: \prod_vz_v=1\}.
\end{equation*}
If $G$ (resp. $H$) is not isomorphic to a metaplectic group, we simply denote by $G(\Ade_F)$ (resp. $H(\Ade_F)$) the adelic points of $G$ (resp. $H$).

Similarly, for all but finitely many finite places $v$ of $F$, the metaplectic covering $\Mp(\bb W_v)\to \Sp(\bb W_v)$ splits over the hyperspecial maximal compact subgroup $\mdc K_v$. So we may regard $\mdc K_v$ as an open compact subgroup of $\Mp(\bb W_v)$. Then we define $\Mp(\bb W)(\Ade_F)$ as the quotient of the restricted tensor product
\begin{equation*}
\prod_v{}'\Mp(\bb W_v)
\end{equation*}
by the central subgroup
\begin{equation*}
\{(z_v)\in\prod_v\bb C^1: z_v=1\text{ for all but finitely many }v, \prod_vz_v=1\}.
\end{equation*}
The covering $\Mp(\bb W)(\Ade_F)\to \Sp(\bb W)(\Ade_F)$ canonically splits over the subgroup $\Sp(\bb W)(F)$. So we can regard $\Sp(\bb W)(F)$ as a subgroup of $\Mp(\bb W)(\Ade_F)$.

We fix a convenient set of parameters for the theta correspondence: a nontrivial additive character $\psi$ of $F\bsh \Ade_F$ and a pair of automorphic characters $\chi=(\chi_W, \chi_V)$ of $K_1\bsh\Ade_{K_1}^\times$ \sut 
\begin{enumerate}
\item
In Case U, $\chi_W=\mu^{(1+(-1)^{\dim W})/2}$ and $\chi_V=\chi^{(1+(-1)^{\dim V})/2}$;
\item
In Case SO, $\chi_W$ is trivial and $\chi_V=\chi_{\disc(V)}$;
\item
In Case OS, $\chi_W=\chi_{\disc(W)}$ and $\chi_V$ is trivial.
\end{enumerate}
Note that $\chi_W^\cc=\chi_W^{-1}$ and $\chi_V^\cc=\chi_V^{-1}$. The pair $(\chi, \psi)$ fixes a lifting
\begin{equation*}
\tilde\iota_{W,V}\defining\btimes_v{}'\tilde\iota_{W_v, V_v, \psi_v, \chi_v}: G(\Ade_F)\times H(\Ade_F)\to \Mp(\bb W)(\Ade_F)
\end{equation*}
of
\begin{equation*}
\iota_{W, V}\defining\btimes_v{}'\iota_{W_v, V_v}: G(\Ade_F)\times H(\Ade_F)\to \Sp(\bb W)(\Ade_F).
\end{equation*}
The global Weil representation $\omega_{\bb W}\defining \otimes'_v\omega_{\bb W_v, \psi_v}$ of 
\begin{equation*}
\prod_v{}'\Mp(\bb W_v)
\end{equation*}
factors through a representation $\omega_{\bb W}$ of $\Mp(\bb W)(\Ade_F)$. Using the lifting $\tilde\iota_{W, V}$, we obtain a representation
\begin{equation*}
\omega_{W, V}\defining \omega_{\bb W}\circ\tilde\iota_{W, V}
\end{equation*}
of $G(\Ade_F)\times H(\Ade_F)$. If $W$ is skew-Hermitian, we pair it with the $1$-dimensional Hermitian space $V'=F_1e$ with $\norml{e}=1$, and let $\omega_W$ denote the restriction of $\omega_{W, V'}$ to $G(\Ade_F)$, called the \tbf{Weil representation} of $G(\Ade_F)$. For each place $v$ of $F$, we denote by $\omega_{W_v, \psi_v}$ the local component of $\omega_W$, which is a representation of $G(F_v)$.

Let $\bb L$ be a Lagrangian subspace of $\bb W$. Then the Weil representation $\omega_{W, V}$ is realized on the space of Schwartz functions $\mcl S(\bb L(\Ade_F))$. For each Schwartz function $\phi\in \mcl S(\bb L(\Ade_F))$, define a theta function on $G(\Ade_F)\times H(\Ade_F)$ by
\begin{equation*}
\theta_{W, V}(g, h; \phi)\defining\sum_{x\in \bb L(F)}\omega_{W, V}(g, h)\phi(x), \quad (g, h)\in G(\Ade_F)\times H(\Ade_F).
\end{equation*}
Let $\pi\subset\mcl A_0(G(\Ade_F))$ be a genuine cuspidal automorphic representation of $G(\Ade_F)$. Then the \tbf{theta lift} $\theta_{W, V}(\pi)$ is defined to be the span of functions on $H(\Ade_F)$ of the form
\begin{equation*}
\theta_{W, V}(\vp; \phi): h\mapsto \int_{G(F)\bsh G(\Ade_F)}\ovl{\vp(g)}\theta_{W, V}(g, h; \phi)\bx dg,
\end{equation*}
for $\vp\in \pi$ and $\phi\in\mcl S(\bb L(\Ade_F))$. Here the measure $\bx dg$ denotes the Tamagawa measure on $G(F)\bsh G(\Ade_F)$ if $G$ is not metaplectic, and an arbitrary fixed Haar measure otherwise. Since theta functions are of moderate growth and $\vp$ is rapidly decreasing, these integrals converge and define automorphic forms on $H(\Ade_F)$.

We recall the following compatibility property between global and local theta lifts.

\begin{prop}\label{oeieeiiriemfies}
Let $\pi\subset\mcl A_0(G(\Ade_F))$ be a genuine cuspidal automorphic representation of $G(\Ade_F)$. If $\sigma\defining\theta_{W, V}(\pi)$ is contained in the space of square-integrable automorphic forms on $H(\Ade_F)$, then it is irreducible and isomorphic to the restricted tensor product $\otimes'_v\theta_{W_v, V_v, \psi_v, \chi_v}(\pi_v)$. If moreover $\sigma$ is cuspidal, then $\pi=\theta_{V, W}(\sigma)$.
\end{prop}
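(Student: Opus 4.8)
The plan is to exhibit $\sigma$, as a module over $H(\Ade_F)$, as a quotient of the restricted tensor product ${\otimes'_v}\,\Theta_{W_v, V_v, \psi_v, \chi_v}(\pi_v)$ of the local big theta lifts, and then to run the following formal argument: the maximal semisimple quotient of that restricted tensor product is ${\otimes'_v}\,\theta_{W_v, V_v, \psi_v, \chi_v}(\pi_v)$, which is \emph{irreducible} by the Howe duality conjecture (Proposition~\ref{oeiieieikifimeis}), while $\sigma$, being square-integrable, is itself a semisimple $H(\Ade_F)$-module; these two facts pin down $\sigma$. First I would set up the $G(\Ade_F)\times H(\Ade_F)$-equivariant theta kernel map $\phi\mapsto\theta_{W, V}(\cdot, \cdot; \phi)$ from $\mcl S(\bb L(\Ade_F))$ to automorphic forms, which intertwines $\omega_{W, V}$ with right translation. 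Pairing against $\ovl{\vp(g)}$ for $\vp\in\pi$ and integrating over $G(F)\bsh G(\Ade_F)$, the cuspidality of $\pi$ makes this pairing $G(\Ade_F)$-invariant and, place by place, makes it factor through the maximal $\pi_v$-isotypic quotient of $\omega_{W_v, V_v, \psi_v, \chi_v}|_{G(F_v)}$ --- that is, through $\Theta_{W_v, V_v, \psi_v, \chi_v}(\pi_v)$ --- so that one gets a surjection of $H(\Ade_F)$-modules ${\otimes'_v}\,\Theta_{W_v, V_v, \psi_v, \chi_v}(\pi_v)\twoheadrightarrow\sigma$. (The complex conjugation in $\ovl{\vp(g)}$ and the passage between $\pi_v$, $\pi_v^\vee$ and the conjugate of $\pi_v$ are absorbed using the relation $\ovl{\omega_{W_v, V_v, \psi_v, \chi_v}}\cong\omega_{W_v, V_v, \ovl{\psi_v}, \ovl{\chi_v}}$ and the self-conjugacy properties of the relevant local packets; this bookkeeping is standard and does not affect what follows.)

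Granting the surjection, we may assume $\sigma\neq0$, since otherwise there is nothing to prove. Since cosocles commute with (restricted) tensor products, and $\Theta_{W_v, V_v, \psi_v, \chi_v}(\pi_v)$ is unramified --- hence already irreducible --- at almost all places, the maximal semisimple quotient of ${\otimes'_v}\,\Theta_{W_v, V_v, \psi_v, \chi_v}(\pi_v)$ is $\Pi\defining{\otimes'_v}\,\theta_{W_v, V_v, \psi_v, \chi_v}(\pi_v)$, and $\Pi$ is irreducible because each factor $\theta_{W_v, V_v, \psi_v, \chi_v}(\pi_v)$ is irreducible by Proposition~\ref{oeiieieikifimeis}(1) (none can be zero, as $\sigma\neq0$). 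Thus the maximal semisimple quotient of $\sigma$ is a nonzero quotient of the irreducible $\Pi$ --- here $\sigma$ is admissible, being a quotient of the finite-length-at-each-place module ${\otimes'_v}\,\Theta_{W_v, V_v, \psi_v, \chi_v}(\pi_v)$ --- hence equals $\Pi$. On the other hand, $\sigma$ lies in the square-integrable spectrum of $H$, so it is a semisimple $H(\Ade_F)$-module and therefore coincides with its maximal semisimple quotient. Hence $\sigma\cong\Pi$ is irreducible and isomorphic to ${\otimes'_v}\,\theta_{W_v, V_v, \psi_v, \chi_v}(\pi_v)$, as claimed.

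For the final statement, suppose in addition that $\sigma$ is cuspidal. The standard unfolding of the double theta integral $\int_{H(F)\bsh H(\Ade_F)}\ovl{\theta_{W, V}(\vp; \phi)(h)}\,\theta_{V, W}(g, h; \phi')\,\bx dh$, using the factorization of the theta kernel for the pair $(V, W)$, shows that $\theta_{V, W}(\sigma)$ contains $\pi$; in particular $\theta_{V, W}(\sigma)\neq0$, and since $\sigma$ is cuspidal its lift $\theta_{V, W}(\sigma)$ is square-integrable --- indeed cuspidal, as a nonzero theta lift of a cusp form sits at the first occurrence in the Witt tower on the $W$-side and is cuspidal there. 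Applying the part already proved to $\sigma$ and its lift gives $\theta_{V, W}(\sigma)\cong{\otimes'_v}\,\theta_{V_v, W_v, \psi_v, \chi_v}(\sigma_v)={\otimes'_v}\,\theta_{V_v, W_v, \psi_v, \chi_v}(\theta_{W_v, V_v, \psi_v, \chi_v}(\pi_v))$. Finally, the local ``going down and back up'' identity $\theta_{V_v, W_v, \psi_v, \chi_v}(\theta_{W_v, V_v, \psi_v, \chi_v}(\pi_v))\cong\pi_v$ --- valid whenever $\theta_{W_v, V_v, \psi_v, \chi_v}(\pi_v)\neq0$, since then $\pi_v$ is a quotient of $\Theta_{V_v, W_v, \psi_v, \chi_v}(\theta_{W_v, V_v, \psi_v, \chi_v}(\pi_v))$ and Proposition~\ref{oeiieieikifimeis}(1) forces equality --- yields $\theta_{V, W}(\sigma)\cong\pi$; and since $\theta_{V, W}(\sigma)\subset\mcl A_0(G(\Ade_F))$ contains $\pi$ and is abstractly isomorphic to $\pi$, it equals $\pi$.

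The genuinely deep input is entirely contained in the Howe duality conjecture, available by Proposition~\ref{oeiieieikifimeis}; the rest is formal. Accordingly I expect the main obstacle to be not those formalities but rather two points of care. First, the careful bookkeeping of complex conjugations and contragredients, together with the normalization of arbitrary Haar versus Tamagawa measures in the metaplectic case, needed to obtain the surjection of the first paragraph with exactly the local factors $\Theta_{W_v, V_v, \psi_v, \chi_v}(\pi_v)$ (rather than those of the contragredients). Second, justifying that the back-lift $\theta_{V, W}(\sigma)$ is square-integrable --- so that the irreducibility statement may be fed back into itself --- which rests on the tower property together with the non-vanishing of $\theta_{V, W}(\sigma)$ supplied by the unfolding computation.
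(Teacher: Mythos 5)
Your argument for the first half (irreducibility and factorization of $\sigma$) is sound and is essentially the standard one that the paper cites from Kudla--Rallis: the surjection $\bigotimes'_v\Theta_{W_v,V_v,\psi_v,\chi_v}(\pi_v)\twoheadrightarrow\sigma$, the cosocle computation via local Howe duality, and the semisimplicity of $\sigma$ as a subrepresentation of the discrete spectrum. The conjugation/contragredient bookkeeping you flag is indeed the only delicate point there.

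For the second half, though, there is a genuine gap. You assert that ``a nonzero theta lift of a cusp form sits at the first occurrence in the Witt tower on the $W$-side and is cuspidal there,'' and use this to conclude $\theta_{V,W}(\sigma)$ is cuspidal. That is not what the Rallis tower property says: a nonzero lift can occur at or \emph{above} first occurrence, and only the lift at first occurrence is cuspidal. To invoke the tower property you must first show that $W$ is the first occurrence for $\sigma$ in the $W$-tower, which is not automatic and in fact is one of the substantive conclusions of the cited GRS result, not an input to it. Relatedly, the way you use the unfolding of $\theta_{V,W}(\theta_{W,V}(\vp;\phi);\phi')$ is logically off: what the unfolding (granting the nontrivial Fubini justification and a Siegel--Weil/Rallis inner product input) actually delivers in the GRS-style proof is that each such function lies \emph{in} $\pi$, i.e.\ $\theta_{V,W}(\sigma)\subseteq\pi$ as subspaces of cusp forms, from which cuspidality is immediate; you state instead that it shows $\theta_{V,W}(\sigma)$ \emph{contains} $\pi$, which the nonvanishing of a Petersson pairing does not give (it only shows the projection of $\theta_{V,W}(\sigma)$ onto the $\pi$-isotypic component is nonzero, which together with abstract isomorphism still leaves the possibility of a different embedded copy of $\pi$ when multiplicity one is not assumed). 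So you need to reverse the inclusion your unfolding is meant to produce, and you need to either prove first occurrence at $W$ or, more cleanly, derive cuspidality from $\theta_{V,W}(\sigma)\subseteq\pi$ itself; the paper sidesteps all of this by citing GRS93 directly.
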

\begin{proof}
The first assertion follows from \cite[Corollary~7.1.3]{K-R94}. The second follows from \cite[Proposition 1.2]{GRS93}. 
\end{proof}

To end this section, we discuss relation between global theta correspondence and functorial lifts. We first recall the notion of Arthur parameters attached to discrete automorphic representations of orthogonal groups.

\begin{defi}\label{formaml-parameineirmes}
Let $\pi$ be an automorphic representation of $H(\Ade_F)$ contained in the space of square-integrable functions on $H(\Ade_F)$. There is a standard $L$-homomorphism 
\begin{equation*}
\xi:\LL H\to\LL\paren{\Res_{F_1/F}(\GL_N)_{F_1}}, \quad N=\dim V+
\begin{cases}
0 &\text{In Cases }\bx U\text{ or }\bx O1\bx S\text{ or }\bx{SO}2\\
1 &\text{In Case }\bx O2\bx S\\
-1 &\text{In Case }\bx{SO}1\\
\end{cases}
\end{equation*}
as defined in \cite[\S2.1]{Mok15} in Case U (the standard base change embedding) and in \cite[\S1.2]{Art13} in Case SO or OS. For each finite place $v$ of $F$ \sut $H_v$ is unramified, $\xi$ induces a map $\xi_*$ from the set of isomorphism classes of irreducible unramified representations of $\SO(\mbf V)(F_v)$ to that of $\GL_N(F_1\otimes_FF_v)$. A \tbf{functorial lift} of $\pi$ is defined to be an automorphic representation $\Pi$ of $\GL_N(\Ade_{F_1})$ that is a finite isobaric sum of discrete automorphic representations \sut $\Pi_v$ is isomorphic to $\FL(\pi_v)$ for all but finitely many finite places $v$ of $F$ \sut $\pi_v$ is unramified. A functorial lift $\FL(\pi)$ exists in Cases O2S, U, O1S, SO1, and SO2; see \cite{Art13}, \cite[Theorem~1.7.1]{KMSW}, \cite[Theorem~1.1]{G-I18}, \cite[Theorem~3.16]{Ish24}, \cite[Theorem~2.1]{C-Z24}, respectively. By strong multiplicity one for $\GL_N(\Ade_{F_1})$ \cite{Sha79}, this functorial lift is unique up to isomorphism, denoted by $\FL(\pi)$, and we will also call it the \tbf{Arthur parameter} of $\pi$. To align with the literature, in Case U we also refer to $\FL(\pi)$ as the \tbf{base change} of $\pi$ and write $\BC(\pi)$.
\end{defi}

\begin{prop}\label{peiemitimeifes}
Suppose we are in Cases $\bx U$ or $\SO$. Let $\pi\subset\mcl A_0(G(\Ade_F))$ be a cuspidal automorphic representation of $G(\Ade_F)$ such that $\theta_{W, V}(\pi)$ is an (irreducible) cuspidal automorphic representation of $H(\Ade_F)$. Then
\begin{equation*}
\FL(\theta_{W, V}(\pi))=
\begin{cases}
\BC(\pi) & \dim V=\dim W\text{ in Case }\bx U\\
(\BC(\pi)\otimes\mu^{-1})\boxplus \mu^{(1+(-1)^{\dim W})/2} & \dim V=\dim W+1\text{ in Case }\bx U\\
\FL(\pi) &\dim V=\dim W+1\text{ in Case }\bx{SO}1\\
(\FL(\pi)\otimes\chi_V)\boxplus\uno &\dim V=\dim W+2\text{ in Case }\bx{SO}2
\end{cases}.
\end{equation*}
\end{prop}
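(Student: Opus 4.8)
The plan is to compare the two sides of the asserted identity as isobaric automorphic representations of $\GL_N(\Ade_{F_1})$, checking agreement of their local components at almost all unramified places via the local Prasad conjectures of Theorem~\ref{sieieifnimidms}, and then invoking strong multiplicity one. First I would apply Proposition~\ref{oeieeiiriemfies}: since $\theta_{W, V}(\pi)$ is assumed irreducible cuspidal, that proposition gives $\sigma\defining\theta_{W, V}(\pi)\cong\otimes'_v\theta_{W_v, V_v, \psi_v, \chi_v}(\pi_v)$, and in particular $\theta_{W_v, V_v, \psi_v, \chi_v}(\pi_v)\ne0$ for every place $v$. Both $\FL(\sigma)$ and the right-hand side of the claimed formula are isobaric sums of discrete automorphic representations of $\GL_N(\Ade_{F_1})$ (cuspidal pieces, together with at most one one-dimensional summand), so by strong multiplicity one for $\GL_N$ (the result \cite{Sha79} used to define $\FL$ in Definition~\ref{formaml-parameineirmes}) it suffices to show $\FL(\sigma)_v$ agrees with the $v$-component of the right-hand side for all but finitely many finite places $v$.

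Next I would fix a finite place $v$ at which $\pi_v$, the spaces $W_v$ and $V_v$, the additive character $\psi_v$, the characters $\chi_{W, v}$, $\chi_{V, v}$ and $\mu_v$, and (in Case $\bx U$) the extension $F_1/F$ are all unramified. Then $\sigma_v=\theta_{W_v, V_v, \psi_v, \chi_v}(\pi_v)$ is unramified, and since the ambient $V_v$ is forced to be the distinguished space $V^\bullet$ of Theorem~\ref{sieieifnimidms} (the local lift being nonzero), that theorem computes $\rec_{V_v}(\sigma_v)$ in terms of $\rec_{W_v}(\pi_v)$: part (1) when $\dim V=\dim W$ in Case $\bx U$, part (2) when $\dim V=\dim W+1$ in Case $\bx U$, part (4) in Case $\bx{SO}1$, and part (6) in Case $\bx{SO}2$. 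On the other hand, by the construction of the functorial lift through the standard $L$-homomorphism $\xi$ (Definition~\ref{formaml-parameineirmes}), $\FL(\sigma)_v$ is the unramified representation of $\GL_N(F_1\otimes_F F_v)$ whose Satake parameter corresponds to $\xi_v\circ\rec_{V_v}(\sigma_v)$, and likewise $\BC(\pi)_v$ (resp. $\FL(\pi)_v$) corresponds to applying the standard embedding to $\rec_{W_v}(\pi_v)$. Thus the local identity reduces to a purely formal comparison of $\xi_v\circ\rec_{V_v}(\sigma_v)$ with the $v$-component of the claimed isobaric sum.

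To conclude, I would carry out this comparison in each of the four cases by substituting the global normalization of the auxiliary characters into Prasad's recipe. For example, when $\dim V=\dim W$ in Case $\bx U$ one has $\chi_V=\chi_W$, so the twist $\chi_V^{-1}\chi_W$ of Theorem~\ref{sieieifnimidms}(1) is trivial and $\FL(\sigma)_v\cong\BC(\pi)_v$; when $\dim V=\dim W+1$ in Case $\bx U$ the parities of $\dim V$ and $\dim W$ are opposite, so $\chi_V^{-1}\chi_W$ is the $v$-component of $\mu^{-1}$ (using $\mu^\cc=\mu^{-1}$) while the extra summand $\chi_{W, v}$ of Theorem~\ref{sieieifnimidms}(2) equals $\mu_v^{(1+(-1)^{\dim W})/2}$, recovering $(\BC(\pi)\otimes\mu^{-1})\boxplus\mu^{(1+(-1)^{\dim W})/2}$ at $v$; and Cases $\bx{SO}1$ and $\bx{SO}2$ are handled the same way using $\chi_W$ trivial and $\chi_V=\chi_{\disc(V)}$ together with the normalization of the metaplectic functorial lift. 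Once these identities hold at all but finitely many finite places, strong multiplicity one yields the proposition.

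The hard part will be precisely this last bookkeeping: correctly tracking the characters $\mu$, $\chi_W$, $\chi_V$ and the degree shift relating $\rec_W(\pi)$ to $\BC(\pi)$ (or $\FL(\pi)$) through the embedding $\xi$, and, in Case $\bx{SO}1$, reconciling the normalization of the Gan--Ichino functorial lift of a genuine representation of $\tld\Sp(W)$ with the Arthur parameter of its theta lift on the orthogonal group. Everything else is a direct consequence of Proposition~\ref{oeieeiiriemfies} and Theorem~\ref{sieieifnimidms}, and no new automorphic input is required.
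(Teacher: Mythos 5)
Your overall strategy — reduce to local comparisons at almost all finite places via strong multiplicity one and then apply Prasad's conjectures (Theorem~\ref{sieieifnimidms}) — is exactly what the paper does in Case~$\SO$, so that half is the same argument. Where you genuinely diverge is Case~$\bx U$: the paper simply cites \cite[Proposition~8.14]{Xue14} there, whereas you propose to carry out the same Prasad-plus-strong-multiplicity-one comparison directly. That is a legitimate and in some ways cleaner alternative (it makes the proof uniform across all four cases and avoids an external citation), and your invocation of Proposition~\ref{oeieeiiriemfies} to relate the global theta lift to its local components is the right first step.

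The gap is in the character bookkeeping you flag as "the hard part," and it is more than bookkeeping: the one case you actually spell out contains a sign error. With the paper's global normalization $\chi_W=\mu^{(1+(-1)^{\dim W})/2}$, $\chi_V=\mu^{(1+(-1)^{\dim V})/2}$ and $\dim V=\dim W+1$, one computes $\chi_V^{-1}\chi_W=\mu^{(-1)^{\dim W}}$, which is $\mu$ when $\dim W$ is even and $\mu^{-1}$ when $\dim W$ is odd — not $\mu^{-1}$ uniformly as you assert. Invoking $\mu^\cc=\mu^{-1}$ does not fix this, since $\mu\neq\mu^{-1}$ in general (its archimedean components are the nontrivial argument character). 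In other words, the twist appearing in Theorem~\ref{sieieifnimidms}(2) does not coincide with the $\mu^{-1}$ in the statement for both parities of $\dim W$, so as written your local comparison fails for one of them. (There also appears to be tension between the stated global normalization of $\chi_W$, the local requirement $\chi_W|_{K^\times}=\chi_{K_1}^{\dim W}$, and the way the proposition is applied with $\dim W=2r$ in \S\ref{psoseieiruefeimsis}, which is worth sorting out; but that is an issue with the paper's conventions rather than with your strategy.) To make your approach rigorous you need to (i) fix a consistent normalization of $\chi_W,\chi_V$ and of the relation between $\rec_W(\pi_v)$ and the Satake parameter of $\BC(\pi)_v$, and (ii) track both parities of $\dim W$ separately through Theorem~\ref{sieieifnimidms}(1)--(2), rather than asserting a parity-independent identity for $\chi_V^{-1}\chi_W$.
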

\begin{proof}
In Case U, this is \cite[Proposition~8.14]{Xue14}. In Case SO, by strong multiplicity one theorem \cite{J-S81}, it suffices to compare their localizations at finite places $v$ of $F$ where $H$ is split. Thus the assertion follows from Proposition~\ref{sieieifnimidms}.
\end{proof}

We recall the following criterion of nonvanishing of global theta lifts.

\begin{thm}\label{novnainsieiheirimfies}
Suppose we are in Case $\bx U$ or $\SO$. Suppose $\dim W=\dim V+1-[F_1: F]$ and $\pi\subset\mcl A_0(G(\Ade_F))$ is a genuine cuspidal automorphic representation of $G(\Ade_F)$. Assume that $\FL(\pi)_v$ is tempered for every finite place $v$ of $F$. If $\theta_{\mbf W, \mbf V}(\pi)$ is contained in $\mcl A_0(\bx U(\mbf V)(\Ade_F))$, then it is nonzero if and only if
\begin{itemize}
\item
for all places $v$ of $F$, the local theta lift $\theta_{W_v, V_v, \psi_v,\chi_v}(\pi_v)$ is nonzero, and
\item
$L(\FL(\pi)\otimes\chi_V; \frac{1}{2})$ is nonzero.
\end{itemize}
\end{thm}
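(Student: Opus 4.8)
The plan is to deduce Theorem~\ref{novnainsieiheirimfies} from the Rallis inner product formula, combined with the place-by-place dictionary between nonvanishing of local doubling zeta integrals and nonvanishing of local theta lifts. Set $\sigma\defining\theta_{W,V}(\pi)$, which by hypothesis lies in $\mcl A_0(H(\Ade_F))$. The easy half of the ``only if'' direction is immediate: if $\sigma\ne0$, then by Proposition~\ref{oeieeiiriemfies} it is irreducible and isomorphic to $\otimes'_v\theta_{W_v,V_v,\psi_v,\chi_v}(\pi_v)$, so every local theta lift is nonzero. Everything else — recovering $L(\frac{1}{2},\FL(\pi)\otimes\chi_V)\ne0$ from $\sigma\ne0$, and conversely producing a nonzero $\sigma$ from the two nonvanishing hypotheses — will go through the Petersson inner products $\bra{\theta_{W,V}(\vp;\phi),\theta_{W,V}(\vp;\phi)}_H$ of theta lifts of vectors $\vp\in\pi$.

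The key input is the Rallis inner product formula in the boundary case. Since $\dim W=\dim V+1-[F_1:F]$, the dual pair $(G(W),G(V))$ sits in the first-term range of the regularized Siegel--Weil formula, and the doubling construction for $\pi$ produces the complete $L$-function $L(s,\FL(\pi)\otimes\chi_V)$ with central point $s=\frac{1}{2}$. Using the regularized Siegel--Weil formula and the doubling zeta integrals of \cite{Yam14}, valid in Cases $\bx U$ and $\SO$, one obtains for $\vp\in\pi$ and a Schwartz function $\phi$ an identity of the shape
\begin{equation*}
\bra{\theta_{W,V}(\vp;\phi),\theta_{W,V}(\vp;\phi)}_H=\frac{L\paren{\frac{1}{2},\FL(\pi)\otimes\chi_V}}{b\paren{\frac{1}{2}}}\cdot\prod_v Z_v^\ast\paren{\frac{1}{2};\vp_v,\phi_v},
\end{equation*}
where $b(s)$ is the product of abelian $L$-factors normalizing the relevant Eisenstein series and $Z_v^\ast$ is the normalized local doubling zeta integral, equal to $1$ for unramified data at almost all $v$. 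Because $\FL(\pi)_v$, hence $\pi_v$, is tempered at every finite place, $b(s)$ and the finite local factors contribute neither a zero nor a pole at $s=\frac{1}{2}$, so the left-hand side is nonzero for some choice of data precisely when $L(\frac{1}{2},\FL(\pi)\otimes\chi_V)\ne0$ and every normalized local integral $Z_v^\ast(\frac{1}{2};-,-)$ is not identically zero as $\vp_v,\phi_v$ vary.

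It then remains to match, for each place $v$, the nonvanishing of $Z_v^\ast(\frac{1}{2};-,-)$ with the nonvanishing of $\theta_{W_v,V_v,\psi_v,\chi_v}(\pi_v)$: at the unramified places this is the explicit unramified computation, at the remaining non-archimedean places it is the local Rallis inner product formula for Cases $\bx U$ and $\SO$ (via \cite{Yam14} and the work of Gan--Ichino, using that $\pi_v$ is tempered), and at the archimedean places it is the corresponding archimedean statement. Granting this dictionary, both directions follow from the displayed identity together with positivity of the Petersson inner product on cusp forms: if $\sigma\ne0$ then some $\theta_{W,V}(\vp;\phi)$ has positive norm, forcing $L(\frac{1}{2},\FL(\pi)\otimes\chi_V)\ne0$; and if all local theta lifts are nonzero and $L(\frac{1}{2},\FL(\pi)\otimes\chi_V)\ne0$, one chooses data making every $Z_v^\ast(\frac{1}{2};-,-)$ nonzero, whence $\theta_{W,V}(\vp;\phi)\ne0$ and $\sigma\ne0$. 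I expect the main obstacle to be the archimedean local analysis — verifying holomorphy and nonvanishing at $s=\frac{1}{2}$ of both $b(s)$ and the archimedean doubling integral and matching these with the nonvanishing of the archimedean theta lift — together with the bookkeeping of normalizations needed to land in precisely the ``first term on the boundary'' regime of \cite{Yam14} rather than in the second-term range. All of this is available in the literature for Cases $\bx U$ and $\SO$, so the task is to assemble the pieces and to confirm that the finite-place temperedness hypothesis is exactly what kills the potential poles of $b(s)$.
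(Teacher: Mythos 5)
Your overall approach — the regularized Siegel--Weil formula, the Rallis inner product formula, and the place-by-place dictionary between nonvanishing of the normalized doubling zeta integrals and nonvanishing of the local theta lifts — is precisely the machinery underlying Yamana's Theorem 10.1, and the paper's proof is simply a citation of that theorem. So you are not taking a genuinely different route; you are re-deriving the cited result.

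However, there is a gap in your proposal that the paper explicitly flags and you pass over. You write that ``the doubling construction for $\pi$ produces the complete $L$-function $L(s,\FL(\pi)\otimes\chi_V)$,'' but this is not automatic. Yamana's Theorem~10.1 is stated in terms of the standard $L$-function $L(s,\pi\otimes\chi_V)$ defined by the doubling method, and it is a priori not clear that this doubling $L$-function coincides with $L(s,\FL(\pi)\otimes\chi_V)$: Yamana's unramified computation (his Proposition~7.1) shows only that the partial $L$-functions agree. To pass from Yamana's nonvanishing criterion (in terms of the doubling $L$-function) to the criterion in the theorem (in terms of $L(s,\FL(\pi)\otimes\chi_V)$), one must argue that the two $L$-functions have the same order of vanishing at $s=\tfrac12$. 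The paper does this by invoking the temperedness hypothesis together with Yamana's Lemma~7.2, which controls the ramified local factors so that they contribute no zero or pole at the center. This also shows that the role of the finite-place temperedness assumption is not, as you suggest at the end, to kill poles of the normalizing factor $b(s)$, but rather to effect this comparison between the doubling $L$-function and $L(s,\FL(\pi)\otimes\chi_V)$ at $s=\tfrac12$. Your proof would be complete once you insert this comparison step instead of silently identifying the two $L$-functions.
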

\begin{proof}
This follows from \cite[Theorem~10.1]{Yam14}. In fact, it is not clear whether the standard $L$-function $L(s, \pi)$ for $\pi$ constructed by the doubling method in \cite{Yam14} and the standard $L$-function of $\FL(\pi)$ coincide. Nevertheless, it follows from Yamana's computation at unramified places \cite[Proposition~7.1]{Yam14} that their partial $L$-functions are equal. It follows from the temperedness assumption and \cite[Lemma~7.2]{Yam14} that
\begin{equation*}
\ord_{s=\frac{1}{2}}L(s, \pi\otimes\chi_V)=\ord_{s=\frac{1}{2}}L(s, \FL(\pi)\otimes\chi_V).
\end{equation*}
Now \cite[Theorem~10.1]{Yam14} applies.
\end{proof}

\section{Seesaw and proof of main theorems}\label{ososieifieiifmes}

In this section, we use seesaw identities (both local and global) to prove the main theorems. Let $r$ be a positive integer.

\subsection{The conjugate self-dual case}\label{psoseieiruefeimsis}

Let $F$ be a totally imaginary quadratic extension of a totally real number field $F_+$. Let $\mbf V_{2r}$ be a Hermitian space of dimension $2r$ over $F$, and $\mbf V_1$ be a Hermitian space of dimension $1$ over $F$ equipped with an element $e\in \mbf V_1$ satisfying $\norml{e}=1$. Let $\mbf W_{2r}$ be a skew-Hermitian space of dimension $2r$ over $F$. Set $\mbf V_{2r+1}\defining \mbf V_{2r}\oplus\mbf V_1$. Let $\iota: \bx U(\mbf V_{2r})\subset \mbf U(\mbf V_{2r+1})$ be the natural inclusion. We fix a nontrivial additive character $\psi$ of $F_+\bsh \Ade_{F_+}$, and use notations defined in \S\ref{ieiieiepeoies}.

Consider the inclusion
\begin{equation*}
\bx U(\mbf V_{2r})\times \bx U(\mbf V_1)\subset \bx U(\mbf V_{2r+1})
\end{equation*}
and the diagonal embedding
\begin{equation*}
\bx U(\mbf W_{2r})\subset \bx U(\mbf W_{2r})\times \bx U(\mbf W_{2r}).
\end{equation*}
$(\bx U(\mbf W_{2r}), \bx U(\mbf V_{2r+1}))$ and
\begin{equation*}
(\bx U(\mbf W_{2r})\times \bx U(\mbf W_{2r}), \bx U(\mbf V_{2r})\times \bx U(\mbf V_1))
\end{equation*}
are reductive dual pairs. In other words, there is a seesaw diagram:
\begin{equation}\label{sosiemfiemfiw}
\begin{tikzcd}[sep=large]
\bx U(\mbf W_{2r})\times\bx U(\mbf W_{2r})\ar[dr, dash] &\bx U(\mbf V_{2r+1})\\
\bx U(\mbf W_{2r})\ar[u, dash]\ar[ur, dash] &\bx U(\mbf V_{2r})\times\bx U(\mbf V_1)\ar[u, dash]
\end{tikzcd}.
\end{equation}

We fix a conjugate self-dual automorphic character $\mu$ of $\Ade_F$ satisfying $\mu_u(z)=z/\sqrt{z\ovl z}$ for $z\in \bb C^\times$ at every infinite place $u$ of $F$. Then we use the pair $(\psi, \chi)$ to define the (both local and global) theta correspondences between the pairs
\begin{equation*}
(\bx U(\mbf W_{2r}),\bx U(\mbf V_{2r})),\quad (\bx U(\mbf W_{2r}), \bx U(\mbf V_1)),\quad (\bx U(\mbf W_{2r}), \bx U(\mbf V_{2r+1}))
\end{equation*}
as defined in \S\ref{issoeoifmmieimws}. We record the following local seesaw identity attached to the seesaw diagram \eqref{sosiemfiemfiw}.

\begin{lm}\label{ososopeeiiemreis}
Let $\mfk p$ be a finite place of $F_+$ that is inert in $F$. For irreducible admissible representations $\pi_0$ of $\bx U(\mbf V_{2r})(F_{+, v})$ and $\sigma_1$ of $\bx U(\mbf W_{2r})(F_{+, v})$, there is a canonical isomorphism
\begin{align*}
\Hom_{\bx U(\mbf W_{2r})(F_{+, v})}(\Theta_{\mbf V_{2r}, \mbf W_{2r}}(\pi_0)&\otimes\omega_{\mbf W_{2r}}, \pi)\\
&\cong\Hom_{\bx U(\mbf V_{2r})(F_{+, v})}(\Theta_{\mbf W_{2r}, \mbf V_{2r+1}}(\sigma_1), \pi_0).
\end{align*}
\end{lm}
\begin{proof}
This is standard.
\end{proof}

We introduce the unitary Gan--Gross--Prasad periods and the Fourier--Jacobi periods.

\begin{defi}
Let $\pi_0\subset \mcl A_0(\bx U(\mbf V_{2r})(\Ade_{F_+}))$ and $\pi_1\subset \mcl A_0(\bx U(\mbf V_{2r+1})(\Ade_{F_+}))$ be cuspidal automorphic representations and $f_0\in\pi_0$ and $f_1\in\pi_1$ be cusp forms. We define the unitary \tbf{Gan--Gross--Prasad period}
\begin{equation*}
\mcl P_{\bx{GGP}}(f_0, f_1)\defining\int_{\bx U(\mbf V_{2r})(F_+)\bsh \bx U(\mbf V_{2r})(\Ade_{F_+})}f_0(h)f_1(\iota(h))\bx dh.
\end{equation*}
Here the measure $\bx dh$ is the Tamagawa measure on $\bx U(\mbf V_{2r})(\Ade_{F_+})$. This integral is absolutely convergent since $f_0$ and $f_1$ are rapidly decreasing.
\end{defi}

We set
\begin{equation*}
\bb W_{2r, 1}\defining\mbf W_{2r}\otimes_{F_+}\mbf V_1, \quad \bb W_{2r, 2r}\defining\mbf W_{2r}\otimes_{F_+}\mbf V_{2r}, \quad \bb W_{2r, 2r+1}\defining\mbf W_{2r+1}\otimes_{F_+}\mbf V_{2r+1}.
\end{equation*}
Then they are all symplectic spaces over $F_+$ as defined in \S\ref{issoeoifmmieimws}. Fix Lagrangian subspaces
\begin{equation*}
\bb L_{2r, 1}\subset \bb W_{2r, 1}, \quad \bb L_{2r, 2r}\subset \bb W_{2r, 2r},
\end{equation*}
then $\bb L_{2r, 2r}\defining \bb L_{2r, 2r}\oplus\bb L_{2r, 1}$ is a Lagrangian subspace of $\bb W_{2r, 2r+1}$. For each $n\in\{1, 2r, 2r+1\}$, let $\omega_{\mbf W_{2r}, \mbf V_n}$ denote the Weil representation, which can realized on the space of Schwartz functions $\mcl S(\bb L_{2r, n})$. Then
\begin{equation*}
\omega_{\mbf W_{2r}, \mbf V_{2r+1}}=\omega_{\mbf W_{2r}, \mbf V_{2r}}\hat\otimes\omega_{\mbf W_{2r}}.
\end{equation*}
In particular, if $\phi_{2r, 2r+1}=\phi_{2r,2r}\otimes\phi_{2r, 1}\in \mcl S(\bb L_{2r, 2r}(\Ade_{F_+}))\otimes\mcl S(\bb L_{2r, 1}(\Ade_{F_+}))$, then
\begin{equation*}
\theta_{\mbf W_{2r}, \mbf V_{2r+1}}(g, \iota(h); \phi_{2r, 2r+1})=\theta_{\mbf W_{2r}, \mbf V_{2r}}(g, h; \phi_{2r, 2r})\theta_{\mbf W_{2r}, \mbf V_1}(g, \phi_{2r, 1})
\end{equation*}
for every $(g, h)\in \bx U(\mbf W_{2r})(\Ade_{F_+})\times\bx U(\mbf V_{2r})(\Ade_{F_+})$.

\begin{defi}
Let $\sigma_0, \sigma_1\subset \mcl A_0(\bx U(\mbf W_{2r})(\Ade_{F_+}))$ be two cuspidal automorphic representations. Let $\vp_0\in\sigma_0, \vp_1\in\sigma_1$ be automorphic forms and $\phi\in\mcl S(\bb L_{2r, 1}(\Ade_{F_+}))$ be a Schwartz function. We define the \tbf{Fourier--Jacobi period}
\begin{equation*}
\mcl{FJ}(\vp_0, \vp_1; \phi)\defining\int_{\bx U(\mbf W_{2r})(F_+)\bsh\bx U(\mbf W_{2r})(\Ade_{F_+})}\vp_0(g)\vp_1(g)\theta_{\mbf W_{2r}, \mbf V_1}(g; \phi)\bx dg.
\end{equation*}
Here the measure $\bx dg$ is the Tamagawa measure on $\bx U(\mbf W_{2r})(\Ade_{F_+})$. This integral is absolutely convergent since $\vp_0$ and $\vp_1$ are rapidly decreasing and theta functions are of moderate growth.
\end{defi}

We will use the following global seesaw identity. 

\begin{lm}\label{osismeimefiemmitis}
Let $\sigma_1\subset \mcl A_0(\bx U(\mbf W_{2r})(\Ade_{F_+}))$ and $\pi_0\subset\mcl A_0(\bx U(\mbf V_{2r})(\Ade_{F_+}))$ be cuspidal automorphic representations such that
\begin{equation*}
\sigma_0=\theta_{\mbf V_{2r}, \mbf W_{2r}}(\ovl\pi_0)
\end{equation*}
is a cuspidal automorphic representation of $\bx U(\mbf W_{2r})(\Ade_F)$. Let $\vp_1\in \sigma_1$ and $f_0\in\pi_0$ be cusp forms and $\phi_{2r, 1}\in\mcl S(\bb L_{2r, 1}(\Ade_{F_+})), \phi_{2r, 2r}\in\mcl S(\bb L_{2r, 2r}(\Ade_{F_+}))$ be Schwartz functions. Then
\begin{align*}
\mcl{FJ}&\paren{\theta_{\mbf V_{2r}, \mbf W_{2r}}(\ovl f_0; \phi_{2r, 2r}), \vp_1; \phi_{2r, 1}}\\
&=\mcl P_{\bx{GGP}}\paren{f_0, \theta_{\mbf W_{2r},\mbf V_{2r+1}}(\ovl\vp_1; \phi_{2r, 2r}\otimes\phi_{2r, 1})}.
\end{align*}
\end{lm}
\begin{proof}
To save space, we write $[\bx U(\mbf V_{2r})]$ and $[\bx U(\mbf W_{2r})]$ for
\begin{equation*}
\bx U(\mbf V_{2r})(F_+)\bsh \bx U(\mbf V_{2r})(\Ade_{F_+})\quad\bx{ and }\quad \bx U(\mbf W_{2r})(F_+)\bsh \bx U(\mbf W_{2r})(\Ade_{F_+}),
\end{equation*}
respectively. Then
\begin{align*}
\mcl{FJ}&\paren{\theta_{\mbf V_{2r}, \mbf W_{2r}}(\ovl f_0; \phi_{2r, 2r}), \vp_1; \phi_{2r, 1}}\\
&=\int_{[\bx U(\mbf W_{2r})]}\vp_1(g)\theta_{\mbf W_{2r}, \mbf V_1}(g; \phi_{2r, 1})\int_{[\bx U(\mbf V_{2r})]}f_0(h)\theta_{\mbf W_{2r}, \mbf V_{2r}}(g, h; \phi_{2r, 2r})\bx dh\bx dg\\
&=\int_{[\bx U(\mbf V_{2r})]}f_0(h)\int_{[\bx U(\mbf W_{2r})]}\vp_1(g)\theta_{\mbf W_{2r}, \mbf V_1}(g; \phi_{2r, 1})\theta_{\mbf W_{2r}, \mbf V_{2r}}(g, h; \phi_{2r, 2r})\bx dg\bx dh\\
&=\int_{[\bx U(\mbf V_{2r})]}f_0(h)\int_{[\bx U(\mbf W_{2r})]}\vp_1(g)\theta_{\mbf W_{2r}, \mbf V_{2r+1}}(g, \iota(h); \phi_{2r, 2r}\otimes\phi_{2r, 1})\bx dg\bx dh\\
&=\mcl P_{\bx{GGP}}\paren{f_0, \theta_{\mbf W_{2r},\mbf V_{2r+1}}(\ovl\vp_1; \phi_{2r, 2r}\otimes\phi_{2r, 1})}
\end{align*}
\end{proof}

We now explain how to deduce Theorem~\ref{ismsieifmeifmss} from Theorem~\ref{ismsieiemiwmws}. The key ingredient is the following Burger--Sarnak type principle for Fourier--Jacobi periods on the pair of unitary groups $(\bx U(\mbf W_{2r}), \bx U(\mbf W_{2r}))$, in the spirit of \cites{B-S91, H-L98, Pra07, Zha14}. We first fix notation. For every infinite place $u$ of $F$, $\bx U(\mbf W_{2r})(F_{+, u})$ has a maximal compact subgroup $\mdc K_u\cong\bx U(r)\times\bx U(r)$. 
We fix such an isomorphism and denote by $\det_1^{m_1}\det^{m_2}$ the character of $\mdc K_u$ defined by
\begin{equation*}
(k_1, k_2)\mapsto\det(k_1)^{m_1}\det(k_2)^{m_2}.
\end{equation*}

\begin{prop}\label{ososieifnienfims}
Assume that $\mbf W_{2r}$ has signature $(r, r)$ at every infinite place. Suppose that
\begin{enumerate}
\item
$\Pla$ is a finite set of places of $F_+$ containing at least one finite place;
\item
$\sigma_0$ is an automorphic representation of $\bx U(\mbf W_{2r})(\Ade_{F_+})$; and
\item
$\otimes_{v\in\Pla}\tau_v$ is an irreducible admissible representation of $\prod_{v\in \Pla}\bx U(\mbf W_{2r})(F_{+, v})$ satisfying
\begin{enumerate}
\item
for every $v\in \Pla$, the space $\Hom_{\bx U(\mbf W_{2r})(F_{+, v})}(\sigma_{0,v}\otimes\omega_{\mbf W_{2r, v}, \psi_v}\otimes\tau_v, \bb C)$ is nonzero;
\item
for every finite place $v\in \Pla$, $\tau_v$ is compactly induced from an irreducible admissible representation $\nu_v$ of $Z_v\mdc K_v$, where $\mdc K_v$ is a compact open subgroup of $\bx U(\mbf W_{2r})(F_{+, v})$ and $Z_v$ is the center of $\bx U(\mbf W_{2r})(F_{+, v})$; and
\item
for every infinite place $u\in\Pla$, $\tau_u^\vee$ is a holomorphic discrete series that is a generalized Verma module in the sense of \cite{Gar05}. Moreover, if the lowest $\mdc K_u$-type of $\tau_u^\vee$ is the character $\det_1^{m_1}\det_2^{-m_2}$ for some positive integers $m_1, m_2$, then $\sigma_{0, u}$ has lowest $\mdc K_u$-type $\det_1^{m_1-1}\det_2^{-m_2}$ with multiplicity one.
\end{enumerate}
\end{enumerate}
Then there exists a cuspidal automorphic representation $\sigma_1$ of $\bx U(\mbf W_{2r})(\Ade_{F_+})$ satisfying
\begin{enumerate}
\item
for every place $v\in\Pla$, $\sigma_{1, v}$ is isomorphic to $\tau_v$; and
\item
there exist automorphic forms $\vp_0\in\sigma_0, \vp_1\in \sigma_1$ and a Schwartz function $\phi\in \mcl S(\bb L_{2r, 1}(\Ade_F))$ such that
\begin{equation*}
\mcl{FL}(\vp_0, \vp_1; \phi)\ne 0.
\end{equation*}
\end{enumerate}
\end{prop}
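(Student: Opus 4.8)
The plan is to realize the Fourier--Jacobi period as a pairing on $[\bx U(\mbf W_{2r})]\defining \bx U(\mbf W_{2r})(F_+)\bsh \bx U(\mbf W_{2r})(\Ade_{F_+})$ and to produce $\sigma_1$ by a Poincar\'e-series construction adapted to the prescribed local data, in the style of a relative Burger--Sarnak argument. First I would fix a $\mdc K$-finite pure-tensor vector $\vp_0\in\sigma_0$ and a $\mdc K$-finite pure-tensor Schwartz function $\phi=\otimes_v\phi_v$, and set $\Psi(g)\defining \vp_0(g)\,\theta_{\mbf W_{2r}, \mbf V_1}(g;\phi)$, a function on $[\bx U(\mbf W_{2r})]$. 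One checks that $\Psi$ is an automorphic form (left-invariant, $\mdc K$-finite, $\mcl Z$-finite, of moderate growth, since theta functions of $\mdc K$-finite Schwartz functions are automorphic forms), and that for every cusp form $\vp_1$ on $\bx U(\mbf W_{2r})(\Ade_{F_+})$ one has $\mcl{FJ}(\vp_0, \vp_1;\phi)=\int_{[\bx U(\mbf W_{2r})]}\Psi(g)\vp_1(g)\,\bx dg$, the integral converging because $\vp_1$ is rapidly decreasing. Thus it suffices to construct a cuspidal $\sigma_1$ with $\sigma_{1, v}\cong\tau_v$ for all $v\in\Pla$ whose isotypic part meets $\Psi$ nontrivially, for a suitable choice of $(\vp_0,\phi)$.

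Next I would build a test function $f=\otimes_v f_v$ on $\bx U(\mbf W_{2r})(\Ade_{F_+})$ and take $\sigma_1$ to be generated by $R(f)\Psi$. Fix a finite place $v_0\in\Pla$ (which exists by hypothesis (1)); since $\tau_{v_0}$ is compactly induced from $Z_{v_0}\mdc K_{v_0}$ it is supercuspidal, and I take $f_{v_0}$ to be a matrix coefficient of $\tau_{v_0}$, compactly supported modulo the centre. At an infinite place $u\in\Pla$ I take $f_u=e_{W_0, u}$, the (bounded) idempotent projecting onto the lowest $\mdc K_u$-type $W_0=\det_1^{m_1}\det_2^{-m_2}$ of $\tau_u^\vee$; the generalized-Verma-module hypothesis in (3c), together with the matching of lowest $\mdc K_u$-types with $\sigma_{0, u}$, is what pins down the archimedean component. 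At all remaining places I take $f_v$ the unit of the local Hecke algebra (or a suitable idempotent matching the behaviour of $\Psi$). The argument then rests on three points. (i) \emph{Cuspidality:} convolution against the supercuspidal matrix coefficient $f_{v_0}$ sends automorphic forms to cusp forms (a standard vanishing-of-constant-terms lemma), so $R(f)\Psi$ is a cusp form, hence rapidly decreasing and square-integrable. (ii) \emph{Correct local components:} at each finite $v\in\Pla$ the pair (compact induction, its matrix coefficient) is a type, so the $\bx U(\mbf W_{2r})(F_{+, v})$-representation generated by $R(f)\Psi$ is $\tau_v$; at each infinite $u\in\Pla$ the lowest-$\mdc K_u$-type matching of (3c) --- which is exactly the archimedean branching relation forced by tensoring with the lowest-weight archimedean Weil representation $\omega_{\mbf W_{2r, u}, \psi_u}$ --- together with unitarity and highest-weight theory singles out $\sigma_{1, u}^\vee$ as the holomorphic discrete series $\tau_u^\vee$. (iii) \emph{Non-vanishing:} hypothesis (3a), the nonvanishing of $\Hom_{\bx U(\mbf W_{2r})(F_{+, v})}(\sigma_{0, v}\otimes\omega_{\mbf W_{2r, v}, \psi_v}\otimes\tau_v, \bb C)$ for $v\in\Pla$, is precisely what guarantees that $R(f)\Psi$ can be made nonzero for a suitable choice of $(\vp_0,\phi)$: unfolding $\langle R(f)\Psi, \ovl{\vp_1}\rangle$ factors the global pairing into local trilinear integrals on $\sigma_{0, v}\otimes\omega_{\mbf W_{2r, v}, \psi_v}\otimes\tau_v$, each of which is nonzero for a suitable choice of $\vp_{0, v}$, $\phi_v$ and local test vector.

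Having produced a nonzero cusp form $R(f)\Psi$ with the prescribed local components at $\Pla$, I would take $\sigma_1$ to be an irreducible cuspidal constituent of the representation it generates, chosen so that $\sigma_{1, v}\cong\tau_v$ for every $v\in\Pla$ (possible because the local structure there is rigid: $\tau_{v_0}$ is the unique supercuspidal selected by $f_{v_0}$, and the archimedean types are pinned as above). Reversing the computation then yields $\vp_1\in\sigma_1$, $\vp_0\in\sigma_0$ and $\phi$ with $\mcl{FJ}(\vp_0, \vp_1;\phi)\ne 0$, which is the assertion; here the local seesaw identity of Lemma~\ref{ososopeeiiemreis} (and its Fourier--Jacobi variant) is used to identify the local trilinear functionals appearing in (3a) with the relevant restriction functionals.

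The hardest part is step (iii) together with the archimedean half of step (ii): converting the \emph{local} distinction statements (3a) into a single \emph{global} non-vanishing $R(f)\Psi\ne 0$ requires a careful factorization of the Fourier--Jacobi period on the pair $(\bx U(\mbf W_{2r}), \bx U(\mbf W_{2r}))$ --- morally an Ichino--Ikeda / doubling-type local-global identity --- and, at the real places, one must verify that the holomorphic discrete series really does occur with the lowest $\mdc K_u$-type dictated by the branching in (3c), a concrete but delicate computation for $\bx U(r, r)$. This is exactly where hypotheses (3b)--(3c) enter in an essential way.
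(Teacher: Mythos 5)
Your overall plan — form $\Psi = \vp_0\cdot\theta(-;\phi)$ on $[\bx U(\mbf W_{2r})]$, force its local shape at places in $\Pla$ by tensoring against the prescribed $\mdc K_v$-types, use the supercuspidality of $\tau_{v_0}$ at a finite place $v_0$ to guarantee cuspidality of the extracted constituent, and pin down the archimedean component of $\sigma_1$ by matching lowest $\mdc K_u$-types against (3c) — is the same as the paper's, which follows the Burger--Sarnak scheme of \cite[Proposition~2.14]{Zha14}. Your steps (i) and (ii) also agree with the paper up to packaging: the paper does not convolve $\Psi$ against a Hecke test function built from a supercuspidal matrix coefficient, but instead directly chooses $\vp_0$ and $\phi$ so that the $\prod_{v\in\Pla}\mdc K_v$-translates of $f = \vp_0\cdot\theta(-;\phi)$ span a copy of $\otimes_{v}\nu_v^\vee|_{\mdc K_v}$ (using Frobenius reciprocity and (3a)), then uses that $f\in L^2$ (because $\vp_0$ is a cusp form) to pair with some automorphic form, and appeals to the supercuspidal local component at $v_0$ when projecting to a cuspidal $\sigma_1$. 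The two bookkeeping devices are equivalent in spirit.

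The genuine gap is your step (iii), the nonvanishing. You propose to show $R(f)\Psi\neq 0$ by ``unfolding $\langle R(f)\Psi, \ovl{\vp_1}\rangle$'' into local trilinear integrals, invoking an Ichino--Ikeda/doubling-type local-global factorization. This is circular (you do not yet have $\vp_1$ or $\sigma_1$ — you are trying to construct them), and even apart from circularity such a factorization of the Fourier--Jacobi period into local integrals is a deep conjectural statement, far stronger than what is needed and not available at the required generality. The Burger--Sarnak argument avoids period formulas entirely. The paper's nonvanishing step is elementary: the set of admissible functions $f$ is a nonzero $\bb C$-vector space carrying a natural action of $\mbf G(\Ade_{F_+}^{\Pla})$, and each $f$ is left $\mbf G(F_+)$-invariant; if every such $f$ vanished at the identity, then by this action and left-invariance they would vanish on $\mbf G(F_+)\cdot\mbf G(\Ade_{F_+}^{\Pla})$, hence on all of $\mbf G(\Ade_{F_+})$ by continuity and weak approximation ($\mbf G(F_+)$ dense in $\mbf G(\Ade_{F_+,\Pla})$), contradicting the nonvanishing of the space. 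This weak approximation trick is exactly the missing idea in your argument; you should replace the proposed factorization by it.

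One further, smaller point: in step (3a) what you actually use is Frobenius reciprocity to deduce that $\nu_v^\vee|_{\mdc K_v}$ is a quotient of $\sigma_{0,v}\otimes\omega_{\psi_v}|_{\mdc K_v}$. State that explicitly, since it is the passage from the abstract $\Hom$-space nonvanishing to the compact-group branching statement that feeds into the construction of $f$; the way you have written (3a) it reads as if the nonvanishing of a continuous trilinear functional is being used directly, which again suggests a period formula rather than a Frobenius reciprocity/approximation argument.
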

\begin{proof}
The proof is a variant of that of \cite[Proposition~2.14]{Zha14}. We write $\mbf G=\bx U(\mbf W_{2r})$. It follows from the hypothesis that $\tau_u$ is induced from its lowest $K$-type $\nu_u$. We consider the restriction of $\sigma_{0, v}\otimes\omega_{\psi_v, \chi_v}$ to $\mdc K_v$ for each $v\in\Pla$. By the assumption and Frobenius reciprocity, $\nu^\vee|_{\mdc K_v}$ is a quotient representation of $\sigma_{0, v}\otimes\omega_{\psi_v, \chi_v}|_{\mdc K_v}$. Because $\mdc K_v$ is compact, there exist an automorphic function $\vp_0\in\sigma_0$ on $\mbf G(\Ade_{F_+})$ and a Schwartz function $\phi\in \mcl S(\bb L_{2r, 1}(\Ade_{F_+}))$ \sut the $\prod_{v\in\Pla}\mdc K_v$-translates of $f\defining \vp_0\cdot\theta_{\mbf W_{2r}, \mbf V_1}(-; \phi)$ span a $\bb C$-vector space that is isomorphic to $\otimes_{v\in\Pla}\nu^\vee|_{\mdc K_v}$ as representations of $\prod_{v\in\Pla}\mdc K_v$. We can further assume that $f(1)\ne 0$. Indeed, $\mbf G(\Ade_{F_+}^\Pla)$ acts on the set of all such functions. If they all vanish at the identity element, then they would be identically zero by the weak approximation theorem according to which $\mbf G(F_+)$ is dense in $\mbf G(\Ade_{F_+, \Pla})$.

The group $\prod_{v\in\Pla}Z_v$ acts on $f$ by the character $\prod_{v\in\Pla}\omega_{\nu_v}^{-1}$, where $\omega_{\nu_v}$ is the central character of $\nu_v$ for every $v\in\Pla$. Thus the $\prod_{v\in\Pla}Z_v\mdc K_v$-translates of $f$ generates a $\bb C$-vector space that is isomorphic to $\prod_{v\in\Pla}\nu_v^{-1}$ as a representation of $\prod_{v\in\Pla}Z_v\mdc K_v$. As a result, if $v\in\Pla$ is finite, then the $\mbf G(F_{+, v})$-translates of $f$ generates a $\bb C$-vector space that is isomorphic to $\Ind_{Z_v\mdc K_v}^{\mbf G(F_{+, v})}\nu_v^{-1}$ as representations of $\mbf G(F_{+, v})$. On the other hand, if $u\in\Pla$ is infinite, then it follows from the relation between the lowest $\mdc K_u$-type of $\tau_u^\vee$ and $\sigma_{0, u}$ that $\bx U(\mfk g_u)\rtimes\mdc K_u$-translates of $f$ generate a $\bb C$-vector space that is isomorphic to $\tau_u^\vee$ as a $(\mfk g_u, \mdc K_u)$-module (Here $\bx U(\mfk g_u)$ is the universal enveloping algebra over $\bb C$ of the Lie algebra of $\mbf G(F_{+, u})$).

Since cusp forms are rapidly decreasing, $f$ is contained in $L^2(\mbf G(F_+)\bsh\mbf G(\Ade_{F_+}))$. Because the space of automorphic forms are $L^2$-dense, one can find an automorphic form $\vp_1$ on $\mbf G(\Ade_{F_+})$ \sut
\begin{equation*}
\int_{\mbf G(F_+)\bsh\mbf G(\Ade_{F_+})}f(g)\vp_1(g)\bx dg
\end{equation*}
is absolutely convergent and nonzero. Using Hecke projectors and properties of $f$, we can further assume that Hecke translates of $\vp_1$ generate a cuspidal automorphic representation $\sigma_1$ of $\mbf G(\Ade_{F_+})$ satisfying $\sigma_{1, v}\cong \tau_v$ for every $v\in\Pla$. Thus $\mcl{FL}(\vp_0, \vp_1; \phi)$ is nonzero.

The theorem is proved.
\end{proof}

We define the notion of admissible places for the coefficient field appearing in Theorem~\ref{ismsieifmeifmss}.

\begin{defi}\label{issisoeeiriens}
Let $\Pi$ be a relevant automorphic representation of $\GL_{2r}(\Ade_F)$ and $E$ be a strong coefficient field of $\Pi$ (see Definition~\ref{sisieifnieeimfsi}). We say that a finite place $\lbd$ of $\fPla_E$, with underlying prime $\ell$, is an admissible place (with respect to $\Pi$) if the following hold:
\begin{enumerate}
\item[($\Lbd$1)]
$\ell\ge 4r+2$.
\item[($\Lbd$2)]
$\Pla^\Pi_+$ does not contain places over $\ell$.
\item[($\Lbd$3)]
The residual representation $\ovl\rho_{\Pi_0, \lbd}$ is absolutely irreducible. Fix a $\Gal_F$-stable $\mcl O_\lbd$-lattice $\bx R\subset \rho_{\Pi, \lbd}(r)$ (which is unique up to homothety), together with an isomorphism $\Xi: \bx R\xr\sim \bx R^\vee(1)$.
\item[($\Lbd$4-1)]
Either one of the following two assumptions holds:
\begin{enumerate}
\item
The image of $\Gal_F$ in $\GL(\ovl{\bx R})$ contains a nontrivial scalar element.
\item
$\ovl{\bx R}$ is a semisimple $\kappa_\lbd[\Gal_F]$-module and $\Hom_{\kappa_\lbd[\Gal_F]}(\End(\ovl{\bx R}), \ovl{\bx R})=0$.
\end{enumerate} 
\item[($\Lbd$4-2)]
$(\bx{GI}^1_{F', \mrs P, \bx R})$ from Lemma~\ref{ososoieieiehfimiws} holds for $F'=F_{\bx{rflx}, +}$ and $\mrs P(T)=T^2-1$.
\item[($\Lbd$5)]
The homomorphism $\ovl\rho_{\Pi, \lbd, +}$ is rigid for $(\Pla^\Pi_+, \vn)$ (see Definition~\ref{rigidindiremiesLGoos}), and $\ovl\rho_{\Pi, \lbd}|_{\Gal_{F(\mu_\ell)}}$ is absolutely irreducible.
\item[($\Lbd$6)]
The composite homomorphism $\bb T_{2r}^{\Pla^\Pi_+}\xr{\phi_\Pi}\mcl O_E\to \kappa_\lbd$ is cohomologically generic (see Definition~\ref{INIENIEMisimw} and Definition~\ref{cohomomoegienifmos}).
\end{enumerate}
\end{defi}

\begin{lm}\label{sosieimfifmiesws}
Let $\Pi$ be a relevant automorphic representation of $\GL_{2r}(\Ade_F)$ and $E$ be a strong coefficient field of $\Pi$ (see Definition~\ref{sisieifnieeimfsi}). Suppose $F_+\ne \bb Q$ and one of the following two assumptions holds:
\begin{enumerate}
\item
$E=\bb Q$ and there exists a modular elliptic curve $A$ over $F_+$ with no complex multiplication over $\ovl F$ satisfying $\rho_{\Pi, \ell}\cong\Sym^{2r-1}\etH^1(A_{\ovl F}, \bb Q_\ell)|_{\Gal_F}$ for every rational prime $\ell$.
\item
There exists a finite place $w$ of $F$ \sut $\Pi_w$ is supercuspidal; and a good place $\mfk p$ of $F$ (see \textup{Definition~\ref{issieniefeifmies}}) \sut $\Pi_{\mfk p}$ is a Steinberg representation.
\end{enumerate}
Then all but finitely many finite places of $E$ are admissible (with respect to $\Pi$).
\end{lm}
\begin{proof}
We first consider case (1): By \cite[Th\'eor\`eme~6]{Ser72} and \cite{Lom15}, for sufficiently large rational prime $\ell$, the homomorphism
\begin{equation*}
\ovl\rho_{A, \ell}|_{F_{\bx{rflx}}}: \Gal_{F_{\bx{rflx}}}\to\GL\paren{\etH^1\paren{A_{\ovl F},\bb F_\ell}}
\end{equation*}
is surjective; let $\ell$ be such a rational prime. We fix an isomorphism $\etH^1\paren{A_{\ovl F},\bb F_\ell}\cong \bb F_\ell^{\oplus 2}$ \sut $\rho_{A, \ell}(\cc)$ is given by the matrix
\begin{equation*}
\begin{bmatrix} &1 \\1 & \end{bmatrix}\in\GL_2(\bb F_\ell).
\end{equation*}
We need to check that every condition in Definition~\ref{issisoeeiriens} excludes only finitely many rational primes $\ell$. 

For ($\Lbd$1-3) and ($\Lbd$4-1), this is clear.

For ($\Lbd$4-2), we suppose $\ell>2^{4r-2}$, so
\begin{equation*}
\Brace{2^{\pm1}, 2^{\pm3}, \ldots, 2^{\pm(2r-1)}}
\end{equation*}
consists of distinct elements in $\bb F_\ell$, and does not contain $-2\in\bb F_\ell$. We take an element $g\in\Gal_{F_{\bx{rflz}}}$ whose image under $\ovl\rho_{A, \ell}$ is
\begin{equation*}
\begin{bmatrix}2 & \\ & 1\end{bmatrix}\in \GL_2(\bb F_\ell).
\end{equation*}
Thus $(\bx{GI}^1_{F', \mrs P, \bx R})$ from Lemma~\ref{ososoieieiehfimiws} holds for $F'=F_{\bx{rflx}, +}$ and $\mrs P(T)=T^2-1$ holds by taking the image of $g\cc$ under $(\ovl\rho_{\Pi, \ell}, \ovl\ve_\ell)$.

For ($\Lbd$5), by~\cite[Corollary~4.2]{LTXZZa}, the condition that $\ovl\rho_{\Pi_0, \lbd, +}$ is rigid for $(\Pla^{\min}_+, \vn)$ excludes only finitely many finite places $\lbd$ of $E$. The second condition is clearly satisfied.

For ($\Lbd6)$, this follows from the same reasoning as in the proof of Lemma~\ref{ssoeienfefmfeiskw}.

We now consider case (2): We need to check that every condition in Definition~\ref{issisoeeiriens} excludes only finitely many rational primes $\ell$.

For ($\Lbd$1) and ($\Lbd$2), this is clear.

For ($\Lbd$3), this follows from \cite[Theorem~4.5.(1)]{LTXZZa}.

For ($\Lbd$4-1), this follows by the same reasoning as in the proof of \cite[Lemma~8.1.4]{LTXZZa}.

For ($\Lbd$4-2), note that, for all but finitely many finite place $\lbd$ of $E$ strongly disjoint from $\mfk p$,
\begin{equation*}
\{\norml{\mfk p}^{\pm1}\modu\lbd, \norml{\mfk p}^{\pm3}\modu\lbd, \ldots, \norml{\mfk p}^{2r-1}\modu\lbd\}
\end{equation*}
consists of distinct elements and does not contain $-1$. For every such $\lbd$ that also satisfies ($\Lbd$3), the condition $(\bx{GI}^1_{\bx R, F', \mrs P})$ from Lemma~\ref{ososoieieiehfimiws} holds for $F'=F_{\bx{rflx}, +}$ and $\mrs P(T)=T^2-1$, by taking the element $(\ovl\rho_{\bx R, \lbd}, \ovl\ve_\ell)(\phi_{\mfk p})$.

For ($\Lbd$5-2), this follows from \cite[Theorem~4.8]{LTXZZa}.

For ($\Lbd6)$, this follows from the same reasoning as in the proof of Lemma~\ref{ssoeienfefmfeiskw}.

Note that the primes that are excluded can be effectively bounded.
\end{proof}

We now prove Theorem~\ref{ismsieifmeifmss} using the Burger--Sarnak type principle (see Proposition~\ref{ososieifnienfims}) and seesaw identities.

\begin{thm}\label{osineiieihirmisws}
Let $\Pi_0$ be a relevant automorphic representation of $\GL_{2r}(\Ade_F)$ and $E$ be a strong coefficient  field of $\Pi_0$ (see \textup{Definition~\ref{sisieifnieeimfsi}}). If $L(\frac{1}{2}, \Pi_0)\ne 0$, then for every admissible place $\lbd$ of $E$ with respect to $\Pi_0$, the Bloch--Kato Selmer group $\bx H^1_f(F, \rho_{\Pi_0, \lbd}(n))$ vanishes.
\end{thm}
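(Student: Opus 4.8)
The plan is to deduce the theorem from Theorem~\ref{ismsieiemiwmws} by manufacturing, out of $\Pi_0$ and the datum $\lbd$, an auxiliary relevant representation $\Pi_1 = \Pi_1^\flat \boxplus \uno$ of $\GL_{2r+1}(\Ade_F)$, together with the local behaviour needed, so that $L(\frac{1}{2},\Pi_0)\cdot L(\frac{1}{2},\Pi_0\times\Pi_1^\flat)\neq0$, $\Pi_1^\flat$ is square-integrable at a finite place of $F$ over a place of $F_+$ inert in $F$, and $\lbd$ is admissible with respect to $(\Pi_0,\Pi_1)$ in the sense of Definition~\ref{oiamsisieiwps}. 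The engine is the seesaw diagram \eqref{sosiemfiemfiw} together with the Burger--Sarnak principle for Fourier--Jacobi periods (Proposition~\ref{ososieifnienfims}).

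First I would descend $\Pi_0$: since $\Pi_0$ is relevant, hence conjugate self-dual, cohomological, and cuspidal of even rank, Arthur's multiplicity formula for unitary groups produces a Hermitian space $\mbf V_{2r}$ over $F$ positive definite at every Archimedean place and a cuspidal $\pi_0\subset\mcl A_0(\bx U(\mbf V_{2r})(\Ade_{F_+}))$ with trivial Archimedean components, $\BC(\pi_0)\cong\Pi_0$, and spherical Hecke character $\phi_{\Pi_0}$. Fixing $\psi$ and the auxiliary characters $\chi$, $\mu$ as in \S\ref{issoeoifmmieimws}, I would then pick a skew-Hermitian space $\mbf W_{2r}$ over $F$ of dimension $2r$ with signature $(r,r)$ at every Archimedean place, chosen via the local conservation relations (Theorem~\ref{oeoeieifnies}) so that each local theta lift $\theta_{\mbf V_{2r,v},\mbf W_{2r,v},\psi_v,\chi_v}(\ovl\pi_{0,v})$ is nonzero; then the Rallis inner product formula (Theorem~\ref{novnainsieiheirimfies}) combined with $L(\frac{1}{2},\Pi_0)\neq0$ forces $\sigma_0\defining\theta_{\mbf V_{2r},\mbf W_{2r}}(\ovl\pi_0)$ to be a nonzero cuspidal automorphic representation of $\bx U(\mbf W_{2r})(\Ade_{F_+})$. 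By Proposition~\ref{peiemitimeifes} its base change is $\Pi_0\otimes\mu^{-1}$, and its Archimedean components are the theta lifts of the trivial representation of the compact $\bx U(2r)$, i.e.\ scalar holomorphic discrete series whose lowest $\mdc K_u$-types are explicitly computable from $\psi$, $\chi$, $\mu$.

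The core step is the construction of $\sigma_1$. Using Proposition~\ref{ososieifnienfims} I would globalize a prescribed family $\{\tau_v\}_{v\in\Pla}$ of local representations of $\bx U(\mbf W_{2r})(F_{+,v})$ to a cuspidal $\sigma_1\subset\mcl A_0(\bx U(\mbf W_{2r})(\Ade_{F_+}))$ with $\sigma_{1,v}\cong\tau_v$ for $v\in\Pla$ and with $\mcl{FJ}(\vp_0,\vp_1;\phi)\neq0$ for suitable $\vp_0\in\sigma_0$, $\vp_1\in\sigma_1$, $\phi$. Here $\Pla$ consists of the Archimedean places, a finite place $w$ over a place of $F_+$ inert in $F$, and a good inert place $\mfk p$: at $w$ I choose $\tau_w$ supercuspidal, compactly induced, with irreducible $2r$-dimensional $L$-parameter (so that $\BC(\sigma_1)$ is cuspidal, $(\Pi_1^\flat)_w$ is square-integrable, and, via \cite[Theorem~4.5(1)]{LTXZZa}, $\ovl\rho_{\Pi_1^\flat,\lbd}$ is residually absolutely irreducible); at $\mfk p$ I choose $\tau_{\mfk p}$ unramified with Satake parameter made ``generic enough'' for $\lbd$ using the local Galois representation of Appendix~\ref{oggieeotiieimfiiwis}, so that the remaining conditions (L3), (L4-2), (L6) of Definition~\ref{oiamsisieiwps} hold for $(\Pi_0,\Pi_1)$; and at each Archimedean $u$ I choose $\tau_u^\vee$ a scalar holomorphic discrete series whose lowest $\mdc K_u$-type is ``one notch above'' that of $\sigma_{0,u}$, which via Prasad's conjecture forces $\pi_1\defining\theta_{\mbf W_{2r},\mbf V_{2r+1}}(\ovl\sigma_1)$ to have trivial Archimedean components. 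Hypothesis (3)(a) of Proposition~\ref{ososieifnienfims}, namely nonvanishing of $\Hom_{\bx U(\mbf W_{2r})(F_{+,v})}(\sigma_{0,v}\otimes\omega_{\mbf W_{2r,v},\psi_v}\otimes\tau_v,\bb C)$, is then verified place by place by combining the now-known local Gan--Gross--Prasad conjecture (Theorem~\ref{ssoeieuriwos}), Prasad's conjecture (Theorem~\ref{sieieifnimidms}), and the local seesaw identity (Lemma~\ref{ososopeeiiemreis}), which turns it into a Bessel restriction statement for the pair $(\pi_{0,v},\theta_{\mbf W_{2r,v},\mbf V_{2r+1,v}}(\tau_v))$ on $\bx U(\mbf V_{2r,v})$.

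Finally, the global seesaw identity (Lemma~\ref{osismeimefiemmitis}) turns $\mcl{FJ}(\vp_0,\vp_1;\phi)\neq0$ into nonvanishing of the unitary Gan--Gross--Prasad period of $(\pi_0,\pi_1)$ on the definite pair $(\bx U(\mbf V_{2r}),\bx U(\mbf V_{2r+1}))$; in particular $\pi_1\neq0$, and by Proposition~\ref{peiemitimeifes} and Prasad's conjecture its base change, after a suitable character twist, has the shape $\Pi_1=\Pi_1^\flat\boxplus\uno$ with $\Pi_1^\flat$ conjugate self-dual cuspidal and square-integrable at $w$. The direction $(2)\Rightarrow(1)$ of \cite[Theorem~1.1.5.1]{BCZ22} then yields $L(\frac{1}{2},\Pi_0\times\Pi_1)=L(\frac{1}{2},\Pi_0)\cdot L(\frac{1}{2},\Pi_0\times\Pi_1^\flat)\neq0$, and the choices at $w$ and $\mfk p$, together with verifications of the type in Lemma~\ref{sosieimfifmiesws} for the remaining conditions, show that $\lbd$ is admissible with respect to $(\Pi_0,\Pi_1)$. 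Invoking Theorem~\ref{ismsieiemiwmws} (via Theorem~\ref{iieiieierhfieiswp} and Hypothesis~\ref{iifififmieiemss}, available by Proposition~\ref{oososiemiehifws}) then gives $\bx H^1_f(F,\rho_{\Pi_0,\lbd}(r))=0$. The main obstacle is the core step: simultaneously meeting all the local constraints on $\sigma_1$ — matching Prasad's recipe for $\theta_{\mbf W_{2r},\mbf V_{2r+1}}(\sigma_1)$ against the Bessel branching for $\pi_0$ at each place of $\Pla$, pinning down the Archimedean lowest-$\mdc K_u$-type bookkeeping so that $\pi_1$ becomes unramified and cohomological at $\infty$ with Arthur parameter of the precise form $\Pi_1^\flat\boxplus\uno$, and constructing at $\mfk p$ a local Galois representation whose reduction modulo $\lbd$ is generic enough to force the residual conditions (L3), (L4-2), (L6).
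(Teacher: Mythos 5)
Your overall strategy matches the paper's: descend $\Pi_0$ to a definite unitary group via theta lifting and the Rallis inner product, globalize an auxiliary $\sigma_1$ via a Burger--Sarnak principle for Fourier--Jacobi periods, pass through the seesaw to produce $\Pi_1 = \Pi_1^\flat\boxplus\uno$ with a nonvanishing Gan--Gross--Prasad period, and invoke Theorem~\ref{iieiieierhfieiswp}. The direction in which you run the first step (descending $\Pi_0$ to $\pi_0$ on $\bx U(\mbf V_{2r})$ and then lifting to $\sigma_0$, rather than the paper's choice of building $\sigma_0$ on $\bx U(\mbf W_{2r})$ via Arthur with prescribed local data and then lifting to get $\pi_0$) is a cosmetic change, and your invocation of the converse direction of BCZ22 to recover $L(\tfrac12,\Pi_0\times\Pi_1)\neq0$ is unnecessary but harmless, since the proof of Theorem~\ref{iieiieierhfieiswp} only ever uses the nonvanishing period. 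However, two of your local choices would fail.

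First, you cannot take $\tau_{\mfk p}$ unramified. Hypothesis~(3)(b) of Proposition~\ref{ososieifnienfims} requires every finite $\tau_v$ for $v\in\Pla$ to be compactly induced from an irreducible representation of $Z_v\mdc K_v$ with $\mdc K_v$ compact open, and an unramified principal series is never of this form. Since you also want to enforce the congruence conditions (L4-2) and (L6) through the local behaviour of $\Pi_1^\flat$ at $\mfk p$, and since the globalization via Burger--Sarnak only controls $\sigma_{1,\mfk p}$ if $\mfk p\in\Pla$, you cannot simply drop $\mfk p$ from $\Pla$ either. The paper's resolution is to take $\Pi_{1,\mfk p}^{\flat,\prime}$ to be a $B$-avoiding \emph{good} representation in the sense of Definition~\ref{osoeiiinfiemisw} --- a ramified isobaric sum of $r$ pairwise distinct conjugate self-dual supercuspidals of $\GL_2(F_{\mfk p})$ produced by Lemma~\ref{soosseteeiifeimiefisws} --- whose Galois side has Frobenius eigenvalues that are $\ell$-adic units with controlled reductions. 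The corresponding $\pi_{1,\mfk p}'$ is supercuspidal by \cite[Corollaire~3.5]{M-R18}, and $\sigma_{1,\mfk p}'$, while not supercuspidal, is still compactly induced by \cite[Theorem~8.1]{Fin21} together with \cite[Corollaire~3.5]{M-R18}, so hypothesis~(3)(b) holds.

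Second, your appeal to \cite[Theorem~4.5.(1)]{LTXZZa} to get that $\ovl\rho_{\Pi_1^\flat,\lbd}$ is absolutely irreducible only yields the conclusion for all but finitely many $\lbd$, whereas the place $\lbd$ is fixed in advance as part of the admissibility hypothesis for $\Pi_0$; there is no room to discard exceptions. This is why the paper chooses a second auxiliary inert place $\mfk q$ with $2r\ell\mid(\norml{\mfk q}^2-1)$ and, via Lemma~\ref{tteoeoeires}, constructs a conjugate-orthogonal supercuspidal $\Pi_{1,\mfk q}^{\flat,\prime}$ whose associated \emph{local} Galois representation at $\mfk q$ is residually absolutely irreducible for the given $\iota_\ell$; since the restriction of $\rho_{\Pi_1^\flat,\lbd'}$ to $\Gal_{F_{\mfk q}}$ then has irreducible reduction, so does the global representation, and condition~(L3) holds for the specific $\lbd'$. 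Both gaps are of the same flavour: the admissibility conditions on $\Pi_1$ must be engineered locally for the fixed $\lbd$, using the explicit Appendix~A constructions, rather than invoked generically.
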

\begin{proof}
Let $\lbd$ be an admissible place with underlying rational prime $\ell$. We fix an isomorphism $\iota_\ell: \bb C\xr\sim\ovl{\bb Q_\ell}$ that induces the place $\lbd$. By ($\Lbd$4-2) and the Chebotarev density theorem, we can find a good inert place $\mfk p$ of $F_+$ (see \textup{Definition~\ref{issieniefeifmies}}) satisfying
\begin{itemize}
\item
the underlying prime of $\mfk p$ is larger than $\max(\ell, 2r+1)$; and
\item
$\ovl\rho_{\Pi, \lbd}(\phi_{\mfk p})$ has generalized eigenvalues $\{\norml{\mfk p}\cdot\alpha_{0, 1}^{\pm1}, \ldots, \norml{\mfk p}\cdot\alpha_{0, r}^{\pm1}\}\subset \ovl{\kappa_\lbd}^\times$ with $\alpha_{0, 1}=\norml{\mfk p}$ and $\alpha_{0, i}\notin\{\norml{\mfk p}^{\pm1}\modu\lbd\}$ for every $2\le i\le r$.
\end{itemize}
We take a $\Pi_{0, \mfk p}$-avoiding good representation $\Pi_{1, \mfk p}^{\flat, \prime}$ of $\GL_{2r}(F_{\mfk p})$ with respect to $\iota_\ell$ (see Definition~\ref{osoeiiinfiemisw}) satisfying
\begin{itemize}
\item
there exists a lift $F\in W_{F_{\mfk p}}$ of the arithmetic Frobenius element \sut the eigenvalues $\{\alpha_1, \ldots, a_{2r}\}$ of $\iota_\ell\rec_{2r}(\Pi_{1, \mfk p}^{\flat, \prime})(F^2)$ are $\ell$-adic units; and
\item
\begin{equation*}
\norml{\mfk p}^2\notin\{\alpha_i\alpha_j^{-1}|1\le i\ne j\le 2r\}\cup\{\alpha_i|1\le i\le 2r\}\subset\ovl{\bb F_\ell}.
\end{equation*}
holds.
\end{itemize}
Such a representation exists by Lemma~\ref{soosseteeiifeimiefisws}. 

We fix another prime $\mfk q$ of $F_+$ inert in $F$ \sut $2r\ell$ divides $\norml{\mfk q}^2-1$. By Lemma~\ref{tteoeoeires}, we can take a conjugate-orthogonal supercuspidal representation $\Pi_{1, \mfk q}^{\flat, \prime}$ of $\GL_{2r}(F_{\mfk q})$ whose associated Galois representation
\begin{equation*}
\iota_\ell\rec_{2r}(\Pi_{1, \mfk q}^{\flat, \prime}): W_{F_{\mfk q}}\to \GL_{2r}(\ovl{\bb Q_\ell})
\end{equation*}
is residually absolutely irreducible.

In this paragraph, let $v$ denote a place in $\{\mfk p, \mfk q\}$. By the local Gan--Gross--Prasad conjecture (see Theorem~\ref{ssoeieuriwos}(1)), there exists a Hermitian space $V'_v$ of dimension $2r$ over $F_v$ and irreducible admissible representations $\pi_{0, v}'$ and $\pi_{1, v}'$ of $\bx U(V'_v)$ and $\bx U(V'_{v,\sharp})$, respectively, satisfying
\begin{enumerate}
\item
$\BC(\pi_{0, v}')=\Pi_{0, v}$ and $\BC(\pi_{1, v}')=\Pi_{1, v}^{\flat, \prime}\boxplus\uno$, where $\uno$ is the trivial representation of $\GL_1(F_v)$; and
\item
$\Hom_{\bx U(V_v')}\paren{\pi'_{1, v}|_{\bx U(V_v')}\otimes\pi_{0, v}', \bb C}\ne 0$.
\end{enumerate}
In particular, $\pi_{1, v}'$ is supercuspidal by \cite[Corollaire~3.5]{M-R18}. By Prasad's conjecture (see Theorem~\ref{sieieifnimidms}(3)), there exist a unique skew-Hermitian space $W'_v$ of dimension $2r$ over $F_v$ \sut the contragredient theta lift
\begin{equation*}
\sigma_{1, v}'\defining(\theta_{V'_{\sharp,v}, W'_v}(\pi_{1, v}'))^\vee
\end{equation*}
is nonzero. Moreover, it follows from Prasad's conjectures (see Theorem~\ref{sieieifnimidms}) that $\BC(\sigma_{1, v}^{\prime, \vee})=\Pi_{1, v}^{\flat, \prime}\otimes\mu_v$. In particular, it follows from \cite[Theorem~8.1]{Fin21} and \cite[Corollaire~3.5]{M-R18} that $\sigma_{1, v}'$ is compactly induced from an irreducible representation of some compact open subgroup of $\bx U(W_v')$. Thus it follows from the local seesaw identity (see Lemma~\ref{ososopeeiiemreis}) and Proposition~\ref{oeiieieikifimeis} that the theta lift
\begin{equation*}
\sigma_{0, v}'\defining \theta_{V_{\sharp, v}', W_v'}(\pi_{0, v}^{\prime, \vee})
\end{equation*}
is also nonzero, and
\begin{equation*}
\Hom_{\bx U(W'_v)}(\sigma_{0, v}'\otimes\omega_{W_v', \psi_v}\otimes\sigma_{1,v}', \bb C)
\end{equation*}
is nonzero. Moreover, it follows from Prasad's conjectures (see Theorem~\ref{sieieifnimidms}(1)) that $\BC(\sigma_{0, v}')=\Pi_{0, v}^\vee$.

We now consider an infinite place $u$. let $W_u'$ be a skew-Hermitian space of dimension $2r$ and signature $(r, r)$ over $F_{+, u}$, and let $V_u'$ be a Hermitian space of dimension $2r$ and signature $(2r, 0)$ over $F_{+, u}$. Let
\begin{equation*}
\sigma_{1, u}'\defining(\theta_{V'_{u,\sharp}, W_u'}(\uno))^\vee
\end{equation*}
be the contragredient of the theta lift of the trivial representation of $\bx U(V'_{u, \sharp})$ to $\bx U(W_u')$, and let
\begin{equation*}
\sigma_{0, u}'\defining \theta_{V_u', W_u'}(\uno)
\end{equation*}
be the theta lift of the trivial representation of $\bx U(V_u')$ to $\bx U(W_u')$. Then it follows from classical calculation (see, for example, \cite[\S2.3]{Har07} and \cite{Li90}) that
\begin{itemize}
\item
$\sigma_{1, u}^{\prime, \vee}$ is a holomorphic discrete series representation with Harish-Chandra parameter
\begin{equation*}
\tau_1^\vee=\paren{\frac{2r+1}{2}, \ldots, \frac{3}{2}, -\frac{1}{2}, \ldots, -\frac{2r-1}{2}}
\end{equation*}
and the lowest $\mdc K_u$-type being the character $(k_1, k_2)\mapsto\det(k_1)^{r+1}\det(k_2)^{-r}$; and
\item
$\sigma_{0, u}'$ is a holomorphic discrete series representation with Harish-Chandra parameter
\begin{equation*}
\tau_0=\paren{\frac{2r-1}{2}, \ldots, -\frac{2r-1}{2}}
\end{equation*}
and the lowest $\mdc K_u$-type being the character $(k_1, k_2)\mapsto\det(k_1)^r\det(k_2)^{-r}$
\end{itemize}
for every infinite place $u$ of $F_+$. In particular, $\sigma_{1, u}^{\prime, \vee}$ is a generalized Verma module for every infinite place $u$ of $F_+$ (cf. \cite{Gar05}). Moreover, by the local seesaw identity (see Lemma~\ref{ososopeeiiemreis}) and Proposition~\ref{oeiieieikifimeis}, the space \begin{equation*}
\Hom_{\bx U(W_u)}(\sigma_{0, u}'\otimes\omega_{W_u'}\otimes\sigma_{1, u}', \bb C)
\end{equation*}
is nonzero for every infinite place $u$ of $F_+$. Moreover, $\BC(\sigma_{0, v}')=\Pi_{0, u}$.

By Arthur's multiplicity formula (see \cite[Theorem~1.7.1]{KMSW}), there exists a skew-Hermitian space $\mbf W_{2r}$ of dimension $2r$ over $F$ with signature $(r,r)$ at every infinite place satisfying $\mbf W_{2r, v}\cong \mbf W_v'$ for every $v\in\{\mfk p, \mfk q\}$, and a cuspidal automorphic representation $\sigma_0$ of $\bx U(\mbf W_{2r})$ satisfying $\sigma_{0, v}\cong \sigma_{0, v}'$ for every $v\in\{\mfk p, \mfk q\}\cup\Pla^\infty_{F_+}$ and $\BC(\sigma_0)$ is isomorphic to $\Pi_0^\vee$.

Because $L(\frac{1}{2}, \Pi_0)$ is nonzero, it follows from the local conservation relation (see Theorem~\ref{oeoeieifnies}), Theorem~\ref{ieieeinfeieiites}(1) and Theorem~\ref{novnainsieiheirimfies} that there exists a Hermitian space $\mbf V_{2r}$ of dimension $2r$ over $F$ with signature $(2r, 0)$ at every infinite place, \sut the conjugate global theta lift
\begin{equation*}
\pi_0\defining\ovl{\theta_{\mbf W_{2r}, \mbf V_{2r}}(\sigma_0)}
\end{equation*}
is an (irreducible) cuspidal automorphic representation of $\mbf V_{2r}(\Ade_F)$ with trivial Archimedean components. Then it follows from Proposition~\ref{oeieeiiriemfies} and the local conservation relation (see Theorem~\ref{oeoeieifnies}) that $\mbf V_{2r, v}\cong V_v'$ and $\pi_{0, v}\cong \pi_{0, v}'$, for every $v\in\{\mfk p, \mfk q\}$. Moreover, it follows from Lemma~\ref{peiemitimeifes} and Proposition~\ref{oeieeiiriemfies} that $\BC(\pi_0)$ is isomorphic to $\Pi_0$, and $\sigma_0=\theta_{\mbf V_{2r}, \mbf W_{2r}}(\ovl\pi_0)$. Set $\mbf V_{2r+1}\defining (\mbf V_{2r})_\sharp$.

It follows from the Burger--Sarnak type principle (see Proposition~\ref{ososieifnienfims}) that there exists a cuspidal automorphic representation $\sigma_1$ of $\bx U(\mbf W_{2r})(\Ade_{F_+})$ \sut $\sigma_{1, v}$ is isomorphic to $\sigma_{1, v}'$ for every $v\in\{\mfk p, \mfk q\}\cup\Pla^\infty_{F_+}$, together with automorphic forms $\vp_0\in\sigma_0, \vp_1\in\sigma_1$ and a Schwartz function $\phi\in\mcl S(\bb L_{2r, 1}(\Ade_{F_+}))$ \sut 
\begin{equation*}
\mcl{FJ}(\vp_0, \vp_1; \phi)\ne 0.
\end{equation*}
Set $\Pi_1^\flat\defining \BC(\ovl\sigma_1)$. Then $\Pi_{1, v}^\flat$ is isomorphic to $\Pi_{1, v}^{\flat, \prime}\otimes\mu_v$ for every $v\in\{\mfk p, \mfk q\}$ and $\Pi_{1, u}^\flat$ is isomorphic to $\BC(\sigma_{1, u}^{\prime, \vee})$ for every $u\in\Pla^\infty_{F_+}$. Set $\Pi_1\defining(\Pi_1^\flat\otimes\mu^{-1})\boxplus\uno$, where $\uno$ is the trivial representation of $\GL_1(\Ade_F)$. Then $\Pi_1$ is an almost cuspidal relevant representation of $\GL_{2r+1}(\Ade_F)$ (see Definition~\ref{eirinenriedusnisimaos}).

It follows from the global seesaw identity Lemma~\ref{osismeimefiemmitis} that 
\begin{equation*}
\pi_1\defining \theta_{\mbf W_{2r}, \mbf V_{2r+1}}(\ovl{\sigma_1})
\end{equation*}
is nonzero. Because $\mbf V_{2r+1}$ is anisotropic, we know $\pi_1$ is an (irreducible) cuspidal automorphic representation of $\bx U(\mbf V_{2r+1})(\Ade_{F_+})$. In particular, it follows from Lemma~\ref{peiemitimeifes} that $\pi_1$ has trivial Archimedean component, and $\pi_{1, v}$ is isomorphic to $\pi_{1, v}'$ for every $v\in\{\mfk p, \mfk q\}$. Moreover, it follows from Proposition~\ref{peiemitimeifes} that
\begin{equation*}
\BC(\pi_1)\cong(\BC(\ovl\sigma_1)\otimes\mu^{-1})\boxplus\uno=\Pi_1,
\end{equation*}
Thus it follows from the global seesaw identity again that there exist automorphic forms $f_0\in\pi_0$ and $f_1\in\pi_1$ \sut
\begin{equation*}
\mcl P_{\bx{GGP}}(f_0, f_1)\ne 0.
\end{equation*}

Let $E'$ be a strong coefficient field of $\Pi_1$ containing $E$. The isomorphism $\iota_\ell: \bb C\xr\sim\ovl{\bb Q_\ell}$ induces a place $\lbd'$ of $E'$ with underlying place $\lbd$ of $E$. We check that $\lbd'$ is an admissible place of $E$ with respect to $(\Pi_0,\Pi_1)$ (see Definition~\ref{oiamsisieiwps}).
\begin{itemize}
\item
(L1), (L2), (L4-1) and (L5) are satisfied by ($\Lbd$1), ($\Lbd$2), ($\Lbd$4-1) and ($\Lbd$5), respectively.
\item
For (L3), $\ovl\rho_{\Pi_0, \lbd'}$ is absolutely irreducible by $(\Lbd3)$. The restriction of $\rho_{\Pi^\flat_1, \lbd'}\otimes_{E'_{\lbd'}}\ovl{\bb Q_\ell}$ to $\Gal_{F_{\mfk q}}$ is residually absolutely irreducible by Proposition~\ref{ieieeinfeieiites} and the definition of $\Pi_{1, \mfk q}^{\flat,\prime}$. Thus $\rho_{\Pi^\flat, \lbd'}$ is residually absolutely irreducible.
\item
For (L4-2), it is easy to check that $(\bx{GI}^1_{F_{\bx{rflx}, +}, \mrs P})$ with $\mrs P(T)=T^2-1$ is satisfied by taking the element $(\ovl\rho_{\Pi_0, \lbd', +}, \ovl\rho_{\Pi_1, \lbd', +}, \ovl\ve_\ell)(\phi_{\mfk p})$.
\item
For (L6), if $\alpha=0$, then this follows from ($\Lbd6$). If $\alpha=1$, then this follows from the definition of $\Pi_{1, \mfk p}^{\flat, \prime}$ and the Chebotarev density theorem applied to the representation $\ovl\rho_{\Pi_1, \lbd'}\oplus\ovl\ve_\ell$ of $\Gal_F$, we see that there are infinitely many finite places $w$ of $F$ that are of degree 1 over $\bb Q$ satisfying that
\begin{enumerate}
\item
$\Pi_{1, w}$ is unramified with Satake parameter $\{\alpha_{1, 1}, \ldots, \alpha_{1, 2r+1}\}$ in which $\iota_\ell(\alpha_{1, i})$ is an $\ell$-adic unit for every $1\le i\le 2r+1$, and
\item
$\iota_\ell(\alpha_{1, i}/\alpha_{1, j})\ne\norml{w}\in \ovl{\kappa_{\lbd'}}$ for $1\le i\ne j\le 2r+1$. 
\end{enumerate}
Then it follows from \cite[Theorem~1.5]{Y-Z25} that (L6) holds for $\lbd'$.
\end{itemize}

As $F_+\ne \bb Q$, Hypothesis~\ref{iifififmieiemss} is known for every positive integer $N\ge 2$ by Proposition~\ref{oososiemiehifws}. We now apply (the proof of) Theorem~\ref{iieiieierhfieiswp} to get
\begin{equation*}
\bx H^1_f\paren{F, \rho_{\Pi_0, \lbd}(n)}\otimes_{E_\lbd}E'_{\lbd'}=\bx H^1_f\paren{F, \rho_{\Pi_0, \lbd'}(r)}=0.
\end{equation*}
Thus $\bx H^1_f\paren{F, \rho_{\Pi, \lbd}(n)}$ vanishes.
\end{proof}

We now deduce Theorem~\ref{ismsiieppwso} and Theorem~\ref{lsslienefifnieiwsws} from Theorem~\ref{ismsieifmeifmss}.

\begin{cor}
Let $A$ be a modular elliptic curve over $F_+$. Suppose that $F^+\ne\bb Q$ and $A$ has no complex multiplication over $\ovl F$. If the central critical value $L\paren{\Sym^{2r-1}A_F; r}$ does not vanish, then the Bloch--Kato Selmer group
\begin{equation*}
\bx H^1_f\paren{F, \Sym^{2r-1}\etH^1(A_{\ovl F}, \bb Q_\ell)(r)}
\end{equation*}
vanishes for all but finitely many rational primes $\ell$.
\end{cor}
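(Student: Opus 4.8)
The plan is to package $\Sym^{2r-1}A_F$ as a relevant automorphic representation of $\GL_{2r}(\Ade_F)$ and then invoke \textup{Theorem~\ref{osineiieihirmisws}} (the reformulation of \textup{Theorem~\ref{ismsieifmeifmss}}). First I would use modularity of $A$ over $F_+$ together with the odd case of symmetric power functoriality for Hilbert modular forms \cite{N-T21} to produce a cuspidal automorphic representation $\Pi$ of $\GL_{2r}(\Ade_F)$: namely, $\Pi$ is the base change to $F$ of the $(2r-1)$-st symmetric power of the $\GL_2(\Ade_{F_+})$-representation attached to $A$, and its $\ell$-adic realization is $\rho_{\Pi, \iota_\ell} \cong \Sym^{2r-1}\etH^1(A_{\ovl F}, \ovl{\bb Q_\ell})|_{\Gal_F}$ for every rational prime $\ell$ and every isomorphism $\iota_\ell: \bb C \xr\sim \ovl{\bb Q_\ell}$, this identification being forced by local--global compatibility for the Galois representation of $A$ and the characterizing property in \textup{Proposition~\ref{ieieeinfeieiites}(2)}. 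Next I would verify that $\Pi$ is relevant in the sense of \textup{Definition~\ref{eirinenriedusnisimaos}}: it is conjugate self-dual because $\rho_{\Pi, \iota_\ell}$ extends to $\Gal_{F_+}$, so $\rho_{\Pi, \iota_\ell}^\cc \cong \rho_{\Pi, \iota_\ell}$, while $\etH^1(A_{\ovl F})^\vee \cong \etH^1(A_{\ovl F})(1)$ gives $\rho_{\Pi, \iota_\ell}^\vee(1-2r) \cong \rho_{\Pi, \iota_\ell}$; its Archimedean components are the principal series induced from $(\arg^{1-2r}, \arg^{3-2r}, \ldots, \arg^{2r-1})$ since $A$ corresponds to a parallel weight-two form; and it is cuspidal because $A$ has no complex multiplication over $\ovl F$, so by Serre's open image theorem \cite{Ser72} the representation $\Sym^{2r-1}\etH^1(A_{\ovl F}, \ovl{\bb Q_\ell})|_{\Gal_F}$ is irreducible, hence $\Pi$ cannot decompose as a nontrivial isobaric sum.

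With $\Pi$ in hand I would take $E = \bb Q$ as a strong coefficient field of $\Pi$, which is legitimate since the Galois representations $\rho_{\Pi, \ell} \defining \Sym^{2r-1}\etH^1(A_{\ovl F}, \bb Q_\ell)|_{\Gal_F}$ already carry $\bb Q_\ell$-coefficients and base-change to $\rho_{\Pi, \iota_\ell}$. The motivic $L$-function $L(s, \Sym^{2r-1}A_F)$ and the automorphic $L$-function $L(s, \Pi)$ differ only by the normalization shift $s \mapsto s - (r - \tfrac12)$, so the hypothesis $L(r, \Sym^{2r-1}A_F) \ne 0$ is exactly $L(\tfrac12, \Pi) \ne 0$. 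Because $F_+ \ne \bb Q$, I would then apply \textup{Lemma~\ref{sosieimfifmiesws}(1)} (its case for $E = \bb Q$ and a non-CM modular elliptic curve), which shows that all but finitely many finite places $\lbd = (\ell)$ of $\bb Q$ are admissible with respect to $\Pi$ in the sense of \textup{Definition~\ref{issisoeeiriens}}, with the excluded finite set effectively bounded.

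Finally, for every such admissible $\lbd$ with underlying prime $\ell$, \textup{Theorem~\ref{osineiieihirmisws}} applied to $\Pi_0 = \Pi$ (using that $F_+ \ne \bb Q$ also guarantees \textup{Hypothesis~\ref{iifififmieiemss}} for every $N\ge 2$ via \textup{Proposition~\ref{oososiemiehifws}}) gives
\begin{equation*}
\bx H^1_f\paren{F, \Sym^{2r-1}\etH^1(A_{\ovl F}, \bb Q_\ell)(r)} = \bx H^1_f\paren{F, \rho_{\Pi, \lbd}(r)} = 0,
\end{equation*}
which is the assertion. The more general \textup{Theorem~\ref{lsslienefifnieiwsws}} follows by the identical scheme, with \textup{Lemma~\ref{sosieimfifmiesws}(2)} (the supercuspidal-place plus Steinberg-at-a-good-place hypotheses) replacing \textup{Lemma~\ref{sosieimfifmiesws}(1)} to supply infinitely many admissible places of $E$, and \textup{Theorem~\ref{ismsieiemiwmws}} as the Rankin--Selberg input underneath. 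I do not expect a genuine obstacle in this deduction itself; the substantive inputs are the cited symmetric power functoriality and open-image results, and the only real point of care is matching the two $L$-function normalizations and confirming that the elliptic-curve data feed into the list of conditions in \textup{Definition~\ref{issisoeeiriens}} exactly as \textup{Lemma~\ref{sosieimfifmiesws}} requires --- the hard analytic and geometric work all lives in \textup{Theorem~\ref{osineiieihirmisws}} and the bipartite Euler system machinery behind \textup{Theorem~\ref{ismsieiemiwmws}}.
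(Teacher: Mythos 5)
Your proposal is correct and follows the same route as the paper: obtain a cuspidal relevant $\Pi$ on $\GL_{2r}(\Ade_F)$ from Newton--Thorne symmetric power functoriality plus base change, take $E=\bb Q$ as strong coefficient field, match the $L$-value normalizations, verify admissibility for almost all $\ell$ via Lemma~\ref{sosieimfifmiesws}(1) (using $F_+\ne\bb Q$ both for this and for Hypothesis~\ref{iifififmieiemss} via Proposition~\ref{oososiemiehifws}), and conclude by Theorem~\ref{ismsieifmeifmss}. The only cosmetic difference is that the paper explicitly invokes Arthur--Clozel base change alongside Newton--Thorne, whereas you verify cuspidality of the base change directly through irreducibility of the symmetric-power Galois representation via Serre's open image theorem; both routes are valid and equivalent here.
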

\begin{proof}
By \cite[Theorem~A]{N-T22} and \cite{A-C89}, $\Sym^{2r-1}A_F$ is modular. Let $\Pi_0$ denote the automorphic representation of $\GL_{2r}(\Ade_F)$ attached to $\Sym^{2r-1}A_F$, which is a cuspidal relevant representation. Thus $\Pi_0$ has strong coefficient field $\bb Q$, and $\rho_{\Pi_0, \ell}$ is conjugate to $\Sym^{2r-1}\etH^1(A_{\ovl F}, \bb Q_\ell)$ as $\bb Q_\ell[\Gal_F]$-modules for every rational prime $\ell$. Moreover,
\begin{equation*}
L(\frac{1}{2}, \Pi_0)=L\paren{r, \Sym^{2r-1}A_F}.
\end{equation*}
As $F_+\ne \bb Q$, Hypothesis~\ref{iifififmieiemss} is known for every positive integer $N\ge 2$ by Proposition~\ref{oososiemiehifws}. Thus the assertion is an immediate consequence of Theorem~\ref{ismsieifmeifmss} and Lemma~\ref{sosieimfifmiesws}.
\end{proof}

\begin{cor}
Let $\Pi$ be a relevant automorphic representation of $\GL_{2r}(\Ade_F)$. Suppose that
\begin{enumerate}
\item
$F_+\ne \bb Q$;
\item
There exists a finite place $w$ of $F$ \sut $\Pi_w$ is supercuspidal;
\item
There exists a good inert place $\mfk p$ of $F$ (see \textup{Definition~\ref{issieniefeifmies}}) \sut $\Pi_{\mfk p}$ is a Steinberg representation.
\end{enumerate}
Let $E$ be a strong coefficient field of $\Pi$ (see \textup{Definition~\ref{sisieifnieeimfsi}}). If the central critical value $L(\frac{1}{2}, \Pi)$ does not vanish, then for almost every finite place $\lbd$ of $E$, the Bloch--Kato Selmer group $\bx H^1_f(F, \rho_{\Pi, \lbd}(n))$ vanishes.
\end{cor}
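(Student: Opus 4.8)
This is Theorem~\ref{lsslienefifnieiwsws}, and the plan is simply to combine two ingredients already available: Theorem~\ref{osineiieihirmisws} (the body version of Theorem~\ref{ismsieifmeifmss}), which says that for a relevant automorphic representation of $\GL_{2r}(\Ade_F)$ with non-vanishing central value, the Bloch--Kato Selmer group $\bx H^1_f(F, \rho_{\Pi, \lbd}(r))$ vanishes at every \emph{admissible} place $\lbd$ of $E$; and Lemma~\ref{sosieimfifmiesws}, which shows that under the running hypotheses all but finitely many $\lbd$ are admissible. No new argument is required beyond checking that hypotheses (1)--(3) place us in the situation of those two results.

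First I would note that $\Pi$ is a relevant representation in the sense of Definition~\ref{eirinenriedusnisimaos} and that $E$ is a strong coefficient field in the sense of Definition~\ref{sisieifnieeimfsi}, so Definition~\ref{issisoeeiriens}, Theorem~\ref{osineiieihirmisws}, and Lemma~\ref{sosieimfifmiesws} all apply verbatim. Hypotheses (2) and (3) are precisely the standing assumptions of case~(2) of Lemma~\ref{sosieimfifmiesws}: the supercuspidal place $w$ yields residual absolute irreducibility and rigidity of $\ovl\rho_{\Pi, \lbd}$ (conditions ($\Lbd$3) and ($\Lbd$5) of Definition~\ref{issisoeeiriens}, via the results of~\cite{LTXZZa}) for all but finitely many $\lbd$; while the good inert Steinberg place $\mfk p$, for all sufficiently large $\ell$, produces a Frobenius element whose generalized eigenvalues are $\{\norml{\mfk p}^{\pm1}, \norml{\mfk p}^{\pm3}, \ldots, \norml{\mfk p}^{\pm(2r-1)}\}$ with no repetitions and none equal to $-1$, which is exactly what is needed to verify the general-image condition ($\Lbd$4-2) through Lemma~\ref{ososoieieiehfimiws}. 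The remaining conditions ($\Lbd$1), ($\Lbd$2), ($\Lbd$4-1), ($\Lbd$6) exclude only finitely many $\lbd$ by the same Chebotarev and boundedness arguments as in the proof of Lemma~\ref{ssoeienfefmfeiskw}. Hence all but finitely many finite places $\lbd$ of $E$ are admissible with respect to $\Pi$.

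Next, hypothesis (1), namely $F_+\ne\bb Q$, ensures that Hypothesis~\ref{iifififmieiemss} holds for every $N\ge 2$ by Proposition~\ref{oososiemiehifws}; this is the one external input needed to make Theorem~\ref{osineiieihirmisws} (and the underlying Theorem~\ref{iieiieierhfieiswp}) unconditional here. Consequently, for each admissible place $\lbd$ the non-vanishing of $L(\frac{1}{2}, \Pi)$ together with Theorem~\ref{osineiieihirmisws} gives $\bx H^1_f(F, \rho_{\Pi, \lbd}(r)) = 0$, and combining this with the previous paragraph proves the corollary. I do not expect a serious obstacle at this point: the deduction is formal once Theorem~\ref{ismsieifmeifmss} and Lemma~\ref{sosieimfifmiesws} are in hand. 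The only point requiring care is bookkeeping of the finite exceptional set --- assembling the finitely many $\lbd$ ruled out by Lemma~\ref{sosieimfifmiesws} with the large-prime constraints of Definition~\ref{issisoeeiriens} (e.g.\ $\ell\ge 4r+2$ and $\Pla^\Pi_+\cap\Pla_{F_+}(\ell)=\vn$) --- and observing, as in Lemma~\ref{sosieimfifmiesws}, that this set is finite and effectively bounded. All the substantive work (the seesaw/Burger--Sarnak construction of an auxiliary cuspidal $\sigma_1$, the passage through theta correspondence, and the bipartite Euler system bound on the Selmer group) is already contained in the proofs of Theorems~\ref{iieiieierhfieiswp} and~\ref{osineiieihirmisws}.
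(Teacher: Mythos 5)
Your proposal is correct and takes essentially the same approach as the paper: deduce the corollary by combining Theorem~\ref{ismsieifmeifmss} (proved as Theorem~\ref{osineiieihirmisws}) with Lemma~\ref{sosieimfifmiesws}, using $F_+\ne\bb Q$ together with Proposition~\ref{oososiemiehifws} to secure Hypothesis~\ref{iifififmieiemss} for all $N\ge 2$. The paper's proof is a two-sentence citation of exactly these inputs; your expanded verification of the admissibility conditions simply unpacks what Lemma~\ref{sosieimfifmiesws} already establishes, so there is no substantive difference.
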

\begin{proof}
As $F_+\ne \bb Q$, Hypothesis~\ref{iifififmieiemss} is known for every positive integer $N\ge 2$ by Proposition~\ref{oososiemiehifws}. Thus the assertion is an immediate consequence of Theorem~\ref{ismsieifmeifmss} and Lemma~\ref{sosieimfifmiesws}.
\end{proof}

\subsection{The self-dual case}\label{psosieuueiures}

Let $F$ be a totally real number field. Let $\mbf V_{2r+1}$ be a quadratic space of dimension $2r$ over $F$ and let $\mbf V_1$ a quadratic space of dimension $1$ over $F$ of discriminant $1$. Let $\mbf W_{2r}$ be a symplectic space of dimension $2r$ over $F$. Set $\mbf V_{2r+2}\defining \mbf V_{2r+1}\oplus \mbf V_1$. Let $\iota: \bx O(\mbf V_{2r+1})\subset \bx O(\mbf V_{2r+2})$ be the natural inclusion. We fix a nontrivial additive character $\psi$ of $F\bsh \Ade_F$, and use notation defined in \S\ref{ieiieiepeoies}.

Consider the inclusion
\begin{equation*}
\bx O(\mbf V_{2r+1})\times \bx O(\mbf V_1)\subset \bx O(\mbf V_{2r+2})
\end{equation*}
and the diagonal embedding
\begin{equation*}
\Sp(\mbf W_{2r})\subset \Sp(\mbf W_{2r})\times \Sp(\mbf W_{2r}).
\end{equation*}
$(\Sp(\mbf W_{2r}), \bx O(\mbf V_{2r+2}))$ and
\begin{equation*}
(\Sp(\mbf W_{2r})\times \Sp(\mbf W_{2r}), \bx O(\mbf V_{2r+1})\times \bx O(\mbf V_1))
\end{equation*}
are reductive dual pairs. In other words there is a seesaw diagram:

\begin{equation*}
\begin{tikzcd}[sep=large]
\Sp(\mbf W_{2r})\times\Sp(\mbf W_{2r})\ar[dr, dash] &\bx O(\mbf V_{2r+2})\\
\Sp(\mbf W_{2r})\ar[u, dash]\ar[ur, dash] &\bx O(\mbf V_{2r+1})\times\bx O(\mbf V_1)\ar[u, dash]
\end{tikzcd}.
\end{equation*}

We record the following local seesaw identity attached to the seesaw diagram \eqref{sosiemfiemfiw}.

\begin{lm}\label{ososopeeiiettttmreis}
For irreducible admissible representations $\pi_0$ of $\bx O(\mbf V_{2r+1})(F_{+, v})$ and $\sigma_1$ of $\Sp(\mbf W_{2r})(F_{+, v})$, there is a canonical isomorphism
\begin{align*}
\Hom_{\Sp(\mbf W_{2r})(F_{+, v})}(\Theta_{\mbf V_{2r+1}, \mbf W_{2r}}(\pi_0)&\otimes\omega_{\mbf W_{2r}}, \pi)\\
&\cong\Hom_{\bx O(\mbf V_{2r+1})(F_{+, v})}\paren{\Theta_{\mbf W_{2r}, \mbf V_{2r+2}}(\sigma_1), \pi_0}.
\end{align*}
\end{lm}
\begin{proof}
This is standard.
\end{proof}

We introduce the orthogonal Gross--Prasad periods and the Fourier--Jacobi periods.

\begin{defi}
Let $\pi_0\subset \mcl A_0(\bx O(\mbf V_{2r+1})(\Ade_F))$ and $\pi_1\subset \mcl A_0(\bx O(\mbf V_{2r+2})(\Ade_F))$ be cuspidal automorphic representations, and $f_0\in\pi_0$ and $f_1\in\pi_1$ be cusp forms. We define the orthogonal \tbf{Gross--Prasad period}
\begin{equation*}
\mcl P_{\bx{GP}}(f_0, f_1)\defining\int_{\bx O(\mbf V_{2r+1})(F)\bsh \bx O(\mbf V_{2r+1})(\Ade_F)}f_0(h)f_1(\iota(h))\bx dh.
\end{equation*}
Here the measure $\bx dh$ is the Tamagawa measure on $\bx O(\mbf V_{2r+1})(\Ade_F)$. This integral is absolutely convergent since $f_0$ and $f_1$ are rapidly decreasing.
\end{defi}

We set
\begin{equation*}
\bb W_{2r, 1}\defining\mbf W_{2r}\otimes_F\mbf V_1, \quad \bb W_{2r, 2r+1}\defining\mbf W_{2r}\otimes_F\mbf V_{2r+1}, \quad \bb W_{2r, 2r+2}\defining\mbf W_{2r+1}\otimes_F\mbf V_{2r+2}.
\end{equation*}
Then they are all symplectic spaces over $F_+$ as defined in \S\ref{issoeoifmmieimws}. Fix Lagrangian subspaces
\begin{equation*}
\bb L_{2r, 1}\subset \bb W_{2r, 1}, \quad \bb L_{2r, 2r+1}\subset \bb W_{2r, 2r+1},
\end{equation*}
then $\bb L_{2r, 2r+1}\defining \bb L_{2r, 2r+1}\oplus\bb L_{2r, 1}$ is a Lagrangian subspace of $\bb W_{2r, 2r+2}$. For each $n\in\{1, 2r+1, 2r+2\}$, we denote by $\omega_{\mbf W_{2r}, \mbf V_n}$ the Weil representation, which can be realized on the space of Schwartz functions $\mcl S(\bb L_{2r, n})$. Then
\begin{equation*}
\omega_{\mbf W_{2r}, \mbf V_{2r+2}}=\omega_{\mbf W_{2r}, \mbf V_{2r+1}}\hat\otimes\omega_{\mbf W_{2r}}.
\end{equation*}
In particular, if $\phi_{2r, 2r+2}=\phi_{2r,2r+1}\otimes\phi_{2r, 1}\in \mcl S(\bb L_{2r, 2r+1}(\Ade_F))\otimes\mcl S(\bb L_{2r, 1}(\Ade_F))$, then
\begin{equation*}
\theta_{\mbf W_{2r}, \mbf V_{2r+2}}(g, \iota(h); \phi_{2r, 2r+2})=\theta_{\mbf W_{2r}, \mbf V_{2r+1}}(g, h; \phi_{2r, 2r+1})\theta_{\mbf W_{2r}, \mbf V_1}(g; \phi_{2r, 1})
\end{equation*}
for every $(g, h)\in \Sp(\mbf W_{2r})(\Ade_F)\times\bx O(\mbf V_{2r+1})(\Ade_F)$.

\begin{defi}
Let $\tilde\sigma_0\subset \mcl A_0(\tld\Sp(\mbf W_{2r})(\Ade_F))$ and $\sigma_1\subset\mcl A_0(\Sp(\mbf W_{2r})(\Ade_F))$ be genuine cuspidal automorphic representations. Let $\tilde\vp_0\in\tilde\sigma_0, \vp_1\in\sigma_1$ be cusp forms and $\phi\in\mcl S(\bb L_{2r, 1}(\Ade_F))$ be a Schwartz function. We define the \tbf{Fourier--Jacobi period}
\begin{equation*}
\mcl{FJ}(\tilde\vp_0, \vp_1; \phi)\defining\int_{\Sp(\mbf W_{2r})(F)\bsh\Sp(\mbf W_{2r})(\Ade_F)}\tilde\vp_0(\tilde g)\vp_1(g)\theta_{\mbf W_{2r}, \mbf V_1}(\tilde g; \phi)\bx dg.
\end{equation*}
Here $\tld g$ is an arbitrary lift of $g$ to $\tld\Sp(\mbf W_{2r})$, and the measure $\bx dg$ is the Tamagawa measure on $\Sp(\mbf W_{2r})(\Ade_F)$. This integral is absolutely convergent since $\tilde\vp_0$ and $\vp_1$ are rapidly decreasing and theta functions are of moderate growth.
\end{defi}

We will use the following global seesaw identity.

\begin{lm}
Let $\sigma_1\subset \mcl A_0(\Sp(\mbf W_{2r})(\Ade_{F_+}))$ and $\pi_0\subset\mcl A_0(\bx O(\mbf V_{2r+1})(\Ade_{F_+}))$ be cuspidal automorphic representations, \sut
\begin{equation*}
\tilde\sigma_0=\theta_{\mbf V_{2r+1}, \mbf W_{2r}}(\ovl\pi_0)
\end{equation*}
is a genuine cuspidal automorphic representation of $\tld\Sp(\mbf W_{2r})(\Ade_F)$. Let $\vp_1\in \sigma_1$ and $f_0\in\pi_0$ be cusp forms and $\phi_{2r, 1}\in\mcl S(\bb L_{2r, 1}(\Ade_{F_+})), \phi_{2r, 2r+1}\in\mcl S(\bb L_{2r, 2r+1}(\Ade_{F_+}))$ be Schwartz functions. Then
\begin{align*}
\mcl{FJ}&\paren{\theta_{\mbf V_{2r+1}, \mbf W_{2r}}(\ovl f_0, \phi_{2r, 2r+1}), \vp_1; \phi_{2r, 1}}\\
&=\mcl P_{\bx{GP}}\paren{f_0, \theta_{\mbf W_{2r},\mbf V_{2r+2}}(\ovl\vp_1; \phi_{2r, 2r+1}\otimes\phi_{2r, 1})}.
\end{align*}
\end{lm}
\begin{proof}
The proof is the same as that of Lemma~\ref{osismeimefiemmitis}, thus omitted.
\end{proof}

We now explain how to deduce Conjecture~\ref{osoppowiur} from Conjecture~\ref{ocneienipwowie}. The key ingredient is the following Burger--Sarnak type principle for Fourier--Jacobi periods on the pair $(\Sp(\mbf W_{2r}), \tld\Sp(\mbf W_{2r}))$, in the spirit of \cites{B-S91, H-L98, Pra07, Zha14}. We first fix notation. For every infinite place $u$ of $F$, $\Sp(\mbf W_{2r})(F_u)$ has a maximal compact subgroup $\mdc K_u\cong\bx U(r)$. 
Denote the preimage of $\mdc K_u$ in $\tld\Sp(\mbf W_{2r})(F_u)$ by $\tld{\mdc K}_u$. Under the identification $\mdc K_u\cong\bx U(r)$, one can realize
\begin{equation*}
\tld{\mdc K_u}=\{(g,z)|g\in \bx U(r), z\in\bb C^\times,\ \det(g)=z^2\}.
\end{equation*}
There is a genuine ``square-root of the determinant'' character
\begin{equation*}
\sqrt{\det}: \tld{\mdc K}_u\mapsto \bb C^\times,  (k, z)\mapsto z,
\end{equation*}
which satisfies $(\sqrt{\det})^2=\det$ (via the projection $\tld{\mdc K}_u\to \mdc K_u$).

\begin{prop}
Suppose that
\begin{enumerate}
\item
$\Pla$ is a finite set of places of $F$ containing at least one finite place;
\item
$\tld\sigma_0$ is a genuine automorphic representation of $\tld\Sp(\mbf W_{2r})(\Ade_F)$; and
\item
$\otimes_{v\in\Pla}\tau_v$ is an irreducible admissible representation of $\prod_{v\in\Pla}\Sp_{2r}(F_v)$ \sut
\begin{enumerate}
\item
for every $v\in\Pla$, the space $\Hom_{\Sp(\mbf W_{2r})(F_v)}(\tld\sigma_{0, v}\otimes\omega_{\mbf W_{2r, v}, \psi_v}\otimes\tau_v, \bb C)$ is nonzero;
\item
for every finite place $v\in\Pla$, $\tau_v$ is a supercuspidal representation that is compactly induced from a representation of a compact open subgroup $\mdc K_v$ of $\Sp_{2r}(F_v)$; and
\item
for every infinite place $u\in\Pla$, $\tau_u^\vee$ is a holomorphic discrete series that is a generalized Verma module in the sense of \cite{Gar05}. Moreover, if the lowest $\mdc K_u$-type of $\tau_u^\vee$ is the character $\det^m$ for some positive integer $m$, then $\tilde\sigma_{0, u}$ has lowest $\tld{\mdc K}_u$-type $(\sqrt{\det})^{2m-1}$ with multiplicity one.
\end{enumerate}
\end{enumerate}
Then there exists a cuspidal automorphic representation $\sigma_1$ of $\Sp(\mbf W_{2r})(\Ade_{F_+})$ satisfying
\begin{enumerate}
\item
for every $v\in\Pla$, $\sigma_{1,v}$ is isomorphic to $\tau_v$; and
\item
there exist genuine automorphic forms $\tilde\vp_0\in\tilde\sigma_0, \vp_1\in \sigma_1$ and a Schwartz function $\phi\in\mcl S(\bb L_{2r, 1}(\Ade_F))$ such that
\begin{equation*}
\mcl{FL}(\tilde\vp_0, \vp_1; \phi)\ne 0.
\end{equation*}
\end{enumerate}
\end{prop}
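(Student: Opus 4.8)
The plan is to adapt the proof of Proposition~\ref{ososieifnienfims}---itself a variant of \cite[Proposition~2.14]{Zha14}---to the symplectic--metaplectic pair $(\Sp(\mbf W_{2r}), \tld\Sp(\mbf W_{2r}))$. Write $\mbf G = \Sp(\mbf W_{2r})$. The one genuinely new feature is the interplay of the metaplectic cover with the genuine Weil representation. Since $\dim\mbf V_1$ is odd, the theta function $\theta_{\mbf W_{2r}, \mbf V_1}(\tld g; \phi)$ is genuine in $\tld g$, so the product $f \defining \tld\vp_0 \cdot \theta_{\mbf W_{2r}, \mbf V_1}(-; \phi)$ of a genuine cusp form $\tld\vp_0 \in \tld\sigma_0$ with this theta function descends to an honest (non-genuine) automorphic function on $\mbf G(\Ade_F)$; this is why the representation $\sigma_1$ that we produce lives on $\mbf G$ and not on its cover.

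First I would extract the local data from hypothesis (3). For a finite place $v \in \Pla$, since $\tau_v = \Ind_{\mdc K_v}^{\mbf G(F_v)} \nu_v$ is compactly induced, Frobenius reciprocity turns the nonvanishing of $\Hom_{\mbf G(F_v)}(\tld\sigma_{0,v} \otimes \omega_{\mbf W_{2r,v},\psi_v} \otimes \tau_v, \bb C)$ into the statement that $\nu_v^{-1}|_{\mdc K_v}$ is a quotient of the (non-genuine) $\mdc K_v$-representation $(\tld\sigma_{0,v} \otimes \omega_{\mbf W_{2r,v},\psi_v})|_{\mdc K_v}$. For an infinite place $u \in \Pla$, the oscillator representation $\omega_{\mbf W_{2r,u},\psi_u}$ of $\tld{\mbf G}(F_u)$ has scalar lowest $\tld{\mdc K}_u$-type $(\sqrt{\det})^{\pm1}$ (the sign depending on $\psi_u$), so with the normalization in condition (c) of hypothesis (3) the tensor product $\tld\sigma_{0,u} \otimes \omega_{\mbf W_{2r,u},\psi_u}$ has $\det^m$ as a $\mdc K_u$-type with multiplicity one; since $\tau_u^\vee$ is a generalized Verma module---hence generated over $\bx U(\mfk g_u)$ by its lowest $\mdc K_u$-type $\det^m$---this identifies $\tau_u^\vee$ with the $(\mfk g_u, \mdc K_u)$-submodule of $\tld\sigma_{0,u} \otimes \omega_{\mbf W_{2r,u},\psi_u}$ generated by that type.

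Next I would globalize, exactly as in the proof of Proposition~\ref{ososieifnienfims}. Using compactness of the $\mdc K_v$ at the finite places of $\Pla$ and finite-dimensionality of the lowest $\mdc K_u$-types at the infinite places, there exist a genuine cusp form $\tld\vp_0 \in \tld\sigma_0$ and a Schwartz function $\phi \in \mcl S(\bb L_{2r,1}(\Ade_F))$ such that the $\prod_{v \in \Pla} \tld{\mdc K}_v$-translates of $f \defining \tld\vp_0 \cdot \theta_{\mbf W_{2r}, \mbf V_1}(-; \phi)$ span a space realizing, after descending the relevant central subgroups, the outer tensor product of the $\nu_v^{-1}$ (finite $v$) and the lowest $\mdc K_u$-types of $\tau_u^\vee$ (infinite $u$); and, as in the unitary case, weak approximation (density of $\mbf G(F)$ in $\mbf G(\Ade_{F, \Pla})$) lets us further arrange $f(1) \ne 0$. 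Then the $\mbf G(F_v)$-translates of $f$ span a copy of $\Ind_{\mdc K_v}^{\mbf G(F_v)} \nu_v^{-1}$ at each finite $v \in \Pla$, and the $\bx U(\mfk g_u) \rtimes \mdc K_u$-translates span the Harish-Chandra module of $\tau_u^\vee$ at each infinite $u \in \Pla$, so the local representation generated by the translates of $f$ is $\tau_v$ up to the contragredient implicit in the Fourier--Jacobi period. Since $f$ is rapidly decreasing it lies in $L^2(\mbf G(F) \bsh \mbf G(\Ade_F))$; by $L^2$-density of automorphic forms we choose an automorphic form $\vp_1$ on $\mbf G(\Ade_F)$ with $\int_{\mbf G(F) \bsh \mbf G(\Ade_F)} f(g)\vp_1(g)\,\bx dg$ absolutely convergent and nonzero, and Hecke projectors at the places of $\Pla$ let us arrange that the Hecke translates of $\vp_1$ generate a cuspidal automorphic representation $\sigma_1$ with $\sigma_{1,v} \cong \tau_v$ for all $v \in \Pla$. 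Unwinding the definitions then gives $\mcl{FJ}(\tld\vp_0, \vp_1; \phi) \ne 0$.

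The main obstacle I anticipate is the Archimedean bookkeeping: one must pin down the lowest $\tld{\mdc K}_u$-type of $\omega_{\mbf W_{2r,u},\psi_u}$ (a half-integral power of $\det$, with a sign depending on $\psi_u$) and verify that tensoring it against $\tld\sigma_{0,u}$ of lowest type $(\sqrt{\det})^{2m-1}$ produces a representation in which the $\mdc K_u$-type $\det^m$ occurs with multiplicity exactly one and generates a copy of $\tau_u^\vee$ as a generalized Verma module; this is the precise content of condition (c) of hypothesis (3), and it is more delicate than the linear computation in Proposition~\ref{ososieifnienfims} because the metaplectic lowest-weight calculation involves the genuine character $\sqrt{\det}$ on $\tld{\mdc K}_u$. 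A secondary point, which should be recorded explicitly although it follows from the product formula for the metaplectic cocycle, is that the product $f = \tld\vp_0 \cdot \theta_{\mbf W_{2r}, \mbf V_1}(-; \phi)$ is invariant under the global $\mu_2$ and hence really is a well-defined automorphic function on $\mbf G(\Ade_F)$.
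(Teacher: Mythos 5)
Your proposal is correct and follows essentially the same route as the paper, which simply notes that the proof of Proposition~\ref{ososieifnienfims} carries over verbatim to the symplectic--metaplectic pair. Your explicit observation that the product $f = \tld\vp_0 \cdot \theta_{\mbf W_{2r}, \mbf V_1}(-;\phi)$ is non-genuine (the two genuine characters of $\mu_2$ cancel) and hence descends to an honest automorphic function on $\Sp(\mbf W_{2r})(\Ade_F)$ is a useful piece of bookkeeping that the paper leaves implicit.
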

\begin{proof}
The proof is the same as that of Proposition~\ref{ososieifnienfims}, thus omitted. 
\end{proof}

We introduce the notion of preadmissible places for the coefficient fields appearing in Conjecture~\ref{ocneienipwowie}. This is a preliminary notation that can be refined.

\begin{defi}\label{aosppwoienuvneu}
Let $A$ be an elliptic curve over $F$ with $\End(A_{\ovl F})=\bb Z$, and $\Pi$ a relevant automorphic representation of $\GL_{2r+1}(\Ade_F)$. Let $E\subset\bb C$ be a strong coefficient field of $\Pi$ (Definition~\ref{psosieifmmiess}). We say that a finite place $\lbd\in\fPla_E$ with underlying prime $\ell$, is preadmissible (with respect to $(A, \Pi)$) if
\begin{enumerate}
\item[(pL1)]
The semi-simplified residual representation $\ovl\rho_{\Pi, \lbd}$ is either absolutely irreducible or a sum of a self-dual absolutely irreducible representation with a self-dual character.
\item[(pL2)]
There exists a finite place $\mfk p$ of $F$ and a finite extension $E'$ of $E$ in $\bb C$ with a finite place $\lbd'$ over $\lbd$ satisfying
\begin{enumerate}
\item
$\{\pm1, \pm\norml{\mfk p}^{\pm 1}\modu{\ell},\pm \norml{\mfk p}^{\pm 2}\modu{\ell}, \ldots,\pm \norml{\mfk p}^{\pm 4r}\modu{\ell}\}$ consists of distinct elements; 
\item
$E$ has good reduction at $\mfk p$ with $a_{\mfk p}(E)-(\norml{\mfk p}+1)$ is divisible by $\ell$;
\item
$\Pi_{\mfk p}$ is unramified with Satake parameter $\{-1, \alpha_1^{\pm1}, \ldots, \alpha_r^{\pm1}\}\subset \mcl O_{E'}$ \sut $\{\alpha_i|1\le i\le r\}$ is disjoint from $\{\pm1, \pm\norml{\mfk p}^{\pm1}, \ldots, \pm\norml{\mfk p}^{\pm4r}\}$ in $\kappa_{\lbd'}$.
\end{enumerate}
\item[(pL3)]
There exists a finite place $v$ of $F$ \sut $\Pi_v$ is unramified with Satake parameters $\{1, \alpha_{v, 1}^{\pm1}, \ldots, \alpha_{v, r}^{\pm1}\}$ satisfying
\begin{equation*}
\norml{v}\notin\{\alpha_{v, i}\alpha_{v, j}^{-1}|1\le i\ne j\le r\}\cup\{\alpha_{v, i}^{\pm1}|1\le i\le r\}\cup\{1\}\subset\ovl{\bb F_\ell}.
\end{equation*}
\end{enumerate}
\end{defi}

We give an example where it is known that all but finitely many finite places $\lbd$ of $E$ are admissible.

\begin{lm}\label{oeeoitiirunmes}
Let $A$ and $\Pi$ be as in Definition~\ref{aosppwoienuvneu}. If we assume that there exist finite places $\mfk p, \mfk q$ of $F$ \sut
\begin{enumerate}
\item
$A$ has split multiplicative reduction at $\mfk p$, and
\item
$\Pi_{\mfk p}$ is unramified with Satake parameter of the form $\{-1, \alpha_1^{\pm1}, \ldots, \alpha_r^{\pm1}\}$ satisfying $\alpha_i\ne\pm1$ for every $1\le i\le r$; and
\item
$\Pi_{\mfk q}$ is either supercuspidal or an isobaric sum of a self-dual supercuspidal representation with a self-dual character.
\end{enumerate}
then there exists an effective constant $N(F, A,\Pi_{\mfk p}, \Pi_{\mfk q})$ depending on $F, A, \Pi_{\mfk p}$, and $\Pi_q$ \sut every finite place $\lbd$ of $E$ with underlying prime $\ell$ greater than $N(F, A, \Pi_{\mfk p}, \Pi_{\mfk q})$ is admissible \wrt $(A, \Pi)$.
\end{lm}
\begin{proof}
We show that every condition in Definition~\ref{aosppwoienuvneu} excludes only finitely many finite places of $E$. 

For (pL1), the condition $\ovl\rho_{\Pi_1^\flat, \lbd}$ is absolutely irreducible only excludes finitely many finite places $\lbd$ of $E$ by~\cite[Theorem~4.5.(1)]{LTXZZa} and (3).

For (pL2), it follows from temperedness (see Proposition~\ref{ddddddeieopws}) that $|\alpha_i|=1$ for every $1\le i\le r$. Moreover, $\alpha_i$ is an algebraic number for every $1\le i\le r$ by Remark~\ref{oeietureps}. Thus it is clear that when $\ell$ is large, (pL2)(a, c) is satisfied. By the Chebotarev density theorem, (L2) is satisfied for all such $\ell$.

For (pL3), it follows from Proposition~\ref{ddddddeieopws}(1) that $\norml{\alpha_i}=1$ for every $1\le i\le 2r+1$. Thus, for every sufficiently large rational prime $\ell$,
\begin{equation*}
\norml{\mfk p}\notin\{\pm\alpha_i\alpha_j^{-1}|1\le i\ne j\le r\}\cup\{\pm\alpha_i^{\pm1}|1\le i\le r\}\cup\{\pm1\}\subset\ovl{\bb F_\ell}.
\end{equation*}
Suppose $\lbd$ is a finite place of $E$ over $\ell$, we fix an isomorphism $\iota_\ell: \bb C\xr\sim\ovl{\bb Q_\ell}$ which induces $\lbd$. Applying the Chebotarev density theorem to the representation $\ovl\rho_{\Pi, \lbd}\oplus\ovl\ve_\ell$ of $\Gal_F$, we see that (L3) holds for $\lbd$.
\end{proof}

\begin{thm}
Let $r$ be a positive integer, and let $A$ be a modular elliptic curve over $F$ with no complex multiplication over $\ovl F$. Assume \textup{Conjecture~\ref{ocneienipwowie}} holds for $r$ and $A$.  If the central critical value $L\paren{\Sym^{2r-1}A; r}$ does not vanish, then there is an effective constant $N(F, A, r)$ depending only on $F, A$, and $r$ \sut the Bloch--Kato Selmer group
\begin{equation*}
\bx H^1_f\paren{F, \Sym^{2r-1}\etH^1(A; \bb Q_\ell)(r)}.
\end{equation*}
vanishes for all rational primes $\ell$ greater than $N(F, A, r)$.
\end{thm}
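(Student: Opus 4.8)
The plan is to run the argument of Theorem~\ref{osineiieihirmisws} in the self-dual, symplectic--metaplectic setting, with Conjecture~\ref{ocneienipwowie} playing the role that Theorem~\ref{iieiieierhfieiswp} played in the conjugate self-dual case. First I would invoke Newton--Thorne together with \cite{A-C89} to conclude that $\Sym^{2r-1}A$ is modular, and let $\Pi_0$ be the associated self-dual cuspidal automorphic representation of $\GL_{2r}(\Ade_F)$, twisted by a suitable nontrivial quadratic character so that the positive definite quadratic space $\mbf V_{2r+1}$ produced below satisfies $-\disc(\mbf V_{2r+1})\notin F^{\times2}$; then $\rho_{\Pi_0,\ell}$ agrees with $\Sym^{2r-1}\etH^1(A_{\ovl F},\bb Q_\ell)$ up to the twist, $L(\tfrac12,\Pi_0)=L(r,\Sym^{2r-1}A)\ne 0$, and $\Pi_0$ is tempered at every place by Proposition~\ref{ddddddeieopws}(1). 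Viewing $\Pi_0$ as a generic elliptic $A$-parameter for $\tld\Sp_{2r}$, Gan--Ichino's Arthur multiplicity formula \cite{G-I18} yields a genuine cuspidal automorphic representation $\tld\sigma_0$ of $\tld\Sp_{2r}(\Ade_F)$ with this parameter, and I would arrange $\tld\sigma_{0,\infty}$ to have lowest $\tld{\mdc K}$-type $(\sqrt{\det})^{2r-1}$ at every real place. Since $L(\tfrac12,\Pi_0)\ne0$, the Rallis inner product formula \cite{Yam14} (Theorem~\ref{novnainsieiheirimfies}), combined with the local conservation relations (Theorem~\ref{oeoeieifnies}), produces $\mbf V_{2r+1}$ of dimension $2r+1$ over $F$ and a nonzero cuspidal theta lift $\pi_0=\ovl{\theta_{\mbf W_{2r},\mbf V_{2r+1}}(\tld\sigma_0)}$ on $\bx O(\mbf V_{2r+1})(\Ade_F)$ with trivial Archimedean components and Arthur parameter $\Sym^{2r-1}A$ up to the twist; set $\mbf V_{2r+2}=\mbf V_{2r+1}\oplus\mbf V_1$. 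This already supplies items (2) and half of (3) of Conjecture~\ref{ocneienipwowie}.

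Next I would use the seesaw diagram of \S\ref{psosieuueiures} relating $(\Sp_{2r},\bx O(\mbf V_{2r+2}))$ and $(\Sp_{2r}\times\Sp_{2r},\bx O(\mbf V_{2r+1})\times\bx O(\mbf V_1))$ to reduce the construction of $\pi_1$ to that of a cuspidal $\sigma_1\subset\mcl A_0(\Sp_{2r}(\Ade_F))$ with nonvanishing Fourier--Jacobi period against $\tld\sigma_0$. To control $\sigma_1$ locally, I would fix a finite place $\mfk p$ of $F$ (and, if needed, an auxiliary inert place $\mfk q$) and the self-dual orthogonal-type local Galois representation $\phi_\mfk p$ of dimension $2r$ furnished by Appendix~\ref{oggieeotiieimfiiwis}: it is residually absolutely irreducible and carries a Frobenius lift whose eigenvalues are $\ell$-adic units avoiding a prescribed finite subset of $\ovl{\bb F_\ell}$, with $\alpha_i^2\ne\norml{\mfk p}^2\alpha_j^2$ for $i\ne j$. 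Via the established local Gan--Gross--Prasad conjecture in the Bessel case (Theorem~\ref{ssoeieuriwos}(2)), Prasad's conjectures (Theorem~\ref{sieieifnimidms}, parts (4)--(7)), and the local seesaw identity (Lemma~\ref{ososopeeiiettttmreis}), I would produce at $\mfk p$ a supercuspidal representation $\sigma_{1,\mfk p}$ of $\Sp_{2r}(F_\mfk p)$ that is compactly induced from a compact open subgroup (by \cite{Fin21, M-R18}) and whose $L$-parameter contains $\phi_\mfk p$, such that $\Hom_{\Sp_{2r}(F_\mfk p)}(\tld\sigma_{0,\mfk p}\otimes\omega_{\mbf W_{2r},\psi_\mfk p}\otimes\sigma_{1,\mfk p},\bb C)\ne0$. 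At the real places, the classical theta computations of Li~\cite{Li90} and Harris~\cite{Har07} give $\sigma_{1,\infty}^\vee$ as a holomorphic discrete series that is a generalized Verma module (in the sense of \cite{Gar05}) with scalar lowest $\mdc K$-type of weight $(r+1,\dots,r+1)$, matched with the lowest $\tld{\mdc K}$-type of $\tld\sigma_{0,\infty}$, and the local seesaw identity again gives the Archimedean Hom-nonvanishing. The Burger--Sarnak type principle for Fourier--Jacobi periods on $(\Sp_{2r},\tld\Sp_{2r})$ (the self-dual analogue of Proposition~\ref{ososieifnienfims}) then globalises $\{\sigma_{1,v}\}_{v\in\{\mfk p,\mfk q,\infty\}}$ to a cuspidal $\sigma_1$ with nonzero Fourier--Jacobi period against $\tld\sigma_0$.

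By the global seesaw identity, the theta lift $\pi_1=\theta_{\mbf W_{2r},\mbf V_{2r+2}}(\ovl\sigma_1)$ is a nonzero cuspidal automorphic representation of $\bx O(\mbf V_{2r+2})(\Ade_F)$, with trivial Archimedean component by Proposition~\ref{peiemitimeifes}, and the orthogonal Gross--Prasad period \eqref{oeieiimfimes} is nonzero on $(\pi_0,\pi_1)$. Again by Proposition~\ref{peiemitimeifes}, $\FL(\pi_1)=(\FL(\sigma_1)\otimes\chi_V)\boxplus\uno$; since $\sigma_{1,\mfk p}$ has $L$-parameter containing the irreducible $2r$-dimensional $\phi_\mfk p$, the self-dual parameter $\FL(\sigma_1)$ of $\GL_{2r+1}$ cannot be irreducible (there are no irreducible self-dual odd-dimensional local Galois representations of dimension $>1$), so $\FL(\sigma_1)=\tau\boxplus\chi'$ with $\tau$ a self-dual cuspidal representation of $\GL_{2r}(\Ade_F)$ and $\chi'$ a quadratic character, nontrivial because of the normalisation $-\disc(\mbf V_{2r+1})\notin F^{\times2}$. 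Setting $\Pi\defining\FL(\sigma_1)\otimes\chi_V$, the tuple $(\Pi,(\mbf V_{2r+1},\mbf V_{2r+2}),(\pi_0,\pi_1),(f_0,f_1))$ meets all the hypotheses of Conjecture~\ref{ocneienipwowie}. Finally I would verify, as in the proof of Lemma~\ref{oeeoitiirunmes} and using the prescribed Frobenius eigenvalues of $\phi_\mfk p$ together with the split-multiplicative/good reduction information forced at $\mfk p$, that every rational prime $\ell$ above an effective bound $N(F,A,r)$ underlies a preadmissible place $\lbd$, with respect to $(A,\Pi)$, of a strong coefficient field $E$ of $\Pi$; Conjecture~\ref{ocneienipwowie} then gives the vanishing of $\bx H^1_f(F,\Sym^{2r-1}\etH^1(A_{\ovl F};\bb Q_\ell)(r))$. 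All bounds encountered along the way—from the local conservation relation pinning down $\mbf V_{2r+1}$, from the Burger--Sarnak globalisation, and from preadmissibility—depend only on $F$, $A$, and $r$, because the chosen local components at $\mfk p$, $\mfk q$, and the infinite places are taken from an explicit list depending only on these data.

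The main obstacle is the coordinated local construction at $\mfk p$ (and $\mfk q$): one must simultaneously arrange that the local Gan--Gross--Prasad Hom-space is nonzero, that $\sigma_{1,\mfk p}$ is compactly induced so the Burger--Sarnak machine applies, that the resulting global parameter $\FL(\sigma_1)$ decomposes as a self-dual cuspidal $\GL_{2r}$-representation plus a \emph{nontrivial} quadratic character (landing in the shape required by Conjecture~\ref{ocneienipwowie}, the non-cuspidality being unavoidable for $p$ odd), and that the Satake and Frobenius data at $\mfk p$ force preadmissibility. This is precisely why the explicit Galois-theoretic input $\phi_\mfk p$ of Appendix~\ref{oggieeotiieimfiiwis} is required, and why—unlike the conjugate self-dual case, where the analogous constraints can be split between two places—fitting all the conditions onto the available place(s) needs care.
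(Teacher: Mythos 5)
Your proposal follows essentially the same route as the paper: twist $\Sym^{2r-1}A$ to land in the metaplectic $A$-parameter framework of Gan--Ichino, use the Rallis inner product formula and local conservation relations to fix $\mbf V_{2r+1}$ and $\pi_0$, build the local data at a well-chosen finite place $\mfk p$ via the established local Gan--Gross--Prasad conjecture, Prasad's conjectures, and a local seesaw, globalize $\sigma_1$ through the Burger--Sarnak type principle for Fourier--Jacobi periods, descend via the global seesaw to a nonvanishing orthogonal Gross--Prasad period, and finally verify preadmissibility. This is exactly the strategy carried out in the paper's proof in \S\ref{psosieuueiures}, and you have correctly identified the appendix construction of $\Pi_{1,\mfk p}^{\flat\flat,\prime}$ as the device that pins down the Satake/Frobenius data needed for (pL1)--(pL3).

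Two small slips worth flagging. First, the lowest $\tld{\mdc K}_u$-type of $\tld\sigma_{0,u}$ should be $(\sqrt{\det})^{2r+1}$, not $(\sqrt{\det})^{2r-1}$: in the symplectic--metaplectic Burger--Sarnak principle, if $\sigma_{1,u}^\vee$ has lowest $\mdc K_u$-type $\det^m$ then $\tld\sigma_{0,u}$ must have lowest $\tld{\mdc K}_u$-type $(\sqrt{\det})^{2m-1}$, and here $m=r+1$, giving $2r+1$; this matches the theta lift of the trivial representation of $\bx O(\mbf V_{2r+1,u})$. Second, in the self-dual case a single auxiliary finite place $\mfk p$ suffices --- the paper folds the irreducibility-of-$\ovl\rho_{\Pi^\flat,\lbd}$ requirement and the Satake-avoidance requirement into a single supercuspidal $B_2$-avoiding good representation $\Pi_{1,\mfk p}^{\flat\flat,\prime}$ at the same $\mfk p$ (see Lemma~\ref{sooeiifeimiefisws}), rather than spreading them over two places $\mfk p$, $\mfk q$ as in the conjugate self-dual argument. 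Neither of these affects the overall logic, but they are the places where your write-up drifts from the actual computation.
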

\begin{proof}
By \cite[Theorem~A]{N-T22}, $\Sym^{2r-1}A$ is modular. Let $\Pi_0$ denote the automorphic representation of $\GL_{2r}(\Ade_F)$ attached to $\Sym^{2r-1}A$, which is a cuspidal automorphic representation. Thus $\Pi_0$ has strong coefficient field $\bb Q$, and $\rho_{\Pi_0, \ell}$ is conjugate to $\Sym^{2r-1}\etH^1(A_{\ovl F}, \bb Q_\ell)$ as $\bb Q_\ell[\Gal_F]$-modules for every rational prime $\ell$. Moreover,
\begin{equation*}
L(\frac{1}{2}, \Pi_0)=L\paren{r, \Sym^{2r-1}A}.
\end{equation*}

By \cite[Th\'eor\`eme~6]{Ser72} and \cite{Lom15}, there is an effective constant $N_1(F, A)$ depending only on $A$ \sut the homomorphism
\begin{equation*}
\ovl\rho_{A, \ell}: \Gal_F\to\GL\paren{\etH^1(A_{\ovl F},\bb F_\ell)}
\end{equation*}
is surjective for every rational prime $\ell$ greater than $N_1(F, A)$.

Suppose there is an effective constant $N_2(F, A, r)$ \sut Conjecture~\ref{ocneienipwowie} holds for any preadmissible finite place $\lbd$ of the strong coefficient field with underlying rational prime $\ell>N_2(F, A, r)$. We set
\begin{equation*}
N(F, A, r)\defining\max(N_1(F, A), N_2(F, A, r), 2^{16r}).
\end{equation*}
Let $\ell>N(F, A, r)$ be a rational prime with a fixed isomorphism $\iota_\ell: \bb C\xr\sim\ovl{\bb Q_\ell}$. Then we know the set
\begin{equation*}
B_2\defining \Brace{\pm1, \pm2^{\pm1}, \pm 2^{\pm2}, \ldots, \pm2^{\pm 4r}}
\end{equation*}
consists of distinct elements in $\bb F_\ell$. By the Chebotarev density theorem, we can find a finite place $\mfk p$ of $F$ satisfying
\begin{itemize}
\item
The rational prime $p$ underlying $\mfk p$ is larger than $\max(\ell, 2r)$;
\item
$A$ has good reduction at $\mfk p$; and
\item
$\ovl\rho_{A, \ell}(\phi_{\mfk p})$ has eigenvalues $\{2, 1\}$.
\end{itemize}
We fix a totally positive element $\mfk d\in F^\times$ satisfying $(-1)^{r+1}\mfk d\ne 1\in F^\times/(F^\times)^2$.

We take a supercuspidal $B_2$-avoiding good representation $\Pi_{1, \mfk p}^{\flat\flat, \prime}$ of $\GL_{2r}(F_{\mfk p})$ with respect to $\iota_\ell$ (see Definition~\ref{ossewwttoeiiinfiemisw}) satisfying
\begin{itemize}
\item
$\iota_\ell\rec_{2r}(\Pi_{1, \mfk p}^{\flat\flat, \prime})$ is residually absolutely irreducible; and
\item
there exists a lift $F\in W_{F_{\mfk p}}$ of the arithmetic Frobenius element \sut the eigenvalues $\{\alpha_1, \ldots, \alpha_{2r}\}$ of $\iota_\ell\rec_{2r}(\Pi_{1, \mfk p}^{\flat\flat, \prime})(F)$ are $\ell$-adic units; and
\item
\begin{equation*}
\norml{\mfk p}\notin\{\pm\alpha_i\alpha_j^{-1}|1\le i\ne j\le 2r\}\cup\{\pm\alpha_i|1\le i\le 2r\}\subset\ovl{\bb F_\ell}.
\end{equation*}
holds.
\end{itemize}
Such a representation exists by Lemma~\ref{sooeiifeimiefisws}. Set $\Pi_{1, \mfk p}^{\flat, \prime}\defining \Pi_{1, \mfk p}^{\flat\flat, \prime}\boxplus\chi_{(-1)^{r+1}\mfk d}$.

By the local Gan--Gross--Prasad conjecture (see Theorem~\ref{ssoeieuriwos}(2)), there exists a quadratic space $V'_{\mfk p}$ of dimension $2r+1$ over $F_{\mfk p}$ with $\disc(V'_{\mfk p})=(-1)^r\mfk d\in F_{\mfk p}^\times/(F_{\mfk p}^\times)^2$; and irreducible admissible representations $\pi_{0, \mfk p}'$ and $\pi_{1, \mfk p}'$ of $\bx O(V'_{\mfk p})$ and $\bx O(V'_{\mfk p,\sharp})$, respectively, satisfying
\begin{enumerate}
\item
$\FL(\pi_{0, \mfk p}')=\Pi_{0, \mfk p}$ and $\FL(\pi_{1, \mfk p}')=\Pi_{1, v}^{\flat, \prime}\boxplus\uno$, where $\uno$ is the trivial representation of $\GL_1(F_{\mfk p})$; and
\item
$\Hom_{\bx O(V_{\mfk p}')}\paren{\pi'_{1,\mfk p}|_{\bx O(V_{\mfk p}')}\otimes\pi_{0, \mfk p}', \bb C}\ne 0$.
\end{enumerate}
In particular, $\pi_{1, \mfk p}'$ is supercuspidal by \cite[Corollaire~3.5]{M-R18}. According to Prasad's conjecture (see Theorem~\ref{sieieifnimidms}(7)), upon changing $(\pi_{0, \mfk p}', \pi_{1, \mfk p}')$ to $(\pi_{0, \mfk p}'\otimes\det, \pi_{1, \mfk p}'\otimes\det)$ if necessary, we can assume that the contragredient theta lift
\begin{equation*}
\sigma_{1, \mfk p}'\defining(\theta_{V'_{\sharp, \mfk p}, W_{\mfk p}}(\pi_{1, \mfk p}'))^\vee
\end{equation*}
is nonzero, where $W_{\mfk p}$ is a symplectic space of dimension $2r$ over $F_{\mfk p}$. Moreover, it follows from Prasad's conjectures (see Theorem~\ref{sieieifnimidms}) that $\FL(\sigma_{1, \mfk p}^{\prime, \vee})=\Pi_{1, \mfk p}^{\flat, \prime}\otimes\chi_{(-1)^{r+1}\mfk d}$. In particular, it follows from \cite[Theorem~8.1]{Fin21} and \cite[Corollaire~3.5]{M-R18} that $\sigma_{1, \mfk p}'$ is supercuspidal and compactly induced from an irreducible representation of some compact open subgroup of $\Sp(W_{\mfk p})$. By the local seesaw identity (see Lemma~\ref{ososopeeiiettttmreis}) and Proposition~\ref{oeiieieikifimeis}, the theta lift
\begin{equation*}
\tilde\sigma_{0, \mfk p}'\defining \theta_{V_{\sharp, \mfk p}', W_{\mfk p}'}(\pi_{0, \mfk p}^{\prime, \vee})
\end{equation*}
is also nonzero, and
\begin{equation*}
\Hom_{\Sp(W_{\mfk p})}(\tilde\sigma_{0, \mfk p}'\otimes\omega_{W_{\mfk p}', \psi_{\mfk p}}\otimes\sigma_{1,\mfk p}', \bb C)
\end{equation*}
is nonzero. Moreover, it follows from Prasad's conjecture (see Theorem~\ref{sieieifnimidms}(5)) that
\begin{equation*}
\FL(\tilde\sigma_{0, \mfk p}')=\Pi_{0, \mfk p}^\vee\otimes\chi_{(-1)^r\mfk d}.
\end{equation*}

We now consider an infinite place $u$. Let $W_u$ be a symplectic space of dimension $2r$ over $F_u$, and let $V_u'$ be a quadratic space of dimension $2r+1$ over $F_u$ with signature $(2r+1, 0)$. Let
\begin{equation*}
\sigma_{1, u}'\defining(\theta_{V'_{u,\sharp}, W_u}(\uno))^\vee
\end{equation*}
be the contragredient of the theta lift of the trivial representation of $\bx O(V'_{u, \sharp})$ to $\Sp(W_u')$, and let
\begin{equation*}
\tilde\sigma_{0, u}'\defining \theta_{V_u', W_u}(\uno)
\end{equation*}
be the theta lift of the trivial representation of $\bx O(V_u')$ to $\tld\Sp(W_u)$. Then it follows from classical calculation (see, for example, \cite[Proposition~2.1]{K-R90} and \cite[Theorem~3.3]{A-B98}) 
that
\begin{itemize}
\item
$\sigma_{1, u}^{\prime, \vee}$ is a holomorphic discrete series representation with Harish-Chandra parameter
\begin{equation*}
\tau_1^\vee=\paren{r, r-1, \ldots, 1}
\end{equation*}
and the lowest $\mdc K_u$-type being the character $\det^{r+1}$; and
\item
$\tilde\sigma_{0, u}'$ is a holomorphic discrete series representation with Harish-Chandra parameter
\begin{equation*}
\tilde\tau_0=\paren{\frac{2r-1}{2}, \frac{2r-3}{2}, \ldots, \frac{1}{2}}
\end{equation*}
and the lowest $\tld{\mdc K}_u$-type being the character $\sqrt{\det}^{2r+1}$
\end{itemize}
for every infinite place $u$ of $F$. In particular, $\sigma_{1, u}^{\prime, \vee}$ is a generalized Verma module for every infinite place $u$ of $F$ (cf.~\cite{Gar05}). Moreover, by the local seesaw identity (see Lemma~\ref{ososopeeiiettttmreis}) and Proposition~\ref{oeiieieikifimeis}, the space \begin{equation*}
\Hom_{\Sp(W_u)}(\tilde\sigma_{0, u}'\otimes\omega_{W_u}\otimes\sigma_{1, u}', \bb C)
\end{equation*}
is nonzero for every infinite place $u$ of $F$. Moreover, $\FL(\sigma_{0, v}')=\Pi_{0, u}\otimes\mfk\chi_{(-1)^r}$.

By Arthur's multiplicity formula \cite[Theorem~1.4]{G-I18}, there exists a genuine cuspidal automorphic representation $\tilde\sigma_0$ of $\tld\Sp(\mbf W_{2r})$ satisfying $\tilde\sigma_{0, u}\cong \tilde\sigma_{0, u}'$ for every $u\in\Pla^\infty_{F_+}\cup\{\mfk p\}$ and $\FL(\tilde\sigma_0)\cong \Pi_0^\vee\otimes\chi_{(-1)^r\mfk d}$.

Because $L(\frac{1}{2}, \Pi_0)$ is nonzero, it follows from the local conservation relation (see Theorem~\ref{oeoeieifnies}), Theorem~\ref{ddddddeieopws}(1) and Theorem~\ref{novnainsieiheirimfies} that there exists a unique quadratic space $\mbf V_{2r+1}$ of dimension $2r+1$ over $F$ satisfying
\begin{itemize}
\item
$\mbf V_{2r+1}$ has signature $(2r+1, 0)$ at every infinite place of $F$;
\item
$\disc(\mbf V_{2r+1})=(-1)^r\mfk d\in F^\times/(F^\times)^2$; and
\item
the conjugate global theta lift
\begin{equation*}
\pi_0\defining\ovl{\theta_{\mbf W_{2r}, \mbf V_{2r+1}}(\tilde\sigma_0)}
\end{equation*}
is an (irreducible) cuspidal automorphic representation of $\mbf V_{2r+1}(\Ade_F)$ with trivial Archimedean components.
\end{itemize}
Then it follows from Proposition~\ref{oeieeiiriemfies} and the local conservation relation (see Theorem~\ref{oeoeieifnies}) that $\mbf V_{2r+1, \mfk p}$ is isomorphic to $V_{\mfk p}'$ and $\pi_{0, \mfk p}$ is isomorphic to $\pi_{0, \mfk p}'$. Moreover, it follows from Lemma~\ref{peiemitimeifes} and Proposition~\ref{oeieeiiriemfies} that $\FL(\pi_0)$ is isomorphic to $\Pi_0$, and
\begin{equation*}
\tilde\sigma_0=\theta_{\mbf V_{2r+1}, \mbf W_{2r}}(\ovl\pi_0).
\end{equation*}
Set $\mbf V_{2r+2}\defining (\mbf V_{2r+1})_\sharp$.

It follows from the Burger--Sarnak type principle (see Proposition~\ref{ososieifnienfims}) that there exists a cuspidal automorphic representation $\sigma_1$ of $\Sp(\mbf W_{2r})(\Ade_{F_+})$ \sut $\sigma_{1, v}$ is isomorphic to $\sigma_{1, v}'$ for every $v\in\{\mfk p, \mfk q\}\cup\Pla^\infty_{F_+}$; together with automorphic forms $\tilde\vp_0\in\tilde\sigma_0, \vp_1\in\sigma_1$ and a Schwartz function $\phi\in\mcl S(\bb L_{2r, 1}(\Ade_{F_+}))$ \sut 
\begin{equation*}
\mcl{FJ}(\tilde\vp_0, \vp_1; \phi)\ne 0.
\end{equation*}
Set $\Pi_1^\flat\defining \FL(\ovl\sigma_1)$, which satisfies $\Pi_{1, \mfk p}^\flat\cong\Pi_{1, \mfk p}^{\flat, \prime}\otimes\chi_{(-1)^{r+1}\mfk d}$ and $\Pi_{1, u}^\flat\cong\FL(\sigma_{1, u}^{\prime, \vee})\otimes\chi_{(-1)^{r+1}}$ for every $u\in\Pla^\infty_F$. Then $\Pi\defining \Pi_1^\flat\otimes\chi_{(-1)^{r+1}\mfk d}$ is a relevant automorphic representation of $\GL_{2r+1}(\Ade_F)$ (see Definition~\ref{oreieigieifiewp}). Set $\Pi_1\defining\Pi\boxplus\uno$, where $\uno$ is the trivial representation of $\GL_1(\Ade_F)$.

It follows from the global seesaw identity (see Lemma~\ref{osismeimefiemmitis}) that 
\begin{equation*}
\pi_1\defining \theta_{\mbf W_{2r}, \mbf V_{2r+2}}(\ovl{\sigma_1})
\end{equation*}
is nonzero. Because $\mbf V_{2r+2}$ is anisotropic, we know $\pi_1$ is an (irreducible) cuspidal automorphic representation of $\bx O(\mbf V_{2r+2})(\Ade_F)$. In particular, it follows from Lemma~\ref{peiemitimeifes} that $\pi_1$ has trivial Archimedean component, and $\pi_{1, \mfk p}$ is isomorphic to $\pi_{1, \mfk p}'$. Moreover, it follows from Proposition~\ref{peiemitimeifes} that
\begin{equation*}
\FL(\pi_1)\cong (\FL(\ovl\sigma_1)\otimes\chi_{(-1)^{r+1}\mfk d})\boxplus\uno=\Pi_1.
\end{equation*}
Thus it follows from the global seesaw identity again that there exist automorphic forms $f_0\in\pi_0$ and $f_1\in\pi_1$ \sut
\begin{equation*}
\mcl P_{\bx{GP}}(f_0, f_1)\ne 0.
\end{equation*}

Let $E\subset \bb C$ be a strong coefficient field of $\Pi$. The isomorphism $\iota_\ell: \bb C\xr\sim\ovl{\bb Q_\ell}$ induces a place $\lbd$ of $E$. We check that $\lbd$ is a preadmissible place of $E$ with respect to $(E,\Pi_1)$ (see Definition~\ref{aosppwoienuvneu}).

\begin{itemize}
\item
For (pL1), note that the restriction of $\rho_{\Pi, \lbd}$ to $\Gal_{F_{\mfk q}}$ is a direct sum of a residually absolutely irreducible self-dual representation $\sigma$ with a self-dual character $\chi$ by Proposition~\ref{ddddddeieopws} and the definition of $\Pi_{1, \mfk p}^{\flat, \prime}$. If the semi-simplified residual representation $\ovl\rho_{\Pi, \lbd}$ is not irreducible, then it is a sum of a self-dual absolutely irreducible representation with a self-dual character. On the other hand, if it is irreducible, then it is absolutely irreducible, because otherwise $\ovl\rho_{\Pi, \lbd}\otimes_{\kappa_\lbd}\ovl\kappa_\lbd$ is a sum of several irreducible representations of the same dimension, contradicting the fact that $\ovl\rho_{\Pi, \lbd}|_{\Gal_{F_{\mfk q}}}=\ovl\sigma\oplus\ovl\chi$.
\item
(pL2) holds by our choice of $\mfk p$ and the definition of $\Pi_{1, \mfk p}^{\flat, \prime}$; see Definition~\ref{ossewwttoeiiinfiemisw}.
\item
(pL3) holds by the definition of $\Pi_{1, \mfk p}^{\flat\flat, \prime}$ and the Chebotarev density theorem applied to the representation $\ovl\rho_{\Pi, \lbd}\oplus\ovl\ve_\ell$ of $\Gal_F$.
\end{itemize}

We now apply Conjecture~\ref{ocneienipwowie} to the preadmissible place $\lbd$ to get
\begin{equation*}
\bx H^1_f\paren{F, \Sym^{2r-1}\etH^1(A; \bb Q_\ell)(r)}\otimes_{\bb Q_\ell}E_\lbd=\bx H^1_f\paren{F, \rho_{\Pi_0, \ell}(r)}\otimes_{\bb Q_\ell}E_\lbd=\bx H^1_f\paren{F, \rho_{\Pi_0, \lbd}(r)}=0.
\end{equation*}
Thus $\bx H^1_f\paren{F, \Sym^{2r-1}\etH^1(A; \bb Q_\ell)(r)}$ vanishes.

The theorem is proved.
\end{proof}

\appendix

\section{Polarized local Galois representations}\label{oggieeotiieimfiiwis}

In this appendix, we construct certain (conjugate) self-dual local Galois representations of special kind. These representations will be used in the Burger--Sarnak type principles.

\subsection{Special conjugate self-dual local Galois representations}

In this subsection, we construct certain conjugate self-dual local Galois representations of special kind.

Let $p$ be an odd rational prime, and let $K$ be a finite extension of $\bb Q_p$. Denote by $\kappa$ the residue field of $K$, of cardinality $q$. Let $K_1$ be the unramified quadratic extension of $K$. Let $\mcl O_K$ (resp. $\mcl O_{K_1}$) denote the ring of integers of $K$ (resp. $K_1$) with maximal ideal $\mfk m_K$ (resp. $\mfk m_{K_1}$). Denote by $\kappa_1$ the residue field of $K_1$. Fix a uniformizer $\varpi_K$ of $K$.

We care about representations of $W_{K_1}$ that are conjugate-orthogonal, that is, if we write $\Pi^\theta\defining (\Pi^s)^\vee$, where $\Pi^s$ is the conjugate of $\Pi$ by an element $s\in W_K$ which maps to $\cc\in \Gal(K_1/K)$, then there is an isomorphism $f: \Pi^\theta\xr\sim \Pi$ satisfying $(f^\vee)^s=f$. Constructing irreducible conjugate self-dual representations of $W_{K_1}$ is more complicated than expected. We will only provide the following construction of residually absolutely irreducible conjugate-orthogonal representations when there is a tamely ramified cyclic extension of degree $2r$.

\begin{lm}\label{tteoeoeires}
Let $\ell$ be a rational prime distinct from $p$, with a fixed isomorphism $\iota_\ell: \bb C\xr\sim \ovl{\bb Q_\ell}$. Suppose $\ell$ is coprime to $2pr$ and $2r|(q^2-1)$. Then there exists a conjugate-orthogonal supercuspidal representation $\Pi$ of $\GL_{2r}(K_1)$ \sut  the Galois representation
\begin{equation*}
\iota_\ell\rec_{2r}(\Pi): W_{K_1}\to \GL_{2r}(\ovl{\bb Q_\ell})
\end{equation*}
attached to $\Pi$ via local Langlands correspondence is residually absolutely irreducible.
\end{lm}
\begin{proof}
By local Langlands correspondence for $\GL_{2r}(K_1)$, it suffices to construct a residually absolutely irreducible $2r$-dimensional representation $(\rho, V)$ of $W_{K_1}$ with $\ovl{\bb Q_\ell}$-coefficients, a lift $s\in W_K$ of $\cc\in\Gal(K_1/K)$, and a nondegenerate pairing $\bra{-, -}: V\times V\to \ovl{\bb Q_\ell}$ satisfying
\begin{equation}\label{oeimeigieimsiws}
\begin{cases}
\bra{\rho(\tau)f, \rho(s\tau s^{-1})g}=\bra{f, g}\\
\bra{g, f}=\bra{f, \rho(s^2)g}
\end{cases}
\end{equation}
for all $\tau\in W_{K_1}$ and $f, g\in V$.

Let $\ovl\gamma\in\kappa_1^\times$ be \sut $\{\ovl\gamma^q, \ovl\gamma\}$ is a $\kappa$-basis of $\kappa_1$. Let $\gamma\in K_1^\times$ denote the \Teichmuller lift of $\ovl\gamma$, and set
\begin{equation*}
E=K_1((\gamma\varpi_K)^{1/2r}),
\end{equation*}
which is a totally (tamely) ramified cyclic Galois extension of $K_1$ of degree $2r$ since $2r|(q^2-1)$. Let $\mcl O_E$ denote the ring of integers of $E$ with maximal ideal $\mfk m_E$. Let $W_E\subset W_{K_1}$ denote the corresponding Weil groups, and write $\ab_E: W_E\to W_E^\ab$ for the Abelianization map. Let
\begin{equation*}
\Art_E: E^\times\xr\sim W_E^\ab
\end{equation*}
be the local Artin map, normalized so that uniformizers are mapped to geometric Frobenius classes.

Let $\tau$ be a generator of $\Gal(E/K_1)\cong\bb Z/2r\bb Z$ and let $\varpi_E$ be a uniformizer of $E$ \sut $\tau(\varpi_E)=\zeta\varpi_E$ for some $2r$-th root of unity $\zeta\in K_1^\times$. Let $\phi\in \Gal(E/K)$ be lift of $\cc\in\Gal(K_1/K)$. By considering the action of $\phi$ we may change the lift so that $\phi(\gamma)=\gamma^q$ and $\phi(\varpi_E)=-\varpi_E$. In particular, $\phi^2=1$.

Recall the group decomposition
\begin{equation*}
E^\times=\bra{\varpi_E}\times \kappa_1^\times\times U^1_E, \quad U^1_E=1+\mfk m_E,
\end{equation*}
where $\kappa_1^\times$ embeds into $K_1^\times$ via the \Teichmuller lift $[-]: \kappa_1^\times\to K_1^\times$. Since $p>2$, the $p$-adic logarithm
\begin{equation*}
\log: U^1_E\to \mfk m_E: 1+x\mapsto \sum_{k\in\bb Z_+}\frac{(-x)^{k+1}}{k}
\end{equation*}
is a continuous group homomorphism and is $\Gal(E/K)$-equivariant. We extend $\log$ to a map $E^\times\to \mfk m_E$ by setting $\log(\varpi_E)=0$ and $\log|_{\kappa_1^\times}=0$.

Let $e_K$ denote the ramification index of $K$, and set $k_0=\floor{\frac{2re_K}{p-1}}+1$. Fix a positive integer $m>k_0$ to be determined later. Define, for $x\in E$,
\begin{equation*}
\Psi_E\defining \iota_\ell e^{2\pi\ii\cdot \tr_{E/\bb Q_p}(x)/p^{m+[K: \bb Q_p]}},
\end{equation*}
which is an additive character of $E$ of conductor at most $-(2re_K(m-1)+1)$. Let $\chi: E^\times\to\ovl{\bb Q_\ell}^\times$ denote the character
\begin{equation*}
\chi(x)\defining\Psi_E(\varpi_E\log x), \quad x\in E^\times.
\end{equation*}
When $m$ is sufficiently large, $\chi^\phi=\chi^{-1}$ and $\chi^\sigma\ne \chi$ for every nontrivial element $\sigma\in\Gal(E/K_1)$. Here we use that $\mfk m_E^{k_0}\subset \log(E^\times)$. Set
\begin{equation*}
\xi\defining \chi\circ\Art_E^{-1}\circ\ab_E: W_E\to \ovl{\bb Q_\ell}^\times.
\end{equation*}

Let
\begin{equation*}
(\rho, V)\defining\Ind_{W_E}^{W_{K_1}}\xi
\end{equation*}
denote the induced representation of $W_{K_1}$ of dimension $2r$. It follows from Mackey's theory that $\rho$ is absolutely irreducible. Fix an element $s\in W_K$ lifting $\phi\in\Gal(E/K)$, and define a pairing on $V$ given by
\begin{equation*}
\bra{f, g}\defining \sum_{[x]\in W_E\bsh W_{K_1}}f(x)g(sxs^{-1}).
\end{equation*}
Here for each $[x]\in W_E\bsh W_K$, $x\in W_K$ is a lift of $[x]$. Note that this is well-defined because replacing $x$ by $hx$ gives
\begin{equation*}
f(hx)g(shxs^{-1})=\xi(h)f(x)g(shs^{-1}(sxs^{-1}))=\xi(h)\xi^\phi(h)f(x)g(sxs^{-1})=f(x)g(sxs^{-1}).
\end{equation*}
This pairing is clearly nondegenerate. We check \eqref{oeimeigieimsiws}:
\begin{equation*}
\bra{\rho(\tau)f, \rho(s\tau s^{-1})g}=\sum_{[x]\in W_E\bsh W_{K_1}}f(x\tau)g(sx\tau s^{-1})=\sum_{[x]\in W_E\bsh W_{K_1}}f(x)g(sxs^{-1}).
\end{equation*}
\begin{equation*}
\bra{g, f}=\sum_{[x]\in W_E\bsh W_{K_1}}g(x)f(sxs^{-1})=\sum_{[x]\in W_E\bsh W_{K_1}}f(x)g(s^{-1}xs)=\xi(s^2)^{-1}\bra{f, \rho(s^2)g}.
\end{equation*}
Here we use that conjugation by $s$ permutes left $W_E$-cosets. We claim that $\xi(s^2)=1$. In fact, this claim is independent of the lift $s$ chosen, because for any other lift $s'=hs$ with $h\in W_E$,
\begin{equation*}
\xi((s')^2)=\xi(h)\xi(shs^{-1})\xi(s^2)=\xi(h)\xi^\phi(h)\xi(s^2)=\xi(s^2).
\end{equation*}
To prove the claim, we let $H$ denote the subgroup of elements of $W_K$ whose images in $\Gal(E/K)$ lie in $\bra{\phi}$. Then there is an exact sequence
\begin{equation*}
1\to \xi(W_E)\to H/\ker(\xi)\to \bra{\phi}\to 1.
\end{equation*}
Note that $\xi(W_E)$ is a finite $p$-group. So it follows from the Schur--Zassenhaus theorem that there exists a lift $s\in W_K$ of $\phi$ satisfying $s^2\in \ker(\xi)$. In particular, $\xi(s^2)=1$, and \eqref{oeimeigieimsiws} is proved.

We check that $\rho$ is residually absolutely irreducible: As $\rho$ factors through $W_K/\ker(\xi)$, which is a finite group with order dividing $2rq^M$ for some positive integer $M$. Thus $\ovl{\bb F_\ell}[\Im(\ovl\rho)]$ is a semisimple algebra, because $\ell$ is coprime to $2pr$. Thus the same Mackey theory argument implies that $\rho$ is residually absolutely irreducible.

The lemma is proved.
\end{proof}

\begin{defi}\label{osoeiiinfiemisw}
Let $\ell$ be a rational prime distinct from $p$, with a fixed isomorphism $\iota_\ell: \bb C\xr\sim \ovl{\bb Q_\ell}$. Let $B$ be a finite subset of $\ovl{\bb F_\ell}$. A $B$-avoiding good representation (with respect to $\iota_\ell$) is a representation $\Pi$ of $\GL_{2r}(K_1)$ \sut there exists some lift $F\in \Gal_K$ of the arithmetic Frobenius element satisfying
\begin{itemize}
\item
there is a partition $n=\sum_{i=1}^kn_i$ \sut $\Pi$ is an isobaric sum of distinct representations $\Pi_i$ where each $\Pi_i$ is a supercuspidal representation of $\GL_{n_i}(K_1)$;
\item
for each $1\le i\le k$, if we write $\Pi_i^\theta\defining (\Pi_i^s)^\vee$, where $\Pi_i^s$ is the conjugate of $\Pi_i$ by an element $s\in W_K$ which maps to $\cc\in \Gal(K_1/K)$, then there is an isomorphism $f_i: \Pi_i^\theta\xr\sim \Pi_i$ satisfying $(f_i^\vee)^s=f_i$;
\item
the Galois representation $\iota_\ell\rec_{2r}(\Pi): W_{K_1}\to \GL_{2r}(\ovl{\bb Q_\ell})$ attached to $\Pi$ via local Langlands maps $F^2$ to an element with generalized eigenvalues $\{\alpha_1, \ldots, \alpha_{2r}\}$ in which $\alpha_i$ is an $\ell$-adic unit with residue not in $B$ for every $1\le i\le 2r$.
\end{itemize}

Suppose $\Pi_0$ is a constituent of an unramified principal series of $\GL_{2r}(K_1)$ with Satake parameter $\alpha(\Pi_0)=\{\beta_1, \ldots, \beta_{2r}\}$. If $\iota_\ell(\beta_i)$ is an $\ell$-adic unit for every $1\le i\le r$, then we say a representation $\Pi$ of $\GL_{2r}(K_1)$ is $\Pi_0$-avoiding (\wrt $\iota_\ell$) if it is $B\defining\{-q, q\iota_\ell(\beta_1), \ldots, q\iota_\ell(\beta_{2r})\}\modu\ell$-avoiding.
\end{defi}

For a given finite subset $B\subset\ovl{\bb F_\ell}$, constructing $B$-avoiding good representations is more complicated than we expected. In fact, we do not know how to construct supercuspidal $B$-avoiding good representations. Nonetheless, we have the following result which is enough for our purpose.

\begin{lm}\label{soosseteeiifeimiefisws}
Let $\ell$ be a rational prime distinct from $p$, with a fixed isomorphism $\iota_\ell: \bb C\xr\sim \ovl{\bb Q_\ell}$. Suppose $\ell$ is coprime to $2pr$. For any finite subset $B\subset\ovl{\bb F_\ell}$, a $B$-avoiding good representation exists. We can further ensure that the generalized eigenvalues $\{\alpha_1, \ldots, \alpha_{2r}\}$ as defined in Definition~\ref{osoeiiinfiemisw} satisfy
\begin{equation*}
q^2\notin\{\alpha_i\alpha_j^{-1}|1\le i\ne j\le 2r\}\cup\{\alpha_i|1\le i\le 2r\}\subset\ovl{\bb F_\ell}.
\end{equation*}
\end{lm}
\begin{proof}
By local Langlands correspondence for $\GL_{2r}(K_1)$, it suffices to construct 
\begin{itemize}
\item
a $2r$-dimensional representation $(\rho, V)$ of $W_{K_1}$ with $\ovl{\bb Q_\ell}$-coefficients that is a sum of $r$ distinct $2$-dimensional irreducible representations,
\item
a lift $F\in W_K$ of the arithmetic Frobenius element,
\item
a lift $s\in W_K$ of $\cc\in\Gal(K_1/K)$, and
\item
a nondegenerate pairing $\bra{-, -}: V\times V\to \ovl{\bb Q_\ell}$
\end{itemize}
satisfying
\begin{enumerate}
\item
\begin{equation}\label{oeieeemeigieimsiws}
\begin{cases}
\bra{\rho(\tau)f, \rho(s\tau s^{-1})g}=\bra{f, g}\\
\bra{g, f}=\bra{f, \rho(s^2)g}
\end{cases}
\end{equation}
for all $\tau\in W_{K_1}$ and $f, g\in V$; and
\item
The eigenvalues $\{\alpha_1, \ldots, \alpha_{2r}\}$ of $\rho(F^2)$ are $\ell$-adic units with residues not in $B$, and
\begin{equation*}
q^2\notin\{\alpha_i\alpha_j^{-1}|1\le i\ne j\le 2r\}\cup\{\alpha_i|1\le i\le 2r\}\subset\ovl{\bb F_\ell}.
\end{equation*}
\end{enumerate}

Let $R/K$ be a quadratic ramified extension and let $E=RK_1$. Let $\mcl O_E$ denote the ring of integers of $E$ with maximal ideal $\mfk m_E$. Let $\tau$ denote the nontrivial element of $\Gal(R/K)\cong\bb Z/2\bb Z$. Then there is a natural identification
\begin{equation*}
\Gal(E/K)=\bra{\tau}\times\bra{\cc}\cong\bb Z/2\bb Z\times\bb Z/2\bb Z.
\end{equation*}
Let $W_E\subset W_{K_1}\subset W_K$ denote the corresponding Weil groups, and let $\ab_E: W_E\to W_E^\ab$ denote the Abelianization map. Let the Artin map
\begin{equation*}
\Art_E: E^\times\xr\sim W_E^\ab
\end{equation*}
be normalized so that uniformizers are mapped to geometric Frobenius classes. Choose a uniformizer $\varpi_R$ of $R$ with $\tau(\varpi_R)=-\varpi_R$.

Recall the group decomposition
\begin{equation*}
E^\times=\bra{\varpi_R}\times \kappa_1^\times\times U^1_E, \quad U^1_E=1+\mfk m_E,
\end{equation*}
where $\kappa_1^\times$ embeds into $K_1^\times$ via the \Teichmuller lift $[-]: \kappa_1^\times\to K_1^\times$. Since $p>2$, the $p$-adic logarithm
\begin{equation*}
\log: U^1_E\to \mfk m_E: 1+x\mapsto \sum_{k\in\bb Z_+}\frac{(-x)^{k+1}}{k}
\end{equation*}
is a continuous group homomorphism and is $\Gal(E/K)$-equivariant. We extend $\log$ to a map $E^\times\to \mfk m_E$ by setting $\log(\varpi_R)=0$ and $\log|_{\kappa_1^\times}=0$.

Let $d_K$ denote the different exponent of $K$, so the different ideal $\mfk d_K$ of $K$ over $\bb Q_p$ satisfies $\mfk d_K=\mfk m_K^{d_K}$. Let $e_K$ denote the ramification index of $K$, and set $k_0=\floor{\frac{2e_K}{p-1}}+1$. Fix a positive integer $m>k_0$ to be determined later. Let
\begin{equation*}
\Psi_{\bb Q_p}: \bb Q_p\to \ovl{\bb Q_\ell}^\times: x\mapsto\iota_\ell e^{2\pi\ii x}
\end{equation*}
denote the standard additive character of $\bb Q_p$ of conductor $0$, and set
\begin{equation*}
\Psi_K\defining \Psi_{\bb Q_p}\paren{\tr_{K/\bb Q_p}(\varpi_K^{-d_K-m}x)}, \quad \Psi_E\defining \Psi_K\circ\tr_{E/K}
\end{equation*}
Then $\Psi_E$ is an additive character of $E$ of conductor $1-2m$.

For each $1\le i\le r$, let $\chi_i: E^\times\to\ovl{\bb Q_\ell}^\times$ denote the character given by
\begin{equation*}
\chi_i(x)\defining\Psi_E(p^{1-i}\varpi_R\log x), \quad x\in E^\times.
\end{equation*}
Set $\phi=\tau\cc\in\Gal(E/K)$. Then $\chi_i^\phi=\chi_i^\tau=\chi_i^{-1}\ne\chi_i$ for every $1\le i\le r$. It is clear that $\chi_i\ne \chi_j^\pm$ for $1\le i<j\le r$. Here we use that $\mfk m_E^{k_0}\subset\log(E^\times)$. Set
\begin{equation*}
\xi_i\defining \chi_i\circ\Art_E^{-1}\circ\ab_E: W_E\to \ovl{\bb Q_\ell}^\times, \quad 1\le i\le r.
\end{equation*}

For each $1\le i\le r$, let
\begin{equation*}
(\rho_i, V_i)\defining\Ind_{W_E}^{W_{K_1}}\xi_i
\end{equation*}
denote the induced representation of $W_K$ of dimension $2r$. And we define
\begin{equation*}
(\rho, V)\defining\bplus_{i=1}^r(\rho_i, V_i).
\end{equation*}
It follows from Mackey's theory that $\rho$ is a direct sum of $r$ distinct $2$-dimensional absolutely irreducible representations. Fix any element $s\in W_K$ lifting $\phi\in\Gal(E/K)$, and define a pairing on $V_i$ given by
\begin{equation*}
\bra{(f_1, \ldots, f_r), (g_1, \ldots, g_r)}\defining\sum_{i=1}^r\sum_{[x]\in W_E\bsh W_{K_1}}f_i(x)g_i(sxs^{-1}).
\end{equation*}
Here for each $[x]\in W_E\bsh W_K$, $x\in W_K$ is a lift of $[x]$. The same argument as in the proof of Lemma~\ref{tteoeoeires} shows that \eqref{oeieeemeigieimsiws} holds.

We compute the Frobenius eigenvalues of the residual representation of $\rho$. Fix a lift $F_0\in W_K$ of $\phi\in\Gal(E/K)$. For each $t\in I_E$, $F=tF_0$ is also a lift of $\cc\in\Gal(E/K)$. The characteristic polynomial of $\ovl\rho(F^2)$ is
\begin{equation*}
\chi_{\ovl\rho(F^2)}(X)=\prod_{i=1}^r(X-\ovl\xi_i(F^2))(X-\ovl\xi_i(F^2)^{-1}).
\end{equation*}

For each $1\le i\le r$, note that
\begin{align*}
\xi_i(F^2)&=\xi_i(F_0^2)\Psi_E\paren{p^{1-i}\varpi_R\tr_{E/R}(\log\Art_E^{-1}\ab_E(i))}\\
&=\xi(F_0^r)\Psi_{\bb Q_p}\paren{\tr_{K/\bb Q_p}\varpi_K^{-d_K-m}\tr_{R/K}\paren{2p^{1-i}\varpi_R\tr_{E/R}(\log\Art_E^{-1}\ab_E(i))}}.
\end{align*}
When $t$ varies, $\log\Art_E^{-1}\ab_E(t)$ ranges over all elements in $\mfk m_E^{k_0}$. Since $E/R$ is unramified, $\tr_{E/R}(\log\Art_E^{-1}\ab_E(t))$ ranges over all elements in $\mfk m_R^{k_0}$, and
\begin{equation*}
\tr_{R/K}\paren{2p^{1-i}\varpi_R\tr_{E/R}(\log\Art_E^{-1}\ab_E(i))}
\end{equation*}
ranges over all elements in $\mfk m_K^{k_0}$. Thus when $t$ varies,
\begin{equation*}
\Psi_{\bb Q_p}\paren{\tr_{K/\bb Q_p}\varpi_K^{-d_K-m}\tr_{R/K}\paren{2p^{1-i}\varpi_R\tr_{E/R}(\log\Art_E^{-1}\ab_E(i))}}
\end{equation*}
can be every $p^{\floor{\frac{m-k_0}{e_K}}}$-th roots of unity in $\ovl{\bb Q_\ell}$. Thus, when $s$ is large, it is clear that we can take some $t\in I_E$ \sut $\ovl\xi_i(F^2)$ is not contained in the set
\begin{equation*}
\{b^{\pm1}|b\in B\}\cup\{q^{\pm1}, q^{\pm2}\}\subset\ovl{\bb F_\ell}
\end{equation*}
for every $1\le i\le r$. As a result, $\ovl\rho(F^2)$ has no eigenvalues in $B$. 

Similarly, when $s$ is large, we can further assume that $t\in I_E$ is chosen so that each of the elements
\begin{equation*}
\ovl\xi_i(F^2)\ovl\xi_j(F^2)=\xi_i(F_0^2)\Psi_E\paren{(p^{1-i}+p^{1-j})\varpi_R\tr_{E/R}(\log\Art_E^{-1}\ab_E(i))}, \quad 1\le i<j\le r
\end{equation*}
and
\begin{equation*}
\ovl\xi_i(F^2)\ovl\xi_j^{-1}(F^2)=\xi_i(F_0^2)\Psi_E\paren{(p^{1-i}-p^{1-j})\varpi_R\tr_{E/R}(\log\Art_E^{-1}\ab_E(i))}, \quad 1\le i\le r
\end{equation*}
is not contained in $\{q^{\pm2}\}\subset\ovl{\bb F_\ell}$.

The desired properties of $\rho$ are all proved.
\end{proof}

\subsection{Special self-dual local Galois representations}

Let $p$ be an odd rational prime, and let $K$ be a finite extension of $\bb Q_p$. Let $\kappa$ denote the residue field of $K$, of cardinality $q$. let $\mcl O_K$ denote the ring of integers of $K$ with maximal ideal $\mfk m_K$. Fix a uniformizer $\varpi_K$ of $K$.

\begin{defi}\label{ossewwttoeiiinfiemisw}
Let $\ell$ be a rational prime distinct from $p$, with a fixed isomorphism $\iota_\ell: \bb C\xr\sim \ovl{\bb Q_\ell}$. For a finite subset $B\subset\ovl{\bb F_\ell}$, a supercuspidal $B$-avoiding good representation (with respect to $\iota_\ell$) is a supercuspidal representation $\Pi$ of $\GL_{2r}(K)$ \sut there exists some lift $F\in \Gal_K$ of the arithmetic Frobenius element satisfying
\begin{itemize}
\item
there is an isomorphism $f: \Pi^\vee\xr\sim \Pi$ satisfying $f^\vee=f$.
\item
the Galois representation $\iota_\ell\rec_{2r}(\Pi): W_K\to \GL_{2r}(\ovl{\bb Q_\ell})$ attached to $\Pi$ via local Langlands maps $F$ to an element with generalized eigenvalues $\{\alpha_1, \ldots, \alpha_{2r}\}$ in which $\alpha_i$ is an $\ell$-adic unit with residue not in $B$ for every $1\le i\le 2r$.
\end{itemize}

If $\Pi_0$ is a constituent of an unramified principal series of $\GL_{2r}(K)$ with Satake parameter $\alpha(\Pi_0)=\{\beta_1, \ldots, \beta_{2r}\}$, then we say a representation $\Pi$ of $\GL_{2r}(K)$ is $\Pi_0$-avoiding if it is $B$-avoiding for 
\begin{equation*}
B\defining\{\pm1, \pm q^{\pm1}, \ldots, \pm q^{\pm4r}\}\cup\{q^{1/2}\beta_1, \ldots, q^{1/2}\beta_{2r}\}\subset\ovl{\bb F_\ell}.
\end{equation*}
\end{defi}

\begin{lm}\label{sooeiifeimiefisws}
Let $\ell$ be a rational prime with an isomorphism $\iota_\ell: \bb C\xr\sim \ovl{\bb Q_\ell}$ satisfying $\ell\nmid 2pr$. For any finite subset $B\subset\ovl{\bb F_\ell}$, a supercuspidal $B$-avoiding good representation $\Pi$ exists. Moreover, we can make sure that $\iota_\ell\rec_{2r}(\Pi)$ is absolutely residually irreducible. If $q^{2r}-1$ is not divisible by $\ell$, we can further make sure that the generalized eigenvalues $\{\alpha_1, \ldots, \alpha_{2r}\}$ as defined in Definition~\ref{ossewwttoeiiinfiemisw} satisfy
\begin{equation*}
q\notin\{\pm\alpha_i\alpha_j^{-1}|1\le i\ne j\le 2r\}\cup\{\pm\alpha_i|1\le i\le 2r\}\subset\ovl{\bb F_\ell}.
\end{equation*}
\end{lm}
\begin{proof}
It suffices to show that we can find a residually absolutely irreducible $2r$-dimensional representation $(\rho, V)$ of $W_K$ with $\ovl{\bb Q_\ell}$-coefficients and a lift $F\in\Gal_K$ of the arithmetic Frobenius element satisfying
\begin{itemize}
\item
there exists a $W_K$-invariant nondegenerate $\ovl{\bb Q_\ell}$-valued symmetric pairing on $V$;
\item
the eigenvalues $\{\alpha_1, \ldots, \alpha_{2r}\}$ of $\rho(F)$ are $\ell$-adic units with residues not in $B$; and
\item
\begin{equation*}
q\notin\{\pm\alpha_i\alpha_j^{-1}|1\le i\ne j\le 2r\}\cup\{\pm\alpha_i|1\le i\le 2r\}\subset\ovl{\bb F_\ell}.
\end{equation*}
holds if $q^{2r}-1$ is not divisible by $\ell$.
\end{itemize}

Choose $U/K$ unramified of degree $r$ with Frobenius class $\sigma$, and $R/K$ ramified quadratic with Galois group $\Gal(R/K)=\{1, \tau\}$. Set $E=UR$. Let $\mcl O_E$ denote the ring of integers of $E$ with maximal ideal $\mfk m_E$. Then there is a natural identification
\begin{equation*}
\Gal(E/K)=\bra{\phi}\times\bra{\tau}\cong\bb Z/r\bb Z\times\bb Z/2\bb Z.
\end{equation*}
Let $W_E\subset W_K$ be the corresponding Weil groups, and let $\ab_E: W_E\to W_E^\ab$ denote the Abelianization map. Let the Artin map
\begin{equation*}
\Art_E: E^\times\xr\sim W_E^\ab
\end{equation*}
be normalized so that uniformizers are mapped to geometric Frobenius classes.

Let $\kappa_U$ denote the residue field of $U$. Choose a uniformizer $\varpi_R$ of $R$ satisfying $\tau(\varpi_R)=-\varpi_R$. Recall the decomposition
\begin{equation*}
E^\times=\bra{\varpi_R}\times \kappa_U^\times\times U^1_E, \quad U^1_E=1+\mfk m_E,
\end{equation*}
where $\kappa_U^\times$ embeds into $K^\times$ via the \Teichmuller lift $[-]: \kappa_U^\times\to K^\times$. Since $p>2$, the $p$-adic logarithm
\begin{equation*}
\log: U^1_E\to \mfk m_E: 1+x\mapsto \sum_{k\in\bb Z_+}\frac{(-x)^{k+1}}{k}
\end{equation*}
is a continuous group homomorphism and is $\Gal(E/K)$-equivariant. We extend $\log$ to a map $E^\times\to \mfk m_E$ by setting $\log(\varpi_R)=0$ and $\log|_{\kappa_U^\times}=0$.

Let $d_K$ denote the different exponent of $K$, so the different ideal $\mfk d_K$ of $K$ over $\bb Q_p$ satisfies $\mfk d_K=\mfk m_K^{d_K}$. Let $e_K$ denote the ramification index of $K$, and set $k_0=\floor{\frac{2e_K}{p-1}}+1$. Fix a positive integer $s>k_0$ to be determined later. Let
\begin{equation*}
\Psi_{\bb Q_p}: \bb Q_p\to \ovl{\bb Q_\ell}^\times: x\mapsto\iota_\ell e^{2\pi\ii x}
\end{equation*}
denote the standard additive character of $\bb Q_p$ of conductor $0$, and set
\begin{equation*}
\Psi_K\defining \Psi_{\bb Q_p}\paren{\tr_{K/\bb Q_p}(\varpi_K^{-d_K-s}x)}, \quad \Psi_E\defining \Psi_K\circ\tr_{E/K}
\end{equation*}
Then $\Psi_E$ is an additive character of $E$ of conductor $1-2s$.

Take an element $\ovl\gamma\in \kappa_U^\times$ satisfying
\begin{enumerate}
\item
$\ovl\gamma^{q^i}\ne \pm\ovl\gamma$ for every $1\le i\le r-1$, and
\item
$\tr_{\kappa_U/\kappa}(\ovl\gamma)\ne 0$.
\end{enumerate}
Such an element $\ovl\gamma$ exists by normal basis theorem. Indeed, we can take $\ovl\gamma$ \sut
\begin{equation*}
\{\sigma^{q^i}(\ovl\gamma)|0\le i\le r-1\}
\end{equation*}
is a $\kappa$-basis of $\kappa_U$. Set $\alpha=\varpi_R[\ovl\gamma]$. Then
\begin{enumerate}
\item
$\tau(\alpha)=-\alpha$, and
\item
$\sigma(\alpha)-\alpha\in\mfk m_E\setm\mfk m_E^2$ for every nontrivial element $\sigma\in \Gal(E/K)$.
\end{enumerate}

Let $\chi: E^\times\to\ovl{\bb Q_\ell}^\times$ denote the character given by
\begin{equation*}
\chi(x)\defining\Psi_E(\alpha\log x), \quad x\in E^\times.
\end{equation*}
Then $\chi^\tau=\chi^{-1}$ and $\chi^\sigma\ne \chi$ for every nontrivial element $\sigma\in\Gal(E/K)$. Here we use that $\mfk m_E^{k_0}\subset\log(E^\times)$. Set
\begin{equation*}
\xi\defining \chi\circ\Art_E^{-1}\circ\ab_E: W_E\to \ovl{\bb Q_\ell}^\times.
\end{equation*}

Let
\begin{equation*}
(\rho, V)\defining\Ind_{W_E}^{W_K}\xi
\end{equation*}
be the induced representation of $W_K$ of dimension $2r$. It follows from Mackey's theory that $\rho$ is absolutely irreducible. Fix any element $y\in W_K$ lifting $\tau\in\Gal(E/K)$, and define a pairing on $V$ given by
\begin{equation*}
\bra{f, g}\defining \sum_{[x]\in W_E\bsh W_K}f(x)g(y^{-1}x).
\end{equation*}
Here for each $[x]\in W_E\bsh W_K$, $x\in W_K$ is a lift of $[x]$. Note that this is well-defined because replacing $x$ by $hx$ gives
\begin{equation*}
f(hx)g(y^{-1}hx)=\xi(h)f(x)g(y^{-1}hy(y^{-1}x))=\xi(h)\xi^\tau(h)f(x)g(y^{-1}x)=f(x)g(y^{-1}x).
\end{equation*}
This pairing is clearly $W_K$-invariant and nondegenerate. We check that it is symmetric:
\begin{align*}
\bra{f, g}&=\sum_{[x]\in W_E\bsh W_K}f(x)g(y^{-1}x)\\
&=\sum_{[x]\in W_E\bsh W_K}f(yxy^{-1})g(xy^{-1})\\
&=\sum_{[x]\in W_E\bsh W_K}f(yx)g(x)\\
&=\xi(y^2)\sum_{[x]\in W_E\bsh W_K}g(x)f(y^{-1}x)\\
&=\xi(y^2)\bra{g, f}
\end{align*}
Here we use that conjugation by $y$ permutes left $W_E$-cosets. We claim that $\xi(y^2)=1$. In fact, this claim is independent of the lift $y$ chosen, because for any other lift $y'=hy$ with $h\in W_E$,
\begin{equation*}
\xi((y')^2)=\xi(h)\xi(yhy^{-1})\xi(y^2)=\xi(h)\xi^\tau(h)\xi(y^2)=\xi(y^2).
\end{equation*}
To prove the claim, we let $H$ denote the subgroup of elements of $W_K$ whose images in $\Gal(E/K)$ lie in $\bra{\tau}$. Then there is an exact sequence
\begin{equation*}
1\to \xi(W_E)\to H/\ker(\xi)\to \bra{\tau}\to 1.
\end{equation*}
Note that $\xi(W_E)$ is a finite $p$-group. So it follows from the Schur--Zassenhaus theorem  that there exists a lift $y\in W_K$ of $\tau$ satisfying $y^2\in \ker(\xi)$. In particular, $\xi(y^2)=1$, and the form $\bra{-, -}$ is symmetric.

We check that $\rho$ is residually absolutely irreducible: As $\rho$ factors through $W_K/\ker(\xi)$, which is a finite group with order dividing $2rq^{r(2s-3)}$. Thus $\ovl{\bb F_\ell}[\Im(\ovl\rho)]$ is a semisimple algebra, because $\ell$ is coprime to $2pr$. Thus the same Mackey theory argument implies that $\rho$ is residually absolutely irreducible.

We compute the Frobenius eigenvalues of the residual representation of $\rho$. Fix a lift $F_0\in W_K$ of $\phi\in\Gal(E/K)$. For each $t\in I_E$, $F=tF_0$ is also lift of $\bra{\phi}\in\Gal(E/K)$. The characteristic polynomial of $\ovl\rho(F)$ is
\begin{equation*}
\chi_{\ovl\rho(F)}(X)=(X^r-\ovl\xi(F^r))(X^r-\ovl\xi(F^r)^{-1}).
\end{equation*}
Note that
\begin{align*}
\xi(F^r)&=\xi(F_0^r)\Psi_E\paren{\alpha\tr_{E/R}(\log\Art_E^{-1}\ab_E(i))}\\
&=\xi(F_0^r)\Psi_{\bb Q_p}\paren{\tr_{K/\bb Q_p}\varpi_K^{-d_K-s}\tr_{R/K}\paren{\tr_{E/R}(\alpha)\tr_{E/R}(\log\Art_E^{-1}\ab_E(i))}}.
\end{align*}
It follows from the choice of $\gamma$ that 
\begin{equation*}
\tr_{E/R}(\alpha)=\varpi_R\tr_{E/R}\gamma\in\mfk m_R\setm\mfk m_R^2.
\end{equation*}
When $t$ varies, $\log\Art_E^{-1}\ab_E(t)$ ranges over all elements in $\mfk m_E^{k_0}$. Since $E/R$ is unramified and $\tr_{E/R}(\alpha)\in \mfk m_R\setm\mfk m_R^2$, $\tr_{E/R}(\log\Art_E^{-1}\ab_E(t))$ ranges over all elements in $\mfk m_R^{k_0}$, and
\begin{equation*}
\tr_{R/K}\paren{\tr_{E/R}(\alpha)\tr_{E/R}(\log\Art_E^{-1}\ab_E(i))}
\end{equation*}
ranges over all elements in $\mfk m_K^{k_0}$. Thus when $t$ varies,
\begin{equation*}
\Psi_{\bb Q_p}\paren{\tr_{K/\bb Q_p}\varpi_K^{-d_K-s}\tr_{R/K}\paren{\tr_{E/R}(\alpha)\tr_{E/R}(\log\Art_E^{-1}\ab_E(i))}}
\end{equation*}
can be every $p^{\floor{\frac{s-k_0}{e_K}}}$-th roots of unity in $\ovl{\bb Q_\ell}$. When $s$ is large, it is clear that we can take some $t\in I_E$ \sut $\ovl\xi(F^r)$ is not contained in the set
\begin{equation*}
\{b^{\pm r}|b\in B\}\cup\{(\pm q)^{\pm r}\}\subset\ovl{\bb F_\ell}.
\end{equation*}
As a result, $\ovl\rho(F)$ has no eigenvalues in $B$. Moreover, if $q^{2r}-1$ is not divisible by $\ell$, it is clear that 
\begin{equation*}
q\notin\{\pm\alpha_i\alpha_j^{-1}|1\le i\ne j\le 2r\}\cup\{\pm\alpha_i|1\le i\le 2r\}\subset\ovl{\bb F_\ell}
\end{equation*}
is satisfied.

The desired properties of $\rho$ are all proved.
\end{proof}

\bibliography{bibliography}

\vspace{2em}
\noindent\textsc{Hao Peng}\\
Department of Mathematics, Massachusetts Institute of Technology, Cambridge, MA 02139, USA\\
\textit{Email}: \texttt{hao\_peng@mit.edu}

\end{document}